\documentclass[11pt,reqno]{amsart}
\usepackage[T1]{fontenc}
\usepackage{graphicx}
\usepackage{cite}

\usepackage{color}
\definecolor{MyLinkColor}{rgb}{0,0,0.4}


\newcommand{\re}{\mathop{\rm Re}\nolimits}

\newcommand{\PV}{\mathop{\rm PV}\nolimits}
\newcommand{\id}{\mathop{\rm id}\nolimits}
\newcommand{\supp}{\mathop{\rm supp}\nolimits}
\newtheorem{thm}{Theorem}

\newtheorem{prop}{Proposition}
\newtheorem{lemma}{Lemma}

\theoremstyle{remark} 
\newtheorem{rem}{Remark}

\textwidth15.25cm
\oddsidemargin+0.7cm
\evensidemargin+0.7cm

\numberwithin{equation}{section} 

\title[Well-posedness of the  Muskat problem in subcritical $L_p$-Sobolev spaces]{Well-posedness of the  Muskat problem\\ in subcritical $L_p$-Sobolev spaces}

  \author{Helmut Abels}
\address{Fakult\"at f\"ur Mathematik, Universit\"at Regensburg,   93040 Regensburg, Deutschland.}
\email{helmut.abels@ur.de}
\email{bogdan.matioc@ur.de}
  
\author{Bogdan--Vasile Matioc}

\subjclass[2010]{35R37; 35K55;  35Q35; 42B20}
\keywords{Muskat problem; Rayleigh-Taylor condition; Subcritical space; Singular integral.}

\usepackage[colorlinks=true,linkcolor=MyLinkColor,citecolor=MyLinkColor]{hyperref} 

\begin{document}

\begin{abstract}
We study the Muskat problem describing the vertical motion of two immiscible fluids in a two-dimensional homogeneous porous medium in an $L_p$-setting with $p\in(1,\infty)$.
The Sobolev space $W^s_p(\mathbb{R})$ with $s=1+1/p$ is  a critical space  for this problem. 
We prove, for $s\in (1+1/p,2),$ that  the Rayleigh-Taylor condition identifies an open subset of $W^s_p(\mathbb{R})$ within which the Muskat problem is of parabolic type. 
This enables us to establish the local well-posedness of the problem in all these subcritical spaces together with a parabolic smoothing property.
\end{abstract}

\maketitle

\section{Introduction}\label {Sec:1}

In this paper we study the following system of nonlinear and nonlocal  equations  
\begin{subequations}\label{P}
\begin{equation}\label{P:1}
\begin{array}{rlll}
 \displaystyle\partial_ tf(t,x)\!\!&=&\!\! \displaystyle\frac{1}{\pi}\PV\int_\mathbb{R}\frac{y+\partial_ xf(t,x)(f(t,x)-f(t,x-y))}{ y^2+(f(t,x)-f(t,x-y))^2 }\overline{\omega}(t,x-y)\, dy,\\[3ex]
  \displaystyle -C_{\Theta} \partial_ xf(t,x)\!\!&=&\!\!\displaystyle \overline{\omega}(t,x) \\[1ex]
  &&\!\!+ \displaystyle\frac{a_\mu}{\pi  } \PV\int_\mathbb{R}\frac{ y\partial_ xf(t,x)-(f(t,x)-f(t,x-y)) }{ y^2+(f(t,x)-f(t,x-y))^2 }\overline{\omega}(t,x-y)\, dy,
\end{array}
\end{equation}
for $t\geq0$ and $x\in\mathbb{R}$, that describes the motion of two immiscible Newtonian fluids   with   viscosities $\mu_-$ and $\mu_+$  and    densities $\rho_-$ and $\rho_+$
 in a vertical two-dimensional  porous medium with 
constant permeability $k$ that we identify with $\mathbb{R}^2$. The fluid located below is denoted by~$-$.
The unknown function $f$ parameterizes the sharp interface between the fluids and~${2(1+(\partial_ xf)^2)^{-1/2}\overline{\omega}}$
 is the jump of the velocity field  in tangential direction at the interface, cf. \cite[Eq. (2.6)]{MBV18}.
For the Muskat problem (\ref{P:1}) we consider the general scenario when
\[\mu_--\mu_+\in\mathbb{R}\qquad\mbox{and}\qquad \Theta:=g(\rho_--\rho_+)  +\frac{\mu_--\mu_+}{k}V\in\mathbb{R}.\]
The constant  $g$ is the Earth's gravity, $|V|\in\mathbb{R}$ is the velocity at which the fluid system moves vertically upwards if $V>0$ or downwards if $V<0$,
 and 
\[
a_\mu:=\frac{\mu_--\mu_+}{\mu_-+\mu_+}\in(-1,1) \qquad\mbox{and}\qquad C_{\Theta}:=\frac{k\Theta}{\mu_-+\mu_+},
\]
where $a_\mu$ is called Atwood number.
 Moreover, $\PV$ denotes the principal value and is taken at zero and/or at infinity.
The system~(\ref{P:1}) is supplemented with the initial condition
\begin{equation}\label{P:1_IC}
\begin{array}{rlll}
f(0,\cdot)\!\!&=&\!\! f_0.
\end{array}
\end{equation}
\end{subequations}

\subsection*{Critical spaces for $(\ref{P})$} It can be verified, that if $f$ is a solution to (\ref{P:1}), then, given~${\lambda>0}$, the function $f_\lambda$ with
\[
f_\lambda(t,x):=\lambda^{-1}f(\lambda  t,\lambda x)
\]  
 also solves (\ref{P:1}). 
 Moreover, given $p\in(1,\infty) $ and $r\in(0,1)$, it holds that
 $$[\partial_ x^{(k)} f(\lambda t)]_{W^{r}_p}=[\partial_ x^{(k)} f_\lambda(t)]_{W^{r}_p}$$
exactly for $k=1$ and  $r=1/p$. 
This property identifies the  space $W^s_p(\mathbb{R})$ with $s=1+1/p$ as a critical space for (\ref{P:1}).
We recall  that, given $0<s\not\in\mathbb{N}$ with $s=[s]+\{s\}$, where~${[s]\in\mathbb{N}}$ and  $\{s\}\in(0,1)$, $W^s_p(\mathbb{R})$ is a Banach space with the norm
\[
\|f\|_{W^s_p}:=\big(\|f\|_{W^{[s]}_p}^p+[f]_{W^{s}_p}^p\big)^{1/p},
\]
where
\[
[f]_{W^{s}_p}^p:=\int_{\mathbb{R}^2}\frac{|f^{([s])}(x)-f^{([s])}(y)|^p}{|x-y|^{1+\{s\}p}}\, d(x,y)=\int_{\mathbb{R}}\frac{\|f^{([s])} -\tau_\xi f^{([s])}\|_p^p}{|\xi|^{1+\{s\}p}}\, d\xi.
\] 
Here $\{\tau_\xi\}_{\xi\in\mathbb{R}}$ denotes the group of right translations and  $\|\cdot\|_p:=\|\cdot\|_{L_p(\mathbb{R})}$.
We study the problem (\ref{P}) in all subcritical spaces $W^s_p(\mathbb{R})$ with $s\in (1+1/p,2)$.\medskip

\subsection*{Reformulation of  $(\ref{P})$} In a compact form, the problem (\ref{P}) can be formulated as
\begin{equation}\label{P'}
\left\{
\begin{array}{rlll}
  \cfrac{d f}{dt}\!\!&=&\!\! \mathbb{B}(f)[\overline{\omega}],\quad t\geq0,\\[2ex]
  \displaystyle -C_{\Theta}  f' \!\!&=&\!\! (1+ a_\mu \mathbb{A}(f))[\overline{\omega}],\quad t\geq0,\\[2ex]
  f(0)\!\!&=&\!\! f_0.
\end{array}
\right.
\end{equation}
The first two equations of (\ref{P'}) should hold in $W^{s-1}_p(\mathbb{R})$ and $f'(t):=d(f(t))/dx$.  
 Moreover,~$\mathbb{A}(f)$ and $\mathbb{B}(f)$ are the singular integral operators defined by
\begin{align}
\mathbb{A}(f)[\overline{\omega}]&:=\frac{1}{\pi}\PV\int_\mathbb{R}\frac{ y f'(x)-(f(x)-f(x-y)) }{ y^2+(f(x)-f(x-y))^2 }\overline{\omega}(x-y)\, dy,\label{OpA}\\[1ex]
\mathbb{B}(f)[\overline{\omega}]&:=\frac{1}{\pi}\PV\int_\mathbb{R}\frac{y+ f'(x)(f(x)-f(x-y))}{ y^2+(f(x)-f(x-y))^2 }\overline{\omega}(x-y)\, dy\label{OpB}.\\[-2ex]\nonumber
\end{align}

\subsection*{Summary of known results} The Muskat problem was introduced in \cite{Mu34}, but the reformulation (\ref{P}) and many of the results  on this classical problem are very recent.
It is important to stress out that most of the results pertaining to (\ref{P}) are established in $L_2$-based Sobolev spaces. 
The main reasons are:
\begin{itemize}  
\item The $L_2$-continuity of singular integral operators is an important problem in the harmonic analysis and many results are available in this context;  
\item Plancherel's theorem can be used; 
\item When $a_\mu\neq0$, the  equation $(\ref{P:1})_2$ (see also $(\ref{P'})_2$) is a linear equation for $\overline{\omega}$. 
In the $L_2$-setting this equation can be solved by using  an integral identity,  known as  the Rellich formula. 
An $L_p$-version, $p\neq 2$,  of the Rellich formula is not available.
\end{itemize}
  
In the particular case when the Atwood number satisfies $a_\mu=0$, the equation $(\ref{P:1})_2$ identifies $\overline{\omega}$ as a function of $f$ and (\ref{P}) can be recast as a quasilinear equation for $f$ which is parabolic when
the fluid located below is   denser, that is when  $\rho_->\rho_+,$ cf. e.g. \cite{MBV19}.
The well-posedness of the resulting equation in $L_2$-based Sobolev spaces was established in \cite{CG07} in $H^3(\mathbb{R})$ and in \cite{ MBV19} for $H^{3/2+\varepsilon}(\mathbb{R})$-data, $\varepsilon\in(0,1/2)$,
while \cite{CGSV17} addressed this issue in $W^2_p(\mathbb{R})\cap L_2(\mathbb{R})$ with $1<p\leq \infty.$ 
Solutions corresponding to medium size data   in $H^{3/2+\varepsilon}(\mathbb{R})$  exists globally, cf. \cite{MBV19, CCGS13, CCGS16, PS17, Cam19}, while  the solutions determined by certain  
 initial data with steeper slope
break  down in finite time
 \cite{CCFG13, CCFGL12, CGFL11}.
 Exponential stability results of the (flat) equilibria  for the periodic counterpart of (\ref{P}) were established in \cite{Matioc2019, MW20}. 
  For well-posedness  results in homogeneous $L_2$-Sobolev spaces we refer to \cite{DLL17, AL20}.
  Moreover, the papers \cite{GG14, BCG14} studied the inhomogeneous Muskat problem with nonconstant permeability, while \cite{CGO14, GB14} consider  (\ref{P}) in a confined geometry. 
  
  The general case when $a_\mu\neq0$ is more involved as  additionally the equation $(\ref{P:1})_2$ needs to be solved.
  In this context the quasilinear character is lost and the Muskat problem has to be treated as a fully nonlinear and nonlocal problem which is of parabolic type in the open subset of the phase space 
   identified by the Rayleigh-Taylor condition, cf. e.g. \cite{MBV18}. 
 The Rayleigh-Taylor condition is a restriction imposed in the classical formulation of the Muskat problem on the sign 
of the jump of the normal derivative of the pressure at the interface between the fluids. 
The normal is taken to point into the upper region occupied by the fluid~$+$.
To be more precise, the jump of the normal derivative of the pressure has to have positive sign at each point of the interface when passing from the region occupied by the fluid $-$ into the region of the fluid $+$.
Local existence for the periodic counterpart of~(\ref{P}) was first established in \cite{CCG11}  in the phase space $H^3(\mathbb{S})$. 
Later on in \cite{BCS16} the authors proved a well-posedness result for $H^2$-data  with small $H^{3/2+\varepsilon}$-norm, with $\varepsilon<<1$.
More recently, it was shown in \cite{MBV18, MBV20} that (\ref{P}) is well-posed in $H^2(\mathbb{R})$ and $H^2(\mathbb{S})$ without any smallness conditions.   
Well-posedness in the subcritical spaces $H^s(\mathbb{R}^d)$ with $s>1+d/2$ was only recently established in \cite{NP20} by using a paradifferential approach.
This is the first   local well-posedness result that covers all $L_2$-subcritical spaces in all dimensions.
The existence of global  weak solutions  for medium size initial data in critical spaces together with sharp algebraic decay estimates for the 3D counterpart of (\ref{P}) was addressed in \cite{GGPS19}.
 Finally, we point out that the exponential stability of the (flat) equilibria is established  in the periodic setting in \cite{MBV20}.

Other papers consider the Muskat problem in other geometries or settings, cf. \cite{A04, A14, EMM12a, EM11a, EMW18, SCH04, Y96, CG10, GS19, BV14, NF20x}. 
Besides, there are also many studies which address the Muskat problem with surface tension effects, cf.
 \cite{A14, EMM12a, EM11a, EMW18, PS16, PS16x, Ngu20, NF20x,  To17, GGS20}, see also the review articles \cite{G17, GL20}.
 A particular feature of the Muskat problem with surface tension is that in the case when the less viscous fluid  penetrates the region occupied by the fluid with a larger viscosity (or when the denser fluid is located above) there may exist finger-shaped equilibria. The finger-shaped equilibria with  small  amplitude are unstable, cf. \cite{EM11a, EMM12a, MBV20}.
Moreover, it is shown in \cite{GGS20} in the context of the one-phase Muskat problem that surface tension prevents, for fluid interfaces
 smaller than an explicit constant, the formation of fluid drops in finite time, and the corresponding  solutions are global in time. Furthermore, the solutions become instantly analytic.

\subsection*{Main results and strategy of proof}
The main goal of this paper is to establish  a well-posedness theory for  (\ref{P}) that covers all subcritical spaces $W^s_p(\mathbb{R})$ with 
\[\mbox{$s\in (1+1/p,2)$\quad and\quad $1<p<\infty$.}\]
This setting has been previously considered only in \cite{CGSV17} in the special case $a_\mu=0$. 
We point out that in \cite{CGSV17} not all subcritical spaces were covered and   additional $L_2$-integrability of the data was required.
Our strategy is to formulate (\ref{P}) as an abstract evolution problem, cf.~(\ref{NNEP}), and to prove that  this problem is parabolic in the set where the Rayleigh-Talyor condition holds.
In this setting the Rayleigh-Taylor condition   can be formulated as 
\begin{align}\label{RT'}
C_{\Theta}+a_\mu\mathbb{B}(f)[\overline{\omega}(f)]>0,
\end{align}
see Section~\ref{Sec:4}, where  $\mathbb{B}(f)$ is the operator introduced in (\ref{OpB}). 
 Moreover,  given $f\in W^s_p(\mathbb{R})$, the function $\overline{\omega}=\overline{\omega}(f)$ is identified as the unique solution to $(\ref{P'})_2$, cf.~(\ref{SO}). 
Our analysis shows that $\mathbb{B}(f)[\overline{\omega}(f)]\in W^{s-1}_p(\mathbb{R})$,  and therefore (\ref{RT'}) implies that $\Theta\geq0$.
For $\Theta>0$ (the case $\Theta=0$ is not interesting, see Section~\ref{Sec:4}) we prove that  
\[
\mathcal{O}:=\{f\in W^s_p(\mathbb{R})\,:\, C_{\Theta}+a_\mu\mathbb{B}(f)[\overline{\omega}(f)]>0\}
\] 
defines an open subset of $W^s_p(\mathbb{R})$ and the problem (\ref{P})
is parabolic  within $\mathcal{O}$\footnote{Given $f\in W^s_p(\mathbb{R})$, the operators  $\mathbb{A}(f),\,\mathbb{B}(f) $ are linear and $\overline{\omega}(f)=-C_{\Theta}(1+a_\mu\mathbb{A}(f))^{-1}[f']$. 
Hence, the Rayleigh-Taylor condition is equivalent to the relations
\[\Theta>0\quad\mbox{and}\quad 1-a_\mu\mathbb{B}(f)[(1+a_\mu\mathbb{A}(f))^{-1}[f']]>0.\]
 The first condition is imposed on the constants only, while the second one relates the Atwood number $a_\mu$~to~$f$.}.

An important tool in our analysis is the following result.

\begin{thm}\label{T:A}
Let $a:\mathbb{R}\to\mathbb{R}$ be continuously differentiable with bounded and H\"older-continuous first derivative. For $f\in{\rm C}^\infty_0(\mathbb{R})$ let
    \begin{align*}
      T_a[f](x)&:= \PV\int_\mathbb{R} \frac{f(x-y)}{y}\exp\Big(i\frac{a(x)-a(x-y)}{y}\Big)\, dy\\
      &:=\lim_{\varepsilon\to 0} \int_{|y|\geq \varepsilon} \frac{f(x-y)}{y}\exp\Big(i\frac{a(x)-a(x-y)}{y}\Big)\, dy.         
    \end{align*}
  Given $p\in (1,\infty)$,  the operator  $T_a$ has an extension  $T_a\in \mathcal{L}(L_p(\mathbb{R}))$  and it holds that
  \[
\|T_a\|_{\mathcal{L}(L_p(\mathbb{R}))}\leq C_p(1+\|a'\|_\infty).
\]
The constant $C_p$ depends only on  $p$.
\end{thm}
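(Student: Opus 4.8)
\emph{Proof strategy (sketch).} The plan is to recognize $T_a$ as a Calder\'on--Zygmund operator and to read off the $L_p$-bound from the Calder\'on--Zygmund theory, the precise linear dependence on $\|a'\|_\infty$ being obtained at the end by a scaling argument. Let $\alpha\in(0,1]$ denote a H\"older exponent of $a'$ and put $[a']_\alpha:=\sup_{x\neq w}|a'(x)-a'(w)|\,|x-w|^{-\alpha}$. For $\lambda>0$ set $a_\lambda(x):=\lambda^{-1}a(\lambda x)$; substituting $z=\lambda y$ in the defining integral gives $T_{a_\lambda}f=(T_ag)(\lambda\cdot)$ with $g:=f(\lambda^{-1}\cdot)$, hence $\|T_{a_\lambda}\|_{\mathcal{L}(L_p)}=\|T_a\|_{\mathcal{L}(L_p)}$, whereas $\|a_\lambda'\|_\infty=\|a'\|_\infty$ and $[a_\lambda']_\alpha=\lambda^\alpha[a']_\alpha$. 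It therefore suffices to establish an a priori estimate $\|T_a\|_{\mathcal{L}(L_p)}\le C_p\,\Lambda(\|a'\|_\infty,[a']_\alpha)$ with $\Lambda$ affine in its first argument and right-continuous in the second at $0$: applying it to $a_\lambda$ and letting $\lambda\downarrow0$ removes the spurious dependence on $[a']_\alpha$ and leaves $C_p(1+\|a'\|_\infty)$.

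After the change of variables $w=x-y$ one has $T_af(x)=\PV\int_\mathbb{R} K(x,w)f(w)\,dw$, where
\[
K(x,w):=\frac{1}{x-w}\exp\!\Big(i\,\frac{a(x)-a(w)}{x-w}\Big).
\]
This kernel is antisymmetric, $K(w,x)=-K(x,w)$ (so $T_a^*=-T_{-a}$), satisfies the standard size bound $|K(x,w)|=|x-w|^{-1}$, and, using $|a(x)-a(w)-a'(w)(x-w)|=|\int_w^x(a'(s)-a'(w))\,ds|\le 2\|a'\|_\infty|x-w|$, one gets $|\partial_xK(x,w)|+|\partial_wK(x,w)|\le C(1+\|a'\|_\infty)|x-w|^{-2}$. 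Thus $K$ is a Calder\'on--Zygmund kernel with constant at most $C(1+\|a'\|_\infty)$ (the H\"older continuity of $a'$ enters only in making the principal value at $y=0$ converge, via $|e^{i(a(x)-a(x-y))/y}-e^{ia'(x)}|\le[a']_\alpha|y|^\alpha$). Consequently, once $L_2$-boundedness of $T_a$ is known, the Calder\'on--Zygmund theory yields $T_a\in\mathcal{L}(L_p)$ for each $p\in(1,\infty)$ with $\|T_a\|_{\mathcal{L}(L_p)}\le C_p(\|T_a\|_{\mathcal{L}(L_2)}+1+\|a'\|_\infty)$, and the whole problem reduces to the $L_2$-bound with affine dependence on $\|a'\|_\infty$.

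For the $L_2$-estimate I would use the $T(1)$ theorem of David and Journ\'e. The weak boundedness property follows at once from the antisymmetry of $K$: for bumps $\varphi,\psi$ adapted to a ball $B(x_0,r)$ one writes $\langle T_a\varphi,\psi\rangle=\tfrac12\iint K(x,w)\,(\varphi(w)\psi(x)-\varphi(x)\psi(w))\,dx\,dw$ and estimates $|\varphi(w)\psi(x)-\varphi(x)\psi(w)|$ by $Cr^{-1}|x-w|$, which absorbs the singularity of $K$ and gives $|\langle T_a\varphi,\psi\rangle|\le Cr$, uniformly in $a$. It remains to check $T_a1,\,T_a^*1=-T_{-a}1\in\mathrm{BMO}$; by symmetry it is enough to treat $T_a1$. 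Here one exploits the exact cancellation $e^{ia'(x)}H1=0$, where $H$ is the Hilbert transform, so that (writing $\phi(x,y):=(a(x)-a(x-y))/y$)
\[
T_a1(x)=\PV\int_\mathbb{R}\frac1y\Big(e^{i\phi(x,y)}-e^{ia'(x)}\Big)dy,
\]
whose integrand is absolutely integrable near $y=0$ with bound $[a']_\alpha|y|^{\alpha-1}$. Estimating the mean oscillation of $T_a1$ over a ball $B(x_0,r)$ one splits at $|y|\sim r$: for $|y|\le r$ the contribution involves $[a']_\alpha$ but remains bounded by $C\|a'\|_\infty$ on the relevant range of scales, while for $|y|\ge r$ one uses $|\phi(x,y)-\phi(x_0,y)|\le 2\|a'\|_\infty|x-x_0|\,|y|^{-1}$ together with the cancellation of $1/y$ to obtain an $O(\|a'\|_\infty)$ bound. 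This gives $\|T_a1\|_{\mathrm{BMO}}\le C\,\Lambda(\|a'\|_\infty,[a']_\alpha)$ of the admissible type, and likewise for $T_{-a}1$; the $T(1)$ theorem then yields $\|T_a\|_{\mathcal{L}(L_2)}\le C\,\Lambda(\|a'\|_\infty,[a']_\alpha)$, which together with the previous paragraphs proves the statement.

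The step I expect to be the main obstacle is exactly this $\mathrm{BMO}$-bound for $T_a1$ (equivalently, the $L_2$-boundedness) with the dependence on $\|a'\|_\infty$ kept affine: the \emph{far} part of $T_a1$ genuinely needs the cancellation of $1/y$ together with the quantitative H\"older continuity of $a'$, and carrying all constants affine in $\|a'\|_\infty$ — so that the superfluous $[a']_\alpha$ can be scaled away at the end — forces a careful, scale-by-scale analysis rather than the crude global kernel bounds. An alternative to the $T(1)$ route is a Cotlar--Stein almost-orthogonality argument: decompose the kernel dyadically in $y$, use the cancellation $\int_{|y|\sim 2^j}y^{-1}dy=0$ on the small scales, where the phase varies by at most $[a']_\alpha 2^{j\alpha}\le 1$, and the oscillation of the phase (governed by $|\partial_y\phi(x,y)|\le[a']_\alpha|y|^{\alpha-1}$) on the large ones; this requires estimating the $TT^*$- and $T^*T$-kernels with two length scales simultaneously present, which I expect to be more technical than the $T(1)$ argument.
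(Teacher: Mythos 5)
Your overall architecture agrees with the paper's: establish Calder\'on--Zygmund kernel regularity (your pointwise gradient bound implies the paper's integral H\"ormander condition, Lemma~\ref{L:HC}, and with the same constant $O(1+\|a'\|_\infty)$), deduce the $L_p$-bound for $p\in(1,2)$ from the $L_2$-bound plus the kernel estimate, pass to $p>2$ by duality via $T_a^*=-T_{-a}$, and finish with a dilation argument to make the operator norm linear in $\|a'\|_\infty$. The one place you diverge is decisive, though: for the $L_2$-bound with affine dependence on $\|a'\|_\infty$ the paper simply invokes Murai \cite{TM86} (see also \cite[Chapter~9, Rel.~(6.7)]{CM97}), whereas you propose to reprove it from scratch via the $T(1)$ theorem.

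That $L_2$-estimate is precisely the hard theorem, and your sketch of it is incomplete. The weak boundedness property via antisymmetry is fine, but the BMO bound for $T_a1$ with only an affine $\|a'\|_\infty$-dependence is the heart of the matter. In fact your claim that on a ball $B(x_0,r)$ the near contribution $\PV\int_{|y|\le r}y^{-1}e^{i\phi(x,y)}\,dy$ is ``bounded by $C\|a'\|_\infty$'' is not substantiated: the natural dyadic estimate, using $|\phi(x,y)-\phi(x,y')|\le[a']_\alpha\max(|y|,|y'|)^\alpha$ on each block, gives $C[a']_\alpha r^\alpha$, and replacing the H\"older bound by the trivial bound $\min([a']_\alpha|y|^\alpha,\|a'\|_\infty)$ and summing dyadically yields an unbounded logarithmic term in $r$, not $O(\|a'\|_\infty)$. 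Getting a BMO bound of the admissible form $\Lambda(\|a'\|_\infty,[a']_\alpha)$ with $\Lambda(\cdot,0)$ affine would essentially amount to redoing Murai's argument (a corona/good-$\lambda$ type construction), which is a substantial piece of work that you acknowledge but do not carry out. So the proposal is structurally sound and the $p\ne2$ part matches the paper, but the $L_2$ step is a genuine gap unless you either cite \cite{TM86} as the paper does or supply the full Murai-type analysis. A small remark on your scaling reduction: it does successfully remove any $[a']_\alpha$-dependence once you have \emph{some} a~priori bound $\Lambda(\|a'\|_\infty,[a']_\alpha)$ that is right-continuous in the second argument at $0$, because $T_{a_\lambda}$ and $T_a$ have identical $L_p$-operator norms and $[a_\lambda']_\alpha=\lambda^\alpha[a']_\alpha\to0$; the difficulty is producing such a $\Lambda$ with $\Lambda(\cdot,0)$ affine in the first place.
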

 
 The result of Theorem~\ref{T:A} also holds for $a$ merely Lipschitz continuous.
 Then, the operator~$T_a$ has to be defined by a suitable series as in \cite[Section 9.6]{CM97}.
In the canonical case~$p=2$, this result has   already been established in  \cite{TM86} (see also \cite[Chapter 9, Rel.~(6.7)]{CM97} and \cite{TM86b} for a weaker version  of this result).
Theorem~\ref{T:A}  extends the result of \cite{TM86}  to the  $L_p$-setting with $p\in(1,\infty)$.
Actually, having established Theorem~\ref{T:A} for $p\in(1,2)$, the case $p>2$ follows by duality since 
 the adjoint $T_a^*$ of $T_a\in\mathcal{L}(L_2(\mathbb{R}))$ is given by the formula
\begin{equation*}
T_a^*=-T_{-a}.
\end{equation*}
Theorem~\ref{T:A} follows in the case $p\in (1,2)$  from well-known results of the theory of singular integral operators, e.g.~\cite[Theorem~5.5]{AB12}, once the so-called H\"ormander condition is established, which is done in Lemma \ref{L:HC} below. We note that the estimate of the operator norm by a multiple of $1+\|a'\|_\infty$ follows by a simple scaling argument and an inspection of the proof in the same way as e.g.
 in \cite[Proposition 4.28]{AB12}. Here one uses that the constant in the H\"ormander condition and the operator norm on $L_2(\mathbb{R})$ can be bounded by a multiple of~${1+\|a'\|_\infty}$.

A further issue that we had to consider was to solve the  equation $(\ref{P:1})_2$ (or equivalently~$(\ref{P'})_2$) for~$\overline{\omega}$, as the Rellich formula is not available for $p\neq2$.
The arguments use quite technical localization procedures. 
Moreover, the proof in the case $p\in(1,2)$ is different from that for $p\in(2,\infty)$, see Theorem~\ref{T:I1} and Theorem~\ref{T:I2} below.

 The analysis becomes  quite involved also when showing that the evolution problem~(\ref{NNEP}) below (which is a compact reformulation of (\ref{P'}))  is parabolic in $\mathcal{O}$. 
With respect to this goal we establish in Lemma \ref{L:B2}   a commutator estimate  which is used several times in the paper (especially in the proof of the lemmas in the Appendix~\ref{Sec:C}, 
 Theorem~\ref{T:AP}, and  Proposition~\ref{P:IP0}). \smallskip

The main result of this paper is the following theorem.

\begin{thm}\label{MT} Let $p\in(1,\infty)$, $s\in(1+1/p,2), $ and assume that $\Theta>0$.
Then, the following hold true:
\begin{itemize}
\item[(i)]  {\em (Well-posedness)}  Given $f_0\in\mathcal{O}$, there exists a unique maximal solution 
\[f=f(\cdot;f_0)\in {\rm C}([0,T_+), \mathcal{O})\cap {\rm C}^1([0,T_+), W^{s-1}_p(\mathbb{R})),\]
where $T_+=T_+(f_0)\in (0,\infty]$, to (\ref{P}). 
Moreover, $[(t,f_0)\mapsto f(t;f_0)]$ defines a  semiflow on $\mathcal{O}$.\\[-2.2ex]
\item[(ii)]  {\em (Parabolic smoothing)} 
\begin{itemize}
\item[(iia)] $[(t,x)\mapsto f(t,x)]:(0,T_+)\times\mathbb{R}\to\mathbb{R}$ is a real-analytic function;
\item[(iib)] Given $k\in\mathbb{N}$, it holds that $f\in {\rm C}^\omega ((0,T_+), W^k_p(\mathbb{R})).$\footnote{Here ${\rm C}^\omega$   denotes real-analyticity.}
\end{itemize} 
\end{itemize} 
\end{thm}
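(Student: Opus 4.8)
The plan is to recast~(\ref{P}) as a fully nonlinear abstract Cauchy problem and to apply the abstract theory of parabolic evolution equations on the open set~$\mathcal{O}$ carved out by the Rayleigh--Taylor condition~(\ref{RT'}). The first step is to eliminate $\overline\omega$ from the system. Since $|a_\mu|<1$, Theorem~\ref{T:A} together with the technical localization arguments of Theorems~\ref{T:I1} and~\ref{T:I2} --- which substitute for the $L_p$-Rellich formula that is unavailable when $p\neq2$, and whose proofs are genuinely different in the cases $p\in(1,2)$ and $p\in(2,\infty)$ --- show that $1+a_\mu\mathbb{A}(f)\in\mathcal{L}(W^{s-1}_p(\mathbb{R}))$ is a topological isomorphism depending real-analytically on $f\in W^s_p(\mathbb{R})$, so that
\[
\overline\omega=\overline\omega(f):=-C_\Theta\bigl(1+a_\mu\mathbb{A}(f)\bigr)^{-1}[f']
\]
is the unique solution of $(\ref{P'})_2$ and a real-analytic function of $f$. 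Inserting this into $(\ref{P'})_1$ turns~(\ref{P}) into the evolution problem~(\ref{NNEP}), which we write abstractly as $df/dt=\Phi(f)$, $f(0)=f_0$, with $\Phi(f):=\mathbb{B}(f)[\overline\omega(f)]$. The operators $\mathbb{A}(f)$ and $\mathbb{B}(f)$ depend analytically on $f$ --- their kernels arise by composing the analytic map $t\mapsto(1+t^2)^{-1}$ with difference quotients of $f$ --- so $\Phi\in{\rm C}^\omega(\mathcal{O},W^{s-1}_p(\mathbb{R}))$; and since $s-1>1/p$ we have the embedding $W^{s-1}_p(\mathbb{R})\hookrightarrow{\rm C}(\mathbb{R})$, whence the continuity of $f\mapsto C_\Theta+a_\mu\mathbb{B}(f)[\overline\omega(f)]$ into $W^{s-1}_p(\mathbb{R})$ shows that $\mathcal{O}$ is open in $W^s_p(\mathbb{R})$.

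The core of the argument is to show that, for every $f_0\in\mathcal{O}$, the Fr\'echet derivative $\partial\Phi(f_0)\in\mathcal{L}(W^s_p(\mathbb{R}),W^{s-1}_p(\mathbb{R}))$, viewed as an unbounded operator in $W^{s-1}_p(\mathbb{R})$ with dense domain $W^s_p(\mathbb{R})$, generates an analytic semigroup --- that is, $-\partial\Phi(f_0)$ belongs to the class $\mathcal{H}\bigl(W^s_p(\mathbb{R}),W^{s-1}_p(\mathbb{R})\bigr)$ of negative generators of analytic semigroups in this pair of spaces; this will be carried out in Theorem~\ref{T:AP} and Proposition~\ref{P:IP0}. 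The proof proceeds by a freezing-coefficients/localization scheme: using a finite partition of unity one compares, on sufficiently small intervals, $\partial\Phi(f_0)$ with constant-coefficient model operators $h\mapsto-\alpha(x_j)\,|\partial_x|h$, all remainder terms being of lower order; controlling them relies on the mapping properties of the oscillatory singular integrals furnished by Theorem~\ref{T:A} and, decisively, on the commutator estimate of Lemma~\ref{L:B2}. The gain of this reduction is that the symbol weight equals, up to a strictly positive $\mu$-dependent factor, the value at $x_j$ of $C_\Theta+a_\mu\mathbb{B}(f_0)[\overline\omega(f_0)]$, which is positive precisely because $f_0\in\mathcal{O}$; it is exactly this sign that yields the resolvent estimates defining $\mathcal{H}\bigl(W^s_p(\mathbb{R}),W^{s-1}_p(\mathbb{R})\bigr)$. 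We expect this step --- together with the invertibility of $1+a_\mu\mathbb{A}(f)$ discussed above --- to be the main obstacle, since the $L_p$-theory of (oscillatory) singular integrals is considerably less forgiving than the $L_2$-theory and the bookkeeping in the localization is delicate.

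Granted that $\Phi\in{\rm C}^\omega(\mathcal{O},W^{s-1}_p(\mathbb{R}))$ and that $-\partial\Phi$ takes values in $\mathcal{H}\bigl(W^s_p(\mathbb{R}),W^{s-1}_p(\mathbb{R})\bigr)$ on the open set $\mathcal{O}$, the general well-posedness theory for fully nonlinear parabolic problems with analytic nonlinearity --- applied exactly as in the $L_2$-framework of \cite{MBV18,MBV20} --- yields assertion~(i): a unique maximal solution $f\in{\rm C}([0,T_+),\mathcal{O})\cap{\rm C}^1([0,T_+),W^{s-1}_p(\mathbb{R}))$ depending continuously on $f_0$, hence a semiflow on $\mathcal{O}$, together with the time-analyticity $f\in{\rm C}^\omega((0,T_+),\mathcal{O})$. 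To reach the higher regularity in~(iib) one uses that $\Phi$ commutes with translations, so that $\partial_xf$ solves the linear non-autonomous parabolic equation $\partial_t(\partial_xf)=\partial\Phi(f(t))[\partial_xf]$ whose coefficients are real-analytic in $t$; linear parabolic theory gains one spatial derivative while preserving the analyticity in $t$, and iterating this argument yields $f\in{\rm C}^\omega((0,T_+),W^k_p(\mathbb{R}))$ for every $k\in\mathbb{N}$, which is~(iib).

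It remains to establish the joint real-analyticity~(iia), for which we invoke the parameter trick (see \cite{MBV18,MBV20} and the references therein). For $(\lambda,\xi)$ in a small neighbourhood of $(1,0)$ in $\mathbb{R}^2$ set $f_{\lambda,\xi}(t,x):=f(t,\lambda x+\xi)$; exploiting the explicit form of the kernels in~(\ref{OpA}) and~(\ref{OpB}), one checks that $f_{\lambda,\xi}$ solves a modified evolution problem $dg/dt=\Phi_{\lambda,\xi}(g)$, $g(0)=f_0(\lambda\,\cdot+\xi)$, whose right-hand side depends real-analytically on the parameters $(\lambda,\xi)$ and which remains of parabolic type in a neighbourhood of the solution. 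Applying the abstract theory with parameters, $(\lambda,\xi)\mapsto f_{\lambda,\xi}$ is real-analytic into $W^s_p(\mathbb{R})$, locally uniformly for $t\in(0,T_+)$; evaluating at a fixed point and unwinding the definition of $f_{\lambda,\xi}$ then shows that $x\mapsto f(t,x)$ is real-analytic with a locally uniform radius of convergence, and combining this with the time-analyticity obtained above gives that $f$ is real-analytic on $(0,T_+)\times\mathbb{R}$.
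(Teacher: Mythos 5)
Your overall strategy for part~(i) matches the paper: solve $(\ref{P'})_2$ for $\overline\omega$ using the invertibility of $1+a_\mu\mathbb{A}(f)$ in $\mathcal{L}(W^{s-1}_p(\mathbb{R}))$ (Theorems~\ref{T:I1}, \ref{T:I2}, \ref{T:INV}), set $\Phi(f)=\mathbb{B}(f)[\overline\omega(f)]$, prove $\Phi\in{\rm C}^\omega(\mathcal{O},W^{s-1}_p(\mathbb{R}))$ and $-\partial\Phi(f_0)\in\mathcal{H}(W^s_p(\mathbb{R}),W^{s-1}_p(\mathbb{R}))$ by freezing coefficients (Theorem~\ref{T:AP}, Proposition~\ref{P:IP0}), and then invoke Lunardi's fully nonlinear theory. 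One small but real omission: \cite[Theorem~8.1.1]{L95} only guarantees uniqueness in a class with a priori ${\rm C}^\alpha_\alpha$-H\"older regularity in time, not in the class of strict solutions ${\rm C}([0,T],\mathcal{O})\cap{\rm C}^1([0,T],W^{s-1}_p(\mathbb{R}))$ stated in the theorem. The paper upgrades this by interpolating down to $W^{s'}_p(\mathbb{R})$ with $s'<s$, showing every strict solution is automatically ${\rm C}^\alpha$ in time into $W^{s'}_p(\mathbb{R})$, and reapplying the abstract theory there. This improvement is actually needed later when matching the solutions produced by the parameter trick to the original one; you should include it.

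For part~(iia) there is a genuine gap. You set $f_{\lambda,\xi}(t,x):=f(t,\lambda x+\xi)$, but then the initial datum $f_{\lambda,\xi}(0,\cdot)=f_0(\lambda\,\cdot+\xi)$ \emph{depends} on the parameters $(\lambda,\xi)$, and in $W^s_p(\mathbb{R})$ this dependence is at best continuous, not real-analytic (translation and dilation are not analytic curves in $W^s_p(\mathbb{R})$ unless $f_0$ already is real-analytic with appropriate bounds --- which is what you are trying to prove). The abstract parameter-dependence theorem \cite[Theorem~8.3.9]{L95} therefore does not apply as you have set it up. The paper's choice is $f_\lambda(t,x):=f(\lambda_1 t,x+\lambda_2 t)$, a joint scaling of time and a time-proportional translation of space; the essential point is that $f_\lambda(0,\cdot)=f_0$ is \emph{independent} of $\lambda$, so the parameter appears only in the (analytic) nonlinearity $\Psi(f,\lambda)=\lambda_1\Phi(f)+\lambda_2\partial_x f$, and the abstract theory gives $[(t,f_0,\lambda)\mapsto f(t;f_0,\lambda)]\in{\rm C}^\omega$. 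After identifying $f(t;f_0,\lambda)=f_\lambda(t)$ via uniqueness, composing the analytic evaluation $[h\mapsto h(x_0)]$ with the analytic map $(t,x)\mapsto(t/t_0,(x-x_0)/t_0)$ gives joint space-time analyticity.

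For part~(iib) you take a genuinely different route: bootstrap $\partial_x f$ through the linearized non-autonomous parabolic problem $\partial_t u=\partial\Phi(f(t))[u]$. This is plausible but not carefully justified: one needs that the linear non-autonomous equation with time-analytic coefficients produces time-analytic solutions gaining one derivative, and the cleanest proof of that very fact is again the parameter trick. There is also the issue that the iteration requires $\partial\Phi(f(t))$ to generate analytic semigroups on the higher rungs $(W^{s+k}_p(\mathbb{R}),W^{s+k-1}_p(\mathbb{R}))$, which the paper never needs to verify. In contrast, the paper's parameter trick yields~(iib) ``for free'': once $[\lambda\mapsto f_\lambda(t_0)]$ is real-analytic into $W^s_p(\mathbb{R})$, repeated differentiation in $\lambda_2$ produces the time-analyticity of $\partial_x^k f$ into $W^s_p(\mathbb{R})$ directly, with no further generator estimates. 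You should replace your (iib) and (iia) arguments by the time-shearing trick with $f(\lambda_1 t,x+\lambda_2 t)$.
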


The proof of Theorem \ref{MT} and of  Remark \ref{R:MR} below is postponed to the end of Section~\ref{Sec:4}.

 \begin{rem}\label{R:MR}
 Given $\alpha\in(0,1)$, $T>0$,  and a Banach space $X$,   let $B((0,T], X)$   denote the Banach space of all bounded functions from $(0,T]$  into $X$ and  set 
 \[
 {\rm C}^\alpha_\alpha((0,T], X):=\Big\{f\in B((0,T], X)\,:\, [f]_{C^\alpha_\alpha}:=\sup_{s\neq t}\frac{\|t^\alpha f(t)-s^\alpha f(s)\|_X}{|t-s|^\alpha}<\infty\Big\}.
 \]
 Then, for each  $f_0\in\mathcal{O}$, the solution $f=f(\cdot;f_0)$ found in  Theorem \ref{MT} also satisfies
 \[
 f\in \bigcap_{\alpha\in(0,1)}{\rm C}^{\alpha}_{\alpha}((0,T], W^s_p(\mathbb{R})) \qquad \forall \, T\in(0,T_+(f_0)).
 \]
 \end{rem}

\subsection*{Organization of the paper} In Section \ref{Sec:2} we establish the boundedness of certain multilinear singular operators which is then
 used to derive some useful  mapping properties for the operators $\mathbb{A}$ and $\mathbb{B}$ in~(\ref{P'}). 
Section~\ref{Sec:3} is devoted to the solvability issue for the equation~$(\ref{P'})_2$. 
Finally, in Section \ref{Sec:4}, we formulate (\ref{P}) as an evolution equation for $f$, and show that this equation is parabolic in $\mathcal{O}$. We conclude this section with the proof of Theorem~\ref{MT}.
In Appendix~\ref{Sec:C} we prove some technical results that are used in Section~\ref{Sec:4}.

\section{Preliminaries}\label{Sec:2}
We first clarify the  notation used in this paper. 
Then, we check the H\"ormander condition for the kernel of the operator $T_a$ in Theorem~\ref{T:A}. 
This condition builds the fundament of the proof of Theorem~\ref{T:A}. 
The bulk of this section  addresses the boundedness of certain multilinear singular operators and culminates with the proof of Lemma~\ref{L:REG}
 where mapping properties for the operators $\mathbb{A}$ and $\mathbb{B}$ from (\ref{OpA}) 
and (\ref{OpB}) are established.

\subsection*{Notation}
Given $k\in\mathbb{N}$, we let ${\rm C}^k(\mathbb{R}) $ denote the Banach space of $k$-times continuously differentiable functions having bounded derivatives.
Given $\alpha\in(0,1)$, the Hölder space ${\rm C}^{k+\alpha}(\mathbb{R}) $ is the subspace of ${\rm C}^k(\mathbb{R}) $ that consists of functions with $k$th derivative  having finite Hölder seminorm, that is 
\[
[f^{(k)}]_\alpha:=\sup_{x\neq y}\frac{|f^{(k)}(x)-f^{(k)}(y)|}{|x-y|^\alpha}<\infty.
\]
Sobolev's embedding states that $W^{r}_p(\mathbb{R})\hookrightarrow {\rm C}^{r-1/p}(\mathbb{R})$ provided that $r>1/p$.
Besides, given $k\in\mathbb{N}$ with $k<r-1/p,$ since the smooth  function with compact support are dense in $W^r_p(\mathbb{R})$, for $f\in W^r_p(\mathbb{R})$ it holds that 
$f^{(k)}(x)\to0$ for $|x|\to\infty$.
Furthermore,   the following estimate finds several times application in the analysis:
\begin{align}\label{MES}
\hspace{-0.25cm}\|gh\|_{W^{r}_p}\leq 2(\|g\|_\infty\|h\|_{W^{r}_p}+\|h\|_\infty\|g\|_{W^{r}_p}),\quad\mbox{ $g,$ $ h\in W^{r}_p(\mathbb{R})$, $r\in(1/p,1)$, $p\in[1,\infty)$.}
\end{align} 
We also write ${\rm C}^{1-}$ to denote  local Lipschitz continuity.

\subsection*{The H\"ormander condition} 
Defining the singular kernel 
\begin{align}\label{ker}
k(x,y):=\frac{1}{y}\exp\Big(i\frac{a(x)-a(x-y)}{y}\Big), \quad x\in\mathbb{R},\, y\in\mathbb{R}\setminus\{0\},
\end{align}
for $f\in{\rm C}^\infty_0(\mathbb{R})$    it holds that
\[
T_a[f](x)= \PV\int_\mathbb{R} f(y)k(x,x-y)\, dy.
\]
A simple computation reveals that 
\begin{align}\label{Est:K1}
|\partial_y k(x,y)|\leq 2(1+\|a'\|_\infty)y^{-2}, \quad x\in\mathbb{R},\, y\in\mathbb{R}\setminus\{0\},
\end{align}
and the H\"ormander condition can be now established.
\begin{lemma}[The H\"ormander condition]\label{L:HC} Let $a:\mathbb{R}\to\mathbb{R}$ be a Lipschitz continuous function and let $k$ be the kernel defined in (\ref{ker}). 
Given $x_0\in\mathbb{R}$ and $y\in\mathbb{R}\setminus\{0\}$, it then holds
\begin{align*}
\int_{[|x|>2|y|]} |k(x+x_0,x-y)- k(x+x_0,x)|\, dx\leq 8(1+\|a'\|_\infty).
\end{align*}
\end{lemma}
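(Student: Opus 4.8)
The plan is to reduce the claim to the pointwise derivative estimate \eqref{Est:K1} via the fundamental theorem of calculus in the second slot of $k$, and then integrate. Fix $x_0\in\mathbb{R}$ and $y\in\mathbb{R}\setminus\{0\}$, and consider $x$ with $|x|>2|y|$. The difference inside the integral is a difference of values of $k(x+x_0,\cdot)$ at the endpoints $x-y$ and $x$, so I would write it along the connecting segment:
\[
k(x+x_0,x-y)-k(x+x_0,x)=-y\int_0^1(\partial_y k)(x+x_0,x-\theta y)\,d\theta.
\]
Thus everything reduces to controlling $(\partial_y k)(x+x_0,\cdot)$ on the segment from $x-y$ to $x$.

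The only point requiring any care is that this segment must avoid the singularity of $k$ at the origin, and this is exactly where the restriction $|x|>2|y|$ enters. For $\theta\in[0,1]$ one has $|x-\theta y|\geq |x|-|y|>|x|/2>0$, using $|y|<|x|/2$; this simultaneously shows $x-\theta y\neq 0$ (so $\partial_y k$ is defined there and the integral above makes sense) and gives the quantitative lower bound $|x-\theta y|\geq |x|/2$. Feeding this and \eqref{Est:K1} into the identity above yields
\[
|k(x+x_0,x-y)-k(x+x_0,x)|\leq |y|\int_0^1\frac{2(1+\|a'\|_\infty)}{|x-\theta y|^2}\,d\theta\leq \frac{8(1+\|a'\|_\infty)|y|}{|x|^2}.
\]

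It remains to integrate over $[|x|>2|y|]$. Since $\int_{[|x|>2|y|]}|x|^{-2}\,dx=1/|y|$, the last bound gives precisely
\[
\int_{[|x|>2|y|]}|k(x+x_0,x-y)-k(x+x_0,x)|\,dx\leq 8(1+\|a'\|_\infty),
\]
which is the asserted estimate. When $a$ is merely Lipschitz continuous, \eqref{Est:K1} holds for a.e.\ $y$ and $k(x+x_0,\cdot)$ is locally Lipschitz, hence absolutely continuous, on $\mathbb{R}\setminus\{0\}$, so the fundamental theorem of calculus step remains valid with the a.e.\ derivative. I expect no real obstacle here: the computation is elementary, with the kernel estimate \eqref{Est:K1} being the only genuine input, and the constant $8$ comes out exactly because $|x|>2|y|$ costs a factor $4$ in the denominator while the integral of $|x|^{-2}$ over that set contributes the factor $|y|^{-1}$.
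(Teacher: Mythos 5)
Your proof is correct and follows essentially the same route as the paper: both reduce the difference to $\partial_y k$ on the segment from $x-y$ to $x$, invoke the bound (\ref{Est:K1}) together with $|x-\theta y|\geq |x|/2$, and integrate $|x|^{-2}$ over $[|x|>2|y|]$ to pick up the factor $|y|^{-1}$. The only cosmetic difference is that you use the fundamental theorem of calculus where the paper invokes the mean value theorem, which is in fact a slightly cleaner justification for $a$ merely Lipschitz (where $\partial_y k$ exists only a.e.).
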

\begin{proof}
It follows from   (\ref{Est:K1}) and the mean value theorem  that
\begin{align*}
&\int_{[|x|>2|y|]} |k(x+x_0,x-y)- k(x+x_0,x)|\, dx=\int_{[|x|>2|y|]} |\partial_y k(x+x_0,\xi_y)y|\, dx\\[1ex]
&\qquad \leq 2(1+\|a'\|_\infty)\int_{[|x|>2|y|]} |y\xi_y^{-2}|\, dx\leq 8(1+\|a'\|_\infty),
\end{align*}
where we used that $\xi_y=x-ty$, with $t\in[0,1],$ satisfies  $|\xi_y|\geq |x|/2.$
\end{proof}

\subsection*{Boundedness of some multilinear singular integral operators}

The first goal of this  subsection is to show that, for any $s\in(1+1/p,2)$, with $p\in(1,\infty)$, it holds 
\begin{align}\label{PRO1} 
 \mathbb{A},\, \mathbb{B}\in {\rm C}^{\omega}(W^s_p(\mathbb{R}),\mathcal{L}( L_p(\mathbb{R})))\cap {\rm C}^{\omega}(W^s_p(\mathbb{R}),\mathcal{L}(  W^{s-1}_p(\mathbb{R}))).
\end{align} 
 Theorem \ref{T:A} is essential for this purpose.
In the following we set
\[
\delta_{[x,y]} f:=f(x)-f(x-y) =(f-\tau_y f)(x) \qquad\mbox{for $x,\, y\in\mathbb{R}$.}
\]
In order to establish (\ref{PRO1}), but also for later purposes, we provide the following lemma.
\begin{lemma}\label{L:MP1} Let $p\in(1,\infty)$ and $n,\, m\in\mathbb{N}$ be given.
  \begin{itemize}
  \item[(i)] Given  Lipschitz continuous  functions $a_1,\ldots, a_{m},\, b_1, \ldots, b_n:\mathbb{R}\to\mathbb{R}$, the singular integral operator 
  $B_{n,m}(a_1,\ldots, a_{m})[b_1,\ldots,b_n,\,\cdot\,]$ defined by
\[
B_{n,m}(a_1,\ldots, a_m)[b_1,\ldots,b_n,\overline{\omega}](x)
:=\PV\int_\mathbb{R}  \frac{\overline{\omega}(x-y)}{y}\cfrac{\prod_{i=1}^{n}\big(\delta_{[x,y]} b_i /y\big)}{\prod_{i=1}^{m}\big[1+\big(\delta_{[x,y]}  a_i /y\big)^2\big]}\, dy ,
\]
belongs to $\mathcal{L}(L_p(\mathbb{R}))$ and $\|B_{n,m}(a_1,\ldots, a_m)[b_1,\ldots,b_n,\,\cdot\,]\|_{\mathcal{L}(L_p(\mathbb{R}))}\leq C\prod_{i=1}^{n} \|b_i'\|_{\infty}$, where $C$ is a constant depending only 
on $n,\, m$, and $\max_{i=1,\ldots, m}\|a_i'\|_{\infty}.$

 Moreover,   $B_{n,m}\in {\rm C}^{1-}((W^1_\infty(\mathbb{R}))^{m},\mathcal{L}_{n+1}((W^1_\infty(\mathbb{R}))^{n}\times L_p(\mathbb{R}),L_p(\mathbb{R}))).$\\[-1ex]
 \item[(ii)] Given $r\in (1+1/p,2)$ and $\tau\in(1/p,1),$  it holds
   \begin{align*}   
 \|B_{n,m}(a_1,\ldots, a_{m})[b_1,\ldots, b_n,\overline{\omega}]\|_\infty\leq C\|\overline{\omega}\|_{W^\tau_p} \prod_{i=1}^{n} \|b_i\|_{W^r_p} 
\end{align*}
  for all $a_1,\ldots, a_{m}, b_1,\ldots, b_n\in W^r_p(\mathbb{R})$ and $\overline{\omega}\in W^\tau_p(\mathbb{R})$,  with $C$ depending only on $\tau,$ $ r,$ $n $, $m,$ and  $\max_{i=1,\ldots, m}\|a_i \|_{W^r_p}$.
  
 Moreover,   $B_{n,m}\in {\rm C}^{1-}((W^r_p(\mathbb{R}))^{m},\mathcal{L}_{n+1}((W^r_p(\mathbb{R}))^{n}\times W^\tau_p(\mathbb{R}),L_\infty(\mathbb{R}))).$ 
\end{itemize}
\end{lemma}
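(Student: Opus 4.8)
The plan is to reduce both parts to Theorem~\ref{T:A} by expanding the rational factor in the kernel of $B_{n,m}$ into a superposition of the exponential kernels occurring in $T_a$. Writing $u_i:=\delta_{[x,y]}b_i/y$ and $v_j:=\delta_{[x,y]}a_j/y$, one has for the denominator the elementary identity $\frac{1}{1+v^2}=\int_{\mathbb{R}}\nu(\lambda)e^{i\lambda v}\,d\lambda$ with $\nu(\lambda):=\tfrac{1}{2}e^{-|\lambda|}$, valid for every $v\in\mathbb{R}$; for the numerator one uses that $b_i$ being Lipschitz forces $|u_i|\leq\|b_i'\|_\infty$. Fixing $\phi\in{\rm C}^\infty_0(\mathbb{R})$ with $\phi(t)=t$ for $|t|\leq1$, putting $\phi_i:=\|b_i'\|_\infty\,\phi(\cdot/\|b_i'\|_\infty)$ (and $\phi_i:=0$ when $\|b_i'\|_\infty=0$), one writes $u_i=\phi_i(u_i)=\int_{\mathbb{R}} g_i(\mu)e^{i\mu u_i}\,d\mu$, where $g_i$ is a fixed multiple of the (Schwartz) Fourier transform of $\phi_i$, so that by scaling $\int_{\mathbb{R}}|g_i|\,d\mu\leq c\|b_i'\|_\infty$ and $\int_{\mathbb{R}}|\mu||g_i|\,d\mu\leq c$ for a constant $c$ depending only on $\phi$. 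Since $\sum_i\mu_iu_i+\sum_j\lambda_jv_j=\delta_{[x,y]}\bigl(\sum_i\mu_ib_i+\sum_j\lambda_ja_j\bigr)/y$, multiplying the representations together and interchanging the integrations — a routine Fubini argument, carried out first on $\{|y|\geq\delta\}$ and then passing to $\delta\to0$ using the uniform $L_p$-bounds for the truncated singular integrals — gives, for $\overline\omega\in{\rm C}^\infty_0(\mathbb{R})$,
\[
B_{n,m}(a_1,\ldots,a_m)[b_1,\ldots,b_n,\overline\omega]=\int_{\mathbb{R}^{n+m}}\Bigl(\prod_{i=1}^n g_i(\mu_i)\Bigr)\Bigl(\prod_{j=1}^m\nu(\lambda_j)\Bigr)\,T_{A(\mu,\lambda)}[\overline\omega]\,d(\mu,\lambda),
\]
with $A(\mu,\lambda):=\sum_i\mu_ib_i+\sum_j\lambda_ja_j$ real-valued and Lipschitz, $\|A(\mu,\lambda)'\|_\infty\leq\sum_i|\mu_i|\|b_i'\|_\infty+\sum_j|\lambda_j|\|a_j'\|_\infty$.

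Part~(i) then follows. By Theorem~\ref{T:A}, in the version for Lipschitz $a$ noted after its statement (for which $T_a$ is defined by a suitable series), $\|T_{A(\mu,\lambda)}\|_{\mathcal{L}(L_p(\mathbb{R}))}\leq C_p(1+\|A(\mu,\lambda)'\|_\infty)$; since the moment integrals $\int_{\mathbb{R}}(1+|\mu|)|g_i|\,d\mu$ and $\int_{\mathbb{R}}(1+|\lambda|)\nu\,d\lambda$ are finite, with $\int|g_i|$ carrying the factor $\|b_i'\|_\infty$, the integrand above is Bochner-integrable in $\mathcal{L}(L_p(\mathbb{R}))$ and its integral has operator norm at most $C\prod_i\|b_i'\|_\infty$ with $C=C(n,m,p,\max_l\|a_l'\|_\infty)$; in particular $B_{n,m}(a)[b,\cdot]$ extends to $\mathcal{L}(L_p(\mathbb{R}))$, the identity persists for $\overline\omega\in L_p(\mathbb{R})$, and, the whole expression being $(n+1)$-linear in $(b_1,\ldots,b_n,\overline\omega)$ by construction, this gives the asserted membership in $\mathcal{L}_{n+1}$. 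For the ${\rm C}^{1-}$-dependence on $(a_1,\ldots,a_m)$, subtract the representations for $a$ and $\bar a$: then $A(\mu,\lambda)-\bar A(\mu,\lambda)=\sum_j\lambda_j(a_j-\bar a_j)=:G$, and the elementary identity $e^{ip}-e^{iq}=i(p-q)\int_0^1e^{i(q+\sigma(p-q))}\,d\sigma$ exhibits $T_{A(\mu,\lambda)}-T_{\bar A(\mu,\lambda)}$, after an inner $\sigma$-integration, as an operator whose kernel has the form $y^{-1}(\delta_{[x,y]}G/y)\exp(i\delta_{[x,y]}H_\sigma/y)$; applying the same numerator device to the single factor $G$ bounds its norm in $\mathcal{L}(L_p(\mathbb{R}))$ by $C\|G'\|_\infty(1+\|A(\mu,\lambda)'\|_\infty+\|\bar A(\mu,\lambda)'\|_\infty)$, and $\|G'\|_\infty\leq(\sum_j|\lambda_j|)\|a-\bar a\|_{(W^1_\infty(\mathbb{R}))^m}$. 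Integrating against $\prod_i|g_i|\prod_j\nu$ and collecting the finitely many convergent moment integrals yields $\|B_{n,m}(a)-B_{n,m}(\bar a)\|_{\mathcal{L}_{n+1}}\leq C\|a-\bar a\|_{(W^1_\infty(\mathbb{R}))^m}$ for $a,\bar a$ in a bounded set.

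Part~(ii) I would prove directly, without Theorem~\ref{T:A}. Since $r>1+1/p$, Sobolev's embedding gives $b_i,a_j\in{\rm C}^{1+\alpha}(\mathbb{R})$ with $\alpha:=r-1/p-1>0$, and $\tau>1/p$ gives $\overline\omega\in{\rm C}^{\tau-1/p}(\mathbb{R})\cap L_p(\mathbb{R})$. Split the defining integral at $|y|=1$. On $\{|y|>1\}$ one has $|\delta_{[x,y]}b_i/y|\leq\|b_i'\|_\infty$ and denominator $\geq1$, so H\"older's inequality, with $\||y|^{-1}\mathbf{1}_{[|y|>1]}\|_{L_{p'}(\mathbb{R})}<\infty$, bounds this part by $C\|\overline\omega\|_p\prod_i\|b_i'\|_\infty$. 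On $\{|y|\leq1\}$ one uses $\delta_{[x,y]}b_i/y=b_i'(x)+O(|y|^\alpha)$ and $\delta_{[x,y]}a_j/y=a_j'(x)+O(|y|^\alpha)$, whence
\[
\frac{\prod_i(\delta_{[x,y]}b_i/y)}{\prod_j[1+(\delta_{[x,y]}a_j/y)^2]}=\frac{\prod_i b_i'(x)}{\prod_j[1+a_j'(x)^2]}+R(x,y),\qquad |R(x,y)|\leq C|y|^\alpha\prod_i\|b_i\|_{W^r_p},
\]
the remainder being controlled by telescoping in the $b_i$'s and the $a_j$'s together with $[b_i']_\alpha\leq C\|b_i\|_{W^r_p}$, $[a_j']_\alpha\leq C\|a_j\|_{W^r_p}$. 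The constant-in-$y$ term contributes $\frac{\prod_i b_i'(x)}{\prod_j[1+a_j'(x)^2]}\,\PV\!\int_{|y|\leq1}y^{-1}\overline\omega(x-y)\,dy$, where $\PV\!\int_{|y|\leq1}y^{-1}\overline\omega(x-y)\,dy=\int_{|y|\leq1}y^{-1}(\overline\omega(x-y)-\overline\omega(x))\,dy$ is absolutely convergent and bounded by $C[\overline\omega]_{\tau-1/p}$ because $\tau-1/p>0$; the $R$-term contributes at most $\|\overline\omega\|_\infty\int_{|y|\leq1}|R(x,y)||y|^{-1}\,dy$, finite because $\alpha>0$. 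Collecting the estimates and using $\|b_i'\|_\infty\leq C\|b_i\|_{W^r_p}$ together with $\|\overline\omega\|_p+\|\overline\omega\|_\infty+[\overline\omega]_{\tau-1/p}\leq C\|\overline\omega\|_{W^\tau_p}$ yields the bound. The ${\rm C}^{1-}$-dependence on $(a_1,\ldots,a_m)$ is obtained by the same splitting applied to a difference of two kernels: on $\{|y|>1\}$ via $|\prod_j(1+v_j^2)^{-1}-\prod_j(1+\bar v_j^2)^{-1}|\leq C\sum_j\|(a_j-\bar a_j)'\|_\infty$, and on $\{|y|\leq1\}$ by expanding both denominators at $y=0$ and bounding the resulting second-difference expression by $C|y|^\alpha\|a-\bar a\|_{(W^r_p(\mathbb{R}))^m}$, using that the H\"older seminorm of $a_j-\bar a_j$ is controlled by $\|a_j-\bar a_j\|_{W^r_p}$.

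The step I expect to be the main obstacle is making the superposition identity of the first paragraph fully rigorous when the data $a_j,b_i$ are only Lipschitz: for such data $T_a$ is not the na\"ive principal-value integral but is defined through a series, so one must verify that the superposition is compatible with this construction and with the truncation procedure; the clean route is to establish the identity first for smooth data, where Fubini and the principal value are unproblematic, and then pass to the limit, keeping all estimates uniform in $\|b_i'\|_\infty$ and $\|a_j'\|_\infty$. The remaining work — tracking the homogeneity $\prod_i\|b_i'\|_\infty$, respectively $\prod_i\|b_i\|_{W^r_p}$, through the multilinear structure, and checking that the error constants depend on the $a_j$'s alone — is routine once the representation formula and the elementary estimates above are in hand.
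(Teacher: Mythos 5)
Your proof is correct and takes essentially the same approach the paper relies on: for part (i) the paper's cited argument (the $p=2$ case in [MBV19, Lemma 3.3]) reduces $B_{n,m}$ to the operators $T_a$ of Theorem~\ref{T:A} by precisely this kind of superposition — writing $(1+v^2)^{-1}$ as the Fourier transform of $\tfrac12 e^{-|\lambda|}$ and the numerator factors, which are a priori bounded by $\|b_i'\|_\infty$, as superpositions of exponentials via a compactly supported truncated identity — with the moment bounds $\int|g_i|\lesssim\|b_i'\|_\infty$, $\int|\mu||g_i|\lesssim1$ giving exactly the stated homogeneity; and for part (ii) the cited [MBV18, Lemma 3.1] uses the same split at $|y|=1$, tail estimate via H\"older, and the $\delta_{[x,y]}b/y = b'(x)+O(|y|^\alpha)$ expansion near the origin. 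The technical caveat you flag about Lipschitz (rather than ${\rm C}^{1+\alpha}$) data is the right one to flag, and the smooth-approximation resolution you sketch is the standard way to close it.
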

\begin{proof}
 The proof of (i) is similar to that in the case $p=2$, cf. \cite[Lemma 3.3]{MBV19}, and relies to a large extent on Theorem \ref{T:A}.
 The proof of (ii) uses  similar arguments as  that in the case~$p=2$, cf. \cite[Lemma 3.1]{MBV18}.
\end{proof}

The next lemma collects some properties of the operators $B_{n,m}$.

\begin{lemma}\label{L:SMP1} Let $p\in(1,\infty)$ and $n,\, m\in\mathbb{N}$. Let further $a_1,\ldots, a_{m},\, b_1, \ldots, b_n:\mathbb{R}\to\mathbb{R}$ be Lipschitz continuous and $\overline{\omega}\in L_p(\mathbb{R})$.
  \begin{itemize}
 \item[(i)] If $n\geq 1$ and additionally $b_1,\,\varphi\in W^1_\infty(\mathbb{R}) $,    then  
\begin{equation}\label{spr2}  
  \begin{aligned}
 & \varphi B_{n,m}(a_1,\ldots, a_{m})[b_1,\ldots, b_n,\overline{\omega}]-B_{n,m}(a_1,\ldots, a_{m})[b_1,\ldots, b_n, \varphi \overline{\omega}] \\[1ex]
 &\hspace{0.5cm}= b_1 B_{n,m}(a_1,\ldots, a_{m})[b_2,\ldots,b_{n}, \varphi ,\overline{\omega}]-B_{n,m}(a_1,\ldots, a_{m})[b_2,\ldots,  b_{n}, \varphi , b_1 \overline{\omega}] .
  \end{aligned}
  \end{equation}
   \item[(ii)] If  $\widetilde a_1,\ldots, \widetilde a_m$ are  Lipschitz continuous, then
\begin{equation}\label{spr3}   
   \begin{aligned}
 &B_{n,m}(\widetilde a_{1}, \ldots, \widetilde a_{m})[b_1,\ldots, b_{n},\overline{\omega}]-  B_{n,m}(a_1, \ldots, a_{m})[b_1,\ldots, b_{n},\overline{\omega}]\\[1ex]
 &\hspace{0.5cm}=\sum_{i=1}^{m} B_{n+2,m+1}(\widetilde a_{1},\ldots, \widetilde a_{i},a_i,\ldots \ldots, a_{m})[b_1,\ldots, b_{n},a_i+\widetilde a_{i}, a_i-\widetilde a_{i},\overline{\omega}].
\end{aligned}
\end{equation}
\end{itemize}
\end{lemma}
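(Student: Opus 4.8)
The plan is to prove both identities by elementary manipulations at the level of the integrands, after first reducing to $\overline{\omega}\in{\rm C}^\infty_0(\mathbb{R})$. Indeed, by Lemma~\ref{L:MP1}(i) all the operators $B_{n,m}$ appearing in \eqref{spr2} and \eqref{spr3} are bounded on $L_p(\mathbb{R})$ and depend continuously on their arguments, so both sides of each identity are continuous in $\overline{\omega}\in L_p(\mathbb{R})$ (and in the outer factors $\varphi,b_1\in W^1_\infty(\mathbb{R})$); hence it suffices to establish the identities for $\overline{\omega}\in{\rm C}^\infty_0(\mathbb{R})$, where the principal value integrals are genuine integrals away from $y=0$ and the manipulations below may be carried out on the truncated domains $\{\varepsilon\le|y|\le\varepsilon^{-1}\}$ and then passed to the limit termwise.

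For \eqref{spr2} I would first combine the two terms on the left-hand side into a single integral: since the factors $\prod_{i=1}^m[1+(\delta_{[x,y]}a_i/y)^2]$ and $\delta_{[x,y]}b_i/y$ are common to both, the difference equals
\[
\PV\int_\mathbb{R}\frac{\overline{\omega}(x-y)}{y}\,\big(\varphi(x)-\varphi(x-y)\big)\,\frac{\prod_{i=1}^{n}(\delta_{[x,y]}b_i/y)}{\prod_{i=1}^{m}[1+(\delta_{[x,y]}a_i/y)^2]}\,dy .
\]
Now $\varphi(x)-\varphi(x-y)=\delta_{[x,y]}\varphi$, and using $n\ge1$ I would transfer one power of $y$ from the factor $\delta_{[x,y]}b_1/y$ onto $\delta_{[x,y]}\varphi$, i.e.
\[
\frac{\delta_{[x,y]}b_1}{y}\,\delta_{[x,y]}\varphi=\frac{\delta_{[x,y]}\varphi}{y}\,\big(b_1(x)-b_1(x-y)\big).
\]
Splitting the integral according to $b_1(x)$ and $-b_1(x-y)$, pulling the $y$-independent factor $b_1(x)$ out of the first integral, and recognizing $b_1(x-y)\overline{\omega}(x-y)=(b_1\overline{\omega})(x-y)$ in the second, one arrives at $b_1(x)\,B_{n,m}(a_1,\ldots,a_m)[b_2,\ldots,b_n,\varphi,\overline{\omega}](x)-B_{n,m}(a_1,\ldots,a_m)[b_2,\ldots,b_n,\varphi,b_1\overline{\omega}](x)$, which is the right-hand side of \eqref{spr2}; here I use that the $b$-slots enter $B_{n,m}$ symmetrically, so their ordering is irrelevant.

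For \eqref{spr3} I would telescope the denominator product one factor at a time. Setting $P_j:=\prod_{\ell\le j}[1+(\delta_{[x,y]}\widetilde a_\ell/y)^2]\prod_{\ell>j}[1+(\delta_{[x,y]}a_\ell/y)^2]$ for $0\le j\le m$, one has $1/P_m-1/P_0=\sum_{i=1}^m(1/P_i-1/P_{i-1})$, and $P_{i-1}$ and $P_i$ differ only in their $i$-th factor (which is $[1+(\delta_{[x,y]}a_i/y)^2]$ resp. $[1+(\delta_{[x,y]}\widetilde a_i/y)^2]$), the remaining factors forming a common product $Q_i$. Writing $1/P_i-1/P_{i-1}=(P_{i-1}-P_i)/(P_{i-1}P_i)$ and using the elementary identity
\[
\Big(\frac{\delta_{[x,y]}a_i}{y}\Big)^2-\Big(\frac{\delta_{[x,y]}\widetilde a_i}{y}\Big)^2=\frac{\delta_{[x,y]}(a_i+\widetilde a_i)}{y}\cdot\frac{\delta_{[x,y]}(a_i-\widetilde a_i)}{y},
\]
the $i$-th summand becomes the product of these two new factors $\delta_{[x,y]}(a_i\pm\widetilde a_i)/y$ with $1/\big(Q_i[1+(\delta_{[x,y]}a_i/y)^2][1+(\delta_{[x,y]}\widetilde a_i/y)^2]\big)$, and the denominator here is exactly the product of $1+(\delta_{[x,y]}c/y)^2$ taken over the $m+1$ functions $c\in\{\widetilde a_1,\ldots,\widetilde a_i,a_i,\ldots,a_m\}$. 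Multiplying through by $y^{-1}\overline{\omega}(x-y)\prod_{j=1}^n(\delta_{[x,y]}b_j/y)$, integrating, and matching against the definition of $B_{n+2,m+1}$ produces \eqref{spr3} summand by summand.

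I do not expect a genuine obstacle here; the fiddliest part is the index bookkeeping in the telescoping step of (ii) and the justification of interchanging the principal value limit with the (finite) splitting of the integral. For the latter it is enough to note that each individual term on both sides of \eqref{spr2} and \eqref{spr3} is a value of one of the bounded operators $B_{n,m}$ furnished by Lemma~\ref{L:MP1}(i), so the limit $\varepsilon\to0$ exists termwise; in fact, for \eqref{spr2} the left-hand side is absolutely convergent near $y=0$ because $\delta_{[x,y]}\varphi=O(y)$, which makes that reduction especially transparent.
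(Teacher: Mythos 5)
Your proof is correct, and it supplies exactly the elementary pointwise computations that the paper leaves implicit when it writes ``the proof is elementary.'' The combination into a single integrand with the factor $\delta_{[x,y]}\varphi$ for (i), and the telescoping of the denominator product together with the difference-of-squares factorization for (ii), are precisely the manipulations one expects; the density/truncation remark at the start, while not strictly necessary (the identities already hold pointwise a.e.\ on the truncated domains for $\overline{\omega}\in L_p$), is a clean and safe way to justify the $\PV$ bookkeeping.
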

\begin{proof}
 The proof  is elementary.
\end{proof}

 The importance of the operators $B_{n,m}$ becomes clear when considering the relations
 \begin{align}
 \pi\mathbb{A}(f)[\overline{\omega}]=f'B_{0,1}(f)[\overline{\omega}]-B_{1,1}(f)[f,\overline{\omega}],\label{FormulaA}\\[1ex]
 \pi\mathbb{B}(f)[\overline{\omega}]= B_{0,1}(f)[\overline{\omega}]+f'B_{1,1}(f)[f,\overline{\omega}].\label{FormulaB}
 \end{align}
These relations  together with Lemma \ref{L:MP1} (i) show that, given $f\in W^s_p(\mathbb{R}), $ $s\in(1+1/p,2)$,
it holds that $\mathbb{A}(f),\, \mathbb{B}(f)\in\mathcal{L}(L_p(\mathbb{R})).$ 
Arguing as in \cite[Section 5]{MBV19}, it   actually  holds
 \begin{align*} 
 \mathbb{A},\, \mathbb{B}\in {\rm C}^{\omega}(W^s_p(\mathbb{R}),\mathcal{L}(  L_p(\mathbb{R}))).
\end{align*}

In order to establish the second mapping property in (\ref{PRO1}) some further analysis  of the operators $B_{n,m}$ is needed.
To this end we establish in Lemma \ref{L:MP2} new estimates.
 The estimate (\ref{REF1}) is used in Lemma \ref{L:MP3} below (which is the main ingredient in the proof of~(\ref{PRO1})), 
 while (\ref{REF2})  provides a commutator type $L_p$-estimate which is essential when estimating the $W^{r-1}_p$-norm  
 of this commutator, cf.   Lemma \ref{L:MP3b}.
Lemma \ref{L:MP3b} is used in the proof of Theorem~\ref{T:AP}.  
 \begin{lemma}\label{L:MP2}
 Let  $n,m\in\mathbb{N}$ with $n\geq1,$   $r\in(1+1/p ,2)$, and  $\tau\in(2-r+1/p,1)$   be given. 
 Given $a_1,\ldots, a_m\in W^r_p(\mathbb{R})$, there exists a constant  $C$, depending only on $n,\, m$, $r$,  and $\max_{1\leq i\leq m}\|a_i\|_{W^r_p}$ (and on $\tau$ in (\ref{REF2})), such that
\begin{align} 
&\| B_{n,m}(a_1,\ldots, a_{m})[b_1,\ldots, b_n,\overline{\omega}]\|_p\leq C\|b_1'\|_{p}\|\overline{\omega}\|_{W^{r-1}_p}\prod_{i=2}^{n}\|b_i'\|_{W^{r-1}_p} \label{REF1}
\end{align}
and 
\begin{equation}\label{REF2}
\begin{aligned} 
&\| B_{n,m}(a_1,\ldots, a_{m})[b_1,\ldots, b_n,\overline{\omega}]-\overline{\omega} B_{n-1,m}(a_1,\ldots, a_{m})[b_2,\ldots, b_n,b_1']\|_p \\[1ex]
&\hspace{4.6cm}\leq C\|b_1\|_{W^{\tau}_p}\|\overline{\omega}\|_{W^{r-1}_p}\prod_{i=2}^{n}\|b_i'\|_{W^{r-1}_p}
\end{aligned}
\end{equation}
for all $b_1,\ldots, b_n\in W^r_p(\mathbb{R})$ and $\overline{\omega}\in W^{r-1}_p(\mathbb{R}).$

Moreover,   $  B_{n,m}\in {\rm C}^{1-}((W^r_p(\mathbb{R}))^m,\mathcal{L}_{n+1}( W^1_p(\mathbb{R})\times(W_p^{r}(\mathbb{R}))^{n-1}\times  W^{r-1}_p(\mathbb{R}), L_p(\mathbb{R}))).$ 
 \end{lemma}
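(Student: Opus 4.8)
The estimates (\ref{REF1}) and (\ref{REF2}) are the $L_p$-counterparts of estimates established for $p=2$ in \cite{MBV19, MBV18}, and the plan is to follow those arguments, the only structural change being that the $L_2$-boundedness of the underlying singular integral operators is systematically replaced by their $L_p$-boundedness, which is the content of Lemma~\ref{L:MP1}~(i) (and ultimately of Theorem~\ref{T:A}). I would prove (\ref{REF1}) first and then read (\ref{REF2}) off the same decomposition. The elementary facts used throughout are: the embedding $W^{r-1}_p(\mathbb{R})\hookrightarrow L_\infty(\mathbb{R})\cap{\rm C}^{r-1-1/p}(\mathbb{R})$ (recall $r-1>1/p$); the Besov-type bounds $\|g-\tau_\xi g\|_p\leq C\min(|\xi|^{r-1},1)\|g\|_{W^{r-1}_p}$ and $\|g-\tau_\xi g\|_p\leq C\min(|\xi|^{\tau},1)\|g\|_{W^{\tau}_p}$; the Morrey-type bounds $\|h-\tau_\xi h\|_\infty\leq|\xi|^{1-1/p}\|h'\|_p$ for $h\in W^1_p(\mathbb{R})$ and $\|g-\tau_\xi g\|_\infty\leq C|\xi|^{\tau-1/p}\|g\|_{W^{\tau}_p}$; and the identity $\delta_{[x,y]}h/y=\int_0^1 h'(x-ty)\,dt$, which gives $\|(\delta_{[\cdot,y]}h)/y\|_p\leq\|h'\|_p$ for every $y$, together with the pointwise bounds $|\delta_{[x,y]}b_i/y|\leq\|b_i'\|_\infty$ and $|\delta_{[x,y]}b_i/y|\leq|y|^{-1/p}\|b_i'\|_p$.

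The decomposition I would start from rests on two pointwise identities applied inside the principal value integral defining $B_{n,m}(a_1,\ldots,a_m)[b_1,\ldots,b_n,\overline{\omega}]$: the splitting $\overline{\omega}(x-y)=\overline{\omega}(x)-\delta_{[x,y]}\overline{\omega}$, and the identity $\delta_{[x,y]}b_1/y^2=b_1'(x-y)/y-\partial_y(\delta_{[x,y]}b_1/y)$ applied to the factor carrying $b_1$; one then integrates by parts in $y$ so that this $\partial_y$ falls on the bounded, $y$-differentiable factor $\prod_{i=2}^n(\delta_{[x,y]}b_i/y)\cdot\prod_{i=1}^m[1+(\delta_{[x,y]}a_i/y)^2]^{-1}$. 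The boundary terms of this integration by parts cancel, since the relevant factors are continuous at $y=0$ (with $\delta_{[x,y]}b_1/y\to b_1'(x)$ from both sides) and decay as $|y|\to\infty$. Together with, where convenient, the algebraic identities (\ref{spr2})--(\ref{spr3}) of Lemma~\ref{L:SMP1}, this rewrites $B_{n,m}(a_1,\ldots,a_m)[b_1,\ldots,b_n,\overline{\omega}]$ as the sum of three contributions: the ``leading term'' $\overline{\omega}(x)\,B_{n-1,m}(a_1,\ldots,a_m)[b_2,\ldots,b_n,b_1'](x)$, which is exactly the quantity subtracted in (\ref{REF2}); the product of $\overline{\omega}(x)$ with a principal value integral of the now non-singular kernel $(\delta_{[x,y]}b_1/y)\,\partial_y\big(\prod_{i=2}^n(\delta_{[x,y]}b_i/y)\prod_{i=1}^m[1+(\delta_{[x,y]}a_i/y)^2]^{-1}\big)$; and the integral $-\PV\int_{\mathbb{R}}(\delta_{[x,y]}\overline{\omega}/y)\,\prod_{i=1}^n(\delta_{[x,y]}b_i/y)\,\prod_{i=1}^m[1+(\delta_{[x,y]}a_i/y)^2]^{-1}\,dy$, whose kernel is also non-singular because $|\delta_{[x,y]}\overline{\omega}|\leq C|y|^{r-1-1/p}$ near $y=0$.

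For (\ref{REF1}), the leading term is controlled by Lemma~\ref{L:MP1}~(i), applied with $b_1'$ in the role of the last argument: this yields the factor $\|b_1'\|_p\prod_{i=2}^n\|b_i'\|_\infty$, and together with $\|\overline{\omega}\|_\infty\leq C\|\overline{\omega}\|_{W^{r-1}_p}$ gives the required bound for that piece; this is the only step that uses Theorem~\ref{T:A}. The two remaining, non-singular integrals are estimated by Minkowski's integral inequality, after which the matter is reduced to the convergence of finitely many one-dimensional integrals in $y$, obtained by distributing the factors appropriately between $L_p(\mathbb{R})$ and $L_\infty(\mathbb{R})$: $\delta_{[\cdot,y]}\overline{\omega}$ (or $\overline{\omega}$ itself) always in $L_p$ via the Besov bounds; the factors $\delta_{[\cdot,y]}b_i/y$ with $i\geq2$, and the $y$-derivative coming from the integration by parts, in $L_\infty$, using boundedness and integrability near $y=0$ and the $|y|^{-1/p}$-type decay near $y=\infty$; and the $b_1$-factor in $L_p$ near $y=0$ (through $\|(\delta_{[\cdot,y]}b_1)/y\|_p\leq\|b_1'\|_p$ for (\ref{REF1}), respectively $\|\delta_{[\cdot,y]}b_1\|_\infty\leq C|y|^{\tau-1/p}\|b_1\|_{W^\tau_p}$ for (\ref{REF2})) but in $L_\infty$ near $y=\infty$. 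A short computation of the resulting exponents shows that each integral converges at $y=\infty$ for every $n\geq1$, and at $y=0$ precisely because $r>1+1/p$ in the case of (\ref{REF1}), respectively precisely because $\tau>2-r+1/p$ — the relevant $y=0$-exponent being $\tau+r-3-1/p$ — in the case of (\ref{REF2}); this is where and why the hypothesis $\tau\in(2-r+1/p,1)$ enters. Finally, the asserted ${\rm C}^{1-}$-regularity follows from (\ref{REF1}): multilinearity and boundedness in $(b_1,\ldots,b_n,\overline{\omega})\in W^1_p(\mathbb{R})\times(W^r_p(\mathbb{R}))^{n-1}\times W^{r-1}_p(\mathbb{R})$ is (\ref{REF1}) itself (using $\|b_1'\|_p\leq\|b_1\|_{W^1_p}$), while the local Lipschitz dependence on $(a_1,\ldots,a_m)\in(W^r_p(\mathbb{R}))^m$ is obtained by applying (\ref{REF1}) to each summand of the telescoping identity (\ref{spr3}), keeping $b_1$ in the distinguished first slot and placing the difference $a_i-\widetilde a_i\in W^r_p(\mathbb{R})$ in one of the remaining slots, so that the estimate picks up $\|(a_i-\widetilde a_i)'\|_{W^{r-1}_p}\leq\|a_i-\widetilde a_i\|_{W^r_p}$.

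The main obstacle is, as for $p=2$, of bookkeeping nature: one must distribute the available derivative ($\|b_1'\|_p$, respectively the Hölder regularity of $b_1$ measured by $\|b_1\|_{W^\tau_p}$), the $W^{r-1}_p$-regularity of $\overline{\omega}$ and of $b_2',\ldots,b_n'$, and the Hölder continuity of $\overline{\omega}$ and of the $b_i'$ over the various kernel factors so that each of the finitely many $y$-integrals produced above converges simultaneously at $y=0$ and at $y=\infty$, which forces one to interchange the roles of $L_p(\mathbb{R})$ and $L_\infty(\mathbb{R})$ for some factors between the two regimes. The genuinely delicate point is the behaviour near $y=0$ for (\ref{REF2}), where integrability holds exactly under the stated assumption $\tau>2-r+1/p$.
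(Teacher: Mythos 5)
Your overall route is the same as the paper's: rewrite $B_{n,m}(a_1,\ldots,a_m)[b_1,\ldots,b_n,\overline{\omega}]$ by integration by parts using the identity $\partial_y(\delta_{[x,y]}b_1/y)=b_1'(x-y)/y-\delta_{[x,y]}b_1/y^2$ together with the splitting $\overline{\omega}(x-y)=\overline{\omega}(x)-\delta_{[x,y]}\overline{\omega}$, extract the leading term $\overline{\omega}\, B_{n-1,m}[b_2,\ldots,b_n,b_1']$, estimate it by Lemma~\ref{L:MP1}~(i) (i.e.\ Theorem~\ref{T:A}), treat the non-singular remainder kernels via Minkowski's integral inequality and a distribution of factors between $L_p$ and $L_\infty$, and read the ${\rm C}^{1-}$-regularity off (\ref{spr3}). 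You also correctly identify exactly where $\tau>2-r+1/p$ (respectively $r>1+1/p$) enters as the convergence condition near $y=0$.

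The bookkeeping you sketch for the remainder kernels, however, does not quite close as stated, and the paper's version differs in two useful ways. First, you propose to put ``$\overline{\omega}$ always in $L_p$'' while also putting the $b_1$-factor in $L_p$ near $y=0$ for (\ref{REF1}); after Minkowski + H\"older exactly one factor can carry the $L_p$-norm, so this is inconsistent as written. Second, you place the $\partial_y$-derivative that falls on $\prod_i(\delta b_i/y)$ — i.e.\ the second-order difference $(\delta_{[x,y]}b_j-y\,\tau_y b_j')/y^2$ — in $L_\infty$; combined with the $|y|^{-1/p}$-type contribution from $b_1$ (needed to produce $\|b_1'\|_p$), this only gives a $y$-exponent of $r-2-2/p$ near $0$ and would require $r>1+2/p$, stronger than the stated hypothesis. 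The paper avoids both problems by introducing a single parameter $\alpha\in\{\tau,1\}$, measuring $b_1$ in $L_\infty$ via the H\"older seminorm $[b_1]_{\alpha-1/p}$, putting the second-order difference $b_j-\tau_y b_j-y\tau_y b_j'$ (not $b_1$, not $\overline{\omega}$) in $L_p$ for the kernels multiplied by $\overline{\omega}(x-y)$, and putting $\delta_{[x,y]}\overline{\omega}$ in $L_p$ for the remaining kernels; (\ref{REF1}) is then recovered at the very end from the Morrey bound $[b_1]_{1-1/p}\le\|b_1'\|_p$. This parametrized set-up yields both estimates in one pass, under exactly $r>1+1/p$ and $\tau>2-r+1/p$; I would suggest adopting it rather than the case split you propose.
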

\begin{proof}
Without loss of generality  we may assume $\overline{\omega}\in W^{r}_p(\mathbb{R}).$ 
Using the identities
\[
\frac{\partial}{\partial y}\Big(\frac{\delta_{[x,y]}b_1}{y}\Big)=\frac{b_1'(x-y)}{y}-\frac{\delta_{[x,y]}b_1}{y^2}\quad\mbox{and}\quad \overline{\omega}'(x-y)=\frac{\partial}{\partial y}(\overline{\omega}(x)-\overline{\omega}(x-y))
\]
and integration by parts (as in the proof of \cite[Lemma 3.2]{MBV18}),  we arrive at 
\begin{align*}
&\hspace{-1cm} B_{n,m}(a_1,\ldots, a_m)[b_1,\ldots, b_n,\overline{\omega}](x)\\[1ex]
 &=\overline{\omega}(x)B_{n-1,m}(a_1,\ldots, a_{m})[b_2,\ldots, b_n,b_1'](x)\\[1ex]
  &\hspace{0.424cm}-\sum_{j=2}^n\int_\mathbb{R}  K_{1,j}(x,y)  \overline{\omega}(x-y)\, dy+\sum_{j=1}^m\int_\mathbb{R} K_{2,j}(x,y) \   \overline{\omega}(x-y)\, dy\\[1ex]
  &\hspace{0.424cm}-\int_\mathbb{R} K(x,y)\, dy-\sum_{j=2}^n \int_\mathbb{R} K_{3,j}(x,y)\, dy +2\sum_{j=1}^m\int_\mathbb{R} K_{4,j}(x,y)\, dy,
\end{align*}
where, given $x\in\mathbb{R}$ and $y\neq 0$, we have set
\begin{align*}
K(x,y)&:= \frac{\prod_{i=2}^n \delta_{[x,y]}b_i/y}{\prod_{i=1}^m \big[1+ \big(\delta_{[x,y]}a_i/y\big)^2\big]}\frac{\delta_{[x,y]}\overline{\omega}}{y}\frac{\delta_{[x,y]}b_1}{y},\\[1ex]
 K_{1,j}(x,y)&:= \cfrac{\prod_{i=2, i\neq j }^{n}\big(\delta_{[x,y]} b_i /y\big)}{\prod_{i=1}^{m}\big[1+\big(\delta_{[x,y]}  a_i /y\big)^2\big]}\frac{\delta_{[x,y]}b_j-yb_j'(x-y)}{y^2}\frac{\delta_{[x,y]}b_1}{y},\\[1ex]
 K_{2,j}(x,y)&:= 2\cfrac{\prod_{i=2 }^{n}\big(\delta_{[x,y]} b_i /y\big)}{\big[1+\big(\delta_{[x,y]}  a_j /y\big)^2\big]\prod_{i=1}^{m}\big[1+\big(\delta_{[x,y]}  a_i /y\big)^2\big]}
  \frac{\delta_{[x,y]}a_j-ya_j'(x-y)}{y^2}\frac{\delta_{[x,y]}a_j}{y}\frac{\delta_{[x,y]}b_1}{y},\\[1ex]
K_{3,j}(x,y)&:= \frac{\prod_{i=1, i\neq j}^n \delta_{[x,y]}b_i/y}{\prod_{i=1}^m \big[1+ \big(\delta_{[x,y]}a_i/y\big)^2\big]}\frac{\delta_{[x,y]}\overline{\omega}}{y}\Big(\frac{\delta_{[x,y]}b_j}{y}- b_j'(x-y)\Big),\\[1ex]
K_{4,j}(x,y)&:= \frac{\prod_{i=1}^n \delta_{[x,y]}b_i/y}{\big[1+ \big(\delta_{[x,y]}a_j/y\big)^2\big]\prod_{i=1}^m\big[ 1+ \big(\delta_{[x,y]}a_i/y\big)^2\big]}\frac{\delta_{[x,y]}\overline{\omega}}{y}\frac{\delta_{[x,y]}a_j}{y}\Big(\frac{\delta_{[x,y]}a_j}{y}- a_j'(x-y)\Big).
\end{align*}
Recalling Lemma \ref{L:MP1}~(i), we get, with respect to  (\ref{REF1}), that
\begin{align}\label{DE:1}
\|\overline{\omega} B_{n-1,m}(a_1,\ldots, a_{m})[b_2,\ldots, b_n,b_1']\|_p\leq C\|\overline{\omega}\|_\infty\|b_1'\|_p\prod_{i=2}^n\|b_1'\|_\infty.
\end{align}

Let now $\alpha\in\{\tau,1\}$. 
Since $\alpha>1/p$ we have $W^{\alpha}_p(\mathbb{R})\hookrightarrow {\rm C}^{\alpha-1/p}(\mathbb{R})$ and together with Minkowski's  integral inequality  we obtain that  
\begin{align*}
&\hspace{-0.5cm}\Big(\int_\mathbb{R}\Big|\int_\mathbb{R} K_{1,j}(x,y)\overline{\omega}(x-y)\, dy\Big|^p\, dx\Big)^{1/p}\\[1ex]
&\leq  \|\overline{\omega}\|_{\infty}\int_\mathbb{R}\Big(\int_\mathbb{R} |K_{1,j}(x,y)|^p\, dx\Big)^{1/p}\, dy\\[1ex]
&\leq \|\overline{\omega}\|_{\infty}[b_1]_{\alpha-1/p}\Big(\prod_{i=2, i\neq j}^{n}\|b_i'\|_{\infty}\Big)
\int_\mathbb{R}\frac{1 }{|y|^{3-\alpha+1/p}} \Big(\int_\mathbb{R}|b_j-\tau_y b_j-y\tau_y b_j'|^p\, dx\Big)^{1/p}\, dy 
\end{align*}
for  $2\leq j\leq n.$ 
 Fubini's theorem, Minkowski's integral inequality, H\"older's inequality, and a change of variables now  yield 
\begin{align*}
&\hspace{-0.5cm}\int_\mathbb{R}\frac{1 }{|y|^{3-\alpha+1/p}} 
\Big(\int_\mathbb{R}|b_j-\tau_y b_j-y\tau_y b_j'|^p\, dx\Big)^{1/p}\, dy\\[1ex]
&=\int_\mathbb{R}\frac{1 }{|y|^{2-\alpha+1/p}} 
\Big(\int_\mathbb{R} \Big|\int_0^1 [b_j'(x-(1-s)y)- b_j'(x-y)]\, ds\Big|^p\, dx\Big)^{1/p}\, dy
\end{align*}
\begin{align*}
&\leq\int_0^1 \Big[\int_\mathbb{R}\frac{1 }{|y|^{2-\alpha+1/p}}\Big(\int_\mathbb{R}|b_j'(x-(1-s)y)- b_j'(x-y)|^p\, dx\Big)^{1/p}\, dy\Big]\, ds\\[1ex]
  &\leq 2\|b_j'\|_{p}\int_{[|y|\geq1]}\frac{1 }{|y|^{2-\alpha+1/p}}\, dy+\int_0^1 \int_{[|y|<1]}\frac{\|b_j'-\tau_{-sy}b_j'\|_p}{|y|^{2-\alpha+1/p}}  dy\, ds\\[1ex]
  &\leq C\|b_j'\|_{p}+ C\Big(\int_{[|y|<1]}\frac{1 }{|y|^{(3-\alpha-r)p/(p-1)}}\, dy\Big)^{(p-1)/p}  \|b_j'\|_{W^{r-1}_p}\\[1ex]
  &\leq  C\|b_j'\|_{W^{r-1}_p}.
\end{align*}
Consequently, given  $2\leq j\leq n $,  we get
\begin{align}\label{DE:2}
\Big(\int_\mathbb{R}\Big|\int_\mathbb{R} K_{1,j}(x,y)\overline{\omega}(x-y)\, dy\Big|^p\, dx\Big)^{1/p}\leq C\|\overline{\omega}\|_\infty [b_1]_{\alpha-1/p} \Big(\prod_{i=2}^{n}\|b_i'\|_{W^{r-1}_p}\Big),
\end{align} 
and by similar arguments 
\begin{align}\label{DE:3}
\Big(\int_\mathbb{R}\Big|\int_\mathbb{R} K_{2,j}(x,y)\overline{\omega}(x-y)\, dy\Big|^p\, dx\Big)^{1/p}\leq   C\|\overline{\omega}\|_\infty [b_1]_{\alpha-1/p} \Big(\prod_{i=2}^{n}\|b_i'\|_{\infty}\Big)
\end{align}

Furthermore, given $2\leq j\leq n$, H\"older's inequality, Minkowski's integral inequality, and the Sobolev embedding $W^{\alpha}_p(\mathbb{R})\hookrightarrow {\rm C}^{\alpha-1/p}(\mathbb{R})$  yield
\begin{align*}
 &\hspace{-0.5cm}\Big(\int_\mathbb{R}\Big|\int_\mathbb{R} K_{3,j}(x,y)\, dy  \Big|^p\, dx\Big)^{1/p} \leq\int_\mathbb{R}\Big(\int_\mathbb{R} |K_{3,j}(x,y)|^p\, dx  \Big)^{1/p}\, dy\\[1ex]
 &\leq 2 [b_1]_{\alpha-1/p}\Big(\prod_{i=2}^n \|b_i'\|_\infty\Big)\int_\mathbb{R}\frac{1}{|y|^{2-\alpha+1/p}}\Big(\int_\mathbb{R} |\overline{\omega}-\tau_y\overline{\omega}|^p\, dx  \Big)^{1/p}\, dy
\end{align*}
and
\begin{align*}
&\hspace{-0.5cm}\int_\mathbb{R}\frac{1}{|y|^{2-\alpha+1/p}}\Big(\int_\mathbb{R} |\overline{\omega}-\tau_y\overline{\omega}|^p\, dx  \Big)^{1/p}\, dy\\[1ex]
&\leq C\|\overline{\omega}\|_p  +\int_{[|y|<1]}\frac{1}{|y|^{2-\alpha+1/p}}\Big(\int_\mathbb{R} |\overline{\omega}-\tau_y\overline{\omega}|^p\, dx  \Big)^{1/p}\, dy
\\[1ex]
&\leq C\|\overline{\omega}\|_p +\|\overline{\omega}\|_{W^{r-1}_p}\Big(\int_{[|y|<1]}\frac{1}{|y|^{(3-\alpha-r)p/(p-1)}}\, dy\Big)^{(p-1)/p}\\[1ex]
&\leq C\|\overline{\omega}\|_{W^{r-1}_p}.
\end{align*}
We arrive at
\begin{align}\label{DE:4}
\Big(\int_\mathbb{R}\Big|\int_\mathbb{R} K_{3,j}(x,y)\, dy  \Big|^p\, dx\Big)^{1/p} \leq C\|\overline{\omega}\|_{W^{r-1}_p}[b_1]_{\alpha-1/p}\Big(\prod_{i=2}^n \|b_i'\|_\infty\Big),\qquad 2\leq j\leq n.
\end{align}
The same arguments show that
  \begin{equation}\label{DE:5}
\begin{aligned}
&\Big(\int_\mathbb{R}\Big|\int_\mathbb{R} K(x,y)\, dy  \Big|^p\, dx\Big)^{1/p}+\Big(\int_\mathbb{R}\Big|\int_\mathbb{R} K_{4,j}(x,y)\, dy  \Big|^p\, dx\Big)^{1/p} \\[1ex]
&\hspace{2cm}\leq C\|\overline{\omega}\|_{W^{r-1}_p}[b_1]_{\alpha-1/p}\Big(\prod_{i=2}^n \|b_i'\|_\infty\Big),\qquad 1\leq j\leq m.
\end{aligned}
\end{equation}
Choosing $\alpha=1$,   (\ref{REF1}) follows from (\ref{DE:1})-(\ref{DE:5})  and the relation~${[b_1]_{\alpha-1/p}\leq \|b_1'\|_p}$.
Moreover, (\ref{REF2}) follows from (\ref{DE:2})-(\ref{DE:5}) for  $\alpha=\tau.$
Finally, the local Lipschitz continuity property is a consequence of (\ref{spr3}).
\end{proof}

Lemma \ref{L:MP2} enables us to establish estimates in suitable fractional Sobolev spaces for the multilinear operators $B_{n,m}$ considered above.

\begin{lemma}\label{L:MP3}
 Let  $ n,m\in\mathbb{N}$  and $r\in(1+1/p ,2)$   be given. 
 Given $a_1,\ldots, a_m \in W^r_p(\mathbb{R})$, there exists a constant  $C$, depending only on $n,\, m$, $r$,  and $\max_{1\leq i\leq m}\|a_i\|_{W^r_p}$, such that
\begin{align} 
\| B_{n,m}(a_1,\ldots, a_{m})[b_1,\ldots, b_n,\overline{\omega}]\|_{W^{r-1}_p}\leq C \|\overline{\omega}\|_{W^{r-1}_p}\prod_{i=1}^{n}\|b_i'\|_{W^{r-1}_p} \label{REF1'}
\end{align}
for all $ b_1,\ldots, b_n\in W^r_p(\mathbb{R})$ and $\overline{\omega}\in W^{r-1}_p(\mathbb{R}).$

Moreover,   $  B_{n,m}\in {\rm C}^{1-}((W^r_p(\mathbb{R}))^m,\mathcal{L}_{n+1}((W_p^{r}(\mathbb{R}))^{n}\times  W^{r-1}_p(\mathbb{R}), W^{r-1}_p(\mathbb{R}))).$ 
 \end{lemma}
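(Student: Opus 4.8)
The plan is to bound the $W^{r-1}_p$-norm of $B := B_{n,m}(a_1,\ldots,a_m)[b_1,\ldots,b_n,\overline{\omega}]$ by estimating separately the $L_p$-part and the seminorm $[B]_{W^{r-1}_p}$. The $L_p$-part is already covered: by \eqref{REF1} of Lemma \ref{L:MP2} (after a harmless permutation of the arguments $b_1,\ldots,b_n$, which only changes $C$) one has $\|B\|_p \le C\|\overline{\omega}\|_{W^{r-1}_p}\prod_{i=1}^n\|b_i'\|_{W^{r-1}_p}$, using $\|b_i'\|_p \le \|b_i'\|_{W^{r-1}_p}$. So the heart of the matter is the fractional seminorm, for which I would use the translation characterization $[B]_{W^{r-1}_p}^p = \int_\mathbb{R} \|B - \tau_\xi B\|_p^p\,|\xi|^{-1-(r-1-1/p\cdot 0)p}\ldots$ — more precisely $[g]_{W^{r-1}_p}^p = \int_\mathbb{R}\|g-\tau_\xi g\|_p^p |\xi|^{-1-(r-1)p}\,d\xi$ since $r-1\in(1/p,1)$ — and I would commute the translation $\tau_\xi$ inside the singular integral defining $B$.

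The key computation is to write $(B - \tau_\xi B)(x)$ as a sum of terms in each of which the difference $g - \tau_\xi g$ has been applied to exactly one of the slots: using $\delta_{[x,y]}b_i/y$ in the numerator, $1 + (\delta_{[x,y]}a_j/y)^2$ in the denominator (handled via $\alpha^{-1}-\beta^{-1} = (\beta-\alpha)/(\alpha\beta)$, producing again operators of type $B_{\cdot,\cdot}$ with more factors), and $\overline{\omega}(x-y)$, and finally the shift of the integration variable itself. Each resulting term is, after the change of variables $x\mapsto x+\xi$ where appropriate, an operator $B_{n',m'}$ of the same family applied to arguments one of which is a translation difference $b_i - \tau_\xi b_i$, or $\overline{\omega} - \tau_\xi\overline{\omega}$, or $a_j - \tau_\xi a_j$. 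I would then apply the $L_p$-bound \eqref{REF1} of Lemma \ref{L:MP2} to each such term: for the $\overline{\omega}$-slot term this gives a factor $\|\overline{\omega}-\tau_\xi\overline{\omega}\|_p$ times $\prod\|b_i'\|_{W^{r-1}_p}$; for a $b_i$-slot term, since the slot $b_1$ in \eqref{REF1} only costs $\|b_1'\|_p$ while the others cost $\|b_i'\|_{W^{r-1}_p}$, I would put the differentiated factor $(b_i - \tau_\xi b_i)'= b_i' - \tau_\xi b_i'$ in the privileged first slot so it costs $\|b_i'-\tau_\xi b_i'\|_p$; the $a_j$-slot terms are handled similarly after noting $\|(a_j-\tau_\xi a_j)'\|_p = \|a_j'-\tau_\xi a_j'\|_p$ and that $a_j \pm \tau_\xi a_j$ enters through new bounded denominator factors controlled by $\max\|a_i\|_{W^r_p}$. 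Raising to the $p$-th power, integrating $\int_\mathbb{R}(\,\cdot\,)|\xi|^{-1-(r-1)p}\,d\xi$, and recognizing each integral as $[\overline{\omega}]_{W^{r-1}_p}^p$, respectively $[b_i']_{W^{r-1}_p}^p$, then yields \eqref{REF1'}.

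The main obstacle I anticipate is bookkeeping rather than a genuine analytic difficulty: one must carefully organize the telescoping identity so that every term has precisely one translation-difference factor, keep track of which slot gets the privileged $L_p$-only treatment (this is exactly why Lemma \ref{L:MP2} was stated with the asymmetric bound \eqref{REF1} rather than a symmetric one), and make sure the extra denominator factors produced when differencing the $a_j$'s stay of the admissible form $1+(\delta a/y)^2$ up to bounded multiplicative corrections. The final assertion $B_{n,m}\in {\rm C}^{1-}((W^r_p)^m, \mathcal{L}_{n+1}((W^r_p)^n\times W^{r-1}_p, W^{r-1}_p))$ follows from \eqref{spr3} of Lemma \ref{L:SMP1} exactly as in the previous lemmas: the difference $B_{n,m}(\widetilde a) - B_{n,m}(a)$ is a sum of operators $B_{n+2,m+1}$ with two extra slots filled by $a_i\pm\widetilde a_i$, one of which carries a translation difference when estimating the $W^{r-1}_p$-norm, and \eqref{REF1'} together with $\|(a_i-\widetilde a_i)'\|_{W^{r-1}_p}\le\|a_i-\widetilde a_i\|_{W^r_p}$ gives the Lipschitz bound on bounded sets.
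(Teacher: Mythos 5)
Your plan is essentially the paper's proof: estimate the $L_p$-norm directly, pass to the translation characterization of the $W^{r-1}_p$-seminorm, telescope $B-\tau_\xi B$ via \eqref{spr3} into three groups of terms (one with $\overline\omega-\tau_\xi\overline\omega$ in the last slot, one with some $b_i-\tau_\xi b_i$ in a numerator slot, one with $a_i\pm\tau_\xi a_i$ appearing in new numerator slots of a $B_{n+2,m+1}$), and then put the single difference into the privileged first slot of \eqref{REF1} so that the $\xi$-integral closes up to the $W^{r-1}_p$-seminorms. One small correction: the $\overline\omega$-difference term is bounded by $\|\overline\omega-\tau_\xi\overline\omega\|_p\cdot\prod\|b_i'\|_\infty$, which comes from the $\mathcal{L}(L_p)$-bound of Lemma~\ref{L:MP1}~(i), not from \eqref{REF1} — applying \eqref{REF1} there would leave a $\|\overline\omega-\tau_\xi\overline\omega\|_{W^{r-1}_p}$ factor whose $\xi$-integral requires more regularity of $\overline\omega$ than is available; the paper indeed uses Lemma~\ref{L:MP1}~(i) for that term and \eqref{REF1} only for the $b_i$- and $a_i$-difference terms.
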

\begin{proof}
Set $B_{n,m}:=B_{n,m}(a_1,\ldots, a_{m})[b_1,\ldots, b_n,\overline{\omega}].$ 
Recalling Lemma \ref{L:MP1}~(i), it holds that 
\begin{align*} 
\| B_{n,m}\|_{p}\leq C \|\overline{\omega}\|_{p}\prod_{i=1}^{n}\|b_i\|_{W^r_p}.
\end{align*}
It thus remains to consider  the $W^{r-1}_p$-seminorm of $B_{n,m}$.
To this end we observe that
\begin{align*}
[B_{n,m}]_{W^{r-1}_p}^p&= \int_{\mathbb{R}}\frac{\|B_{n,m}-\tau_\xi B_{n,m}\|_p^p}{|\xi|^{1+(r-1)p}}\, d\xi,
\end{align*}
where, using (\ref{spr3}), we write  
\begin{align*}
(B_{n,m}-\tau_\xi B_{n,m})(x)= T_1(x,\xi)+T_2(x,\xi)-T_3(x,\xi),\quad x\in\mathbb{R},\, \xi\ne0,
\end{align*}
with
\begin{align*}
T_1(x,\xi)&:=B_{n,m}(a_1,\ldots, a_{m})[b_1,\ldots, b_n,\overline{\omega}-\tau_\xi\overline{\omega}](x),\\[1ex]
T_2(x,\xi)&:=\sum_{i=1}^nB_{n,m}(a_1,\ldots, a_{m})[\tau_\xi b_1,\ldots,\tau_\xi b_{i-1}, b_i-\tau_\xi b_i, b_{i+1},\ldots b_n,\tau_\xi\overline{\omega}](x),\\[1ex]
T_3(x,\xi)&:=\sum_{i=1}^mB_{n+2,m+1}(a_1,\ldots,a_{i},\tau_\xi a_i,\ldots, \tau_\xi a_{m})[\tau_\xi b_1,\ldots\tau_\xi b_n,a_i+\tau_\xi a_i,a_i-\tau_\xi a_i, \tau_\xi\overline{\omega}](x).
\end{align*}
Hence,
\begin{align}\label{E:1}
[B_{n,m}]_{W^{r-1}_p}^p\leq 3^p\sum_{\ell=1}^3\int_{\mathbb{R}}\frac{\|T_\ell(\cdot,\xi)\|_p^p}{|\xi|^{1+(r-1)p}}\, d\xi,
\end{align} 
and, recalling  Lemma \ref{L:MP1}~(i), it holds
\begin{align}\label{AAE:1}
 \int_{\mathbb{R}}\frac{\|T_1(\cdot,\xi)\|_p^p}{|\xi|^{1+(r-1)p}}\, d\xi&\leq C^p\Big(\prod_{i=1}^{n}\|b_i'\|_{\infty}^p \Big) \int_{\mathbb{R}}\frac{\|  \overline{\omega}-\tau_\xi\overline{\omega}\|_p^p}{|\xi|^{1+(r-1)p}}\, d\xi
 \leq \Big(C\|\overline{\omega}\|_{W^{r-1}_p}\prod_{i=1}^{n}\|b_i'\|_{\infty}\Big)^p.
\end{align} 
Furthermore, in virtue of  (\ref{REF1}), we get that
\begin{equation}\label{AAE:2}
\begin{aligned}
\int_{\mathbb{R}}\frac{\|T_2(\cdot,\xi)\|_p^p}{|\xi|^{1+(r-1)p}}\, d\xi 
&\leq C^p\|\overline{\omega}\|_{W^{r-1}_p}^p\sum_{i=1}^n  \left(\int_{\mathbb{R}}\frac{\|  b_i'-\tau_\xi b_i'\|_{p}^p}{|\xi|^{1+(r-1)p}}\, d\xi\prod_{j\neq i}\|b_j'\|_{W^{r-1}_p}^p\right)  \\[1ex]
&\leq \Big(C\|\overline{\omega}\|_{W^{r-1}_p}\prod_{i=1}^{n}\|b_i'\|_{W^{r-1}_p} \Big)^p
\end{aligned}
\end{equation}
and, by similar arguments,
\begin{equation}\label{AAE:3}
\begin{aligned}
 \int_{\mathbb{R}}\frac{\|T_3(\cdot,\xi)\|_p^p}{|\xi|^{1+(r-1)p}}\, d\xi\leq \Big(C\|\overline{\omega}\|_{W^{r-1}_p}\prod_{i=1}^{n}\|b_i'\|_{W^{r-1}_p} \Big)^p.
\end{aligned}
\end{equation}
The estimates (\ref{E:1})-(\ref{AAE:3}) lead to the desired estimate.
Finally, the local Lipschitz continuity follows from (\ref{spr3}) and (\ref{REF1'}).
\end{proof}

We now estimate the commutator type operator from (\ref{REF2}) in the  $\|\cdot\|_{W^{r-1}_p}$-norm.
\begin{lemma}\label{L:MP3b}
 Let  $ n,m\in\mathbb{N}$, $n\geq 1$, $r\in(1+1/p ,2)$, and $r'\in (1+1/p ,r)$  be given. 
Given $a_1,\ldots, a_m\in W^r_p(\mathbb{R})$, there exists a constant  $C$, depending only on $n,\, m$, $r$, $r'$,  and $\max_{1\leq i\leq m}\|a_i\|_{W^r_p}$, such that
\begin{equation}\label{REF2'}
\begin{aligned} 
&\| B_{n,m}(a_1,\ldots, a_{m})[b_1,\ldots, b_n,\overline{\omega}] -\overline{\omega} B_{n-1,m}(a_1,\ldots, a_{m})[b_2,\ldots, b_n,b_1']\|_{W^{r-1}_p}\\[1ex]
&\hspace{2cm}\leq C \|b_1\|_{W^{r'}_p}\|\overline{\omega}\|_{W^{r-1}_p}\prod_{i=2}^{n}\|b_i\|_{W^r_p}
\end{aligned}
\end{equation}
for all  $ b_1,\ldots, b_n\in W^r_p(\mathbb{R})$ and $\overline{\omega}\in W^{r-1}_p(\mathbb{R}).$
 \end{lemma}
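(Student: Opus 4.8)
The plan is to treat the commutator by the method of Lemma~\ref{L:MP3}, starting from the integration by parts identity already exploited in the proof of Lemma~\ref{L:MP2}. Set
\[
\Phi:=B_{n,m}(a_1,\ldots,a_m)[b_1,\ldots,b_n,\overline{\omega}]-\overline{\omega}\,B_{n-1,m}(a_1,\ldots,a_m)[b_2,\ldots,b_n,b_1'],
\]
and choose $\tau\in(2-r+1/p,\min\{1,r'-r+1\})$. This interval is nonempty since $r>1+1/p$ and $r'>1+1/p$, and because $\tau<1<r'$ we have $W^{r'}_p(\mathbb{R})\hookrightarrow W^\tau_p(\mathbb{R})$ and $\|b_i'\|_{W^{r-1}_p}\le\|b_i\|_{W^r_p}$; hence the desired $L_p$-bound for $\Phi$ follows directly from (\ref{REF2}), whose right-hand side only involves $\|b_1\|_{W^\tau_p}$. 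It therefore remains to control the seminorm $[\Phi]_{W^{r-1}_p}^p=\int_\mathbb{R}|\xi|^{-1-(r-1)p}\,\|\Phi-\tau_\xi\Phi\|_p^p\,d\xi$.

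The structural point is that the integration by parts performed in the proof of Lemma~\ref{L:MP2} rewrites $\Phi$ as a finite sum of absolutely convergent integrals of the form $\int_\mathbb{R}K(x,y)\,dy$ and $\int_\mathbb{R}K_{\ell,j}(x,y)\,\overline{\omega}(x-y)\,dy$ in which $b_1$ enters only through the difference quotient $\delta_{[x,y]}b_1/y$ and never through $b_1'$: the subtraction of $\overline{\omega}B_{n-1,m}[\ldots,b_1']$ cancels precisely the most singular contribution, which is why $b_1$ is needed only in the weaker space $W^{r'}_p(\mathbb{R})$ (via $W^{r'}_p(\mathbb{R})\hookrightarrow{\rm C}^{r'-1-1/p}(\mathbb{R})$). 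Each of these terms is then handled as in the proof of Lemma~\ref{L:MP3}: apply $\tau_\xi$, use the multilinearity of the integrands together with the shift identity (\ref{spr3}) to write the $\xi$-difference as a sum in which a single argument or denominator factor is replaced by its translation difference, estimate each summand by means of (\ref{REF1}), Lemma~\ref{L:MP1}, and the multiplier inequality (\ref{MES}), and integrate in $\xi$. In the summand where $b_1$ itself is differenced one produces a factor $\|b_1-\tau_\xi b_1\|_{W^\tau_p}$, and a standard property of difference quotients in Sobolev--Slobodeckij spaces gives $\int_\mathbb{R}|\xi|^{-1-(r-1)p}\,\|b_1-\tau_\xi b_1\|_{W^\tau_p}^p\,d\xi\le C\|b_1\|_{W^{\tau+r-1}_p}^p\le C\|b_1\|_{W^{r'}_p}^p$, which is exactly where $\tau+r-1<r'$ enters. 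When instead $\overline{\omega}$, some $b_i$ with $i\geq2$, or some $a_i$ is differenced, the translation difference is measured respectively in $L_p(\mathbb{R})$, in $W^1_p(\mathbb{R})$, or in ${\rm C}^{1-}$, and the corresponding $\xi$-integrals converge because $\overline{\omega}\in W^{r-1}_p(\mathbb{R})$ and $b_i,a_i\in W^r_p(\mathbb{R})$.

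The technical heart, and the main obstacle, is precisely this bookkeeping: each argument carries only a fixed amount of smoothness, and once it is replaced by a translation difference it may be paid for only in a norm whose order does not exceed its smoothness minus $r-1$, since otherwise the weighted $\xi$-integral diverges. Arranging that every differenced argument lands in a slot that costs exactly such a weaker norm requires the full symmetry of $B_{n,m}$ in $b_1,\ldots,b_n$, the sharp estimate (\ref{REF1}) in which the distinguished argument is paid for in $L_p(\mathbb{R})$ only, and the fractional regularity of $b_1'$ and of the $b_i'$, which supplies the extra powers of $|y|$ in the kernels and of $|\xi|$ in the differences (these two gains have to be combined, via mixed difference estimates, in the few terms — such as those coming from the factor $\delta_{[x,y]}b_i/y-b_i'(x-y)$ divided by $y$ in $K_{1,i}$ — where the naive estimate would require more regularity of $b_i$ than is available); and in the terms where a spurious $W^r_p$-dependence on $b_1$ would otherwise surface, one again invokes the commutator identities (\ref{spr2}) and (\ref{spr3}) to replace such a term by one of higher multilinear order carrying more cancellation. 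Summing all contributions yields (\ref{REF2'}), and the local Lipschitz dependence on $a_1,\ldots,a_m$ follows, as before, from (\ref{spr3}).
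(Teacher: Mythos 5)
Your proposal takes a genuinely different route from the paper, but in its current form it has a real gap at the central step, and the route you chose is considerably harder than the one the paper actually takes.

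The paper never differences the integration-by-parts representation. Instead, writing $T:=B_{n,m}[b_1,\ldots,b_n,\overline{\omega}]-\overline{\omega}\,B_{n-1,m}[b_2,\ldots,b_n,b_1']$, it first applies $\tau_\xi$ to $T$ at the operator level and decomposes $T-\tau_\xi T=T_1+T_2+T_3+T_4$, where $T_1$ carries the difference $\overline{\omega}-\tau_\xi\overline{\omega}$, $T_2$ carries the difference $b_1-\tau_\xi b_1$ \emph{together with} $b_1'-\tau_\xi b_1'$ (so $T_2$ is still a commutator of exactly the form in (\ref{REF2}), to which (\ref{REF2}) with $\tau=r'-r+1$ applies directly), and $T_3,T_4$ carry the differences of the $a_i$ and of $b_2,\ldots,b_n$, which via (\ref{spr3}) become ordinary $B_{n,m}$-type operators controlled by Lemma~\ref{L:MP2} (with $r$ replaced by $r'$); the $\xi$-integral of the $T_2$ bound is then closed by Lemma~\ref{L:B1}. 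Every piece thus stays within the class of $B_{n,m}$-operators and commutators of such, for which the machinery of Lemmas~\ref{L:MP1}, \ref{L:MP2} and identity (\ref{spr3}) is directly applicable.

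Your proposal instead passes first to the integration-by-parts representation of $T$ (the kernels $K$, $K_{1,j}$, $K_{2,j}$, $K_{3,j}$, $K_{4,j}$ from the proof of Lemma~\ref{L:MP2}) and then tries to estimate $\|T-\tau_\xi T\|_p$ by differencing those kernel integrals. The problem is that these kernels are not of $B_{n,m}$-type: they contain factors such as $(\delta_{[x,y]}b_j-yb_j'(x-y))/y^2$ or $\delta_{[x,y]}\overline{\omega}/y$ that do not appear in any $B_{n,m}$. Consequently, identity (\ref{spr3}) and the estimates (\ref{REF1}) and Lemma~\ref{L:MP1}, which you propose to invoke after differencing, do not apply to these integrands. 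The "mixed difference estimates'' you allude to for precisely those problematic factors are the actual crux of your route, and you do not supply them; without them the argument cannot be completed. In short, the observation that the integration-by-parts representation exhibits $b_1$ only through the quotient $\delta_{[x,y]}b_1/y$ is correct and does explain \emph{why} (\ref{REF2'}) should hold, and your use of (\ref{REF2}) for the $L_p$-part, of the range $\tau\in(2-r+1/p,r'-r+1)$, and of the estimate $\int_\mathbb{R}|\xi|^{-1-(r-1)p}\|b_1-\tau_\xi b_1\|_{W^{\tau}_p}^p\,d\xi\le C\|b_1\|_{W^{r'}_p}^p$ are all exactly right; but for the seminorm you should mimic the paper and difference \emph{before} integrating by parts, so that the $b_1$-difference piece is still a commutator that (\ref{REF2}) controls in $L_p$ uniformly in $\xi$, rather than differencing the raw kernel integrals.
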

\begin{proof}
Letting $T:=B_{n,m}(a_1,\ldots, a_{m})[b_1,\ldots, b_n,\overline{\omega}] -\overline{\omega} B_{n-1,m}(a_1,\ldots, a_{m})[b_2,\ldots, b_n,b_1'],$ it follows from (\ref{REF2}) that $\|T\|_p$ can be estimated as in (\ref{REF2'}).
It remains to consider the term
\begin{align*}
[T]_{W^{r-1}_p}^p&=  \int_{\mathbb{R}}\frac{\|T-\tau_\xi T\|_p^p}{|\xi|^{1+(r-1)p}}\, d\xi,
\end{align*}
for which it is convenient to write 
\begin{align*}
(T-\tau_\xi T)(x)=T_1(x,\xi)+T_2(x,\xi)+T_3(x,\xi)+T_4(x,\xi),\qquad x\in\mathbb{R},\, \xi\ne0,
\end{align*}
where
\begin{align*}
T_1(\cdot,\xi)&:=B_{n,m}(a_1,\ldots, a_{m})[b_1,\ldots, b_n,\overline{\omega}-\tau_\xi\overline{\omega}]\\[1ex]
&\hspace{0.45cm}-(\overline{\omega}-\tau_\xi\overline{\omega})B_{n-1,m}(a_1,\ldots, a_{m})[b_2,\ldots, b_n,b_1'],\\[1ex]
T_2(\cdot,\xi)&:= B_{n,m}(a_1,\ldots, a_{m})[b_1-\tau_\xi b_1,b_2,\ldots,  b_n,\tau_\xi\overline{\omega}]\\[1ex]
&\hspace{0,45cm}-\tau_\xi\overline{\omega} B_{n-1,m}(a_1,\ldots, a_{m})[b_2,\ldots,  b_n,b_1'-\tau_\xi b_1'] ,
\end{align*}
\begin{align*}
T_3(\cdot,\xi)&:= B_{n,m}(a_1,\ldots, a_{m})[\tau_\xi b_1,b_2,\ldots,  b_n,\tau_\xi\overline{\omega}]\\[1ex]
&\hspace{0.45cm}-B_{n,m}(\tau_\xi a_1,\ldots, \tau_\xi a_{m})[\tau_\xi b_1,\ldots, \tau_\xi b_n,\tau_\xi\overline{\omega}],\\[1ex]
T_4(\cdot,\xi)&:=\tau_\xi\overline{\omega} B_{n-1,m}(\tau_\xi a_1,\ldots, \tau_\xi a_{m})[\tau_\xi b_2,\ldots, \tau_\xi b_n,\tau_\xi b_1']\\[1ex]
&\hspace{0,45cm}-\tau_\xi\overline{\omega} B_{n-1,m}(a_1,\ldots, a_{m})[b_2,\ldots,  b_n,\tau_\xi b_1'].
\end{align*}
Lemma \ref{L:MP1} (i) and (ii) (with $\tau=r'-1$) implies that
\begin{align*}
\|T_1(\cdot,\xi)\|_p\leq C\|\overline{\omega}-\tau_\xi\overline{\omega}\|_p\|b_1\|_{W^{r'}_p}\prod_{i=2}^n\|b_i\|_{W^r_p},
\end{align*} 
while (\ref{REF2}) (with $\tau=r'-r+1\in(2-r+1/p,1)$) yields
\begin{align*}
\|T_2(\cdot,\xi)\|_p\leq C\|b_1-\tau_\xi b_1\|_{W^{r'-r+1}_p}\|\overline{\omega}\|_{W^{r-1}_p}\prod_{i=2}^n\|b_i\|_{W^r_p}.
\end{align*} 
Finally, recalling (\ref{spr3}), it holds that
\begin{align*}
T_3(\cdot,\xi)&=\sum_{i=2}^nB_{n,m}(a_1,\ldots, a_{m})[\tau_\xi b_1,\ldots,\tau_\xi b_{i-1},b_i-\tau_\xi b_i, b_{i+1},\ldots,   b_n,\tau_\xi\overline{\omega}]\\[1ex]
&\hspace{0,45cm}-\sum_{i=1}^m B_{n+2,m+1}(  a_1,\ldots, a_i,\tau_\xi a_i,\ldots \tau_\xi a_m)[\tau_\xi b_1,  \ldots,  \tau_\xi b_n,a_i+\tau_\xi a_i,a_i-\tau_\xi a_i,\tau_\xi \overline{\omega}],\\[1ex]
T_4(\cdot,\xi)&=\tau_\xi\overline{\omega}\sum_{i=2}^nB_{n,m}(a_1,\ldots, a_{m})[ b_2,\ldots, b_{i-1},\tau_\xi b_i-b_i, \tau_\xi b_{i+1},\ldots,   \tau_\xi b_n,\tau_\xi b_1']\\[1ex]
&\hspace{0,45cm}+\tau_\xi\overline{\omega}\sum_{i=1}^m B_{n+1,m+1}(\tau_\xi a_1,\ldots,\tau_\xi a_i,a_i,\ldots a_m)[ \tau_\xi b_2,\ldots,  \tau_\xi b_n, a_i+\tau_\xi a_i,  a_i-\tau_\xi a_i,\tau_\xi b_1'],
\end{align*}
and repeated use of Lemma \ref{L:MP2} (with $r=r'$) yields
\begin{align*}
\|T_3(\cdot,\xi)\|_p &\leq C\|\overline{\omega}\|_{W^{r-1}_p}\|b_1\|_{W^{r'}_p}\Big[\sum_{i=2}^n\Big(\prod_{j=2, j\neq i}^n\|b_j\|_{W^r_p}\Big)\|b_i-\tau_\xi b_i\|_{W^1_p} \\[1ex]
 &\hspace{3,75cm} +\Big(\prod_{j=2}^n\|b_j\|_{W^r_p}\Big)\sum_{i=1}^m\|a_i-\tau_\xi a_i\|_{W^1_p}\Big]
 \end{align*}
 and
 \begin{align*}
 \|T_4(\cdot,\xi)\|_p&\leq C\|\overline{\omega}\|_{W^{r-1}_p}\|b_1\|_{W^{r'}_p}\Big[\sum_{i=2}^n\Big(\prod_{j=2, j\neq i}^n\|b_j\|_{W^r_p}\Big)\|b_i-\tau_\xi b_i\|_{W^1_p}  \\[1ex]
 &\hspace{3,75cm} +\Big(\prod_{j=2}^n\|b_j\|_{W^r_p}\Big)\sum_{i=1}^m\|a_i-\tau_\xi a_i\|_{W^1_p}\Big].
\end{align*}
Gathering these estimates, we conclude that
\begin{align*}
[T]_{W^{r-1}_p}\leq C \|\overline{\omega}\|_{W^{r-1}_p}\prod_{i=2}^{n}\|b_i\|_{W^r_p}\left[\|b_1\|_{W^{r'}_p}+\Big(\int_{\mathbb{R}}\frac{\|b_1-\tau_\xi b_1\|_{W^{r'-r+1}_p}^p}{|\xi|^{1+(r-1)p}}\, d\xi\Big)^{1/p}\right],
\end{align*}
which together with Lemma \ref{L:B1} below proves the claim.
\end{proof} 
 
In the proof of Lemma \ref{L:MP3b} we have used the following result.

\begin{lemma}\label{L:B1} Let $p\in(1,\infty) $ and $1<r'<r<2$.
 Then, there exists a constant $C>0$ such that
 \begin{align*}
\int_\mathbb{R}\frac{\|b-\tau_\xi b\|_{W^{r'-r+1}_p}^p}{|\xi|^{1+(r-1)p}}\, d\xi\leq C\|b\|_{W^{r'}_p}^p\qquad\mbox{for all $b\in W^{r'}_p(\mathbb{R})$.}
 \end{align*}
 \end{lemma}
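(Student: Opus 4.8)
The plan is to introduce $\sigma:=r'-r+1$ and to note that $1<r'<r<2$ forces $\sigma\in(0,1)$ (so $W^\sigma_p(\mathbb{R})$ is of Gagliardo type) and $r'-1\in(0,1)$; by density it suffices to argue for $b\in{\rm C}^\infty_0(\mathbb{R})$. Writing $\Delta_\xi b:=b-\tau_\xi b$ and using $\|b-\tau_\xi b\|_{W^\sigma_p}^p=\|\Delta_\xi b\|_p^p+[\Delta_\xi b]_{W^\sigma_p}^p$, I would first dispose of the $L_p$-part: splitting the $\xi$-integral at $|\xi|=1$, for $|\xi|\ge1$ one uses $\|\Delta_\xi b\|_p\le 2\|b\|_p$ together with $\int_{|\xi|\ge1}|\xi|^{-1-(r-1)p}\,d\xi<\infty$ (valid since $r>1$), and for $|\xi|<1$ one uses $\|\Delta_\xi b\|_p\le|\xi|\,\|b'\|_p$ together with $\int_{|\xi|<1}|\xi|^{(2-r)p-1}\,d\xi<\infty$ (valid since $r<2$); this bounds the $L_p$-part by $C(\|b\|_p^p+\|b'\|_p^p)\le C\|b\|_{W^{r'}_p}^p$.

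For the seminorm part one has $[\Delta_\xi b]_{W^\sigma_p}^p=\int_\mathbb{R}\|\Delta_\eta\Delta_\xi b\|_p^p\,|\eta|^{-1-\sigma p}\,d\eta$, and the two elementary inputs I would use are the symmetry $\Delta_\eta\Delta_\xi b=\Delta_\xi\Delta_\eta b$ and, from $\Delta_\xi b(x)=\int_0^\xi b'(x-\xi+t)\,dt$ and Minkowski's integral inequality, the bounds $\|\Delta_\eta\Delta_\xi b\|_p\le|\xi|\,\|\Delta_\eta b'\|_p$ and $\|\Delta_\eta\Delta_\xi b\|_p\le|\eta|\,\|\Delta_\xi b'\|_p$ (besides the trivial $\|\Delta_\eta\Delta_\xi b\|_p\le 2\|\Delta_\eta b\|_p$). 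I would then split the $(\xi,\eta)$-integration of $\|\Delta_\eta\Delta_\xi b\|_p^p\,|\xi|^{-1-(r-1)p}|\eta|^{-1-\sigma p}$ into three regions: (A) $|\xi|\ge1$, where the trivial bound and convergence of the $\xi$-integral leave $C[b]_{W^\sigma_p}^p\le C\|b\|_{W^{r'}_p}^p$ (using $W^{r'}_p\hookrightarrow W^\sigma_p$, which holds as $0<\sigma<r'$); (B) $|\xi|<1\le|\eta|$, where $\|\Delta_\eta\Delta_\xi b\|_p\le2|\xi|\,\|b'\|_p$ and both the $\xi$- and the $\eta$-integral converge, leaving $C\|b'\|_p^p$; and (C) $|\xi|,|\eta|<1$, the essential case.

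In region (C) I would split further. If $|\xi|\le|\eta|$ use $\|\Delta_\eta\Delta_\xi b\|_p\le|\xi|\,\|\Delta_\eta b'\|_p$ and $\int_{|\xi|\le|\eta|}|\xi|^{(2-r)p-1}\,d\xi=c\,|\eta|^{(2-r)p}$, which, since $\sigma-(2-r)=r'-1$, reduces this piece to $c\int_{|\eta|<1}\|\Delta_\eta b'\|_p^p\,|\eta|^{-1-(r'-1)p}\,d\eta\le c\,[b']_{W^{r'-1}_p}^p$. If $|\eta|<|\xi|$ use the symmetric bound $\|\Delta_\eta\Delta_\xi b\|_p\le|\eta|\,\|\Delta_\xi b'\|_p$ and $\int_{|\eta|<|\xi|}|\eta|^{(1-\sigma)p-1}\,d\eta=c\,|\xi|^{(1-\sigma)p}$, which, since $(r-1)-(1-\sigma)=r'-1$, reduces it to $c\int_{|\xi|<1}\|\Delta_\xi b'\|_p^p\,|\xi|^{-1-(r'-1)p}\,d\xi\le c\,[b']_{W^{r'-1}_p}^p$. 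Since $[b']_{W^{r'-1}_p}\le\|b\|_{W^{r'}_p}$, collecting (A)--(C) together with the $L_p$-part finishes the proof. The one genuinely delicate point is that $\sigma=r'-r+1$ is \emph{strictly larger} than $r'-1$ when $r<2$, so $b'$ need not belong to $W^\sigma_p$ and a crude estimate fails; the gain comes exactly from cutting the $(\xi,\eta)$-domain along $|\xi|=|\eta|$, which redistributes the total smoothness budget $(r-1)+\sigma=r'$ so that each of the remaining integrals lands precisely on the seminorm $[b']_{W^{r'-1}_p}$.
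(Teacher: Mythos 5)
Your proof is correct, and since the paper simply asserts the lemma ``follows by using the mean value theorem and the definition of the Sobolev norm'' and omits all details, your argument is a faithful and complete elaboration of precisely that sketch. You correctly reduce to the double integral $\int\int\|\Delta_\eta\Delta_\xi b\|_p^p\,|\xi|^{-1-(r-1)p}|\eta|^{-1-\sigma p}\,d\eta\,d\xi$ with $\sigma=r'-r+1\in(0,1)$, establish the three bounds $\|\Delta_\eta\Delta_\xi b\|_p\le\min\{2\|\Delta_\eta b\|_p,\,|\xi|\|\Delta_\eta b'\|_p,\,|\eta|\|\Delta_\xi b'\|_p\}$ via the mean value theorem/Minkowski, and verify the two arithmetic identities $\sigma-(2-r)=r'-1$ and $(r-1)-(1-\sigma)=r'-1$ that make the split along $|\xi|=|\eta|$ land exactly on the seminorm $[b']_{W^{r'-1}_p}$. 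The convergence checks in each region (using $r>1$, $r<2$, $\sigma\in(0,1)$) all hold, and the $L_p$-part and regions (A), (B) are standard; the observation at the end, that a crude bound fails because $\sigma>r'-1$ and the diagonal split is what redistributes the smoothness budget $(r-1)+\sigma=r'$, is exactly the non-obvious point, and you identified it correctly.
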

\begin{proof}
The claim follows by using the mean value theorem and the definition of the Sobolev norm. We omit the   details.
\end{proof}

We are now in a position to prove the second claim in (\ref{PRO1}).

\begin{lemma}\label{L:REG}
Given $s\in (1+1/p,2)$, it holds that
\begin{align}\label{PRO1b} 
 \mathbb{A},\, \mathbb{B}\in   {\rm C}^{\omega}(W^s_p(\mathbb{R}),\mathcal{L}(  W^{s-1}_p(\mathbb{R}))).
\end{align}
\end{lemma}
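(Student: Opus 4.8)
The plan is to prove \eqref{PRO1b} by combining the formulas \eqref{FormulaA}--\eqref{FormulaB}, the multiplier estimate \eqref{MES}, and the fractional Sobolev estimate \eqref{REF1'} of Lemma~\ref{L:MP3}. We already know from the discussion following \eqref{FormulaB} that $\mathbb{A},\mathbb{B}\in{\rm C}^\omega(W^s_p(\mathbb{R}),\mathcal{L}(L_p(\mathbb{R})))$, so only the mapping property into $\mathcal{L}(W^{s-1}_p(\mathbb{R}))$ is new. Fix $f\in W^s_p(\mathbb{R})$ and $\overline\omega\in W^{s-1}_p(\mathbb{R})$, and apply $\eqref{REF1'}$ with $r=s$ (legitimate since $s\in(1+1/p,2)$) to the operators $B_{0,1}(f)$ and $B_{1,1}(f)[f,\,\cdot\,]$: this gives
\[
\|B_{0,1}(f)[\overline\omega]\|_{W^{s-1}_p}\leq C\|\overline\omega\|_{W^{s-1}_p},\qquad
\|B_{1,1}(f)[f,\overline\omega]\|_{W^{s-1}_p}\leq C\|f'\|_{W^{s-1}_p}\|\overline\omega\|_{W^{s-1}_p},
\]
with $C=C(s,\|f\|_{W^s_p})$. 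Since $f\in W^s_p(\mathbb{R})$ with $s<2$ we have $f'\in W^{s-1}_p(\mathbb{R})$, and because $s-1\in(1/p,1)$ the estimate \eqref{MES} applies: $\|f' g\|_{W^{s-1}_p}\leq 2(\|f'\|_\infty\|g\|_{W^{s-1}_p}+\|g\|_\infty\|f'\|_{W^{s-1}_p})$. Feeding $g=B_{0,1}(f)[\overline\omega]$ and $g=B_{1,1}(f)[f,\overline\omega]$ into this, using the Sobolev embedding $W^{s-1}_p(\mathbb{R})\hookrightarrow {\rm C}(\mathbb{R})$ to bound $\|g\|_\infty$ and Lemma~\ref{L:MP1}(ii) (or simply the $W^{s-1}_p$-bound again) where needed, and recalling \eqref{FormulaA}--\eqref{FormulaB}, we obtain
\[
\|\mathbb{A}(f)[\overline\omega]\|_{W^{s-1}_p}+\|\mathbb{B}(f)[\overline\omega]\|_{W^{s-1}_p}\leq C(s,\|f\|_{W^s_p})\,\|\overline\omega\|_{W^{s-1}_p},
\]
which is precisely $\mathbb{A}(f),\mathbb{B}(f)\in\mathcal{L}(W^{s-1}_p(\mathbb{R}))$.

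For the real-analyticity of the maps $f\mapsto\mathbb{A}(f)$ and $f\mapsto\mathbb{B}(f)$ as functions into $\mathcal{L}(W^{s-1}_p(\mathbb{R}))$, I would argue exactly as in \cite[Section 5]{MBV19}. The point is that for fixed Lipschitz denominators $a_1,\ldots,a_m$ the map $(b_1,\ldots,b_n,\overline\omega)\mapsto B_{n,m}(a_1,\ldots,a_m)[b_1,\ldots,b_n,\overline\omega]$ is multilinear and bounded (Lemma~\ref{L:MP3}), while the dependence of $B_{n,m}(a_1,\ldots,a_m)$ on the parameters $a_i$ is analytic because, after expanding each factor $[1+(\delta_{[x,y]}a_i/y)^2]^{-1}$ as a geometric series, $B_{n,m}$ is given by a locally uniformly convergent power series in the variables $\delta_{[x,y]}a_i/y$ whose coefficients are again operators of the type $B_{n',m'}$ — and these are uniformly bounded on $\mathcal{L}(W^{s-1}_p(\mathbb{R}))$ by Lemma~\ref{L:MP3}, with the geometric-series tail controlled by $\|a_i'\|_\infty\leq C\|a_i\|_{W^s_p}$. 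Composition with the bounded bilinear multiplication map $(g,h)\mapsto gh$ on $W^{s-1}_p(\mathbb{R})$ (estimate \eqref{MES}) and with the analytic map $f\mapsto f'$ preserves analyticity, so \eqref{FormulaA}--\eqref{FormulaB} exhibit $\mathbb{A}$ and $\mathbb{B}$ as compositions of analytic maps, hence analytic.

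The only genuine subtlety — and the step I expect to be the main obstacle — is making the analyticity argument rigorous in the $\mathcal{L}(W^{s-1}_p(\mathbb{R}))$-topology rather than the easier $\mathcal{L}(L_p(\mathbb{R}))$-topology: one must check that the power-series expansion of $B_{n,m}(a_1,\ldots,a_m)$ in the $a_i$ converges in operator norm on $W^{s-1}_p(\mathbb{R})$, uniformly for $a_i$ in bounded subsets of $W^s_p(\mathbb{R})$. This is exactly what Lemma~\ref{L:MP3} is designed to supply: it gives the bound $\|B_{n,m}(a_1,\ldots,a_m)\|\leq C(n,m,s,\max_i\|a_i\|_{W^s_p})$ with explicit control, and inspecting the proof (which reduces everything, via \eqref{spr3} and the estimates \eqref{REF1}--\eqref{REF1'}, to the base operators $B_{0,m}$) shows the constant grows at most geometrically in $n$ and $m$, so the series converges. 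Everything else is bookkeeping with \eqref{MES} and the Sobolev embedding.
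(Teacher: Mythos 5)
Your proof matches the paper's essentially verbatim: the boundedness in $\mathcal{L}(W^{s-1}_p(\mathbb{R}))$ follows from \eqref{FormulaA}--\eqref{FormulaB}, Lemma~\ref{L:MP3} (via \eqref{REF1'}), and the product estimate \eqref{MES} (which is what the paper calls ``the algebra property of $W^{s-1}_p(\mathbb{R})$''), and the analyticity is delegated to \cite[Section 5]{MBV19} exactly as the paper does. One small caveat on your heuristic: the naive geometric-series expansion of $[1+(\delta_{[x,y]}a_i/y)^2]^{-1}$ converges only when $\|a_i'\|_\infty<1$, so the rigorous version in \cite[Section 5]{MBV19} perturbs about a general base point $f_0$ rather than about $0$, but this does not affect the correctness of your outline.
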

\begin{proof}
Combining (\ref{FormulaA})-(\ref{FormulaB}), Lemma \ref{L:MP3}, and the algebra property of $W^{s-1}_p(\mathbb{R})$ it follows that 
  $$[f\mapsto\mathbb{A}(f)],\, [f\mapsto\mathbb{B}(f)]\in {\rm C}^{1-}(W^s_p(\mathbb{R}),\mathcal{L}(  W^{s-1}_p(\mathbb{R}))).$$ 
Moreover,   arguing as in \cite[Section 5]{MBV19}, it can be shown that $\mathbb{A}$ and $\mathbb{B}$ depend analytically on $f\in W^{s}_p(\mathbb{R}).$  
\end{proof}

\section{On the resolvent set of $\mathbb{A}(f)$}\label{Sec:3}

We now fix $s\in(1+1/p,2)$ and $f\in W^s_p(\mathbb{R})$.
The main goal  of this section is to show that  the equation $(\ref{P'})_2$ has  a unique solution $\overline{\omega}\in W^{s-1}_p(\mathbb{R})$.
Compared to the canonical case $p=2$, where the  Rellich formula, see \cite[Eq. (3.24)]{MBV18}, can be used to solve $(\ref{P'})_2$, for $p\neq 2$ 
we need to find a new approach as the Rellich formula does not apply directly.

To  start,  we infer from the arguments in \cite[Theorem~3.5]{MBV19} that, given $\lambda\in\mathbb{R}$   with $|\lambda|\geq 1$, 
the operator $\lambda-\mathbb{A}(f)$ is an $L_2(\mathbb{R})$-isomorphism, i.e. it  belongs to ${\rm Isom}(L_2(\mathbb{R}))$.
Moreover, the $L_2$-adjoint $(\mathbb{A}(f))^*$ of $  \mathbb{A}(f) $ is given by
\[
(\mathbb{A}(f))^*[\overline{\omega}]=\pi^{-1}(B_{1,1}(f)[f,\overline{\omega}]-B_{0,1}(f)[f'\overline{\omega}]),
\]
and, letting   $p'=p/(p-1)$ denote  the dual exponent to $p$,  it follows from   Lemma~\ref{L:MP1}~(i)   that
\begin{align}\label{PRO1''} 
  (\mathbb{A}(f))^* \in  \mathcal{L}( L_{p'}(\mathbb{R})).
\end{align}

The main step towards our goal is to prove the invertibility of $\lambda-\mathbb{A}(f)$  in $\mathcal{L}(L_p(\mathbb{R}))$ for all $\lambda\in\mathbb{R}$   with $|\lambda|\geq 1$, see  Theorem~\ref{T:I1} and Theorem~\ref{T:I2} below.
These results are then used to establish the invertibility of $\lambda-\mathbb{A}(f)$  in $\mathcal{L}(W^{s-1}_p(\mathbb{R}))$ for all $\lambda\in\mathbb{R}$   with $|\lambda|\geq 1$, see  Theorem~\ref{T:INV}.
This necessitates  the introduction of suitable partitions of unity.
To be more precise, we  choose  for each $\varepsilon\in(0,1)$, a finite $\varepsilon$-localization family, that is  a family  
\[\{\pi_j^\varepsilon\,:\, -N+1\leq j\leq N\}\subset  {\rm C}^\infty(\mathbb{R},[0,1]),\]
with $N=N(\varepsilon)\in\mathbb{N} $ sufficiently large, such that
\begin{align*}
\bullet\,\,\,\, \,\,  & \mbox{$\supp \pi_j^\varepsilon $ is an interval of length $\varepsilon$ for all $|j|\leq N-1$, $\supp \pi_{N}^\varepsilon\subset(-\infty,-1/\varepsilon]\cup [1/\varepsilon,\infty)$;} \\[1ex]
\bullet\,\,\,\, \,\, &\mbox{ $ \pi_j^\varepsilon\cdot  \pi_l^\varepsilon=0$ if $[|j-l|\geq2, \max\{|j|, |l|\}\leq N-1]$ or $[|l|\leq N-2, j=N];$} \\[1ex]
\bullet\,\,\,\, \,\, &\mbox{ $\sum_{j=-N+1}^N(\pi_j^\varepsilon)^2=1;$} \\[1ex]
 \bullet\,\,\,\, \,\, &\mbox{$\|(\pi_j^\varepsilon)^{(k)}\|_\infty\leq C\varepsilon^{-k}$ for all $ k\in\mathbb{N}, -N+1\leq j\leq N$.} 
\end{align*} 
To each finite $\varepsilon$-localization family we associate  a second family   
$$\{\chi_j^\varepsilon\,:\, -N+1\leq j\leq N\}\subset {\rm C}^\infty(\mathbb{R},[0,1])$$ with the following properties
\begin{align*}
\bullet\,\,\,\, \,\,  &\mbox{$\chi_j^\varepsilon=1$ on $\supp \pi_j^\varepsilon$;} \\[1ex]
\bullet\,\,\,\, \,\,  &\mbox{$\supp \chi_j^\varepsilon$ is an interval  of length $3\varepsilon$ and with the same midpoint as $ \supp \pi_j^\varepsilon$, $|j|\leq N-1$;} \\[1ex]
\bullet\,\,\,\, \,\,  &\mbox{$\supp\chi_N^\varepsilon\subset [|x|\geq 1/\varepsilon-\varepsilon]$ and $\xi+ \supp \pi_{N}^\varepsilon\subset\supp \chi_{N}^\varepsilon$ for $|\xi|<\varepsilon.$}  
\end{align*} 
Each finite $\varepsilon$-localization family induces on $W^r_p(\mathbb{R})$ a norm    equivalent to the standard  norm.

\begin{lemma}\label{L:EN}
Let $\varepsilon>0$ and let  $\{\pi_j^\varepsilon\,:\, -N+1\leq j\leq N\}$ be a finite $\varepsilon$-localization family.  Given $p\in(1,\infty)$ and $r\in[0,\infty)$, there exists $c=c(\varepsilon,r,p)\in(0,1)$ such that
\[
c\|f\|_{W^r_p}\leq \sum_{j=-N+1}^N\|\pi_j^\varepsilon f\|_{W^r_p}\leq c^{-1}\|f\|_{W^r_p},\qquad f\in W^r_p(\mathbb{R}). 
\]
\end{lemma}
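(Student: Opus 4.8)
The plan is to prove the two inequalities by standard arguments on the equivalence of norms induced by a smooth partition of unity, treating the upper bound first (which is the easy direction) and then the lower bound (which needs a dual-type or interpolation argument). First I would handle the upper bound $\sum_{j}\|\pi_j^\varepsilon f\|_{W^r_p}\leq c^{-1}\|f\|_{W^r_p}$: for each $j$ with $|j|\leq N-1$ the multiplier $\pi_j^\varepsilon$ is smooth with all derivatives bounded by a multiple of $\varepsilon^{-k}$, so multiplication by $\pi_j^\varepsilon$ is a bounded operator on $W^r_p(\mathbb{R})$ with norm controlled by a constant depending only on $\varepsilon$, $r$, $p$ (one reduces to integer $r$ via the Leibniz rule and to the fractional part via the difference-quotient definition of the seminorm, using that $\pi_j^\varepsilon$ is Lipschitz and bounded; the estimate (\ref{MES}) is exactly of this flavour when $r\in(1/p,1)$ and one iterates/extends it for higher $r$). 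Since there are $2N$ terms and $N=N(\varepsilon)$ is fixed, summing gives the upper bound. The term $j=N$ is handled identically, using that $\pi_N^\varepsilon\in {\rm C}^\infty$ with bounded derivatives.

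For the lower bound $c\|f\|_{W^r_p}\leq \sum_j\|\pi_j^\varepsilon f\|_{W^r_p}$, the key point is the identity $\sum_{j=-N+1}^N(\pi_j^\varepsilon)^2=1$, which lets us write $f=\sum_j \pi_j^\varepsilon(\pi_j^\varepsilon f)$. Then $\|f\|_{W^r_p}\leq \sum_j\|\pi_j^\varepsilon(\pi_j^\varepsilon f)\|_{W^r_p}\leq C(\varepsilon,r,p)\sum_j\|\pi_j^\varepsilon f\|_{W^r_p}$, again because multiplication by the smooth bounded function $\pi_j^\varepsilon$ is bounded on $W^r_p(\mathbb{R})$. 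This already gives the claim with $c=1/C$; no duality is actually needed. The only subtlety is making the multiplier bound on $W^r_p(\mathbb{R})$ precise for non-integer $r$: for $r\in[0,1]$ one uses the Gagliardo seminorm and the pointwise bound $|\pi_j^\varepsilon(x)g(x)-\pi_j^\varepsilon(y)g(y)|\leq \|\pi_j^\varepsilon\|_\infty|g(x)-g(y)|+\|(\pi_j^\varepsilon)'\|_\infty|x-y|\,|g(x)|$ together with a splitting of the integral over $|x-y|\lessgtr 1$; for general $r=[r]+\{r\}$ one differentiates $[r]$ times via Leibniz and applies the previous case to each term, absorbing the lower-order derivatives of $\pi_j^\varepsilon$ into the constant.

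The main obstacle, such as it is, is purely bookkeeping: one must check that multiplication by each $\pi_j^\varepsilon$ is a bounded endomorphism of $W^r_p(\mathbb{R})$ with operator norm controlled by a constant depending only on $\varepsilon,r,p$ (through $\sup_{k\le \lceil r\rceil}\|(\pi_j^\varepsilon)^{(k)}\|_\infty\leq C\varepsilon^{-\lceil r\rceil}$ and $N=N(\varepsilon)$), and that this holds uniformly over the finitely many indices $j$. Once that is in place, both inequalities follow by the triangle inequality as above, and one sets $c=c(\varepsilon,r,p)$ to be the reciprocal of the largest constant appearing. I would present the multiplier bound as a short separate paragraph and then conclude in two lines. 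Note also that this lemma is used only for fixed $\varepsilon$ at a time (the $\varepsilon\to0$ limit is never taken in the norm equivalence itself), so no uniformity in $\varepsilon$ is required, which keeps the argument completely elementary.
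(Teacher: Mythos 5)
The paper's own proof is omitted (``We omit the elementary proof''), but your argument is correct and is clearly the intended one: the upper bound from the boundedness of multiplication by each smooth, bounded-derivative $\pi_j^\varepsilon$ on $W^r_p(\mathbb{R})$ together with the finiteness of the family, and the lower bound from the identity $\sum_j(\pi_j^\varepsilon)^2=1$, the triangle inequality, and the same multiplier bound. The bookkeeping you describe (Leibniz for the integer part, the Gagliardo seminorm with a split over $|x-y|\lessgtr 1$ for the fractional part, $N=N(\varepsilon)$ fixed) is exactly what is needed and closes the argument.
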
 
\begin{proof}
We omit the elementary proof.
\end{proof}

The result established in the next lemma is used in an essential way  in the proof of Theorem~\ref{T:I1} and Theorem~\ref{T:I2} below.

\begin{lemma}\label{L:comp}
Let  $\varepsilon\in(0,1)$ be arbitrary (but fixed) and let $\{\pi_j^\varepsilon\,:\, -N+1\leq j\leq N\} $ and $\{\chi_j^\varepsilon\,:\, -N+1\leq j\leq N\}$ be as described above.
Furthermore let   $f\in W^s_p(\mathbb{R})$, $s\in(1+1/p,2)$. 
Given ${-N+1\leq j\leq N}$, the operator $K_j:L_p(\mathbb{R})\to L_p(\mathbb{R})$ defined by
\begin{align*}
K_j[\overline{\omega}]:=\chi_{j}^\varepsilon(\pi_{j}^\varepsilon\mathbb{A}(f)[ \overline{\omega}]-\mathbb{A}(f)[\pi_{j}^\varepsilon \overline{\omega}]),\qquad \overline{\omega}\in L_p(\mathbb{R}),
\end{align*}
is compact.
\end{lemma}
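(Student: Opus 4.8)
The operator $K_j$ is a commutator of the singular integral operator $\mathbb{A}(f)$ with multiplication by the smooth compactly supported (or, for $j=N$, smooth and "supported near infinity") function $\pi_j^\varepsilon$, cut off on the outside by $\chi_j^\varepsilon$. The guiding principle is that commuting a singular integral operator with a sufficiently smooth function produces an operator of \emph{one order lower}, hence (after the cutoff $\chi_j^\varepsilon$, which confines everything to a fixed bounded set when $|j|\le N-1$, or which uses the decay at infinity when $j=N$) a compact operator on $L_p(\mathbb{R})$. The concrete vehicle for "one order lower" here is the commutator identity \eqref{spr2} from Lemma~\ref{L:SMP1}, applied through the representations \eqref{FormulaA}--\eqref{FormulaB}. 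First I would use \eqref{FormulaA} to write $\pi\mathbb{A}(f)[\overline\omega]=f'B_{0,1}(f)[\overline\omega]-B_{1,1}(f)[f,\overline\omega]$, and note that since $\pi_j^\varepsilon$ is bounded with bounded derivative, \eqref{spr2} gives
\[
\pi_j^\varepsilon B_{1,1}(f)[f,\overline\omega]-B_{1,1}(f)[f,\pi_j^\varepsilon\overline\omega]
= f\,B_{1,1}(f)[\pi_j^\varepsilon,\overline\omega]-B_{1,1}(f)[\pi_j^\varepsilon,f\overline\omega],
\]
and similarly (trivially, with no $b$-slot) for the $B_{0,1}$ term one has $\pi_j^\varepsilon B_{0,1}(f)[\overline\omega]-B_{0,1}(f)[\pi_j^\varepsilon\overline\omega]=0$ plus the contribution of $f'$, so that altogether $\pi(\pi_j^\varepsilon\mathbb{A}(f)[\overline\omega]-\mathbb{A}(f)[\pi_j^\varepsilon\overline\omega])$ is a finite sum of operators of the form $B_{n,1}(f)[\ldots,\pi_j^\varepsilon,\ldots]$ in which a slot carries the smooth factor $\pi_j^\varepsilon$ rather than the merely $W^s_p$-regular factor $f$.

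**Gaining regularity and compactness.** The point of having $\pi_j^\varepsilon$ (rather than $f$) in a $b$-slot is that $\pi_j^\varepsilon\in {\rm C}^\infty$, so $\delta_{[x,y]}\pi_j^\varepsilon/y$ is bounded \emph{with an extra power of $y$ to spare} near $y=0$; using the mean value theorem $\delta_{[x,y]}\pi_j^\varepsilon = -y\,(\pi_j^\varepsilon)'(\xi_y)$ with $(\pi_j^\varepsilon)'$ itself Lipschitz, one gets, just as in the integration-by-parts computation in the proof of Lemma~\ref{L:MP2}, that the relevant $B_{n,1}(f)[\ldots,\pi_j^\varepsilon,\ldots]$-type operators actually map $L_p(\mathbb{R})$ into $W^1_p$ locally, or at least into $W^{s-1}_p(\mathbb{R})$ with a gain over $L_p$. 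Concretely I would argue: the operator $\overline\omega\mapsto \chi_j^\varepsilon\big(\pi(\pi_j^\varepsilon\mathbb{A}(f)[\overline\omega]-\mathbb{A}(f)[\pi_j^\varepsilon\overline\omega])\big)$ factors as $L_p(\mathbb{R})\to W^{t}_p(\mathbb{R})$ for some $t\in(0,s-1]$ with values supported in a fixed compact set $K=\supp\chi_j^\varepsilon$ (for $|j|\le N-1$), and then $W^t_p(K)\hookrightarrow L_p(\mathbb{R})$ is compact by Rellich–Kondrachov. For $j=N$ the support is not compact, but $\chi_N^\varepsilon$ lives on $|x|\ge 1/\varepsilon-\varepsilon$, and one uses instead that the commutator structure together with the decay $f'(x)\to 0$, $\pi_N^\varepsilon$ constant near infinity makes the operator small at infinity — more precisely one writes it as a norm limit of operators with compact range (truncating far out) plus a genuinely small-norm remainder, hence compact as a limit in $\mathcal L(L_p)$. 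Alternatively, since $1-\pi_N^\varepsilon$ has compact support, one may recast the $j=N$ commutator in terms of $(1-\pi_N^\varepsilon)$ and reduce to the compactly supported case.

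**Technical bookkeeping.** The remaining work is to make the gain-of-regularity estimate precise: one must check that every kernel appearing after applying \eqref{spr2} (and, where convenient, one more integration by parts moving the $y$-derivative onto $\pi_j^\varepsilon$ or $\overline\omega$) satisfies the hypotheses of Lemma~\ref{L:MP2} or Lemma~\ref{L:MP3}, with the smooth factor $\pi_j^\varepsilon$ playing the role of $b_1$; the extra smoothness of $\pi_j^\varepsilon$ (its derivatives are bounded, so it lies in $W^{r}_p$ on any bounded set and in ${\rm C}^{1-}$ globally) is exactly what supplies the fractional gain. A clean way to organize this: show $K_j\in\mathcal L(L_p(\mathbb{R}),W^{s-1}_p(\mathbb{R}))$ and that $K_j[\overline\omega]$ has support in the fixed bounded set $\supp\chi_j^\varepsilon$ (for $|j|\le N-1$), then invoke compactness of $W^{s-1}_p(I)\hookrightarrow L_p(I)$ for a bounded interval $I$; dispose of $j=N$ by the complementary-support trick above.

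**Main obstacle.** The crux is the quantitative gain-of-regularity estimate for the commutator — verifying that replacing the $W^s_p$-factor by the ${\rm C}^\infty$ cutoff upgrades the target space from $L_p$ to $W^{s-1}_p$ (or at least to some $W^t_p$ with $t>0$) with a locally supported output. This is essentially a careful re-run of the integration-by-parts and Minkowski/Fubini estimates from the proof of Lemma~\ref{L:MP2}, tracking where the extra smoothness of $\pi_j^\varepsilon$ buys the decisive power of $|y|$; everything else (Rellich compactness, handling $j=N$, assembling the finitely many terms from \eqref{spr2}) is routine.
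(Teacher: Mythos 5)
Your approach --- recast $K_j$ as a commutator whose smooth cutoff slot produces a gain of regularity, then conclude compactness from Rellich--Kondrachov on the bounded support of $\chi_j^\varepsilon$ --- is a legitimate alternative to the paper's argument, which instead verifies the Riesz--Fr\'echet--Kolmogorov criterion directly: a uniform tail estimate $\sup_{\|\overline\omega\|_p\le1}\int_{[|x|>R]}|K_j[\overline\omega]|^p\,dx\to0$ and a uniform $L_p$-translation estimate $\sup_{\|\overline\omega\|_p\le1}\|\tau_\xi K_j[\overline\omega]-K_j[\overline\omega]\|_p\lesssim|\xi|^{s-1-1/p}$, both obtained by bare-hands kernel estimates. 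The two routes share the same quantitative core: the paper's translation estimate is exactly the $W^t_p$-bound (any $t<s-1-1/p$) your plan needs, repackaged through the Sobolev seminorm. It is also worth noting that Lemma~\ref{L:B2}, although stated in the appendix and used only in Section~\ref{Sec:4}, already gives the clean form of your ``gain of one derivative'' heuristic: $\varphi B_{n,m}(f,\ldots,f)[f,\ldots,f,\cdot]-B_{n,m}(f,\ldots,f)[f,\ldots,f,\varphi\,\cdot]$ is bounded $L_p\to W^1_p$ whenever $\varphi'$ is bounded and uniformly continuous, which would make the $|j|\le N-1$ case immediate.

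There are two genuine gaps. First, a minor algebra slip: $\pi_j^\varepsilon B_{0,1}(f)[\overline\omega]-B_{0,1}(f)[\pi_j^\varepsilon\overline\omega]$ does not vanish; it equals $B_{1,1}(f)[\pi_j^\varepsilon,\overline\omega]$, which is itself a singular integral, so this piece must be retained and treated like the others (your overall decomposition still has $\pi_j^\varepsilon$ sitting in a $b$-slot everywhere, so the downstream plan is unaffected). Second, and more seriously, the treatment of $j=N$ does not go through as stated. The identity showing the $\pi_N^\varepsilon$-commutator equals minus the $(1-\pi_N^\varepsilon)$-commutator is correct, and $1-\pi_N^\varepsilon$ is compactly supported, but this does \emph{not} make $K_N[\overline\omega]$ compactly supported: the outer factor $\chi_N^\varepsilon$ lives on the unbounded set $[|x|\ge1/\varepsilon-\varepsilon]$, and the commutator kernel carries $\delta_{[x,y]}(1-\pi_N^\varepsilon)$, which is nonzero whenever $x-y$ (not $x$) lies in $\supp(1-\pi_N^\varepsilon)$. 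So the complementary-support reduction fails, and you are thrown back on your truncation-plus-small-remainder alternative --- which requires precisely the uniform tail estimate that the paper proves in Step~1 of its proof, exploiting the decay $f(x),f'(x)\to0$ as $|x|\to\infty$. That piece of analysis is unavoidable and is not supplied in your sketch.
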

\begin{proof}  
According to the Riesz-Fr\'echet-Kolmogorov theorem, it suffices to show that
\begin{align}
&\sup_{\|\overline{\omega}\|_p\leq 1}\int_{[|x|>R]}|K_j[\overline{\omega}](x)|^p\, dx\underset{R\to\infty}\to0,\label{Prop1}\\[1ex]
&\sup_{\|\overline{\omega}\|_p\leq 1}\|\tau_\xi (K_j[\overline{\omega}])-K_j[\overline{\omega}]\|_p\underset{\xi\to0}\to0.\label{Prop2}
\end{align}

\noindent{\em Step 1.} For $|j|\leq N-1$ the  assertion  (\ref{Prop1})  is obvious.  Let now $j=N$.
Then it holds
\begin{align*}
&\hspace{-0.5cm}\Big(\int_{[|x|>R]}|K_j[\overline{\omega}](x)|^p\, dx\Big)^{1/p}\\[1ex]
&\leq\Big(\int_{[|x|>R]}\Big|\int_\mathbb{R} \frac{yf'(x)-\delta_{[x,y]} f}{1+\big(\delta_{[x,y]} f/y\big)^2}\frac{\delta_{[x,y]} \pi_j^\varepsilon}{y^2}\overline{\omega}(x-y)\, dy\Big|^{p}\, dx\Big)^{1/p}\\[1ex]
&\leq \Big(\int_{[|x|>R]}\Big|\int_{[|y|\leq 1]} \frac{yf'(x)-\delta_{[x,y]} f}{1+\big(\delta_{[x,y]} f/y\big)^2}\frac{\delta_{[x,y]} \pi_j^\varepsilon}{y^2}\overline{\omega}(x-y)\, dy\Big|^{p}\, dx\Big)^{1/p}\\[1ex]
&\hspace{0.45cm}+ \Big(\int_{[|x|>R]}\Big|\int_{[|y|>1]} \frac{yf'(x)-\delta_{[x,y]} f}{1+\big(\delta_{[x,y]} f/y\big)^2}
\frac{\delta_{[x,y]} \pi_j^\varepsilon}{y^2}\overline{\omega}(x-y)\, dy\Big|^{p}\, dx\Big)^{1/p}\\[1ex]
&=:T_1+T_2.
\end{align*}
 If $R$ is sufficiently large, then $\pi_j^\varepsilon(x)=1=\pi_j^\varepsilon(x-y)$ for all $|x|>R$ and $|y|\leq 1$, hence~ $T_1=0$.
 Concerning $T_2,$ it holds $T_2\leq T_{2a}+T_{2b}+T_{2c}, $
 where 
 \begin{align*}
 T_{2a}&:=\Big(\int_{[|x|>R]}|f'(x)|^p \Big|\int_{[|y|>1]}  
\frac{\delta_{[x,y]} \pi_j^\varepsilon}{y}\overline{\omega}(x-y)\, dy\Big|^{p}\, dx\Big)^{1/p}\leq C\|f'\|_{L_p([|x|>R])},\\[1ex]
T_{2b}&:=\Big(\int_{[|x|>R]}|f(x)|^p\Big|\int_{[|y|>1]}  
\frac{\delta_{[x,y]} \pi_j^\varepsilon}{y^2}\overline{\omega}(x-y)\, dy\Big|^{p}\, dx\Big)^{1/p}\leq C\|f\|_{L_p([|x|>R])},\\[1ex]
T_{2c}&:=\Big(\int_{[|x|>R]}\Big|\int_{[|y|>1]}  
\frac{\delta_{[x,y]} \pi_j^\varepsilon}{y^2}(f\overline{\omega})(x-y)\, dy\Big|^{p}\, dx\Big)^{1/p}.
 \end{align*}
 If $R$ is sufficiently large, then $\pi_j^\varepsilon(x)=1$ for all $|x|>R$.
  Taking into account that $\pi_j^\varepsilon(x-y)=1$ for all $|x-y|>1/\varepsilon+\varepsilon$, it follows that 
 $\delta_{[x,y]}\pi_j^\varepsilon=0$ for $|x|>R$ and $|x-y|> 1/\varepsilon+\varepsilon$, hence
  \begin{align*}
T_{2c}&=\Big(\int_{[|x|>R]}\Big|\int_{[|y|>R-1/\varepsilon-\varepsilon]}  
\frac{\delta_{[x,y]} \pi_j^\varepsilon}{y^2}(f\overline{\omega})(x-y)\, dy\Big|^{p}\, dx\Big)^{1/p}\\[1ex]
&\leq\int_{[|y|>R-1/\varepsilon-\varepsilon]}\frac{2}{y^2}\Big(\int_{[|x|>R]}|(f\overline{\omega})(x-y)|^p\, dx\Big)^{1/p}\, dy\\[1ex]
&\leq \frac{C}{R-1/\varepsilon-\varepsilon}\|f\|_\infty.
 \end{align*}
 These arguments show that (\ref{Prop1}) holds for all $-N+1\leq j\leq N$.\medskip
 
 \noindent{\em Step 2.} With respect to (\ref{Prop2})  note that 
$\|\tau_\xi (K_j[\overline{\omega}])-K_j[\overline{\omega}]\|_p\leq T_1+T_2+T_3$,  $-N+1\leq j\leq N,$ 
 where
 \begin{align*}
 T_1&:=\|\tau_\xi\chi_j^\varepsilon-\chi_j^\varepsilon\|_\infty\|\pi_{j}^\varepsilon\mathbb{A}(f)[ \overline{\omega}]-\mathbb{A}(f)[\pi_{j}^\varepsilon \overline{\omega}]\|_p\leq C\|\tau_\xi\chi_j^\varepsilon-\chi_j^\varepsilon\|_\infty\leq C|\xi|,\\[1ex]
T_2&:=\Big(\int_{\mathbb{R}}\Big|\int_\mathbb{R} \Big[\frac{ \tau_\xi f'(x) }{1+\big(\delta_{[x-\xi,y-\xi]} f/(y-\xi)\big)^2} \frac{  \delta_{[x-\xi,y-\xi]}  \pi_j^\varepsilon }{y-\xi}\\[1ex]
 &\hspace{2.25cm}-\frac{f'(x)}{1+\big(\delta_{[x,y]} f/y\big)^2} \frac{  \delta_{[x,y]}\pi_j^\varepsilon }{y}\Big]\overline{\omega}(x-y)\, dy\Big|^{p}\, dx\Big)^{1/p},\\[1ex]
 T_3&:=\Big(\int_{\mathbb{R}}\Big|\int_\mathbb{R} \Big[\frac{\delta_{[x-\xi,y-\xi]}  f}{1+\big(\delta_{[x-\xi,y-\xi]}  f/(y-\xi)\big)^2} \frac{  \delta_{[x-\xi,y-\xi]}  \pi_j^\varepsilon}{(y-\xi)^2}\\[1ex]
 &\hspace{2.25cm}-\frac{\delta_{[x,y]} f}{1+\big(\delta_{[x,y]} f/y\big)^2} \frac{  \delta_{[x,y]}\pi_j^\varepsilon }{y^2}\Big]\overline{\omega}(x-y)\, dy\Big|^{p}\, dx\Big)^{1/p}.
 \end{align*}
 Moreover, $T_2\leq  T_{2a}+T_{2b}, $ where, using H\"older's inequality, we have 
 \begin{align*}
T_{2a}&:=\Big(\int_{\mathbb{R}} |\tau_\xi f'(x)-f'(x)|^p\Big(\int_\mathbb{R} \Big| \frac{ \delta_{[x-\xi,y-\xi]}  \pi_j^\varepsilon}{y-\xi}\overline{\omega}(x-y)\Big|\, dy\Big)^{p}\, dx\Big)^{1/p}\leq C\|\tau_\xi f'-f'\|_p,
\end{align*}
uniformly for $|\xi|<1/2$, and
\begin{align*}
 T_{2b}&:=\Big(\int_{\mathbb{R}}|f'(x)|^p\Big|\int_\mathbb{R} \Big[\frac{ \delta_{[x-\xi,y-\xi]}\pi_j^\varepsilon/(y-\xi)}{1+\big(\delta_{[x-\xi,y-\xi]}f/(y-\xi)\big)^2}
 -\frac{\delta_{[x,y]}\pi_j^\varepsilon/y}{1+\big(\delta_{[x,y]} f/y\big)^2} \Big]\overline{\omega}(x-y)\, dy\Big|^{p} dx\Big)^{1/p}\!.
 \end{align*}
 Taking into account that for $|\xi|<1/2$ it holds that 
 \begin{align*}
  \Big|\frac{ \delta_{[x-\xi,y-\xi]}\pi_j^\varepsilon/(y-\xi)}{1+\big(\delta_{[x-\xi,y-\xi]}f/(y-\xi)\big)^2}
 -\frac{\delta_{[x,y]}\pi_j^\varepsilon/y}{1+\big(\delta_{[x,y]} f/y\big)^2} \Big| \leq C |\xi|^{s-1-1/p}\Big({\bf 1}_{[|y|\leq 1]}(y)+{\bf 1}_{[|y|> 1]}(y)\frac{1}{|y|}\Big),
 \end{align*}
 H\"older's inequality leads, for $|\xi|<1/2,$  to
 \[
T_{2b}\leq C|\xi|^{ s-1-1/p }.
 \]
 
 It remains to show that $T_3\to 0$ for $\xi\to0$.
Since
 \begin{align*}
 &\hspace{-0,5cm}\Big|\frac{\delta_{[x-\xi,y-\xi]}  f}{1+\big(\delta_{[x-\xi,y-\xi]}  f/(y-\xi)\big)^2} \frac{  \delta_{[x-\xi,y-\xi]}  \pi_j^\varepsilon}{(y-\xi)^2}
 -\frac{\delta_{[x,y]} f}{1+\big(\delta_{[x,y]} f/y\big)^2} \frac{  \delta_{[x,y]}\pi_j^\varepsilon }{y^2}\Big|\\[1ex]
 &\leq  C |\xi|^{s-1-1/p}\Big({\bf 1}_{[|y|\leq 1]}(y)+{\bf 1}_{[|y|> 1]}(y)\frac{1}{|y|^2}\Big)
 \end{align*}
 for all $|\xi|<1/2$, Minkowski's inequality   yields
 \begin{align*}
 T_{2}&\leq C|\xi|^{s-1-1/p}.
 \end{align*}
 Hence, (\ref{Prop2}) holds true for all  $-N+1\leq j\leq N$ and  the proof is complete.
\end{proof}

The next result is a key ingredient in the proof of Theorem~\ref{T:I1} and  Theorem~\ref{T:I2} below.

\begin{lemma}\label{L:comm'}
Let  $\varepsilon\in(0,1)$ and $\{\pi_j^\varepsilon\,:\, -N+1\leq j\leq N\} $ and $\{\chi_j^\varepsilon\,:\, -N+1\leq j\leq N\}$ be as described above.
Let further   $f\in W^s_p(\mathbb{R})$, $s\in(1+1/p,2)$, and pick $q\in[ p,\infty).$ 
Then, for each ${-N+1\leq j\leq N},$ it holds that
\begin{align*}
\|\chi_j^\varepsilon\mathbb{A}(f)\chi_j^\varepsilon\|_{\mathcal{L}(L_q(\mathbb{R}))}<1,
\end{align*} 
provided that $\varepsilon$ is   sufficiently small.
\end{lemma}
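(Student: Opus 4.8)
The plan is to \emph{freeze the coefficient}: on the small interval $\supp\chi_j^\varepsilon$ the function $f$ is close to an affine function, and $\mathbb{A}$ annihilates affine functions, so the localized operator is a small perturbation of $0$. The first observation is a reduction: if $\widetilde f$ is any Lipschitz function with $\widetilde f=f$ on an open neighbourhood of $\supp\chi_j^\varepsilon$, then in the integral defining $(\chi_j^\varepsilon\mathbb{A}(f)[\chi_j^\varepsilon\overline\omega])(x)$ the outer factor $\chi_j^\varepsilon(x)$ and the inner factor $\chi_j^\varepsilon(x-y)$ force $x,\,x-y\in\supp\chi_j^\varepsilon$, where $f'(x)=\widetilde f'(x)$ and $\delta_{[x,y]}f=\delta_{[x,y]}\widetilde f$; hence $\chi_j^\varepsilon\mathbb{A}(f)\chi_j^\varepsilon=\chi_j^\varepsilon\mathbb{A}(\widetilde f)\chi_j^\varepsilon$. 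Since multiplication by $\chi_j^\varepsilon$ is a contraction on $L_q(\mathbb{R})$, it suffices to produce, for each $j$, such a $\widetilde f=\widetilde f_j$ with $\|\mathbb{A}(\widetilde f_j)\|_{\mathcal{L}(L_q(\mathbb{R}))}\to0$ as $\varepsilon\to0$, uniformly in $j$ and in $q\in[p,\infty)$.

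For $|j|\le N-1$ let $x_j$ be the midpoint of $\supp\pi_j^\varepsilon$ and put $\ell_j(x):=f(x_j)+f'(x_j)(x-x_j)$; for $j=N$ set $\ell_N:=0$. With $g:=f-\ell_j$, the Sobolev embedding $W^{s-1}_p(\mathbb{R})\hookrightarrow{\rm C}^{s-1-1/p}(\mathbb{R})$ (valid since $s-1>1/p$) gives, for $|j|\le N-1$, $\|g'\|_{L_\infty(\supp\chi_j^\varepsilon)}\le C\varepsilon^{s-1-1/p}$ and $\|g\|_{L_\infty(\supp\chi_j^\varepsilon)}\le C\varepsilon^{s-1/p}$ with $C=C(s,p,\|f\|_{W^s_p})$ independent of $j$; for $j=N$, since $f,f'\to0$ at infinity (density of ${\rm C}^\infty_0$ in $W^s_p(\mathbb{R})$), one has $\|g\|_{L_\infty(\supp\chi_N^\varepsilon)}+\|g'\|_{L_\infty(\supp\chi_N^\varepsilon)}\le\eta(\varepsilon)$ with $\eta(\varepsilon)\to0$ as $\varepsilon\to0$. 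Choose a cut-off $\varphi_j\in{\rm C}^\infty(\mathbb{R},[0,1])$ equal to $1$ on a neighbourhood of $\supp\chi_j^\varepsilon$: for $|j|\le N-1$ supported in the concentric interval of length $5\varepsilon$ with $\|\varphi_j'\|_\infty\le C\varepsilon^{-1}$ (a translate of a fixed profile), and for $j=N$ with $\supp\varphi_N\subset[|x|\ge 1/(2\varepsilon)]$ and $\|\varphi_N'\|_\infty\le C\varepsilon$ (transition region of length $\simeq 1/\varepsilon$). Set $\widetilde f_j:=\ell_j+\varphi_j g$. Then $\widetilde f_j=f$ near $\supp\chi_j^\varepsilon$, $\|\widetilde f_j'\|_\infty$ is bounded uniformly in $j$ and $\varepsilon$, and $h_j:=\widetilde f_j-\ell_j=\varphi_j g$ satisfies $\|h_j'\|_\infty\le C\varepsilon^{s-1-1/p}$ for $|j|\le N-1$ and $\|h_N'\|_\infty\le C\eta(\varepsilon)$. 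Since $\delta_{[x,y]}\ell_j=f'(x_j)y$, so that $\delta_{[x,y]}\ell_j/y\equiv f'(x_j)=\ell_j'$, the formula (\ref{FormulaA}), the linearity of $B_{n,m}$ in its $b$-slots, and the resulting identity $B_{1,1}(\widetilde f_j)[\ell_j,\overline\omega]=\ell_j'B_{0,1}(\widetilde f_j)[\overline\omega]$ yield
\begin{align*}
\pi\mathbb{A}(\widetilde f_j)[\overline\omega]&=\widetilde f_j'B_{0,1}(\widetilde f_j)[\overline\omega]-B_{1,1}(\widetilde f_j)[\ell_j,\overline\omega]-B_{1,1}(\widetilde f_j)[h_j,\overline\omega]\\
&=h_j'B_{0,1}(\widetilde f_j)[\overline\omega]-B_{1,1}(\widetilde f_j)[h_j,\overline\omega].
\end{align*}

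By Lemma~\ref{L:MP1}~(i) (applied with $p$ replaced by $q$), $\|B_{0,1}(\widetilde f_j)\|_{\mathcal{L}(L_q(\mathbb{R}))}\le C$ and $\|B_{1,1}(\widetilde f_j)[h_j,\,\cdot\,]\|_{\mathcal{L}(L_q(\mathbb{R}))}\le C\|h_j'\|_\infty$, with $C$ depending only on $q$ and $\|\widetilde f_j'\|_\infty$; hence $\|\mathbb{A}(\widetilde f_j)\|_{\mathcal{L}(L_q(\mathbb{R}))}\le C\|h_j'\|_\infty$, and therefore $\|\chi_j^\varepsilon\mathbb{A}(f)\chi_j^\varepsilon\|_{\mathcal{L}(L_q(\mathbb{R}))}\le C\max\{\varepsilon^{s-1-1/p},\eta(\varepsilon)\}$ for all $-N+1\le j\le N$, with $C$ independent of $j$. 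Choosing $\varepsilon$ small makes the right-hand side $<1$. The main obstacle is the uniformity in $j$: one and the same $\varepsilon$ must work for all $\simeq 2N(\varepsilon)$ operators simultaneously, which forces one to keep every constant depending only on $s,p,q$ and $\|f\|_{W^s_p}$ — this is what the \emph{uniform} Hölder control on $f'$ from $W^{s-1}_p\hookrightarrow{\rm C}^{s-1-1/p}$ (for $|j|\le N-1$) and the decay of $f,f'$ at infinity (for $j=N$) are for, together with taking the $\varphi_j$ as translates of fixed profiles. A secondary point to get right is the exact cancellation $B_{1,1}(\widetilde f_j)[\ell_j,\overline\omega]=\ell_j'B_{0,1}(\widetilde f_j)[\overline\omega]$, which hinges on $\ell_j$ being genuinely affine (so that $\delta_{[x,y]}\ell_j/y$ is constant in $y$).
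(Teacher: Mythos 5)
Your proof is correct, and it takes a genuinely different route from the paper's. The paper proves the bound by direct estimation of the singular integral: for $|j|\leq N-1$ it bounds the kernel by $[f']_{s-1-1/p}|y|^{s-2-1/p}$ via the embedding $W^s_p(\mathbb{R})\hookrightarrow{\rm C}^{s-1/p}(\mathbb{R})$, applies Minkowski's integral inequality, and uses that the $y$-integral is effectively restricted to the set $\supp\chi_j^\varepsilon-\supp\chi_j^\varepsilon$ of measure $\simeq\varepsilon$; for $j=N$ it splits into $|y|\leq 1$ and $|y|>1$, interpolates the kernel between the Hölder seminorm and the sup-norm of $f'$ on $[|x|>1/\varepsilon-2]$, and treats several tail terms separately using the decay of $f,f'$ at infinity. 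Your argument instead freezes the coefficient: after noting that $\chi_j^\varepsilon\mathbb{A}(f)\chi_j^\varepsilon$ only sees $f$ near $\supp\chi_j^\varepsilon$ and that $\mathbb{A}$ annihilates affine functions (the exact cancellation $B_{1,1}(\widetilde f_j)[\ell_j,\cdot]=\ell_j'B_{0,1}(\widetilde f_j)$, which you correctly isolate as the crux), you reduce the smallness of the localized operator to the smallness of $\|(\,\widetilde f_j-\ell_j)'\|_\infty$, and then invoke Lemma~\ref{L:MP1}~(i) as a black box for the $L_q$-bound. Your route is more conceptual and gives a uniform treatment of all $j$ through a single decomposition $\widetilde f_j=\ell_j+\varphi_j(f-\ell_j)$ (with $\ell_N=0$), whereas the paper's is more elementary and self-contained (it does not rely on Lemma~\ref{L:MP1} for this step) but has to handle $j=N$ separately with a more elaborate splitting. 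The reduction $\chi_j^\varepsilon\mathbb{A}(f)\chi_j^\varepsilon=\chi_j^\varepsilon\mathbb{A}(\widetilde f_j)\chi_j^\varepsilon$ is sound because the principal value is taken at $y=0$ where $x$ and $x-y$ both lie in the region where $\widetilde f_j=f$; the uniformity of the constants in $j$ is indeed the delicate point and you handle it correctly via the global Hölder control of $f'$ and the vanishing of $f,f'$ at infinity. (One minor overclaim: you assert uniformity of the bound in $q\in[p,\infty)$, but the constant from Lemma~\ref{L:MP1}/Theorem~\ref{T:A} depends on $q$; this is harmless since the lemma fixes $q$ in advance and, in the application, only finitely many values of $q$ are used simultaneously.)
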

\begin{proof}
We first establish the claim for  $|j|\leq N-1$.
Using Minkowski's inequality and the embedding $W^s_p(\mathbb{R})\hookrightarrow {\rm C}^{s-1/p}(\mathbb{R}),$ it follows that
\begin{align*}
\|\chi_j^\varepsilon\mathbb{A}(f)[\chi_j^\varepsilon\overline{\omega}]\|_q
&=\Big(\int_\mathbb{R}\Big|\int_\mathbb{R} \chi_j^\varepsilon(x)\frac{yf'(x)-\delta_{[x,y]} f}{1+\big(\delta_{[x,y]} f/y\big)^2}\frac{(\chi_j^\varepsilon\overline{\omega})(x-y)}{y^2}\, dy\Big|^{q}\, dx\Big)^{1/q}\\[1ex]
&\leq [f']_{s-1-1/p}\int_\mathbb{R}|y|^{s-2-1/p}\Big(\int_{\supp \chi_j^\varepsilon}  |(\chi_j^\varepsilon\overline{\omega})(x-y)|^q \, dx\Big)^{1/q}\, dy\\[1ex]
&\leq [f']_{s-1-1/p}\|\overline{\omega}\|_q\int_{\supp \chi_j^\varepsilon-\supp \chi_j^\varepsilon}|y|^{s-2-1/p} \, dy\\[1ex]
&\leq C\|f\|_{W^s_p}\varepsilon^{s-1-1/p}\|\overline{\omega}\|_q.
\end{align*}
In the case $j=N$ we have   $\|\chi_j^\varepsilon\mathbb{A}(f)[\chi_j^\varepsilon\overline{\omega}]\|_q\leq T_1+T_2$, where 
\begin{align*}
T_1&:=\Big(\int_{\mathbb{R}}\Big|\int_{[|y|\leq1]} \chi_j^\varepsilon(x)\frac{yf'(x)-\delta_{[x,y]} f}{1+\big(\delta_{[x,y]} f/y\big)^2}\frac{(\chi_j^\varepsilon\overline{\omega})(x-y)}{y^2}\, dy\Big|^{q}\, dx\Big)^{1/q},\\[1ex]
T_2&:=\Big(\int_{\mathbb{R}}\Big|\int_{[|y|>1]} \chi_j^\varepsilon(x)\frac{yf'(x)-\delta_{[x,y]} f}{1+\big(\delta_{[x,y]} f/y\big)^2}
\frac{(\chi_j^\varepsilon\overline{\omega})(x-y)}{y^2}\, dy\Big|^{q}\, dx\Big)^{1/q}.
\end{align*}
Using the mean value theorem, we have
\[
|yf'(x)-\delta_{[x,y]} f|\leq 2 \|f'\|_{L_\infty([|x|>1/\varepsilon- 2])}^{1/2}[f']_{s-1-1/p}^{1/2}|y|^{s/2+1/2-1/2p},
\]
and herewith the term $T_1$ can be estimated as follows
\begin{align*}
T_1&\leq 2\|f'\|_{L_\infty([|x|>1/\varepsilon- 2])}^{1/2}[f']_{s-1-1/p}^{1/2}\Big(\int_\mathbb{R}\Big|\int_{[|y|\leq1]} \chi_j^\varepsilon(x) 
\frac{(\chi_j^\varepsilon\overline{\omega})(x-y)}{y^{3/2+1/2p-s/2}}\, dy\Big|^{q}\, dx\Big)^{1/q}\\[1ex]
&\leq 2\|f'\|_{L_\infty([|x|>1/\varepsilon-2])}^{1/2}[f']_{s-1-1/p}^{1/2}\int_{[|y|\leq1]}\frac{1}{y^{3/2+1/2p-s/2}}\Big(\int_{\mathbb{R}}| (\chi_j^\varepsilon\overline{\omega})(x-y)|^q \, dx\Big)^{1/q}\, dy\\[1ex]
&\leq C\|f'\|_{L_\infty([|x|>1/\varepsilon-2])}^{1/2}\|f\|_{W^s_p}^{1/2}\|\overline{\omega}\|_q.
\end{align*}
Furthermore, $T_2\leq  T_{2a}+T_{2b}+T_{2c} $ where 
\begin{align*}
  T_{2a}&:=\Big(\int_{\mathbb{R}} |(f'\chi_j^\varepsilon)(x)|^q\Big|\int_{[|y|>1]}  
\frac{(\chi_j^\varepsilon\overline{\omega})(x-y)}{y }\, dy\Big|^{q}\, dx\Big)^{1/q}\\[1ex]
&\,\leq C\|f'\|_{L_p([|x|>1/\varepsilon-1])}^{p/q}\|f'\|_{L_\infty([|x|>1/\varepsilon-1])}^{(q-p)/q}\|\overline{\omega}\|_q,
\end{align*}
and
\begin{align*}
T_{2b}&:= \Big(\int_{\mathbb{R}}\Big|\int_{[|y|>1]} (f\chi_j^\varepsilon)(x) 
\frac{(\chi_j^\varepsilon\overline{\omega})(x-y)}{y^2}\, dy\Big|^{q}\, dx\Big)^{1/q}\\[1ex]
&\,\leq \int_{[|y|>1]}\frac{1}{y^2}\Big(\int_{\mathbb{R}} |(f\chi_j^\varepsilon)(x) (\chi_j^\varepsilon\overline{\omega})(x-y)|^q \, dx\Big)^{1/q}\, dy\\[1ex]
&\, \leq C\|f\|_{L_\infty([|x|>1/\varepsilon-1])}\|\overline{\omega}\|_q,
\end{align*}
and finally
\begin{align*}
T_{2c}&:= \Big(\int_{\mathbb{R}}\Big|\int_{[|y|>1]} \chi_j^\varepsilon(x) 
\frac{(f\chi_j^\varepsilon\overline{\omega})(x-y)}{y^2}\, dy\Big|^{q}\, dx\Big)^{1/q}\\[1ex]
&\,\leq \int_{[|y|>1]}\frac{1}{y^2}\Big(\int_{\mathbb{R}} |  (f\chi_j^\varepsilon\overline{\omega})(x-y)|^q \, dx\Big)^{1/q}\, dy\\[1ex]
&\,\leq C\|f\|_{L_\infty([|x|>1/\varepsilon-1])}\|\overline{\omega}\|_q.
\end{align*}
Gathering these estimates and observing that $  f^{(k)}(x)\to0$ for $|x|\to\infty$ and  $k=0,\,1$, we conclude that the claim holds true.
\end{proof}

 We are now in a position to establish the aforementioned invertibility result in $\mathcal{L}(L_p(\mathbb{R})) $  for $p\in(1,2]$.

\begin{thm}\label{T:I1}  
Let $p\in(1,2]$ and $s\in(1+1/p,2)$. Given  $f\in W^s_p(\mathbb{R})$  and  $\lambda\in\mathbb{R}$ with $|\lambda|\geq1$ it holds  
\begin{align*} 
 \lambda-\mathbb{A}(f)\in{\rm Isom\,}(L_p(\mathbb{R})).
\end{align*}
\end{thm}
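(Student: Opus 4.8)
The plan is to show that $\lambda-\mathbb{A}(f)$ is a Fredholm operator of index $0$ on $L_p(\mathbb{R})$ by a localization/parametrix argument, and then to reduce its injectivity to the already known isomorphism property on $L_2(\mathbb{R})$ recorded before Theorem~\ref{T:I1}.

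First I would record, using Lemma~\ref{L:MP1}~(i) together with~(\ref{FormulaA}), that $\mathbb{A}(f)\in\mathcal{L}(L_q(\mathbb{R}))$ for every $q\in(1,\infty)$; hence, for $|\lambda|\geq1$, Lemma~\ref{L:comm'} and a Neumann series yield, for some fixed small $\varepsilon\in(0,1)$ and all $-N+1\leq j\leq N$, that $\lambda-\chi_j^\varepsilon\mathbb{A}(f)\chi_j^\varepsilon\in{\rm Isom\,}(L_q(\mathbb{R}))$ for every $q\in[p,\infty)$. Setting $R_{\lambda,j}:=(\lambda-\chi_j^\varepsilon\mathbb{A}(f)\chi_j^\varepsilon)^{-1}$ and $T_\lambda:=\sum_{j}\pi_j^\varepsilon R_{\lambda,j}\pi_j^\varepsilon\in\mathcal{L}(L_p(\mathbb{R}))$, and exploiting $\sum_j(\pi_j^\varepsilon)^2=1$ and $\chi_j^\varepsilon\equiv1$ on $\supp\pi_j^\varepsilon$ (so that $\pi_j^\varepsilon\mathbb{A}(f)\pi_j^\varepsilon=\pi_j^\varepsilon(\chi_j^\varepsilon\mathbb{A}(f)\chi_j^\varepsilon)\pi_j^\varepsilon$), a routine rearrangement shows that $(\lambda-\mathbb{A}(f))T_\lambda-\id$ and $T_\lambda(\lambda-\mathbb{A}(f))-\id$ are finite sums of operators of the commutator type $\chi_j^\varepsilon(\pi_j^\varepsilon\mathbb{A}(f)-\mathbb{A}(f)\pi_j^\varepsilon)$, pre- or post-composed with bounded operators, hence compact on $L_p(\mathbb{R})$ by Lemma~\ref{L:comp}. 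Thus $\lambda-\mathbb{A}(f)$ is Fredholm on $L_p(\mathbb{R})$. Repeating the construction for $\lambda-t\mathbb{A}(f)$, $t\in[0,1]$ — the estimates of Lemmas~\ref{L:comm'} and~\ref{L:comp} hold for $t\mathbb{A}(f)$ with the same $\varepsilon$ and with constants no larger — exhibits $t\mapsto\lambda-t\mathbb{A}(f)$ as a norm-continuous path of Fredholm operators joining $\lambda-\mathbb{A}(f)$ to $\lambda\,\id$, so ${\rm ind}(\lambda-\mathbb{A}(f))={\rm ind}(\lambda\,\id)=0$.

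It then remains to prove injectivity, and this is where the hypothesis $p\leq2$ enters and where I expect the main difficulty. Let $\overline{\omega}\in L_p(\mathbb{R})$ satisfy $(\lambda-\mathbb{A}(f))\overline{\omega}=0$. Since $W^s_p(\mathbb{R})\hookrightarrow{\rm C}^{1+\alpha}(\mathbb{R})$ with $\alpha:=s-1-1/p>0$ and $f,f'\to0$ at infinity, I would split $\mathbb{A}(f)=\mathbb{A}_\delta+\mathbb{A}^\delta$ by means of~(\ref{FormulaA}) and suitable cut-offs, where $\mathbb{A}_\delta$ collects the Cauchy-type (non-smoothing) contributions that are localized in $x$ near infinity or carry the multiplier $f'$ truncated to a small neighbourhood of infinity — so that $\|\mathbb{A}_\delta\|_{\mathcal{L}(L_q)}<|\lambda|$ for every $q\in[p,\infty)$ by the $j=N$ case of Lemma~\ref{L:comm'} — while $\mathbb{A}^\delta$ strictly improves integrability: near the diagonal its reduced kernel obeys an $|y|^{\alpha-1}$ bound and so behaves like a Riesz potential of positive order $\alpha$, and the remaining part has a non-singular, $|y|^{-2}$-decaying kernel. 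From $\overline{\omega}=(\lambda-\mathbb{A}_\delta)^{-1}\mathbb{A}^\delta\overline{\omega}$ one then bootstraps through a finite chain $p=q_0<q_1<\dots<q_k$ with $1/q_{i+1}$ essentially $1/q_i-\alpha$, reaching some $q_k\geq2$; it is precisely here that $p\leq2$ guarantees that the chain passes the exponent $2$. Then $\overline{\omega}\in L_2(\mathbb{R})$, which forces $\overline{\omega}=0$ because $\lambda-\mathbb{A}(f)\in{\rm Isom\,}(L_2(\mathbb{R}))$ by~\cite{MBV19}; combined with the vanishing index this gives $\lambda-\mathbb{A}(f)\in{\rm Isom\,}(L_p(\mathbb{R}))$. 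The delicate point is organizing this bootstrap — in particular absorbing into $\mathbb{A}_\delta$ every ``multiplier $\times$ truncated singular integral'' term, which does not gain integrability, while keeping $\|\mathbb{A}_\delta\|_{\mathcal{L}(L_q)}<|\lambda|$ uniformly in $q\in[p,\infty)$; by contrast the parametrix step is standard once Lemmas~\ref{L:comm'} and~\ref{L:comp} are available.
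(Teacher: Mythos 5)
Your proposal takes a genuinely different route from the paper on both halves of the theorem. For the ``onto'' half you build the parametrix $T_\lambda=\sum_j\pi_j^\varepsilon R_{\lambda,j}\pi_j^\varepsilon$ and argue Fredholm index $0$ by homotopy; the paper instead proves an a priori bound $\|(\lambda-\mathbb{A}(f))[\overline{\omega}]\|_p\geq C\|\overline{\omega}\|_p$ (Step~2) by a weak-compactness/contradiction argument based on the same Lemmas~\ref{L:comm'} and~\ref{L:comp}, and then applies the method of continuity. Both use the same raw ingredients and both are viable; your route does require checking that the remainder terms that actually come out of the rearrangement --- for instance $\pi_j^\varepsilon\chi_j^\varepsilon\mathbb{A}(f)(1-\chi_j^\varepsilon)R_{\lambda,j}\pi_j^\varepsilon$ and $[\mathbb{A}(f),\pi_j^\varepsilon]R_{\lambda,j}\pi_j^\varepsilon$ --- are compact; they are close to, but not literally of the form, $K_j=\chi_j^\varepsilon(\pi_j^\varepsilon\mathbb{A}(f)-\mathbb{A}(f)\pi_j^\varepsilon)$, so the proof of Lemma~\ref{L:comp} must be revisited (not merely cited). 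For injectivity the paper's argument is a \emph{one-step} gain: it shows directly that $|K_j[\overline{\omega}](x)|\leq C\|\overline{\omega}\|_p$, because the $\delta_{[x,y]}\pi_j^\varepsilon$ factor tames the kernel to $\mathbf{1}_{[|y|\leq1]}+\mathbf{1}_{[|y|>1]}|y|^{-1}\in L_{p'}(\mathbb{R})$, so $K_j[\overline{\omega}]\in L_p\cap L_\infty\subset L_2$ (this is where $p\leq2$ enters), and then inverts $\lambda-\chi_j^\varepsilon\mathbb{A}(f)\chi_j^\varepsilon$ simultaneously on $L_p$ and $L_2$. Your iterative bootstrap, climbing $L_p\to L_{q_1}\to\dots\to L_2$, is a more elaborate (but in principle legitimate) way to reach the same conclusion.

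There is, however, a concrete problem with how you set up the bootstrap. The estimate $|yf'(x)-\delta_{[x,y]}f|\lesssim|y|^{1+\alpha}$ is a property of the \emph{combined} kernel $(yf'(x)-\delta_{[x,y]}f)/(y^2+(\delta_{[x,y]}f)^2)$, and it already shows that $\mathbb{A}(f)$ itself is pointwise dominated by convolution with a fixed $L_r(\mathbb{R})$-function for some $r>1$ (namely $C(|y|^{\alpha-1}\mathbf{1}_{[|y|\leq1]}+|y|^{-1}\mathbf{1}_{[|y|>1]})$), hence maps $L_q\to L_t$ with a uniform gain $0<1/q-1/t<\alpha$ for $q\in[p,2]$ --- so $\overline{\omega}=\lambda^{-1}\mathbb{A}(f)\overline{\omega}$ bootstraps with \emph{no split at all}. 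By contrast, the split $\mathbb{A}(f)=\mathbb{A}_\delta+\mathbb{A}^\delta$ you describe, obtained by applying cut-offs to the two terms of~(\ref{FormulaA}), destroys precisely this cancellation: each of $f'B_{0,1}(f)$ and $B_{1,1}(f)[f,\cdot]$ has a genuine $1/y$-singularity near $y=0$ and gains nothing, and $\|f'\|_\infty$ is not small. A term such as $(f'\mathbf{1}_{[|x|\leq R]})B_{0,1}(f)$ is neither small (so it cannot go into $\mathbb{A}_\delta$) nor smoothing (so it cannot go into $\mathbb{A}^\delta$); your claim that the $\mathbb{A}^\delta$-kernel behaves like a Riesz potential near the diagonal only holds for the uncut combined kernel, not for the individual $B_{n,m}$-pieces. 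If you drop~(\ref{FormulaA}) here and work with the combined PV kernel directly, the injectivity argument goes through and is in fact cleaner than what you wrote; as stated, the decomposition step does not.
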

\begin{proof} The claim in the particular case $p=2$ has been established in \cite[Theorem 3.5]{MBV18}.
Let now $p\in(1,2)$, $f\in W^s_p(\mathbb{R}),$  and  $\lambda\in\mathbb{R}$ with $|\lambda|\geq 1 $  be given. \medskip

\noindent{\em Step 1.} We  first prove that  $\lambda-\mathbb{A}(f)$ is injective.
Let thus $\overline{\omega}\in L_p(\mathbb{R}) $ satisfy   $(\lambda-\mathbb{A}(f))[\overline{\omega}]=0$. 
Given $\varepsilon>0$, this equation is equivalent to the following system of equations 
\begin{equation}\label{REL:LER}
(\lambda-\chi_j^\varepsilon\mathbb{A}(f)\chi_j^\varepsilon)[\pi_j^\varepsilon \overline{\omega}]=K_j[\overline{\omega}]\qquad \mbox{for $-N+1\leq j\leq N,$}
\end{equation} 
where $K_j,$  $-N+1\leq j\leq N,$ are the operators introduced in Lemma \ref{L:comp}.
 Since $p\in(1,2)$, in view of  Lemma \ref{L:comm'} we may choose $\varepsilon>0$  such that 
 $$(\lambda-\chi_j^\varepsilon\mathbb{A}(f)\chi_j^\varepsilon)\in {\rm Isom}(L_2(\mathbb{R}))\cap {\rm Isom}(L_p(\mathbb{R}))\qquad\mbox{for all $-N+1\leq j\leq N$.}$$  
 As $\overline{\omega}\in L_p(\mathbb{R}),$ the right hand-side $K_j[\overline{\omega}]$ of (\ref{REL:LER}) belongs to $L_p(\mathbb{R})$.
 Moreover,  using once more the fact that $p\in(1,2)$ together with the $L_\infty$-bound 
 \begin{align*}
 |K_j[\overline{\omega}](x)|&\leq \int_\mathbb{R} \Big| \frac{yf'(x)-\delta_{[x,y]} f}{1+\big(\delta_{[x,y]} f/y\big)^2} \frac{  \delta_{[x,y]}\pi_j^\varepsilon }{y^2} \overline{\omega}(x-y)\Big|\, dy\\[1ex]
&\leq C\int_\mathbb{R} \Big| \Big({\bf 1}_{[|y|\leq 1]}(y)+{\bf 1}_{[|y|> 1]}(y)\frac{1}{|y|}\Big)\overline{\omega}(x-y)\Big|\, dy\\[1ex]
&\leq C\|\overline{\omega}\|_p,\quad\mbox{$x\in\mathbb{R},$}
 \end{align*}
 it follows that $K_j[\overline{\omega}]\in L_2(\mathbb{R})$ for all $-N+1\leq j\leq N$.
 Invoking  (\ref{REL:LER}), we then get
 \[
\pi_j^\varepsilon\overline{\omega}= (\lambda-\chi_j^\varepsilon\mathbb{A}(f)\chi_j^\varepsilon)^{-1}[K_j[\overline{\omega}]]\in L_p(\mathbb{R})\cap L_2(\mathbb{R}),\qquad \mbox{$-N+1\leq j\leq N,$}
 \]
 and therewith $\overline{\omega}\in L_2(\mathbb{R})$.
 Since $ \lambda- \mathbb{A}(f)\in{\rm Isom}(L_2(\mathbb{R}))$ for all $\lambda\in\mathbb{R}$ with $|\lambda|\geq 1$,   we conclude that the equation $(\lambda- \mathbb{A}(f))[\overline{\omega}]=0$ in $L_p(\mathbb{R})$ has only
 the trivial solution.\medskip

 \noindent{Step 2.} We now prove there exists $C>0$ with the property  
 \begin{align}\label{DE:EST12}
 \|(\lambda-\mathbb{A}(f))[\overline{\omega}]\|_p\geq C\|\overline{\omega}\|_p \qquad \mbox{for all $\overline{\omega}\in L_p(\mathbb{R})$ and $\lambda\in\mathbb{R}$ with $|\lambda|\geq1$}.
 \end{align}
Indeed, assuming   the claim is false, we may find a    sequence $(\overline{\omega}_n)_n\subset L_p(\mathbb{R})$ and a 
bounded sequence ${(\lambda_n)_n\subset\mathbb{R}}$ with the properties   $|\lambda_n|\geq1,$ $\|\overline{\omega}_n\|_p=1$ for all $n\in\mathbb{N}$, and such that
 ${(\lambda_n-\mathbb{A}(f))[\overline{\omega}_n]=:\varphi_n\to0}$ in $L_p(\mathbb{R})$.
 After possibly extracting a subsequence we may assume that $\lambda_n\to\lambda$ in $\mathbb{R}$ and $\overline{\omega}_n\rightharpoonup \overline{\omega}$ in $L_p(\mathbb{R}).$
 In virtue of (\ref{PRO1''}) it holds that 
 \begin{align*} 
\langle(\lambda_n-\mathbb{A}(f))[\overline{\omega}_n]|h\rangle_{L_2}=\langle\overline{\omega}_n|(\lambda_n-(\mathbb{A}(f))^*)[h]\rangle_{L_2}
\end{align*}
for all $n\in\mathbb{N}$ and $h\in L_{p'}(\mathbb{R})$.
Passing to the limit $n\to\infty$ in the previous equation it results that $\langle(\lambda-\mathbb{A}(f))[\overline{\omega}]|h\rangle_{L_2}=0$   for all
$h\in L_{p'}(\mathbb{R})$. 
Since $\lambda-\mathbb{A}(f)$ is injective, we get~$\overline{\omega}=0$. 

Let now $\varepsilon>0$ be chosen such that $(\lambda_n-\chi_j^\varepsilon\mathbb{A}(f)\chi_j^\varepsilon),\, (\lambda-\chi_j^\varepsilon\mathbb{A}(f)\chi_j^\varepsilon)\in  {\rm Isom}(L_p(\mathbb{R}))$  for all~$-N+1\leq j\leq N$ and all $n\in\mathbb{N}$.
Such an $\varepsilon$ exists in virtue of Lemma \ref{L:comm'} and of the fact that $|\lambda_n|\geq 1$ for all $n\in\mathbb{N}$.
Because of
 \[
 1=\|\overline{\omega}_n\|_p\leq \sum_{j=-N+1}^N\|\pi_j^\varepsilon\overline{\omega}_n\|_p,
 \]
there exists ${-N+1\leq j_*\leq N}$ and a subsequence of $(\overline{\omega}_n)_n$ (not relabeled)    such that 
 \begin{align}\label{contr}
 \|\pi_{j_*}^\varepsilon\overline{\omega}_n\|_p\geq (2N)^{-1} \qquad\mbox{for all $n\in\mathbb{N}$.}
 \end{align}
From $(\lambda_n-\mathbb{A}(f))[\overline{\omega}_n]=\varphi_n$ it then follows 
 \begin{equation}\label{nice}
 \begin{aligned}
 \pi_{j_*}^\varepsilon \overline{\omega}_n&=(\lambda_n-\chi_{j_*}^\varepsilon\mathbb{A}(f)\chi_{j_*}^\varepsilon)^{-1} [K_{j_*}[ \overline{\omega}_n]  ]+(\lambda_n-\chi_{j_*}^\varepsilon\mathbb{A}(f)\chi_{j_*}^\varepsilon)^{-1}[\pi_{j_*}^\varepsilon\varphi_n] 
\end{aligned}  
\end{equation} 
for all $n\in\mathbb{N}$. 
Recalling Lemma \ref{L:comp}, we obtain    $K_{j^*}[\overline{\omega}_n]\to0$ in $L_p(\mathbb{R})$.
Furthermore, taking into account that $\lambda_n-\chi_{j_*}^\varepsilon\mathbb{A}(f)\chi_{j_*}^\varepsilon \to \lambda-\chi_{j_*}^\varepsilon\mathbb{A}(f)\chi_{j_*}^\varepsilon$  in $\mathcal{L}(L_p(\mathbb{R}))$, we deduce from (\ref{nice}) that~$\pi_{j_*}^\varepsilon \overline{\omega}_n\to 0$ in $L_p(\mathbb{R})$, which contradicts (\ref{contr}).

 We have thus established the validity of   (\ref{DE:EST12}). 
 Since $\lambda-\mathbb{A}(f)\in{\rm Isom}(L_p(\mathbb{R}))$ for $|\lambda| $  sufficiently large,  the   method of continuity, cf. \cite[Proposition I.1.1.1]{Am95},  leads us to the conclusion that
 ${\lambda-\mathbb{A}(f)\in{\rm Isom}(L_p(\mathbb{R}))}$ for all $|\lambda|\geq 1.$
 This completes the proof. 
 \end{proof}

We now consider the case $p\in(2,\infty)$. 
From the proof of Theorem \ref{T:I1} we may infer that  if ${\lambda-\mathbb{A}(f)\in\mathcal{L}(L_p(\mathbb{R}))}$ is injective for all $\lambda\in\mathbb{R}$ with  $|\lambda|\geq1$, then $\lambda-\mathbb{A}(f)\in{\rm Isom}(L_p(\mathbb{R}))$ for all such $\lambda$.
The arguments used to establish the injectivity property of   $\lambda-\mathbb{A}(f)$ in the case $p\in(1,2]$ however do not work for $p\in(2,\infty)$ and therefore a new strategy is needed. 
 
\begin{thm}\label{T:I2}  
Let $p\in(2,\infty)$ and $s\in(1+1/p,2)$. 
Given  $f\in W^s_p(\mathbb{R})$  and $\lambda\in\mathbb{R}$ with  $|\lambda|\geq1$, it holds 
\begin{align*}
\lambda-\mathbb{A}(f)\in{\rm Isom}(L_p(\mathbb{R})).
\end{align*}
\end{thm}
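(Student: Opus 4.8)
The plan is to prove that $\lambda-\mathbb{A}(f)$ is injective on $L_p(\mathbb{R})$ for every $\lambda\in\mathbb{R}$ with $|\lambda|\geq1$, since, as noted in the paragraph preceding the statement, this already yields that $\lambda-\mathbb{A}(f)\in{\rm Isom}(L_p(\mathbb{R}))$ for all such $\lambda$ via the method of continuity (cf. \cite[Proposition~I.1.1.1]{Am95}) and the compactness supplied by Lemma~\ref{L:comp}, exactly as in Step~2 of the proof of Theorem~\ref{T:I1}. So let $\overline{\omega}\in L_p(\mathbb{R})$ satisfy $(\lambda-\mathbb{A}(f))[\overline{\omega}]=0$; the strategy is to show, by a localization argument, that in fact $\overline{\omega}\in L_2(\mathbb{R})$, after which the $L_2$-isomorphism property recalled at the beginning of this section forces $\overline{\omega}=0$. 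The difficulty is that for $p>2$ the solution $\overline{\omega}$ cannot be promoted from $L_p(\mathbb{R})$ to $L_2(\mathbb{R})$ directly, so the gain of $L_2$-integrability has to be extracted from the \emph{commutator} terms in the localization.

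First I would record that Lemma~\ref{L:comm'} in fact holds for \emph{every} $q\in(1,\infty)$, not only for $q\geq p$: for $|j|\leq N-1$ its proof never uses $q\geq p$, and for $j=N$ the only term estimated there through an $L_p$-norm of $f'$ on the tail (the term $T_{2a}$) can, when $q<p$, instead be bounded by $\|f'\|_{L_\infty([|x|>1/\varepsilon-1])}$ times the $L_q$-operator norm of the truncated operator $g\mapsto\int_{[|y|>1]}g(\cdot-y)/y\,dy$, which is finite for all $q\in(1,\infty)$. Since $\|f'\|_{L_\infty([|x|>1/\varepsilon-1])}\to0$ and $\|f'\|_{L_p([|x|>1/\varepsilon-1])}\to0$ as $\varepsilon\to0$, I can fix $\varepsilon\in(0,1)$ small enough that $\|\chi_j^\varepsilon\mathbb{A}(f)\chi_j^\varepsilon\|_{\mathcal{L}(L_p(\mathbb{R}))}<1$ and $\|\chi_j^\varepsilon\mathbb{A}(f)\chi_j^\varepsilon\|_{\mathcal{L}(L_2(\mathbb{R}))}<1$ for all $-N+1\leq j\leq N$. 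Consequently $\lambda-\chi_j^\varepsilon\mathbb{A}(f)\chi_j^\varepsilon\in{\rm Isom}(L_p(\mathbb{R}))\cap{\rm Isom}(L_2(\mathbb{R}))$, with inverse given by one and the same Neumann series in both spaces. As in Theorem~\ref{T:I1}, the equation $(\lambda-\mathbb{A}(f))[\overline{\omega}]=0$ is equivalent to $(\lambda-\chi_j^\varepsilon\mathbb{A}(f)\chi_j^\varepsilon)[\pi_j^\varepsilon\overline{\omega}]=K_j[\overline{\omega}]$ for $-N+1\leq j\leq N$, with $K_j$ the operators of Lemma~\ref{L:comp}.

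The key step is to show that $K_j[\overline{\omega}]\in L_2(\mathbb{R})$ for every $j$. The pointwise bound $|K_j[\overline{\omega}](x)|\leq C\int_{\mathbb{R}}\big(\mathbf 1_{[|y|\leq1]}+\mathbf 1_{[|y|>1]}|y|^{-1}\big)|\overline{\omega}(x-y)|\,dy\leq C\|\overline{\omega}\|_p$ from the proof of Theorem~\ref{T:I1} (which is valid for all $p\in(1,\infty)$) already gives $K_j[\overline{\omega}]\in L_\infty(\mathbb{R})$. For $|j|\leq N-1$ the function $K_j[\overline{\omega}]$ is moreover supported in $\supp\chi_j^\varepsilon$, hence is bounded with compact support and thus lies in $L_2(\mathbb{R})$. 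For $j=N$ I would use that $\pi_N^\varepsilon\equiv1$ on $\{|x|>1/\varepsilon\}$, so that $\delta_{[x,y]}\pi_N^\varepsilon$ vanishes unless $|x-y|\leq1/\varepsilon$; hence for $|x|\geq2/\varepsilon$ the integral defining $K_N[\overline{\omega}](x)$ runs only over $\{|x-y|\leq1/\varepsilon\}$, where $|y|\geq|x|/2$, which yields
\[
|K_N[\overline{\omega}](x)|\leq C_\varepsilon\Big(\frac{|f'(x)|}{|x|}+\frac{1}{|x|^2}\Big)\|\overline{\omega}\|_p,\qquad |x|\geq2/\varepsilon.
\]
Since $f'\in L_p(\mathbb{R})\cap L_\infty(\mathbb{R})$, the right-hand side is square-integrable over $\{|x|\geq2/\varepsilon\}$, and together with the $L_\infty$-bound on $\{|x|<2/\varepsilon\}$ this gives $K_N[\overline{\omega}]\in L_2(\mathbb{R})$. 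Now, because $K_j[\overline{\omega}]\in L_p(\mathbb{R})\cap L_2(\mathbb{R})$ and $\chi_j^\varepsilon\mathbb{A}(f)\chi_j^\varepsilon$ is a contraction on both $L_p(\mathbb{R})$ and $L_2(\mathbb{R})$, the Neumann-series representation of $\pi_j^\varepsilon\overline{\omega}=(\lambda-\chi_j^\varepsilon\mathbb{A}(f)\chi_j^\varepsilon)^{-1}[K_j[\overline{\omega}]]$ converges in both spaces to the same element, whence $\pi_j^\varepsilon\overline{\omega}\in L_2(\mathbb{R})$ for every $j$. Using $\sum_{j}(\pi_j^\varepsilon)^2=1$ we get $\overline{\omega}=\sum_{j}\pi_j^\varepsilon(\pi_j^\varepsilon\overline{\omega})\in L_2(\mathbb{R})$, and since $\lambda-\mathbb{A}(f)\in{\rm Isom}(L_2(\mathbb{R}))$ for $|\lambda|\geq1$ we conclude $\overline{\omega}=0$.

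The main obstacle is exactly the point just exploited: although $\overline{\omega}$ itself cannot be moved from $L_p(\mathbb{R})$ into $L_2(\mathbb{R})$ when $p>2$, the commutators $K_j[\overline{\omega}]$ do lie in $L_2(\mathbb{R})$ — automatically for the localized pieces $|j|\leq N-1$, and for $j=N$ because the commutator structure forces $|y|$ to be comparable to $|x|$ in the defining integral, producing the quadratic decay displayed above. The accompanying (routine but necessary) technical point is the extension of Lemma~\ref{L:comm'} to exponents $q<p$, for which only the single tail term $T_{2a}$ in its proof has to be reexamined.
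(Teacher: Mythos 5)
Your proof is correct and takes a genuinely different route from the paper's. You reuse the localization apparatus from the proof of Theorem~\ref{T:I1} for $p\le 2$ — the families $\{\pi_j^\varepsilon\}$, $\{\chi_j^\varepsilon\}$, the commutators $K_j$ of Lemma~\ref{L:comp}, and the contraction estimate of Lemma~\ref{L:comm'} — and push it through $p>2$ by means of two additions: (i) extending Lemma~\ref{L:comm'} to exponents $q<p$, which is needed because you must make $\chi_j^\varepsilon\mathbb{A}(f)\chi_j^\varepsilon$ a contraction on $L_2(\mathbb{R})$ with $2<p$, and you correctly identify that only the tail term $T_{2a}$ in that proof requires a new bound (via the $L_q$-boundedness of $g\mapsto\int_{[|y|>1]}g(\cdot-y)/y\,dy$ together with the $L_\infty$-smallness of $f'$ far out); and (ii) showing directly that $K_N[\overline{\omega}]\in L_2(\mathbb{R})$ by noting that the commutator kernel annihilates the near-diagonal region for large $|x|$, so that $|y|\gtrsim|x|$ throughout the defining integral and the pointwise bound decays like $|f'(x)|/|x|+|x|^{-2}$ (strictly there is an additional $|f(x)|/|x|^2$ contribution, but it is absorbed into $C/|x|^2$ since $f$ is bounded). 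The paper takes a different path: it introduces a smooth splitting $\mathbb{A}(f)=\mathbb{A}_{1,\varepsilon}+\mathbb{A}_{2,\varepsilon}$ by cutting off the \emph{function} $f$ (rather than the kernel) with a cutoff $a_\varepsilon$ supported in $\{|x|\ge\varepsilon^{-1}\}$; the piece $\mathbb{A}_{1,\varepsilon}$ has small operator norm on every $L_q$ by Lemma~\ref{L:MP1}~(i) alone, thereby bypassing any version of Lemma~\ref{L:comm'} at $q=2$, and $\mathbb{A}_{2,\varepsilon}[\overline{\omega}]\in L_2\cap L_p$ is obtained by the decomposition $\overline{\omega}=a_{\varepsilon^2}\overline{\omega}+(1-a_{\varepsilon^2})\overline{\omega}$, using compact support of the second piece (this is exactly where $p>2$ enters) and separation of supports between $(1-a_\varepsilon)f$ and $a_{\varepsilon^2}\overline{\omega}$ for the first. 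Your route has the merit of treating $p\le2$ and $p>2$ under a single localization scheme; the paper's cutoff decomposition is technically lighter because it obviates both the $j=N$ decay analysis for $K_N$ and the modification of Lemma~\ref{L:comm'}, at the cost of a separate argument for the two ranges of $p$.
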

\begin{proof}
To each  $\varepsilon\in(0,1)$ we associate a function $a_\varepsilon\in {\rm C}^\infty(\mathbb{R},[0,1]) $ with the   properties that 
${a_\varepsilon(x)=0}$ for $|x|<\varepsilon^{-1},$   $a_\varepsilon(x)=1$ for $|x|>\varepsilon^{-1}+1$, and $|a_\varepsilon'|\leq 2$. 
Recalling (\ref{FormulaA}), it is suitable to write
\begin{align*}
\mathbb{A}(f)=\mathbb{A}_{1,\varepsilon}+\mathbb{A}_{2,\varepsilon},
\end{align*}
where
\begin{align*}
&\pi\mathbb{A}_{1,\varepsilon}:=(a_\varepsilon f)'B_{0,1}(f)-B_{1,1}(f)[a_\varepsilon f,\cdot],\\[1ex]
&\pi\mathbb{A}_{2,\varepsilon}:=[(1-a_\varepsilon)f]'B_{0,1}(f)-B_{1,1}(f)[(1-a_\varepsilon) f,\cdot].
\end{align*}
According to Lemma \ref{L:MP1}~(i), it holds that $\mathbb{A}_{j,\varepsilon}\in\mathcal{L}(L_q(\mathbb{R}))$ for all $1<q<\infty$ and
\begin{align*}
\|\mathbb{A}_{1,\varepsilon}\|_{\mathcal{L}(L_q(\mathbb{R}))} \leq C \|(a_\varepsilon f)'\|_{\infty}\leq C  \|  f\|_{W^1_\infty([|x|>\varepsilon^{-1}])} \underset{\varepsilon\to0}\to0,
\end{align*}
since   $f^{(k)}\to0$ for $|x|\to\infty,\, k=0,\, 1$. 
Hence, if  $\varepsilon$ is sufficiently  small, then $$\lambda-\mathbb{A}_{1,\varepsilon}\in{\rm Isom}(L_p(\mathbb{R}))\cap {\rm Isom}(L_2(\mathbb{R}))$$ for all $|\lambda|\geq 1$.

Let now $\lambda\in\mathbb{R}$ with  $|\lambda|\geq1$ and $\overline{\omega}\in L_p(\mathbb{R})$ satisfy
$(\lambda-\mathbb{A}(f))[\overline{\omega}]=0,$
or equivalently
\begin{align*}
(\lambda-\mathbb{A}_{1,\varepsilon})[\overline{\omega}]=\mathbb{A}_{2,\varepsilon}[\overline{\omega}].
\end{align*}
Assuming that 
\begin{align}\label{assum}
\mathbb{A}_{2,\varepsilon}[\overline{\omega}]\in L_2(\mathbb{R})\cap L_p(\mathbb{R})\qquad\mbox{for all $\varepsilon$ that are sufficiently small},
\end{align}
the previous equality together with $\lambda-\mathbb{A}_{1,\varepsilon}\in{\rm Isom}(L_p(\mathbb{R}))\cap {\rm Isom}(L_2(\mathbb{R}))$  yields $w\in L_2(\mathbb{R})$. 
Recalling that $\lambda-\mathbb{A}(f) $ is an $L_2(\mathbb{R})$-isomorphism, we may then conclude that $\overline{\omega}=0$.

It thus remains to establish (\ref{assum}).
To this end we write
 \begin{align*}
 \mathbb{A}_{2,\varepsilon}[\overline{\omega}]= \mathbb{A}_{2,\varepsilon}[a_{\varepsilon^2}\overline{\omega}]+\mathbb{A}_{2,\varepsilon}[(1-a_{\varepsilon^2})\overline{\omega}].
 \end{align*}
In view of $p\in(2,\infty)$ it holds that $(1-a_{\varepsilon^2})\overline{\omega}\in L_2(\mathbb{R})\cap L_p(\mathbb{R})$ and therefore we obtain that~${\mathbb{A}_{2,\varepsilon}[(1-a_{\varepsilon^2})\overline{\omega}]\in L_2(\mathbb{R})\cap L_p(\mathbb{R})}$.
Letting $g_\varepsilon:=(1-a_\varepsilon)f$, it holds that
\begin{align*}
\|\mathbb{A}_{2,\varepsilon}[a_{\varepsilon^2}\overline{\omega}]\|_2&\leq\Big(\int_{\mathbb{R}}\Big|\int_\mathbb{R} \frac{yg_\varepsilon'(x)-\delta_{[x,y]} g_\varepsilon}{y^2+(\delta_{[x,y]} f)^2} (a_{\varepsilon^2}\overline{\omega})(x-y)\, dy\Big|^{2}\, dx\Big)^{1/2}\\[1ex]
&\leq \Big(\int_{\mathbb{R}}\Big|\int_{[|y|\leq1]} \frac{yg_\varepsilon'(x)-\delta_{[x,y]} g_\varepsilon}{y^2+(\delta_{[x,y]} f)^2} (a_{\varepsilon^2}\overline{\omega})(x-y)\, dy\Big|^{2}\, dx\Big)^{1/2}\\[1ex]
&\hspace{0.45cm}+\Big(\int_{\mathbb{R}}\Big|\int_{[|y|>1]} \frac{yg_\varepsilon'(x)-\delta_{[x,y]} g_\varepsilon}{y^2+(\delta_{[x,y]} f)^2} (a_{\varepsilon^2}\overline{\omega})(x-y)\, dy\Big|^{2}\, dx\Big)^{1/2}\\[1ex]
&=:T_1+T_2.
\end{align*}
If $\varepsilon$ is sufficiently small, then $T_1=0$. Indeed, let $x\in\mathbb{R}$.  Then   $|x|<\varepsilon^{-2}-1$ or $|x|>\varepsilon^{-1}+2.$
In the  case when $|x|<\varepsilon^{-2}-1,$ it follows that  $|x-y|<\varepsilon^{-2}$ and therewith $a_{\varepsilon^2}(x-y)=0.$
In the other case when $|x|>\varepsilon^{-1}+2 $  it holds $|x-y|>\varepsilon^{-1}+1$ and 
$g_\varepsilon'(x)=g_\varepsilon(x)=g_\varepsilon(x-y)=0.$
Concerning $T_2$, we first note that $g_\varepsilon a_{\varepsilon^2} =0$.
 Given  $|x|>\varepsilon^{-1}+1$, we get  $g_\varepsilon'(x)=g_\varepsilon(x)=0,$  and   H\"older's inequality leads to
\begin{align*}
T_2&=\Big(\int_{[|x|\leq \varepsilon^{-1}+1]}\Big|\int_{[|y|>1]} \frac{yg_\varepsilon'(x)-\delta_{[x,y]} g_\varepsilon}{y^2+(\delta_{[x,y]} f)^2} (a_{\varepsilon^2}\overline{\omega})(x-y)\, dy\Big|^{2}\, dx\Big)^{1/2}\\[1ex]
&\leq C \|g_\varepsilon'\|_\infty\Big(\int_{[|x|\leq \varepsilon^{-1}+1]} \Big(\int_{[|y|>1]} \frac{1}{|y|}  |a_{\varepsilon^2}\overline{\omega}|(x-y)\, dy\Big)^{2}\, dx\Big)^{1/2}\\[1ex]
&\leq C(\varepsilon)\|g_\varepsilon'\|_\infty\|\overline{\omega}\|_{L_p}.
\end{align*}
This proves (\ref{assum}) and  the injectivity of ${\lambda-\mathbb{A}(f)\in\mathcal{L}(L_p(\mathbb{R}))}$.
\end{proof}

Finally, we establish the invertibility of $\lambda-\mathbb{A}(f)$, $\lambda\in\mathbb{R}$ with  $|\lambda|\geq1$, in $\mathcal{L}(W^{s-1}_p(\mathbb{R}))$.

\begin{thm}\label{T:INV}       
Let $p\in(1,\infty),$ $s\in(1+1/p,2),$ and $f\in W^s_p(\mathbb{R})$. 
Given $\lambda\in\mathbb{R}$ with  $|\lambda|\geq1,$ it holds 
\begin{align*}
\lambda-\mathbb{A}(f)\in{\rm Isom}(W^{s-1}_p(\mathbb{R})).
\end{align*}
\end{thm}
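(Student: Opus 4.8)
The plan is to reduce the statement to a uniform a priori estimate in $W^{s-1}_p(\mathbb{R})$ and then invoke the method of continuity. Injectivity of $\lambda-\mathbb{A}(f)$ on $W^{s-1}_p(\mathbb{R})$ is immediate, since $W^{s-1}_p(\mathbb{R})\hookrightarrow L_p(\mathbb{R})$ and $\lambda-\mathbb{A}(f)$ is injective on $L_p(\mathbb{R})$ by Theorems~\ref{T:I1} and~\ref{T:I2}. The main work is to produce a constant $c=c(p,s,\|f\|_{W^s_p})>0$ with
\begin{equation}\label{eq:APW}
\|(\lambda-\mathbb{A}(f))[\overline{\omega}]\|_{W^{s-1}_p}\geq c\,\|\overline{\omega}\|_{W^{s-1}_p}\qquad\text{for all }\overline{\omega}\in W^{s-1}_p(\mathbb{R}),\ |\lambda|\geq1.
\end{equation}
Granting \eqref{eq:APW}: Lemma~\ref{L:REG} gives $\mathbb{A}(f)\in\mathcal{L}(W^{s-1}_p(\mathbb{R}))$, so a Neumann series yields $\lambda-\mathbb{A}(f)\in{\rm Isom}(W^{s-1}_p(\mathbb{R}))$ for $|\lambda|$ large, and then \eqref{eq:APW} together with \cite[Proposition~I.1.1.1]{Am95}, applied separately on the connected sets $[1,\infty)$ and $(-\infty,-1]$, gives $\lambda-\mathbb{A}(f)\in{\rm Isom}(W^{s-1}_p(\mathbb{R}))$ for every $|\lambda|\geq1$.

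The engine is the translation covariance $\tau_\xi\big(\mathbb{A}(f)[\overline{\omega}]\big)=\mathbb{A}(\tau_\xi f)[\tau_\xi\overline{\omega}]$, which is read off directly from~(\ref{OpA}); it gives, for $\overline{\omega}\in W^{s-1}_p(\mathbb{R})$ and $\xi\neq0$,
\begin{equation*}
(\lambda-\mathbb{A}(f))[\overline{\omega}]-\tau_\xi\big((\lambda-\mathbb{A}(f))[\overline{\omega}]\big)=(\lambda-\mathbb{A}(f))[\overline{\omega}-\tau_\xi\overline{\omega}]+\big(\mathbb{A}(\tau_\xi f)-\mathbb{A}(f)\big)[\tau_\xi\overline{\omega}].
\end{equation*}
The crucial ingredient is the commutator-type bound
\begin{equation}\label{eq:commW}
\big\|\big(\mathbb{A}(\tau_\xi f)-\mathbb{A}(f)\big)[\overline{\omega}]\big\|_p\leq C\,\|\tau_\xi f'-f'\|_p\,\|\overline{\omega}\|_{W^{s-1}_p},\qquad |\xi|\leq1,\ \overline{\omega}\in W^{s-1}_p(\mathbb{R}),
\end{equation}
with $C=C(p,s,\|f\|_{W^s_p})$. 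To prove it I would use~(\ref{FormulaA}) to write $\pi\big(\mathbb{A}(\tau_\xi f)-\mathbb{A}(f)\big)[\overline{\omega}]$ as the sum of the four terms
\begin{equation*}
[(\tau_\xi f)'-f']\,B_{0,1}(\tau_\xi f)[\overline{\omega}],\quad f'\big(B_{0,1}(\tau_\xi f)-B_{0,1}(f)\big)[\overline{\omega}],\quad B_{1,1}(\tau_\xi f)[\tau_\xi f-f,\overline{\omega}],\quad \big(B_{1,1}(\tau_\xi f)-B_{1,1}(f)\big)[f,\overline{\omega}],
\end{equation*}
then bound the first by $\|\tau_\xi f'-f'\|_p$ times $\|B_{0,1}(\tau_\xi f)[\overline{\omega}]\|_\infty\leq C\|\overline{\omega}\|_{W^{s-1}_p}$ (Lemma~\ref{L:MP3} and $W^{s-1}_p(\mathbb{R})\hookrightarrow {\rm C}^{s-1-1/p}(\mathbb{R})$), and for the remaining three expand the differences of the $B_{n,m}$'s by~(\ref{spr3}) and place the factor $\tau_\xi f-f$ into the slot controlled by the $\|(\cdot)'\|_p$-norm in estimate~(\ref{REF1}) of Lemma~\ref{L:MP2} (using the symmetry of $B_{n,m}$ in its $b$-arguments and $\|f\|_{W^s_p}<\infty$). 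It is exactly the combination of~(\ref{REF1}), which bounds by $\|b_1'\|_p$ and \emph{not} by $\|b_1'\|_\infty$, with the $W^{s-1}_p$-boundedness of $B_{0,1}$ that makes \eqref{eq:commW} sharp enough: the crude bound $\|(\mathbb{A}(\tau_\xi f)-\mathbb{A}(f))[\overline{\omega}]\|_p\leq C\|\tau_\xi f'-f'\|_\infty\|\overline{\omega}\|_p\leq C|\xi|^{s-1-1/p}\|\overline{\omega}\|_p$ is not integrable against $|\xi|^{-1-(s-1)p}$ near $\xi=0$, whereas $\xi\mapsto\|\tau_\xi f'-f'\|_p$ is, precisely because $f'\in W^{s-1}_p(\mathbb{R})$.

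With \eqref{eq:commW} in hand, \eqref{eq:APW} follows. By Theorems~\ref{T:I1}--\ref{T:I2} there is $C_0>0$ with $\|(\lambda-\mathbb{A}(f))[\overline{\omega}]\|_p\geq C_0\|\overline{\omega}\|_p$ for all $\overline{\omega}\in L_p(\mathbb{R})$, $|\lambda|\geq1$ (cf.~(\ref{DE:EST12})). Plugging the displayed identity and~\eqref{eq:commW} into the $W^{s-1}_p$-seminorm of $(\lambda-\mathbb{A}(f))[\overline{\omega}]$, using $(a-b)_+^p\geq2^{-p}a^p-b^p$ for $a,b\geq0$, and integrating only over $\{|\xi|<\delta\}$, one gets
\begin{equation*}
\big[(\lambda-\mathbb{A}(f))[\overline{\omega}]\big]_{W^{s-1}_p}^p\geq 2^{-p}C_0^p\int_{|\xi|<\delta}\frac{\|\overline{\omega}-\tau_\xi\overline{\omega}\|_p^p}{|\xi|^{1+(s-1)p}}\,d\xi-C^p\eta(\delta)\,\|\overline{\omega}\|_{W^{s-1}_p}^p,
\end{equation*}
where $\eta(\delta):=\int_{|\xi|<\delta}\|\tau_\xi f'-f'\|_p^p\,|\xi|^{-1-(s-1)p}\,d\xi\to0$ as $\delta\to0$. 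Since moreover $\int_{|\xi|<\delta}\|\overline{\omega}-\tau_\xi\overline{\omega}\|_p^p\,|\xi|^{-1-(s-1)p}\,d\xi\geq[\overline{\omega}]_{W^{s-1}_p}^p-c_1(\delta)\|\overline{\omega}\|_p^p$, fixing $\delta$ so small that $C^p\eta(\delta)\leq 2^{-p-1}C_0^p$ yields, for all $|\lambda|\geq1$,
\begin{equation*}
\big[(\lambda-\mathbb{A}(f))[\overline{\omega}]\big]_{W^{s-1}_p}^p\geq 2^{-p-1}C_0^p\,[\overline{\omega}]_{W^{s-1}_p}^p-C_2\,\|\overline{\omega}\|_p^p
\end{equation*}
with $C_2$ independent of $\lambda$ and $\overline{\omega}$. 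Now \eqref{eq:APW} follows by a contradiction argument: if $\overline{\omega}_n\in W^{s-1}_p(\mathbb{R})$ with $\|\overline{\omega}_n\|_{W^{s-1}_p}=1$ and $|\lambda_n|\geq1$ satisfy $(\lambda_n-\mathbb{A}(f))[\overline{\omega}_n]\to0$ in $W^{s-1}_p(\mathbb{R})$, then $\|\overline{\omega}_n\|_p\to0$ by the $L_p$-bound, hence $[\overline{\omega}_n]_{W^{s-1}_p}\to1$, and the last display forces $0\geq2^{-p-1}C_0^p>0$, which is absurd.

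The step I expect to be the real obstacle is the commutator estimate~\eqref{eq:commW}: one must gain a full derivative's worth of $\xi$-integrability over the naive operator-norm bound, which is possible only because the refined mapping properties of the multilinear operators $B_{n,m}$ (Lemmas~\ref{L:MP2} and~\ref{L:MP3})—in particular the $L_p$-estimate~(\ref{REF1}) by $\|b_1'\|_p$ and the $W^{s-1}_p$-boundedness of $B_{0,1}$—are at our disposal. The rest (the translation covariance, the four-term split, the $(a-b)_+$ inequality, the contradiction argument, and the method of continuity) is routine.
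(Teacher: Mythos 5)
Your proposal is correct and shares the paper's essential skeleton: the translation covariance $\tau_\xi(\mathbb{A}(f)[\overline{\omega}])=\mathbb{A}(\tau_\xi f)[\tau_\xi\overline{\omega}]$, a commutator bound on $(\mathbb{A}(\tau_\xi f)-\mathbb{A}(f))[\overline{\omega}]$ in $L_p$ extracted from (\ref{FormulaA}), (\ref{spr3}), and (\ref{REF1}), the $L_p$-isomorphism from Theorems~\ref{T:I1}--\ref{T:I2}, and the method of continuity. Where you part company with the paper is in how the commutator term is absorbed. The paper proves the sharper commutator estimate $\|(\mathbb{A}(\tau_\xi f)-\mathbb{A}(f))[\overline{\omega}]\|_p\leq C\|f'-\tau_\xi f'\|_p\,\|\overline{\omega}\|_{W^{s/2-1/2+1/2p}_p}$ by applying (\ref{REF1}) with $r=s/2+1/2+1/2p$ and Lemma~\ref{L:MP1}(ii) with $\tau=s/2-1/2+1/2p$; because $s/2-1/2+1/2p$ lies \emph{strictly between} $0$ and $s-1$, the paper can then invoke the interpolation identity (\ref{IP}) and Young's inequality to write $\|\overline{\omega}\|_{W^{s/2-1/2+1/2p}_p}\leq(2C_0)^{-1}\|\overline{\omega}\|_{W^{s-1}_p}+C\|\overline{\omega}\|_p$ and absorb directly, arriving at the a priori bound without a limiting argument. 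You instead take the coarser version of the commutator estimate (with $\|\overline{\omega}\|_{W^{s-1}_p}$ on the right, so that a direct absorption would be circular) and compensate by restricting the seminorm integral to $\{|\xi|<\delta\}$: the needed smallness comes from $\eta(\delta)=\int_{|\xi|<\delta}\|\tau_\xi f'-f'\|_p^p\,|\xi|^{-1-(s-1)p}\,d\xi\to0$, i.e.\ from the absolute continuity of the integral defining $[f']_{W^{s-1}_p}^p$, rather than from interpolation. This is a legitimate trade: your route avoids tracking the intermediate exponent and the interpolation lemma, at the cost of the $\delta$-truncation and the $(a-b)_+^p\geq 2^{-p}a^p-b^p$ manipulation. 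Two small remarks: the $L_\infty$-bound $\|B_{0,1}(\tau_\xi f)[\overline{\omega}]\|_\infty\leq C\|\overline{\omega}\|_{W^{s-1}_p}$ you want is Lemma~\ref{L:MP1}(ii), not Lemma~\ref{L:MP3}; and your closing contradiction argument can be replaced by a direct estimate (combine your displayed lower bound on the seminorm with $\|(\lambda-\mathbb{A}(f))[\overline{\omega}]\|_p\geq C_0\|\overline{\omega}\|_p$, weighted appropriately, to get $\|\overline{\omega}\|_{W^{s-1}_p}\leq C\|(\lambda-\mathbb{A}(f))[\overline{\omega}]\|_{W^{s-1}_p}$ outright), though the contradiction form is equally valid.
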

\begin{proof} 
Given $\overline{\omega}\in W^{s-1}_p(\mathbb{R})$ and $\lambda\in\mathbb{R}$ with  $|\lambda|\geq1$, let  $\varphi:=(\lambda-\mathbb{A}(f))[\overline{\omega}]$. 
Lemma~\ref{L:REG} then yields $\varphi\in W^{s-1}_p(\mathbb{R})$. 
Recalling Theorem~\ref{T:I1} and Theorem~\ref{T:I2}, it holds that 
\begin{align*}
[\overline{\omega}]^p_{W^{s-1}_p}&=\int_{\mathbb{R}}\frac{\|\overline{\omega}-\tau_{\xi}\overline{\omega}\|_p^p}{|\xi|^{1+(s-1)p}}\, d{\xi}\leq C\int_{\mathbb{R}}\frac{\|(\lambda-\mathbb{A}(\tau_\xi f))[\overline{\omega}-\tau_{\xi} \overline{\omega} ]\|_p^p}{|\xi|^{1+(s-1)p}}\, d{\xi}\\[1ex]
&\leq 2^pC\int_{\mathbb{R}}\frac{\|\varphi-\tau_{\xi}\varphi\|_p^p}{|\xi|^{1+(s-1)p}}\, d{\xi}+2^pC\int_{\mathbb{R}}\frac{\|(\mathbb{A}(f) -\mathbb{A}(\tau_{\xi}f))[\overline{\omega}])\|_p^p}{|\xi|^{1+(s-1)p}}\, d{\xi}\\[1ex]
&\leq 2^pC[\varphi]^p_{W^{s-1}_p}+2^pC\int_{\mathbb{R}}\frac{\|(\mathbb{A}(f) -\mathbb{A}(\tau_{\xi}f))[\overline{\omega}])\|_p^p}{|\xi|^{1+(s-1)p}}\, d{\xi},
\end{align*}
with $C\geq\max_{[|\lambda|\geq 1]\cap\mathbb{R}}\|(\lambda-\mathbb{A}(f))^{-1}\|_{\mathcal{L}(L_p(\mathbb{R}))}^p$.
Recalling (\ref{FormulaA}), we further compute
\begin{align*}
\|\mathbb{A}(f) -\mathbb{A}(\tau_{\xi}f))[\overline{\omega}]\|_p&\leq \|f'B_{0,1}(f)[\overline{\omega}]-\tau_{\xi}f'B_{0,1}(\tau_{\xi}f)[\overline{\omega}]\|_p\\[1ex]
&\hspace{0,45cm}+\|B_{1,1}(f)[f,\overline{\omega}]-B_{1,1}(\tau_{\xi}f)[\tau_{\xi}f,\overline{\omega}]\|_p,
\end{align*}
 and the relation (\ref{spr3}), Lemma \ref{L:MP1}~(ii) (with $\tau=s/2-1/2+1/2p\in(1/p,1) $ and $r=s$), and~(\ref{REF1}) (with $r=s/2+1/2+1/2p\in(1+1/p,2))$ yield
 \begin{align*}
&\hspace{-0.5cm}\|f'B_{0,1}(f)[\overline{\omega}]-\tau_{\xi}f'B_{0,1}(\tau_{\xi}f)[\overline{\omega}]\|_p\\[1ex]
&\leq \|f'-\tau_{\xi}f'\|_p\|B_{0,1}(f)[\overline{\omega}]\|_\infty+\|f'\|_\infty\|B_{2,2}(f,\tau_{\xi}f)[f+\tau_{\xi}f, f-\tau_{\xi}f,\overline{\omega}]\|_p\\[1ex]
&\leq C\|f'-\tau_{\xi}f'\|_p\|\overline{\omega}\|_{W^{s/2-1/2+1/2p}_p}, 
\end{align*}
where $C=C(\|f\|_{W^s_p})$, and by similar arguments
 \begin{align*}
&\hspace{-0.5cm}\|B_{1,1}(f)[f,\overline{\omega}]-B_{1,1}(\tau_{\xi}f)[\tau_{\xi}f,\overline{\omega}]\|_p\\[1ex]
&\leq \|B_{1,1}(f)[f-\tau_{\xi}f,\overline{\omega}]\|_p+\|B_{3,2}(f,\tau_{\xi}f)[f+\tau_{\xi}f, f-\tau_{\xi}f,\tau_{\xi}f,\overline{\omega}]\|_p\\[1ex]
&\leq C\|f'-\tau_{\xi}f'\|_{p}\|\overline{\omega}\|_{W^{s/2-1/2+1/2p}_p}.
\end{align*}
The latter estimates together with Theorem~\ref{T:I1} and Theorem~\ref{T:I2} imply there exists a constant $C_0$ (which depends only on $f$) such that 
\begin{align}\label{L:ES}
\|\overline{\omega}\|_{W^{s-1}_p}\leq C_0(\|(\lambda-\mathbb{A}(f))[\overline{\omega}]\|_{W^{s-1}_p}+\|\overline{\omega}\|_{W^{s/2-1/2+1/2p}_p}),\qquad \overline{\omega}\in W^{s-1}_p(\mathbb{R}).
\end{align} 
Using the interpolation property 
 \begin{align}\label{IP}
W^{(1-\theta)s_1+\theta s_2}_p(\mathbb{R})=(W^{s_1}_p(\mathbb{R}), W^{s_2}_p(\mathbb{R}))_{\theta,p}, \quad 0\leq s_1<s_1<\infty, \, (1-\theta)s_1+\theta s_2\not\in\mathbb{N},
 \end{align}
where $(\cdot,\cdot)_{\theta,p}$, $\theta\in(0,1), $ denotes the real interpolation functor of exponent $\theta$ and parameter~$p$, in the particular case $s_1=0,$ $s_2=s-1$, and  $\theta:=(s-1)^{-1}(s/2-1/2+1/2p)$,
it follows from Young's inequality that
 \[\|\overline{\omega}\|_{W^{s/2-1/2+1/2p}_p}\leq \frac{1}{2C_0}\|\overline{\omega}\|_{W^{s-1}_p}+C\|\overline{\omega}\|_p,\qquad \overline{\omega}\in W^{s-1}_p(\mathbb{R}).\]
This property combined with (\ref{L:ES}),  Theorem~\ref{T:I1}, and Theorem~\ref{T:I2} yields 
\begin{align*}
\|\overline{\omega}\|_{W^{s-1}_p}\leq C \|(\lambda-\mathbb{A}(f))[\overline{\omega}]\|_{W^{s-1}_p}, \quad\mbox{$\overline{\omega}\in W^{s-1}_p(\mathbb{R})$ and $\lambda\in\mathbb{R}$ with  $|\lambda|\geq1.$}
\end{align*}
The method of continuity \cite[Proposition I.1.1.1]{Am95} leads  now to the desired conclusion.
 \end{proof}

\section{The  abstract evolution problem}\label{Sec:4}

In this section we first use the results of Section \ref{Sec:3} to  formulate (\ref{P'})   as an evolution problem in $W^{s-1}_p(\mathbb{R})$ with  $f$ as the only unknown (see (\ref{NNEP})).  
Subsequently, we show that the Rayleigh-Taylor condition identifies a domain of parabolicity for (\ref{NNEP}), cf. Theorem \ref{T:AG}. 

Observing  that the Atwood number $a_\mu$ satisfies $|a_\mu|<1$, Theorem~\ref{T:INV} ensures that, for each~$f\in W^s_p(\mathbb{R})$, the equation $(\ref{P'})_2$ has a unique solution
\begin{align}\label{SO}
\overline{\omega}(f):=-C_{\Theta}(1+a_\mu\mathbb{A}(f))^{-1}[f'].
\end{align}
Moreover, Lemma \ref{L:REG} yields
\begin{align}\label{RSO}
[f\mapsto\overline{\omega}(f)]\in {\rm C}^\omega(W^s_p(\mathbb{R}), W^{s-1}_p(\mathbb{R})).
\end{align} 
We can thus reformulate  the system (\ref{P'}) as the abstract  evolution problem
\begin{align}\label{NNEP}
\frac{d f}{dt}=\Phi(f),\,\, t\geq0,\quad f(0)=f_0,
\end{align}
where the (fully) nonlinear and nonlocal  operator $\Phi:W^s_p(\mathbb{R})\to W^{s-1}_p(\mathbb{R})$ is defined by
\begin{align*}
\Phi(f):=\mathbb{B}(f)[\overline{\omega}(f)].
\end{align*}
In virtue of  (\ref{PRO1b}) and (\ref{RSO})  it holds
\begin{equation}\label{REG}
\Phi\in {\rm C}^\omega(W^s_p(\mathbb{R}), W^{s-1}_p(\mathbb{R})).
\end{equation}
It is important to point out that the operator $\Phi$ is fully nonlinear as the definition of the  function $\overline{\omega}(f)=-C_{\Theta}(1+a_\mu\mathbb{A}(f))^{-1}[f']\in W^{s-1}_p(\mathbb{R})$ requires that $f'\in W^{s-1}_p(\mathbb{R}),$
 but also the ''nonlinear argument'' $f$ in $(1+a_\mu\mathbb{A}(f))^{-1}$is required to belong to $W^{s}_p(\mathbb{R})$. 
 This  differs of course if $a_\mu=0$ and in this setting $\Phi$ has (in a suitable setting)  a quasilinear structure, cf.~\cite{MBV19}. 
 
The Rayleigh-Taylor condition can be simply formulated in our notation as
\begin{align}\label{RT}
C_{\Theta}+a_\mu\Phi(f)>0,
\end{align}
cf., e.g., \cite{MBV18}.
Since $\Phi(f)\in W^{s-1}_p(\mathbb{R})$, this condition implies, under the assumption\footnote{The case when $\Theta=0$ is trivial as the function $\overline{\omega}(f)$ defined in (\ref{SO}) (and therewith also $\Phi(f)$)
 is identically zero. Hence, for $\Theta=0,$ the initial surface is transported vertically  with constant velocity $V$ (and the Rayleigh-Taylor condition needs not to be imposed) and the velocities of the fluids
  are zero. } $\Theta\neq0$, that $\Theta>0$.
Restricting to the setting when $\Theta>0$, it follows from (\ref{REG}) that the set
\begin{align}\label{cO}
\mathcal{O}:=\{f\in W^s_p(\mathbb{R}) \,:\, C_{\Theta}+a_\mu\Phi(f)>0\}
\end{align} 
is an open subset  of $W^s_p(\mathbb{R}).$
The analysis below is devoted to showing that the Fr\'echet derivative $\partial\Phi(f_0)$ of $\Phi$ at $f_0\in\mathcal{O}$ generates an analytic semigroup in $\mathcal{L}( W^{s-1}_p(\mathbb{R}))$,
which in the notation from \cite{Am95} writes $-\partial\Phi(f_0)\in\mathcal{H}(W^s_p(\mathbb{R}), W^{s-1}_p(\mathbb{R})). $
This property identifies (\ref{NNEP}) as a parabolic evolution equation in $\mathcal{O}$ and facilitates us the use of abstract parabolic theory from \cite{L95} when solving it.

\begin{thm}\label{T:AG}
Let  $f_0\in \mathcal{O}$.
It then holds
\begin{align*}
-\partial\Phi(f_0)\in\mathcal{H}(W^s_p(\mathbb{R}), W^{s-1}_p(\mathbb{R})).
\end{align*}
\end{thm}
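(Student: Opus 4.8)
The plan is to prove $-\partial\Phi(f_0)\in\mathcal{H}(W^s_p(\mathbb{R}),W^{s-1}_p(\mathbb{R}))$ by computing the Fr\'echet derivative explicitly, extracting its leading-order part, and showing this leading part is (up to lower-order perturbations) a Fourier multiplier of the type $\omega\mapsto -\alpha(x)|\partial_x|\omega$ with a strictly positive, continuous coefficient $\alpha$; such operators are known to be generators. First I would differentiate $\Phi(f)=\mathbb{B}(f)[\overline{\omega}(f)]$ using the chain rule together with $\overline{\omega}(f)=-C_\Theta(1+a_\mu\mathbb{A}(f))^{-1}[f']$. This gives
\[
\partial\Phi(f_0)[h]=\partial\mathbb{B}(f_0)[h][\overline{\omega}(f_0)]+\mathbb{B}(f_0)[\partial\overline{\omega}(f_0)[h]],
\]
and differentiating $\overline{\omega}$ yields $\partial\overline{\omega}(f_0)[h]=-C_\Theta(1+a_\mu\mathbb{A}(f_0))^{-1}[h']+C_\Theta a_\mu(1+a_\mu\mathbb{A}(f_0))^{-1}[\partial\mathbb{A}(f_0)[h][(1+a_\mu\mathbb{A}(f_0))^{-1}[f_0']]]$. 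The term carrying the highest number of derivatives on $h$ is the one where $h'$ appears; in every other term $h$ enters only through $\mathbb{A}$ or $\mathbb{B}$ evaluated at $h$, and by the commutator estimates from Section~\ref{Sec:2} (Lemma~\ref{L:MP3}, Lemma~\ref{L:MP3b}) those contributions are lower order, i.e.\ they map $W^{s-1}_p(\mathbb{R})$ into itself and even gain regularity relative to the leading term.

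The decisive computation is to identify the principal symbol. Following the strategy of \cite{MBV18, MBV20}, I would freeze coefficients: replacing $f_0(x-y)$ by its linearization $f_0(x)-yf_0'(x)$ inside the singular kernels, the operators $\mathbb{A}(f_0)$ and $\mathbb{B}(f_0)$ reduce modulo lower-order terms to constant-in-$y$ multiples of the Hilbert transform $H$, namely $\mathbb{A}(f_0)\approx -H$-type operators localized via the $\varepsilon$-localization families, and $\mathbb{B}(f_0)[\,\cdot\,]'$ contributes a factor $|\partial_x|$. Tracking the $h'$-term through $\mathbb{B}(f_0)\circ(1+a_\mu\mathbb{A}(f_0))^{-1}$ and using that $(1+a_\mu\mathbb{A}(f_0))^{-1}$ acts to leading order as $(1-a_\mu^2)^{-1}(1+a_\mu H\text{-piece})$, one finds that the leading part of $-\partial\Phi(f_0)[h]$ is, in each localization patch centered at a point $\xi$, the operator $h\mapsto \alpha(\xi)(-\partial_x^2)^{1/2}h$ where
\[
\alpha(\xi)=\frac{C_\Theta+a_\mu\Phi(f_0)(\xi)}{1+(f_0'(\xi))^2}>0
\]
by the Rayleigh-Taylor condition defining $\mathcal{O}$. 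Here continuity of $\alpha$ uses $f_0\in W^s_p(\mathbb{R})\hookrightarrow {\rm C}^{s-1/p}(\mathbb{R})$ and $\Phi(f_0)\in W^{s-1}_p(\mathbb{R})\hookrightarrow {\rm C}^{s-1-1/p}(\mathbb{R})$, and positivity is exactly $f_0\in\mathcal{O}$.

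With the principal part identified, I would invoke a perturbation-and-localization argument in the spirit of \cite[Section~4]{MBV18} and the technical lemmas in Appendix~\ref{Sec:C}: write $-\partial\Phi(f_0)=A+B$ where $A$ is the frozen-coefficient principal part, which generates an analytic semigroup on $W^{s-1}_p(\mathbb{R})$ because after localization it is a direct sum of the generators $\alpha(\xi)(-\partial_x^2)^{1/2}$ with $\alpha(\xi)$ bounded away from $0$ and $\infty$, and $B$ collects all remainder terms, which by the commutator estimates (Lemma~\ref{L:B2}, Lemma~\ref{L:MP3b}) and interpolation (as in \eqref{IP}) have arbitrarily small relative bound with respect to $A$; a standard perturbation result for generators of analytic semigroups, e.g.\ \cite[I.1.3]{Am95}, then closes the argument. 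The main obstacle is the principal-symbol extraction: one must carefully propagate the localization through the composition $\mathbb{B}(f_0)\circ(1+a_\mu\mathbb{A}(f_0))^{-1}$, control the error made by freezing $f_0$ inside all the kernels in the $W^{s-1}_p$-norm (not just $L_2$), and show these errors are genuinely of lower order — this is precisely where Theorem~\ref{T:A}, the fractional-Sobolev commutator estimates of Section~\ref{Sec:2}, and the resolvent bounds from Section~\ref{Sec:3} (Theorems~\ref{T:I1}, \ref{T:I2}, \ref{T:INV}) are all needed simultaneously, and where the $p\neq 2$ case genuinely differs from the Hilbert-space arguments of \cite{MBV18}.
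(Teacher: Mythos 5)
Your principal-symbol computation is essentially correct: the leading coefficient $\alpha(\xi)=(C_\Theta+a_\mu\Phi(f_0)(\xi))/(1+(f_0'(\xi))^2)$ matches what the paper finds (it is the value of $\alpha_\tau$ at $\tau=1$ in Theorem~\ref{T:AP}), and the reduction via $\varepsilon$-localization and Lemmas~\ref{L:B2}, \ref{L:MP3}, \ref{L:MP3b} is the right machinery. But there is a genuine gap in the last step: you propose to write $-\partial\Phi(f_0)=A+B$ with $A$ the variable-coefficient operator $\alpha(x)(-d^2/dx^2)^{1/2}$ and to cite a relative-bound perturbation theorem, asserting that $A$ generates ``because after localization it is a direct sum of the generators $\alpha(\xi)(-\partial_x^2)^{1/2}$.'' This does not hold up. The localization estimate gives only \emph{a priori} resolvent bounds $\|f\|_{W^s_p}+|\lambda|\|f\|_{W^{s-1}_p}\lesssim\|(\lambda-A)[f]\|_{W^{s-1}_p}$; it does not by itself give surjectivity of $\lambda-A$, and the partition of unity does not literally diagonalize $A$ into a direct sum. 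Establishing that a Fourier multiplier with non-constant, merely Hölder-continuous coefficient $\alpha$ generates an analytic semigroup on $W^{s-1}_p(\mathbb{R})$ is itself a nontrivial step of essentially the same difficulty as the theorem you are trying to prove.

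The paper handles precisely this point with the method of continuity, twice. First, Proposition~\ref{P:IP0} proves invertibility of $\lambda+H\circ(a\,d/dx)$ for $a=\delta+\phi>0$ by constructing a homotopy $B(\tau)=H\circ(a_\tau\,d/dx)$ from the \emph{constant}-coefficient operator $B(0)$ (an honest Fourier multiplier, invertible by Mikhlin) to the variable-coefficient $B(1)$, using the a priori bounds obtained by localization to propagate invertibility along the path. Second, Theorem~\ref{T:AG} does not decompose $\partial\Phi(f_0)$ into principal part plus perturbation at all; instead it embeds $\partial\Phi(f_0)$ into a path $\Psi(\tau)$, $\tau\in[0,1]$, with $\Psi(1)=\partial\Phi(f_0)$ and $\Psi(0)=H\circ w(0)$ where $w(0)=-[C_\Theta+a_\mu\Phi(f_0)]\,d/dx$. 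The artificial term $(1-\tau)a_\mu f'\Phi(f_0)$ in the definition of $w(\tau)$ (see \eqref{WT} and Remark~\ref{R:1}) is inserted exactly so that the endpoint $\Psi(0)$ is the type of operator handled by Proposition~\ref{P:IP0}, and so that the path provides the cancellations needed in Theorem~\ref{T:AP}. The localization estimate of Theorem~\ref{T:AP} is then proved \emph{uniformly in $\tau$}, giving a $\tau$-uniform a priori bound \eqref{KDED}, which, together with the isomorphism at $\tau=0$, transports to an isomorphism at $\tau=1$ by continuity. Your sketch omits both the homotopy and Proposition~\ref{P:IP0}, so the argument cannot close as stated: replacing the continuity method with a perturbation of a generator would require a separate and nontrivial proof that $A$ generates, which you do not supply.
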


The proof of Theorem \ref{T:AG} (which is postponed to the end of the section) requires some preparation.
In the following we set
\[
\overline{\omega}_0:=\overline{\omega}(f_0)\in W^{s-1}_p(\mathbb{R}).
\]
By the chain rule 
\begin{align*} 
\partial\Phi(f_0)[f]=\partial\mathbb{B}(f_0)[f][\overline{\omega}_0]+\mathbb{B}(f_0)[\partial\overline{\omega}(f_0)[f]],\qquad f\in W^{s}_p(\mathbb{R}),
\end{align*}
where 
\begin{equation*} 
\begin{aligned}
 \pi\partial\mathbb{B}(f_0)[f][\overline{\omega}_0]&=-2B_{2,2}(f_0,f_0)[f_0,f,\overline{\omega}_0]+f'B_{1,1}(f_0)[f_0,\overline{\omega}_0] \\[1ex]
 &\hspace{0.45cm}+f'_0B_{1,1}(f_0)[f,\overline{\omega}_0]-2f_0'B_{3,2}(f_0,f_0)[f_0,f_0,f,\overline{\omega}_0]. 
\end{aligned}
\end{equation*}
Furthermore, differentiation  of (\ref{SO}) with respect to $f$ at $f_0$ yields
\begin{align*}
 (1+a_\mu\mathbb{A}(f_0))[\partial\overline{\omega}(f_0)[f]]=-a_\mu\partial\mathbb{A}(f_0)[f][\overline{\omega}_0]-C_{\Theta} f', \quad f\in W^s_p(\mathbb{R}),
\end{align*}
where
\begin{equation}\label{FD3}
\begin{aligned}
  \pi\partial\mathbb{A}(f_0)[f][\overline{\omega}_0]&=f'B_{0,1}(f_0)[\overline{\omega}_0]-2f_0'B_{2,2}(f_0,f_0)[f_0,f,\overline{\omega}_0]\\[1ex]
  &\hspace{0.424cm}-B_{1,1}(f_0)[f,\overline{\omega}_0]+2B_{3,2}(f_0,f_0)[f_0,f_0,f,\overline{\omega}_0], \quad f\in W^s_p(\mathbb{R}).
\end{aligned}
\end{equation}
In the derivation of (\ref{FD3}) we have several times made use of the formula
\begin{align*}
\partial (B_{n,m}(f,\ldots,f)[f,\ldots,f,\cdot])\big|_{f_0}[f]&=nB_{n,m}(f_0,\ldots,f_0)[f_0,f_0,\ldots,f,\cdot]\\[1ex]
&\hspace{0,45cm}-2mB_{n+2,m+1}(f_0,\ldots,f_0)[f_0,f_0,\ldots,f,\cdot].
\end{align*}
In order to establish Theorem \ref{T:AG} we consider a continuous path in 
$\mathcal{L}(  W^s_p(\mathbb{R}), W^{s-1}_p(\mathbb{R}))$ which is related to  $\partial\Phi(f_0),$ that is we define $\Psi:[0,1]\to \mathcal{L}(  W^s_p(\mathbb{R}), W^{s-1}_p(\mathbb{R}))$ 
by setting
\begin{align*}
\Psi(\tau)[f]:=\tau\partial\mathbb{B}(f_0)[f][\overline{\omega}_0]+\mathbb{B}(\tau f_0)[w(\tau)[f]],
\end{align*}
where $w\in {\rm C}([0,1],\mathcal{L}(  W^s_p(\mathbb{R}), W^{s-1}_p(\mathbb{R})))$ is    defined as the solution to 
\begin{align}\label{WT}
(1+a_\mu\mathbb{A}(\tau f_0))[w(\tau)[f]]=-\tau a_\mu\partial\mathbb{A}(f_0)[f][\overline{\omega}_0]-C_{\Theta} f'-(1-\tau)a_\mu f'\Phi(f_0)
\end{align}
for $\tau\in[0,1]$ and $f\in W^s_p(\mathbb{R})$.

\begin{rem}\label{R:1}
\begin{itemize}
\item[(i)] If $\tau=1$, then $w(1)=\partial\overline{\omega}(f_0)$ and therewith ${\Psi(1)=\partial\Phi(f_0).}$\\[-2ex]
\item[(ii)] Letting $H$ denote the Hilbert transform,  it holds $\Psi(0)=H\circ w(0)$. 
Moreover, noticing that $\mathbb{A}(0)=0$,  it holds  
 \begin{align*}
w(0)=-[C_{\Theta} +a_\mu \Phi(f_0)]\frac{d}{dx}.
\end{align*}
It is important to point out that the  function  of the right-hand side of the latter relation is exactly the function in the Rayleigh-Taylor condition (\ref{RT}).
This is one of the reasons why we artificially  introduced the term  $(1-\tau)a_\mu f'\Phi(f_0)$ in the definition~(\ref{WT}).
This construction  is essential for our purpose because it provides on one hand some useful cancellations in the proof of Theorem \ref{T:AP} and on the other hand it facilitates us to 
establish the invertibility of $\lambda-\Psi(0)\in\mathcal{L}(  W^s_p(\mathbb{R}), W^{s-1}_p(\mathbb{R}))$ for sufficiently large and positive $\lambda$, cf. Proposition \ref{P:IP0}. 
The latter point is important when establishing the invertibility of $\lambda-\partial\Phi(f_0)$ for such $\lambda$.  
\item[(iii)]  $H$ is the Fourier multiplier with symbol  $[\xi\mapsto-i\,{\rm sign}(\xi)]$ and $${H\circ(d/dx)=(- d^2/dx^2)^{1/2}.}$$
\item[(iv)] Given $s'\in(1+1/p,s),$   Lemma \ref{L:MP1}~(i), Theorem \ref{T:I1}, and Theorem~\ref{T:I2} imply there exists a constant $C=C(f_0)$ such that 
\begin{align}\label{we1}
\|w(\tau)[f]\|_p\leq C\|f\|_{W^{s'}_p}, \quad f\in W^{s}_p(\mathbb{R}),\, \tau\in[0,1].
\end{align}
Besides, Lemma \ref{L:MP3} (with $r=s'$)  and  Theorem \ref{T:INV} (with $s=s'$) yield
\begin{align}\label{we2}
\|w(\tau)[f]\|_{W^{s'-1}_p}\leq C\|f\|_{W^{s'}_p}, \quad \tau\in[0,1],\, f\in W^{s}_p(\mathbb{R}).
\end{align}
\end{itemize}
\end{rem}

Theorem \ref{T:AP} is the main step in the proof of Theorem \ref{T:AG}. 
In Theorem \ref{T:AP} it is shown that the operator $ \Psi(\tau)$  can be locally approximated by  certain Fourier multipliers $\mathbb{A}_{j,\tau}$. 
Theorem \ref{T:AP} also reveals  the importance of the  Rayleigh-Taylor condition  which ensures in this context   the positivity of the 
coefficient $\alpha_\tau(x_j^\varepsilon)$ in the definition of  $\mathbb{A}_{j,\tau}$ below.

\begin{thm}\label{T:AP} 
Let   $\mu>0$ be given and fix $s'\in (1+1/p,s)$. 
Then, there exist $\varepsilon\in(0,1)$, an $\varepsilon$-locali\-za\-tion family  $\{\pi_j^\varepsilon\,:\, -N+1\leq j\leq N\} $,  a constant $K=K(\varepsilon,f_0)$, 
and   bounded operators 
$$
\mathbb{A}_{j,\tau}\in\mathcal{L}(W^s_p(\mathbb{R}), W^{s-1}_p(\mathbb{R})), \qquad\mbox{$j\in\{-N+1,\ldots,N\}$ and $\tau\in[0,1]$,} 
$$
 such that 
 \begin{equation}\label{D1}
  \|\pi_j^\varepsilon \Psi(\tau) [f]-\mathbb{A}_{j,\tau}[\pi^\varepsilon_j f]\|_{W^{s-1}_p}\leq \mu \|\pi_j^\varepsilon f\|_{W^s_p}+K\|  f\|_{W^{s'}_p}
 \end{equation}
 for all $ j\in\{-N+1,\ldots,N\}$, $\tau\in[0,1],$  and  $f\in W^s_p(\mathbb{R})$. 
 The operators $\mathbb{A}_{j,\tau}$ are defined  by 
  \begin{align*} 
 \mathbb{A}_{j,\tau }:=- \alpha_\tau(x_j^\varepsilon) \Big(-\frac{d^2}{dx^2}\Big)^{1/2}+\beta_\tau (x_j^\varepsilon)\frac{d}{dx}, \quad |j|\leq N-1, \qquad\mathbb{A}_{N,\tau }:= -   C_{\Theta} \Big(-\frac{d^2}{dx^2}\Big)^{1/2},
 \end{align*}
 where  $x_j^\varepsilon\in \supp  \pi_j^\varepsilon,$ $|j|\leq N-1,$ and  with functions $\alpha_\tau,\, \beta_\tau$ given by
 \begin{align*}
 \alpha_\tau:={\Big(1-\frac{\tau f_0'^2}{1+f_0'^2}\Big)} [C_{\Theta} +a_\mu \Phi(f_0)], \qquad  \beta_\tau:= \frac{\tau}{\pi}B_{1,1}(f_0)[f_0,\overline{\omega}_0]-\tau a_\mu\frac{\overline{\omega}_0}{1+f_0'^2}.   
 \end{align*}
\end{thm}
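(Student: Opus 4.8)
The plan is to expand $\Psi(\tau)[f]$ into a fixed finite sum of elementary pieces, to localize each of them by multiplication with $\pi_j^\varepsilon$, and to show that on the interval $\supp\pi_j^\varepsilon$ (of length $\varepsilon$) each piece coincides, up to an error that is simultaneously $\varepsilon$-small against $\|\pi_j^\varepsilon f\|_{W^s_p}$ and controlled by $\|f\|_{W^{s'}_p}$, with the constant-coefficient Fourier multiplier obtained by freezing every coefficient at $x_j^\varepsilon$. First I would carry out the expansion. Inserting (\ref{FormulaB}) for $\mathbb{B}(\tau f_0)$, the displayed formulas for $\partial\mathbb{B}(f_0)[\,\cdot\,][\overline{\omega}_0]$ and for $\partial\mathbb{A}(f_0)[\,\cdot\,][\overline{\omega}_0]$ from (\ref{FD3}), and the defining identity (\ref{WT}) for $w(\tau)$, one rewrites $\Psi(\tau)[f]$ as a finite sum of terms of three kinds: (a) products $\varphi\cdot B_{n,m}(f_0,\dots,f_0)[f_0,\dots,f,\dots,f_0,\overline{\omega}_0]$ in which $f$ occupies exactly one of the $b$-slots and $\varphi$ is a fixed function assembled from $f_0$, $\overline{\omega}_0$ and bounded values $B_{n',m'}(f_0,\dots)[\dots]$; (b) products $\psi\cdot f'$ with $\psi$ a fixed function assembled from $f_0$, $\overline{\omega}_0$, $\Phi(f_0)$ and such $B_{n',m'}$-values; and (c) the genuinely implicit contribution $\mathbb{B}(\tau f_0)[w(\tau)[f]]$. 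By Lemma~\ref{L:REG} and Lemma~\ref{L:MP3} all functions $\varphi,\psi$ lie in $W^{s-1}_p(\mathbb{R})\hookrightarrow{\rm C}^{s-1-1/p}(\mathbb{R})$, and $f_0'\in{\rm C}^{s-1-1/p}(\mathbb{R})$.

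For a term of type (a) I would insert a cutoff $\chi_j^\varepsilon$ equal to $1$ on $\supp\pi_j^\varepsilon$ and commute $\pi_j^\varepsilon$ inward using the commutator identities (\ref{spr2})--(\ref{spr3}) of Lemma~\ref{L:SMP1}, the commutator estimate of Lemma~\ref{L:B2}, the bounds (\ref{REF1'})--(\ref{REF2'}), and the auxiliary estimates of Appendix~\ref{Sec:C}; every commutator remainder is then bounded in $W^{s-1}_p$ by $C\|f\|_{W^{s'}_p}$, which is where the gap $s'<s$ is exploited. Moving $f$ out of its $b$-slot via the identity underlying (\ref{REF2'}) replaces the term by $\overline{\omega}_0\,B_{n-1,m}(f_0,\dots,f_0)[f_0,\dots,f_0,f']$, again up to a $C\|f\|_{W^{s'}_p}$-remainder. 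Finally I would compare $B_{n-1,m}(f_0,\dots)[f_0,\dots,f_0,\pi_j^\varepsilon f']$ with $\pi\,f_0'(x_j^\varepsilon)^{\,n-1}(1+f_0'(x_j^\varepsilon)^2)^{-m}\,H[\pi_j^\varepsilon f']$: writing the kernel as a perturbation of the frozen one and using $|\delta_{[x,y]}f_0/y-f_0'(x_j^\varepsilon)|\le C\varepsilon^{s-1-1/p}$ on the relevant set together with the Hölder continuity of $f_0'$ and $\overline{\omega}_0$ and the mapping lemmas of Section~\ref{Sec:2}, the difference is at most $\mu_0(\varepsilon)\|\pi_j^\varepsilon f\|_{W^s_p}$ with $\mu_0(\varepsilon)\to0$ as $\varepsilon\to0$. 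Since $H\circ(d/dx)=(-d^2/dx^2)^{1/2}$ (Remark~\ref{R:1}(iii)), a type-(a) term then contributes a multiple of $(-d^2/dx^2)^{1/2}[\pi_j^\varepsilon f]$, while a type-(b) term contributes $\psi(x_j^\varepsilon)(d/dx)[\pi_j^\varepsilon f]$ up to a $C\|f\|_{W^{s'}_p}$-commutator.

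For the implicit term (c) I would localize the equation (\ref{WT}): multiplying by $\pi_j^\varepsilon$ and commuting gives $(1+a_\mu\chi_j^\varepsilon\mathbb{A}(\tau f_0)\chi_j^\varepsilon)[\pi_j^\varepsilon w(\tau)[f]]=\pi_j^\varepsilon\big(-\tau a_\mu\partial\mathbb{A}(f_0)[f][\overline{\omega}_0]-C_\Theta f'-(1-\tau)a_\mu f'\Phi(f_0)\big)+R_j$ with $\|R_j\|_{W^{s-1}_p}\le C\|f\|_{W^{s'}_p}$; since $\|\chi_j^\varepsilon\mathbb{A}(\tau f_0)\chi_j^\varepsilon\|$ is $<1$ in the relevant operator norm for $\varepsilon$ small, uniformly in $\tau\in[0,1]$ (Lemma~\ref{L:comm'} and its fractional-Sobolev analogue), the operator on the left is invertible with inverse close to the identity, while (\ref{we1})--(\ref{we2}) keep the genuine errors of the admissible form. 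Applying the type-(a)/(b) reductions to $\partial\mathbb{A}(f_0)[f][\overline{\omega}_0]$ then yields $\pi_j^\varepsilon w(\tau)[f]=\widetilde w_j(\tau)[\pi_j^\varepsilon f]+(\text{admissible error})$ for a constant-coefficient operator $\widetilde w_j(\tau)$, and composing with $\mathbb{B}(\tau f_0)$ (localized and frozen by the same recipe via (\ref{FormulaB})) turns $\pi_j^\varepsilon\mathbb{B}(\tau f_0)[w(\tau)[f]]$ into a frozen Fourier multiplier applied to $\pi_j^\varepsilon f$, up to admissible errors. Summing all contributions, I would evaluate the resulting coefficients of $(-d^2/dx^2)^{1/2}$ and of $d/dx$ using the algebraic relations among $B_{0,1}(f_0)[\overline{\omega}_0]$, $\mathbb{A}(f_0)[\overline{\omega}_0]$, $\mathbb{B}(f_0)[\overline{\omega}_0]=\Phi(f_0)$ and $\overline{\omega}_0+a_\mu\mathbb{A}(f_0)[\overline{\omega}_0]=-C_\Theta f_0'$ (here the artificial term $(1-\tau)a_\mu f'\Phi(f_0)$ inserted in (\ref{WT}) supplies precisely the cancellation of Remark~\ref{R:1}(ii)); they turn out to be $-\alpha_\tau(x_j^\varepsilon)$ and $\beta_\tau(x_j^\varepsilon)$, with $\alpha_\tau(x_j^\varepsilon)>0$ because $f_0\in\mathcal{O}$. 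For the far-field index $j=N$ I would use that $f_0$, $f_0'$ and $\overline{\omega}_0$ are uniformly small on $\supp\chi_N^\varepsilon$ once $\varepsilon$ is small, so every $B_{n,m}$-term carrying $\overline{\omega}_0$ as well as $\mathbb{A}(\tau f_0)$ and $\Phi(f_0)$ is small there and only $-C_\Theta\,H\circ(d/dx)=\mathbb{A}_{N,\tau}$ survives. Since the number of building blocks is finite and independent of $\varepsilon$, choosing $\varepsilon$ small in terms of $\mu$ and $f_0$ gives (\ref{D1}).

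The main obstacle will be the quantitative frozen-coefficient step, i.e. showing that a singular-integral operator $B_{n,m}(f_0,\dots)$ acting on functions supported in an interval of length $O(\varepsilon)$ differs by less than $\mu$, in the $W^{s-1}_p$-operator norm, from the constant-coefficient multiplier obtained by replacing each $\delta_{[x,y]}f_0/y$ by $f_0'(x_j^\varepsilon)$; this requires controlling fractional Sobolev seminorms — not merely $L_p$-norms — of the error kernels, which is exactly the purpose of Lemma~\ref{L:MP2}, Lemma~\ref{L:MP3}, Lemma~\ref{L:MP3b} and the commutator estimate of Lemma~\ref{L:B2}. A secondary, but delicate, difficulty is propagating the symbol computation through the implicitly defined $w(\tau)$ and through the composition with $\mathbb{B}(\tau f_0)$ while keeping every remainder of the admissible form $\mu\|\pi_j^\varepsilon f\|_{W^s_p}+K\|f\|_{W^{s'}_p}$.
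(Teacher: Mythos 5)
Your overall architecture — localize, freeze coefficients at $x_j^\varepsilon$, control the residuals by $\mu\|\pi_j^\varepsilon f\|_{W^s_p}+K\|f\|_{W^{s'}_p}$ via the commutator lemmas and the freezing lemmas of Appendix~\ref{Sec:C} — matches the paper's proof, and your treatment of the explicit contributions (your types (a) and (b), corresponding to the paper's Step~1 on $\partial\mathbb{B}(f_0)[f][\overline{\omega}_0]$) is essentially the same as the paper's (Lemma~\ref{L:MP3b} to unload $f$, then Lemmas~\ref{L:C1}, \ref{L:C1a}, and the H\"older argument (\ref{DECI})). The genuine gap is in your step (c). You propose to invert $1+a_\mu\chi_j^\varepsilon\mathbb{A}(\tau f_0)\chi_j^\varepsilon$ on $\pi_j^\varepsilon w(\tau)[f]$ by a Neumann series, invoking Lemma~\ref{L:comm'} ``and its fractional-Sobolev analogue''. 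But Lemma~\ref{L:comm'} gives smallness only in $\mathcal{L}(L_q(\mathbb{R}))$; the analogous smallness of $\|\chi_j^\varepsilon\mathbb{A}(\tau f_0)\chi_j^\varepsilon\|_{\mathcal{L}(W^{s-1}_p(\mathbb{R}))}$ is nowhere established in the paper and is not a routine consequence — the $W^{s-1}_p$-seminorm contributions, in particular the pieces where the singular operator $B_{n,m}(f_0,\dots)$ acts at full strength on $\chi_j^\varepsilon g-\tau_\xi(\chi_j^\varepsilon g)$, do not obviously shrink as $\varepsilon\to 0$, so the Neumann series is not available in the norm you need.

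The paper circumvents this completely. First it derives (\ref{wtf4}), namely $\|\pi_j^\varepsilon w(\tau)[f]\|_{W^{s-1}_p}\le C_0\|\pi_j^\varepsilon f\|_{W^s_p}+K\|f\|_{W^{s'}_p}$, by commuting $\pi_j^\varepsilon$ through (\ref{WT}) via Lemma~\ref{L:B2} and then applying the \emph{global} inverse $(1+a_\mu\mathbb{A}(\tau f_0))^{-1}$ from Theorem~\ref{T:INV} — no localized inversion. Then, to freeze the coefficients in $B_{0,0}[\pi_j^\varepsilon w(\tau)[f]]$ (after (\ref{ST2a})), it uses the identity $B_{0,0}^2=-\pi^2\id$ to turn (\ref{ST2b}) into (\ref{ST2c}), i.e.\ into an estimate for
\begin{equation*}
\pi_j^\varepsilon w(\tau)[f]+\varphi_\tau(x_j^\varepsilon)\pi_j^\varepsilon f'-\frac{\tau a_\mu}{\pi}\frac{\overline{\omega}_0(x_j^\varepsilon)}{1+f_0'^2(x_j^\varepsilon)}B_{0,0}[\pi_j^\varepsilon f'],
\end{equation*}
and this expression is recognized, via (\ref{FLFOR}), as exactly the localized-and-frozen residual of the defining equation (\ref{WT}), which decomposes into the three terms $T_1,T_2,T_3$ estimated directly by Lemma~\ref{L:MP3b}, Lemma~\ref{L:C1}, (\ref{we2}), and (\ref{wtf4}). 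In other words the paper never inverts anything locally: it reads off the approximate symbol of $w(\tau)$ from the equation it satisfies. To repair your proof you should replace the Neumann-series step by this $B_{0,0}^2=-\pi^2\id$ reduction and the direct use of (\ref{FLFOR}).
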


Before proving Theorem~\ref{T:AP} we first present some lemmas (which are proved in the Appendix~\ref{Sec:C}) which are used in an essential  way when establishing Theorem~\ref{T:AP}.\smallskip

The following commutator estimate is used several times in the paper.

\begin{lemma}\label{L:B2} 
Let $n,\, m \in \mathbb{N}$, $p\in(1,\infty),$  $s\in(1+1/p, 2)$, $f\in W^s_p(\mathbb{R})$, and  ${\varphi\in {\rm C}^1(\mathbb{R})}$ with uniformly continuous derivative $\varphi'$ be given. 
Then, there exist  a constant $K$ that depends only on $ n, $ $m, $ $\|\varphi'\|_\infty, $ and $\|f\|_{W^s_p}$  such that 
 \begin{equation}\label{LB2}
  \|\varphi B_{n,m}(f,\ldots,f)[f,\ldots,f, h]- B_{n,m}(f,\ldots,f)[f,\ldots,f, \varphi h]\|_{W^{1}_p}\leq K\| h\|_{p}
 \end{equation}
for all   $h\in L_p(\mathbb{R})$.
\end{lemma}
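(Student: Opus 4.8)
The plan is to estimate the commutator directly from its integral kernel. Writing
\[
C[h](x):=\varphi(x)B_{n,m}(f,\ldots,f)[f,\ldots,f,h](x)-B_{n,m}(f,\ldots,f)[f,\ldots,f,\varphi h](x)
\]
and using $\varphi(x)h(x-y)/y-(\varphi h)(x-y)/y=(\delta_{[x,y]}\varphi/y)h(x-y)$, one obtains
\[
C[h](x)=\PV\int_\mathbb{R}\frac{\delta_{[x,y]}\varphi}{y}\,\frac{\prod_{i=1}^{n}(\delta_{[x,y]}f/y)}{\prod_{i=1}^{m}[1+(\delta_{[x,y]}f/y)^2]}\,h(x-y)\,dy=:\PV\int_\mathbb{R} g(x,y)h(x-y)\,dy.
\]
The structural feature making this tractable is that the genuine $1/y$-singularity has been cancelled by $\delta_{[x,y]}\varphi$, so $g$ is bounded and continuous at $y=0$, the principal value being needed only at infinity (and only when $n\leq1$). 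It then suffices to bound $\|C[h]\|_p$ and $\|\partial_x C[h]\|_p$ by $K\|h\|_p$, and by density one may assume $h\in{\rm C}^\infty_0(\mathbb{R})$.

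For the $L_p$-bound of $C[h]$ itself, when $n\geq1$ I would use the splitting identity \eqref{spr2} of Lemma \ref{L:SMP1}(i) with $b_1=f$, which rewrites $C[h]=fB_{n,m}(f,\ldots,f)[f,\ldots,f,\varphi,h]-B_{n,m}(f,\ldots,f)[f,\ldots,f,\varphi,fh]$; since $f$ and $\varphi$ are Lipschitz, Lemma \ref{L:MP1}(i) bounds both summands in $L_p$ by $C(n,m,\|f'\|_\infty)\|f'\|_\infty^{\,n-1}\|\varphi'\|_\infty\|h\|_p$, using $\|fh\|_p\leq\|f\|_\infty\|h\|_p$. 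For $n=0$ I would expand $1/\prod_{i=1}^m[1+(\delta_{[x,y]}f/y)^2]=1+R(x,y)$ with $|R(x,y)|\leq C(m,\|f'\|_\infty)(\delta_{[x,y]}f/y)^2$, so that $C[h]=\pi[\varphi,H]h+\int_\mathbb{R}(\delta_{[x,y]}\varphi/y)R(x,y)h(x-y)\,dy$; the first term is bounded on $L_p(\mathbb{R})$ by the classical Calderón commutator estimate (with norm controlled by $\|\varphi'\|_\infty$), while the kernel of the second is in absolute value at most $C\|\varphi'\|_\infty(\|f'\|_\infty^2{\bf 1}_{[|y|\leq1]}+\|f\|_\infty^2|y|^{-2}{\bf 1}_{[|y|>1]})$, so Minkowski's integral inequality gives the bound.

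For the $x$-derivative, from $C[h](x)=\int_\mathbb{R} g(x,y)h(x-y)\,dy$ and $\partial_x[h(x-y)]=-\partial_y[h(x-y)]$, an integration by parts in $y$ (the boundary terms vanishing because $g$ is continuous at $0$ and $g(x,\cdot)h(x-\cdot)$ decays at infinity for $h\in{\rm C}^\infty_0$) gives $\partial_x C[h](x)=\PV\int_\mathbb{R}(\partial_x+\partial_y)g(x,y)h(x-y)\,dy$. The key identities are $(\partial_x+\partial_y)\delta_{[x,y]}\phi=\phi'(x)$, hence $(\partial_x+\partial_y)(\delta_{[x,y]}\phi/y)=\phi'(x)/y-\delta_{[x,y]}\phi/y^2$, for $\phi\in\{\varphi,f\}$. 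Applying the product rule to $g$ and collecting terms, $(\partial_x+\partial_y)g$ becomes a finite linear combination of expressions which, integrated against $h(x-y)$, are products of a bounded function (one of $1$, $\varphi'$, $f'$) with a singular integral operator of the type handled by Lemma \ref{L:MP1}(i) — a $B_{n',m'}(f,\ldots,f)$ applied to copies of the Lipschitz functions $f$ and $\varphi$ (with $\varphi$, or the prefactor $\varphi'$, occurring exactly once) and to $h$ — where $n',m'$ are bounded in terms of $n,m$. By Lemma \ref{L:MP1}(i) each such term has $L_p$-norm at most $K\|h\|_p$ with $K=K(n,m,\|\varphi'\|_\infty,\|f\|_{W^s_p})$ (the Sobolev embedding controls $\|f'\|_\infty$ by $\|f\|_{W^s_p}$). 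Adding the two steps yields $\|C[h]\|_{W^1_p}\leq K\|h\|_p$ for $h\in{\rm C}^\infty_0(\mathbb{R})$, hence for all $h\in L_p(\mathbb{R})$ by density.

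The step I expect to be the main obstacle is the bookkeeping in the derivative computation: the terms $\delta_{[x,y]}\phi/y^2$ produced by $(\partial_x+\partial_y)g$ are genuinely $1/y$-singular near $y=0$, and — since $\varphi'$ is only assumed uniformly continuous — cannot be estimated as absolutely convergent integrals; they must instead be reassembled into the $B_{n',m'}$-structure and absorbed via Lemma \ref{L:MP1}(i)/Theorem \ref{T:A}. It is precisely this reassembly that explains why only $\|\varphi'\|_\infty$, and not a modulus of continuity of $\varphi'$, enters the constant $K$. A subsidiary technical point is the rigorous justification of the integration by parts and of the termwise manipulation of principal values, which is handled by the density of ${\rm C}^\infty_0(\mathbb{R})$ in $L_p(\mathbb{R})$.
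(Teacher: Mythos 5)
Your proof is correct and takes essentially the same approach as the paper's. The only organizational difference is that you apply $(\partial_x+\partial_y)$ directly to the commutator kernel $g$ in a single step, whereas the paper first computes $T'$ via difference quotients (reproducing two terms containing $h'$ inside $B_{n,m}$) and then performs a separate integration by parts in $y$ on precisely those two terms; both routes produce the identical collection of $B_{n',m'}$-operators carrying one copy of $\varphi$ or $\varphi'$, to which Lemma~\ref{L:MP1}~(i) is applied, and your treatment of the plain $L_p$-estimate (the splitting~\eqref{spr2} for $n\ge1$, a Calder\'on commutator plus an absolutely integrable remainder for $n=0$) matches in substance the paper's terse invocation of Lemma~\ref{L:MP1}~(i).
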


The next lemma is used in the proof of Theorem \ref{T:AP}.

\begin{lemma}\label{L:C1} 
Let $n,\, m \in \mathbb{N}$, $p\in(1,\infty),$ $1+1/p<s'<s<2$, and  $\nu\in(0,\infty)$ be given. 
Let further  $f\in W^s_p(\mathbb{R})$ and  $\overline{\omega}\in \{1\}\cup W^{s-1}_p(\mathbb{R})$.
For sufficiently small $\varepsilon\in(0,1)$   there exists a constant $K=K(\varepsilon, n, m, \|f\|_{W^s_p},\|\overline{\omega}\|_{W^{s-1}_p})$  such that
 \begin{equation}\label{LC1}
\begin{aligned}
  &\Big\|\pi_j^\varepsilon\overline{\omega} B_{n,m}(f,\ldots,f)[f,\ldots,f, h]-\frac{\overline{\omega}(x_j^\varepsilon)(f'(x_j^\varepsilon))^n}{[1+(f'(x_j^\varepsilon))^2]^m}B_{0,0}[\pi_j^\varepsilon h]\Big\|_{W^{s-1}_p}\\[1ex]
  &\hspace{2.5cm}\leq \nu \|\pi_j^\varepsilon h\|_{W^{s-1}_p}+K\| h\|_{W^{s'-1}_p}
\end{aligned}  
 \end{equation}
for all $|j|\leq N-1$ and  $h\in W^{s-1}_p(\mathbb{R})$  (with $x_j^\varepsilon\in \supp  \pi_j^\varepsilon$).
\end{lemma}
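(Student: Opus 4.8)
The plan is to reduce the claimed estimate to the commutator bound of Lemma \ref{L:B2} together with the fractional-Sobolev estimates of Section \ref{Sec:2} (Lemma \ref{L:MP1}, Lemma \ref{L:MP3}) and a "freezing-the-coefficients" argument exploiting the smallness of $\supp\pi_j^\varepsilon$. Write $B:=B_{n,m}(f,\ldots,f)[f,\ldots,f,\cdot]$ and set $c_j:=\overline{\omega}(x_j^\varepsilon)(f'(x_j^\varepsilon))^n/[1+(f'(x_j^\varepsilon))^2]^m$. The idea is to telescope the difference
\[
\pi_j^\varepsilon\overline{\omega} B[h]-c_j B_{0,0}[\pi_j^\varepsilon h]
\]
through the intermediate quantities $B[\pi_j^\varepsilon\overline{\omega} h]$ (moving the cutoff and $\overline\omega$ inside the operator), then $\overline{\omega}(x_j^\varepsilon)B[\pi_j^\varepsilon h]$ (freezing $\overline\omega$ at $x_j^\varepsilon$), then $c_j B_{0,0}[\pi_j^\varepsilon h]$ (freezing $f'$ at $x_j^\varepsilon$ and simplifying the kernel to the Hilbert transform). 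So one writes
\[
\pi_j^\varepsilon\overline{\omega} B[h]-c_j B_{0,0}[\pi_j^\varepsilon h]
= \big(\pi_j^\varepsilon\overline{\omega} B[h]-B[\pi_j^\varepsilon\overline\omega\, h]\big)
+\big(B[\pi_j^\varepsilon\overline\omega\, h]-\overline\omega(x_j^\varepsilon)B[\pi_j^\varepsilon h]\big)
+\big(\overline\omega(x_j^\varepsilon)B[\pi_j^\varepsilon h]-c_j B_{0,0}[\pi_j^\varepsilon h]\big),
\]
and estimates each bracket in $W^{s-1}_p(\mathbb{R})$.

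First I would handle the commutator bracket. By Lemma \ref{L:B2} (applied with $\varphi=\pi_j^\varepsilon\overline\omega$ when $\overline\omega\in W^{s-1}_p(\mathbb{R})$, which lies in ${\rm C}^1$ only after noting $W^{s-1}_p\hookrightarrow {\rm C}^{s-1-1/p}$ — so one should instead apply Lemma \ref{L:SMP1}(i) to peel off $\pi_j^\varepsilon\overline\omega$ as an extra entry and then invoke Lemma \ref{L:MP3} / Lemma \ref{L:MP3b}, or use the commutator identity \eqref{spr2}), the $W^1_p$-norm — hence the $W^{s-1}_p$-norm — of $\pi_j^\varepsilon\overline\omega B[h]-B[\pi_j^\varepsilon\overline\omega\,h]$ is bounded by $K\|h\|_p\le K\|h\|_{W^{s'-1}_p}$; when $\overline\omega=1$ this bracket uses only $\pi_j^\varepsilon$ and Lemma \ref{L:B2} directly. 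For the middle bracket, $B[(\pi_j^\varepsilon\overline\omega-\overline\omega(x_j^\varepsilon)\pi_j^\varepsilon)h]=B[\pi_j^\varepsilon(\overline\omega-\overline\omega(x_j^\varepsilon))h]$; since $\overline\omega\in {\rm C}^{s-1-1/p}$ and $\supp\pi_j^\varepsilon$ is an interval of length $\varepsilon$, the multiplier $\pi_j^\varepsilon(\overline\omega-\overline\omega(x_j^\varepsilon))$ has $L_\infty$-norm $\lesssim\varepsilon^{s-1-1/p}$ and controlled $W^{s-1}_p$-norm, so by Lemma \ref{L:MP3} and the Banach-algebra estimate \eqref{MES} this bracket is bounded by $C\varepsilon^{s-1-1/p}\|\pi_j^\varepsilon h\|_{W^{s-1}_p}+K\|h\|_{W^{s'-1}_p}$ (the lower-order term coming from distributing derivatives off the compactly supported factor and using interpolation $W^{s'-1}_p=(L_p,W^{s-1}_p)_{\theta,p}$ with Young's inequality). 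Choosing $\varepsilon$ small makes $C\varepsilon^{s-1-1/p}\le\nu/2$.

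The third bracket is the one requiring the most care, and I expect it to be the main obstacle. Here one must compare $\overline\omega(x_j^\varepsilon)B_{n,m}(f,\ldots,f)[f,\ldots,f,\pi_j^\varepsilon h]$ with $c_j$ times the Hilbert-transform-type operator $B_{0,0}[\pi_j^\varepsilon h]=\PV\int (\pi_j^\varepsilon h)(x-y)/y\,dy$. The kernel difference is
\[
\frac{1}{y}\Big(\frac{\prod_i(\delta_{[x,y]}f/y)}{\prod_i[1+(\delta_{[x,y]}f/y)^2]}-\frac{(f'(x_j^\varepsilon))^n}{[1+(f'(x_j^\varepsilon))^2]^m}\Big),
\]
and on $\supp\pi_j^\varepsilon$ one replaces $\delta_{[x,y]}f/y$ by $f'(x_j^\varepsilon)$ up to an error of size $\lesssim[f']_{s-1-1/p}(|y|^{s-1-1/p}$ for $|y|\le1$ and $\lesssim|y|^{-1}$ for $|y|>1)$, using that $x-ty$ stays near $\supp\pi_j^\varepsilon$ only for $|y|\lesssim\varepsilon$ (for larger $y$, $\pi_j^\varepsilon(x-y)=0$, killing the integrand). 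This reduces the bracket to a sum of operators of exactly the $B_{n',m'}$-type with one entry carrying a factor that vanishes to order $s-1-1/p$, so that the $L_p$-estimate \eqref{REF1} and the $W^{s-1}_p$-estimate \eqref{REF1'}—combined once more with the $\varepsilon^{s-1-1/p}$ smallness and interpolation—yield a bound $\le\nu/2\,\|\pi_j^\varepsilon h\|_{W^{s-1}_p}+K\|h\|_{W^{s'-1}_p}$ for $\varepsilon$ small. The technical subtlety is that one cannot simply freeze the kernel pointwise and estimate in $L_\infty$; one must keep the singular-integral structure, write the frozen kernel as a genuine $B_{n',m'}$-kernel plus controllable remainders (as in the integration-by-parts manipulations of Lemma \ref{L:MP2}), and only then apply the $W^{s-1}_p$-boundedness results — this bookkeeping, ensuring every remainder term either produces an $\varepsilon$-small multiple of $\|\pi_j^\varepsilon h\|_{W^{s-1}_p}$ or a lower-order $\|h\|_{W^{s'-1}_p}$ term, is the heart of the proof. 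Collecting the three brackets and absorbing constants into $K=K(\varepsilon,n,m,\|f\|_{W^s_p},\|\overline\omega\|_{W^{s-1}_p})$ gives \eqref{LC1}.
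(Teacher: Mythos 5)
Adding your first two brackets gives precisely the paper's
\[
T_1:=\pi_j^\varepsilon\overline{\omega}\, B_{n,m}(f,\ldots,f)[f,\ldots,f,h]-\overline{\omega}(x_j^\varepsilon)\,B_{n,m}(f,\ldots,f)[f,\ldots,f,\pi_j^\varepsilon h],
\]
and your third bracket is the paper's $\overline{\omega}(x_j^\varepsilon)T_2$, so the two routes agree up to the internal split of $T_1$. It is exactly there that a genuine gap appears. Your first bracket is the commutator with $\pi_j^\varepsilon\overline{\omega}$, which for a generic $\overline{\omega}\in W^{s-1}_p(\mathbb{R})$ is only in ${\rm C}^{s-1-1/p}(\mathbb{R})$, not in ${\rm C}^1$ and not in $W^1_\infty(\mathbb{R})$. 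You flag this, but the substitutes you name do not cure it: identity \eqref{spr2} (Lemma~\ref{L:SMP1}(i)) assumes $\varphi\in W^1_\infty(\mathbb{R})$; after peeling $\varphi=\pi_j^\varepsilon\overline{\omega}$ into a $b$-slot the resulting operator is covered by Lemma~\ref{L:MP1}(i) only if $\varphi$ is Lipschitz, and by Lemma~\ref{L:MP3} only if $\varphi\in W^r_p$ with $r>1+1/p$, neither of which holds; and Lemma~\ref{L:MP3b} concerns a commutator of a different type (swapping a $b$-entry and the $\overline{\omega}$-slot with a derivative), not multiplication of the last slot. The paper has no estimate for $\varphi B_{n,m}[h]-B_{n,m}[\varphi h]$ when $\varphi$ is merely H\"older.

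The way out is to keep $\overline{\omega}$ as an outside multiplier and never push it inside $B_{n,m}$. Write
\[
T_1=\chi_j^\varepsilon(\overline{\omega}-\overline{\omega}(x_j^\varepsilon))\,\pi_j^\varepsilon B_{n,m}(f,\ldots,f)[f,\ldots,f,h]
+\overline{\omega}(x_j^\varepsilon)\bigl(\pi_j^\varepsilon B_{n,m}(f,\ldots,f)[f,\ldots,f,h]-B_{n,m}(f,\ldots,f)[f,\ldots,f,\pi_j^\varepsilon h]\bigr).
\]
The commutator now has multiplier $\pi_j^\varepsilon\in{\rm C}^\infty_0(\mathbb{R})$, so Lemma~\ref{L:B2} yields the low-order bound $K\|h\|_p$. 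For the first summand, \eqref{MES} produces $2\|\chi_j^\varepsilon(\overline{\omega}-\overline{\omega}(x_j^\varepsilon))\|_\infty\|\pi_j^\varepsilon B_{n,m}[h]\|_{W^{s-1}_p}$ plus an $L_\infty$ remainder controlled by Lemma~\ref{L:MP1}(ii), giving $K\|h\|_{W^{s'-1}_p}$; then add and subtract the $\pi_j^\varepsilon$-commutator to replace $\pi_j^\varepsilon B_{n,m}[h]$ by $B_{n,m}[\pi_j^\varepsilon h]$ up to a $K\|h\|_p$ error, use Lemma~\ref{L:MP3}, and absorb the resulting $\|\pi_j^\varepsilon h\|_{W^{s-1}_p}$ factor into $\nu/2$ via the small coefficient $\|\chi_j^\varepsilon(\overline{\omega}-\overline{\omega}(x_j^\varepsilon))\|_\infty\leq C\varepsilon^{s-1-1/p}$.

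For the third bracket your sketch has the right spirit (keep the singular-integral structure rather than freeze pointwise), but it omits the step that actually supplies the $\nu$-smallness. The paper first inserts an extra $\chi_j^\varepsilon$ on the outside (controlling its commutator via Lemma~\ref{L:B2}), rewrites the remaining term as a telescope of $\chi_j^\varepsilon B_{k+1,m}(f,\ldots,f)[\ldots,f-f'(x_j^\varepsilon)\id_{\mathbb{R}},\pi_j^\varepsilon h]$, and then, when estimating the $W^{s-1}_p$-seminorm via $T_k-\tau_\xi T_k$ with $|\xi|<\varepsilon$, replaces $f-f'(x_j^\varepsilon)\id_{\mathbb{R}}$ by $F-f'(x_j^\varepsilon)\id_{\mathbb{R}}$, where $F$ agrees with $f$ on $\supp\chi_j^\varepsilon$ but has $F'\equiv f'(x_j^\varepsilon)$ off it. Since $\|F'-f'(x_j^\varepsilon)\|_\infty=\|f'-f'(x_j^\varepsilon)\|_{L_\infty(\supp\chi_j^\varepsilon)}\leq C\varepsilon^{s-1-1/p}$, Lemma~\ref{L:MP1}(i) then yields a small multiple of $\|\pi_j^\varepsilon h-\tau_\xi(\pi_j^\varepsilon h)\|_p$. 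This localization-and-modification step is the heart of the $T_2$-argument and needs to be made explicit.
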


The next two lemmas are the analogues of Lemma \ref{L:C1}  that deal with the case when $j=N$.

\begin{lemma}\label{L:C1a} 
Let $n,\, m \in \mathbb{N}$, $p\in(1,\infty),$ $1+1/p<s'<s<2$, and  $\nu\in(0,\infty)$ be given. 
Let further  $f\in W^s_p(\mathbb{R})$ and  $\overline{\omega}\in  W^{s-1}_p(\mathbb{R})$.
For sufficiently small $\varepsilon\in(0,1)$   there exists a constant $K=K(\varepsilon, n, m, \|f\|_{W^s_p},\|\overline{\omega}\|_{W^{s-1}_p})$  such that 
  \begin{equation}\label{LC2}
  \|\pi_j^\varepsilon\overline{\omega} B_{n,m}(f,\ldots,f)[f,\ldots,f, h]\|_{W^{s-1}_p}\leq \nu \|\pi_j^\varepsilon h\|_{W^{s-1}_p}+K\| h\|_{W^{s'-1}_p}
 \end{equation} 
 for $j=N$  and $h\in W^{s-1}_p(\mathbb{R})$.
\end{lemma}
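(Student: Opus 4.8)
The plan is to regard Lemma~\ref{L:C1a} as the $j=N$ companion of Lemma~\ref{L:C1}: for $j=N$ the ``frozen coefficient'' $\overline{\omega}(x_j^\varepsilon)(f'(x_j^\varepsilon))^n/[1+(f'(x_j^\varepsilon))^2]^m$ appearing in (\ref{LC1}) degenerates to $0$, because $\supp\pi_N^\varepsilon\subset[|x|\geq 1/\varepsilon]$ and both $\overline{\omega}\in W^{s-1}_p(\mathbb{R})$ and $f'\in W^{s-1}_p(\mathbb{R})$ tend to $0$ at infinity (recall $s-1>1/p$); hence on $\supp\pi_N^\varepsilon$ the coefficients $\overline{\omega}$ and $f'$ are uniformly small in $L_\infty(\mathbb{R})$ once $\varepsilon$ is small. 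Consequently the only genuinely ``order $s-1$ in $h$'' contribution to the left-hand side of (\ref{LC2}) has to be shown to be bounded by a small multiple of $\|\pi_N^\varepsilon h\|_{W^{s-1}_p}$, while all other contributions are of lower order, i.e.\ bounded by $K\|h\|_{W^{s'-1}_p}$ with a constant $K$ that may depend on $\varepsilon$ (as the statement allows). This mirrors the philosophy of the proof of Lemma~\ref{L:comm'}, where the far field already furnished the needed smallness.

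First I would localize the argument of the singular integral operator using Lemma~\ref{L:B2} with $\varphi:=\pi_N^\varepsilon\in{\rm C}^\infty(\mathbb{R})$ (which has bounded, hence uniformly continuous, derivatives): setting $R:=\pi_N^\varepsilon B_{n,m}(f,\ldots,f)[f,\ldots,f,h]-B_{n,m}(f,\ldots,f)[f,\ldots,f,\pi_N^\varepsilon h]$ one obtains $\|R\|_{W^1_p}\leq K\|h\|_p$ with $K=K(\varepsilon,n,m,\|f\|_{W^s_p})$. Since $s-1<1$ and $p\in(1,\infty)$ we have $W^1_p(\mathbb{R})\hookrightarrow W^{s-1}_p(\mathbb{R})\cap L_\infty(\mathbb{R})$, so (\ref{MES}) gives $\|\overline{\omega} R\|_{W^{s-1}_p}\leq K\|h\|_p\leq K\|h\|_{W^{s'-1}_p}$. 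Since $\pi_N^\varepsilon\overline{\omega} B_{n,m}(f,\ldots,f)[f,\ldots,f,h]=\overline{\omega}\,\pi_N^\varepsilon B_{n,m}(f,\ldots,f)[f,\ldots,f,h]=\overline{\omega} B_{n,m}(f,\ldots,f)[f,\ldots,f,\pi_N^\varepsilon h]+\overline{\omega} R$, it then remains to estimate $\overline{\omega} B_{n,m}(f,\ldots,f)[f,\ldots,f,\pi_N^\varepsilon h]$, where now $\pi_N^\varepsilon h$ is supported in $[|x|\geq 1/\varepsilon]$.

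Next I would separate the far field. Choose $\chi=\chi^\varepsilon\in{\rm C}^\infty(\mathbb{R},[0,1])$ with $\chi\equiv 1$ on $[|x|\geq 1/(2\varepsilon)]$ and $\supp\chi\subset[|x|\geq 1/(4\varepsilon)]$, and split $\overline{\omega} B_{n,m}(f,\ldots,f)[f,\ldots,f,\pi_N^\varepsilon h]$ into $\chi\overline{\omega} B_{n,m}(f,\ldots,f)[f,\ldots,f,\pi_N^\varepsilon h]+(1-\chi)\overline{\omega} B_{n,m}(f,\ldots,f)[f,\ldots,f,\pi_N^\varepsilon h]$. In the second summand $\supp(1-\chi)$ is bounded while $\supp(\pi_N^\varepsilon h)\subset[|x|\geq 1/\varepsilon]$, so in the integral the variable $y$ satisfies $|y|\geq 1/(2\varepsilon)$ and no principal value is needed there; using in addition that $|x-z|\geq|z|/2$ on the relevant set, the $L_\infty$-bound on $f'$, and $p<\infty$, one checks that $h\mapsto(1-\chi)B_{n,m}(f,\ldots,f)[f,\ldots,f,\pi_N^\varepsilon h]$ maps $L_p(\mathbb{R})$ into $W^1_p(\mathbb{R})$ with norm $\leq K(\varepsilon,\|f\|_{W^s_p})$, and then, since $\overline{\omega}\in W^{s-1}_p(\mathbb{R})\cap L_\infty(\mathbb{R})$, (\ref{MES}) bounds this summand by $K\|h\|_p\leq K\|h\|_{W^{s'-1}_p}$. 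For the first summand I would apply (\ref{MES}):
\begin{align*}
\|\chi\overline{\omega} B_{n,m}(f,\ldots,f)[f,\ldots,f,\pi_N^\varepsilon h]\|_{W^{s-1}_p}&\leq 2\|\chi\overline{\omega}\|_\infty\|B_{n,m}(f,\ldots,f)[f,\ldots,f,\pi_N^\varepsilon h]\|_{W^{s-1}_p}\\
&\hspace{0.45cm}+2\|B_{n,m}(f,\ldots,f)[f,\ldots,f,\pi_N^\varepsilon h]\|_\infty\|\chi\overline{\omega}\|_{W^{s-1}_p}.
\end{align*}
In the first term Lemma~\ref{L:MP3} (with $r=s$) gives $\|B_{n,m}(f,\ldots,f)[f,\ldots,f,\pi_N^\varepsilon h]\|_{W^{s-1}_p}\leq C(n,m,\|f\|_{W^s_p})\|\pi_N^\varepsilon h\|_{W^{s-1}_p}$ with a constant \emph{independent of $\varepsilon$}, whereas $\|\chi\overline{\omega}\|_\infty\leq\sup_{|x|\geq 1/(4\varepsilon)}|\overline{\omega}(x)|\to0$ as $\varepsilon\to0$; choosing $\varepsilon$ small this term is $\leq\nu\|\pi_N^\varepsilon h\|_{W^{s-1}_p}$. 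In the second term Lemma~\ref{L:MP1}~(ii) (with $\tau=s'-1\in(1/p,1)$ and $r=s$) bounds $\|B_{n,m}(f,\ldots,f)[f,\ldots,f,\pi_N^\varepsilon h]\|_\infty\leq C(n,m,\|f\|_{W^s_p})\|\pi_N^\varepsilon h\|_{W^{s'-1}_p}\leq K(\varepsilon)\|h\|_{W^{s'-1}_p}$, while $\|\chi\overline{\omega}\|_{W^{s-1}_p}\leq K(\varepsilon,\|\overline{\omega}\|_{W^{s-1}_p})$, so this term is of lower order. Collecting the estimates and shrinking $\varepsilon$ proves (\ref{LC2}).

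The main obstacle, and the reason both cut-offs $\pi_N^\varepsilon$ and $\chi^\varepsilon$ are needed, is the following tension: the far field provides smallness of $\overline{\omega}$ (and of $f'$) only in $L_\infty(\mathbb{R})$, never in a stronger norm, and in fact $\|\chi^\varepsilon\overline{\omega}\|_{W^{s-1}_p}$ and $\|(\pi_N^\varepsilon)'\|_\infty$ blow up as $\varepsilon\to0$. The argument must therefore be arranged so that the single genuinely ``order $s-1$ in $h$'' quantity, $\|\pi_N^\varepsilon h\|_{W^{s-1}_p}$, is \emph{isolated exactly} --- which is what the Lemma~\ref{L:B2} commutator step achieves, so that no uncontrolled $\|h\|_{W^{s-1}_p}$ ever appears --- and is then paired precisely with the $L_\infty$-small factor $\|\chi^\varepsilon\overline{\omega}\|_\infty$; every remaining term carries only an $L_p$- or $W^{s'-1}_p$-norm of $h$ and may thus absorb the $\varepsilon$-dependent constants. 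A secondary delicate point is that the ``smoothing'' estimate for the $(1-\chi)$-contribution must really gain from $L_p(\mathbb{R})$ up to (say) $W^1_p(\mathbb{R})$ although $\overline{\omega}$ and $f$ are only Hölder continuous; this is possible because $\overline{\omega}$ enters merely as a multiplier (handled by (\ref{MES})), while $f$ enters the kernel only through the bounded difference quotients $\delta_{[x,y]}f/y$, so that the $x$-derivatives of the kernel cost only factors of $f'$, $(\pi_N^\varepsilon)'$ and $(1-\chi)'$, all in $L_\infty(\mathbb{R})$.
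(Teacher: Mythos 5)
Your proof is correct and rests on the same two pillars as the paper's: the commutator estimate of Lemma~\ref{L:B2} to replace $\pi_N^\varepsilon B_{n,m}[\ldots,h]$ by $B_{n,m}[\ldots,\pi_N^\varepsilon h]$, and the far-field smallness $\|\overline{\omega}\|_{L_\infty([|x|\geq R])}\to0$ (via Sobolev embedding and density) to absorb the genuinely order-$(s-1)$ contribution into $\nu\|\pi_N^\varepsilon h\|_{W^{s-1}_p}$. The one place you diverge is the decomposition. The paper uses the built-in identity $\chi_N^\varepsilon\pi_N^\varepsilon=\pi_N^\varepsilon$ to write $\pi_N^\varepsilon\overline{\omega} B_{n,m}[\ldots,h]=\chi_N^\varepsilon\overline{\omega}\big(\pi_N^\varepsilon B_{n,m}[\ldots,h]-B_{n,m}[\ldots,\pi_N^\varepsilon h]\big)+\chi_N^\varepsilon\overline{\omega}\,B_{n,m}[\ldots,\pi_N^\varepsilon h]$, so the coefficient that multiplies the singular operator is already $\chi_N^\varepsilon\overline{\omega}$, uniformly small in $L_\infty$; there are only two terms. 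You instead keep the unlocalized $\overline{\omega}$ through the commutator step and only afterwards introduce a new cut-off $\chi$, which forces you to handle a third, ``well-separated supports'' term $(1-\chi)\overline{\omega} B_{n,m}[\ldots,\pi_N^\varepsilon h]$. Your treatment of that term is in spirit correct, but note that a naive application of~(\ref{MES}) with Lemma~\ref{L:MP3} does \emph{not} make it lower order (the factor $\|\overline{\omega}\|_\infty$ is not small); you really must exploit that the kernel is non-singular there ($|y|\geq 1/(2\varepsilon)$) and redo essentially the same integration-by-parts argument as in the proof of Lemma~\ref{L:B2} to show it maps $L_p(\mathbb{R})$ into $W^1_p(\mathbb{R})$ — you gesture at this but don't carry it out, and it is not entirely free. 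The paper's decomposition sidesteps this issue entirely, at no cost, which is why it is preferable; your route is nevertheless sound once that smoothing step is written out carefully.
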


We now establish the counterpart of (\ref{LC2}) in the case when $\overline{\omega}=1$.

\begin{lemma}\label{L:C2} 
Let $n,\, m \in \mathbb{N}$, $p\in(1,\infty),$ $1+1/p<s'<s<2$, and  $\nu\in(0,\infty)$ be given. 
Let further  $f\in W^s_p(\mathbb{R})$.
For sufficiently small $\varepsilon\in(0,1)$   there exist a constant $K=K(\varepsilon, n, m, \|f\|_{W^s_p})$  such that 
 \begin{equation}\label{LC1Na}
  \|\pi_j^\varepsilon B_{0,m}(f,\ldots,f)[ h]-B_{0,0}[\pi_j^\varepsilon h]\|_{W^{s-1}_p}\leq \nu \|\pi_j^\varepsilon h\|_{W^{s-1}_p}+K\| h\|_{W^{s'-1}_p}
 \end{equation}
 and 
  \begin{equation}\label{LC1Nb}
  \|\pi_j^\varepsilon B_{n,m}(f,\ldots,f)[f,\ldots,f, h]\|_{W^{s-1}_p}\leq \nu \|\pi_j^\varepsilon h\|_{W^{s-1}_p}+K\| h\|_{W^{s'-1}_p},\qquad n\geq 1,
 \end{equation}
 for $j=N$ and all $h\in W^{s-1}_p(\mathbb{R})$.
\end{lemma}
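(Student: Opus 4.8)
The plan is to reduce both estimates, after pushing the cut‑off inside the operators, to a single statement about $B_{n,m}(f,\ldots,f)[f,\ldots,f,\cdot]$ with $n\ge1$ acting on functions supported near infinity, and to prove that statement by the same bookkeeping as in Lemma~\ref{L:C1a} and in the proof of the $j=N$ part of Lemma~\ref{L:comm'}. Write $g:=\pi_N^\varepsilon h$, so $\supp g\subset\{|x|\ge1/\varepsilon\}$, and recall that $f^{(k)}(x)\to0$ as $|x|\to\infty$ for $k=0,1$, whence $\|f\|_{W^1_\infty(\{|x|\ge1/(2\varepsilon)-2\})}$ is as small as we wish once $\varepsilon$ is small. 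By Lemma~\ref{L:B2} with $\varphi=\pi_N^\varepsilon$ one has
\begin{equation*}
\big\|\pi_N^\varepsilon B_{n,m}(f,\ldots,f)[f,\ldots,f,h]-B_{n,m}(f,\ldots,f)[f,\ldots,f,g]\big\|_{W^1_p}\le K\|h\|_p ,
\end{equation*}
and, since $W^1_p(\mathbb{R})\hookrightarrow W^{s-1}_p(\mathbb{R})$ (as $s<2$) and $\|h\|_p\le\|h\|_{W^{s'-1}_p}$, this commutator is absorbed into $K\|h\|_{W^{s'-1}_p}$; the same applies when $n=0$, so in \eqref{LC1Na} we are left with $B_{0,m}(f,\ldots,f)[g]-B_{0,0}[g]$. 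Moreover \eqref{MES} together with $W^{s'-1}_p(\mathbb{R})\hookrightarrow L_\infty(\mathbb{R})$ gives $\|g\|_{W^{s'-1}_p}\le K(\varepsilon)\|h\|_{W^{s'-1}_p}$, so nothing is lost by keeping $\|g\|_{W^{s'-1}_p}$ on the right‑hand side. Finally, from the elementary identity $[1+t^2]^{-m}-1=-t^2\sum_{k=1}^m[1+t^2]^{-k}$ one gets $B_{0,m}(f,\ldots,f)[g]-B_{0,0}[g]=-\sum_{k=1}^m B_{2,k}(f,\ldots,f)[f,f,g]$; hence both \eqref{LC1Na} and \eqref{LC1Nb} follow once we show, for every $n\ge1$ and every $g\in W^{s-1}_p(\mathbb{R})$ with $\supp g\subset\{|x|\ge1/\varepsilon\}$, that $\|B_{n,m}(f,\ldots,f)[f,\ldots,f,g]\|_{W^{s-1}_p}\le\nu\|g\|_{W^{s-1}_p}+K\|g\|_{W^{s'-1}_p}$ for $\varepsilon$ small enough in terms of $\nu$.

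To prove this, put $G:=B_{n,m}(f,\ldots,f)[f,\ldots,f,g]$ and note that the integrand defining $G(x)$ vanishes unless $|x-y|\ge1/\varepsilon$, so that either $|x|\ge1/(2\varepsilon)$ — in which case all the intermediate points $x-ty$, $t\in[0,1]$, that occur lie in $\{|z|\ge1/(2\varepsilon)-1\}$ and $f,f'$ are small there, giving $|\delta_{[x,y]}f/y|\le\eta(\varepsilon)$ — or $|y|\ge1/(2\varepsilon)$ — in which case, since $|\delta_{[x,y]}f|\le 2\|f\|_\infty$ and $n\ge1$, the kernel is bounded by $C|y|^{-n-1}\le C|y|^{-2}$, and $|y|^{-2}$ has integral $O(\varepsilon)$ over $\{|y|\ge1/(2\varepsilon)\}$. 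Splitting the $y$‑integral into $\{|y|\le1\}$ (where the principal‑value part is treated by separating off the value $f'(x)$ of $\delta_{[x,y]}f/y$ at $y=0$, which is small for $|x|\ge1/\varepsilon-1$, the remainder admitting an integrable majorant) and $\{|y|>1\}$, and invoking Minkowski's integral inequality exactly as in the proof of Lemma~\ref{L:comm'}, one obtains $\|G\|_p\le\nu_0(\varepsilon)\|g\|_p$ with $\nu_0(\varepsilon)\to0$ as $\varepsilon\to0$. This settles the $\|\cdot\|_p$‑part, and, via $\int_{[|\xi|\ge1]}|\xi|^{-1-(s-1)p}\|G-\tau_\xi G\|_p^p\,d\xi\le C\|G\|_p^p$, reduces the $W^{s-1}_p$‑seminorm to the range $|\xi|<1$.

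For $|\xi|<1$ I would expand $(G-\tau_\xi G)(x)$, by means of \eqref{spr2}--\eqref{spr3}, into a finite sum of $B_{n',m'}$‑operators applied to translates of $f$, $f'$, $g$ and to the differences $g-\tau_\xi g$, $f-\tau_\xi f$, $a_i\pm\tau_\xi a_i$ — the very bookkeeping of the proofs of Lemma~\ref{L:MP3} and Lemma~\ref{L:MP3b} — observing that the functions $\tau_\xi g$, $g-\tau_\xi g$ occurring remain supported in $\{|x|\ge1/\varepsilon-1\}$, so that the ``$x$ large or $y$ large'' dichotomy keeps producing a small factor in every term. Estimating each term with \eqref{REF1}, \eqref{REF2} and \eqref{REF2'} (applied with $r=s$, $r'=s'$) while retaining that small factor in the $L_\infty$‑ or $L_p(\{|x|\ge1/(2\varepsilon)-2\})$‑norm, integrating in $\xi$ over $\{|\xi|<1\}$ and using $\int_{[|\xi|<1]}|\xi|^{-1-(s-1)p}\|f-\tau_\xi f\|_{W^1_p}^p\,d\xi<\infty$ (valid since $f\in W^s_p(\mathbb{R})$, cf.~Lemma~\ref{L:B1}) together with the analogous bound for $\|g-\tau_\xi g\|_p$, one arrives at $[G]_{W^{s-1}_p}\le C\varepsilon^{\gamma}\|g\|_{W^{s-1}_p}+K\|g\|_{W^{s'-1}_p}$ for some $\gamma>0$, and the claim follows by choosing $\varepsilon$ small. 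I expect the main obstacle to be exactly this seminorm bound: one must arrange that in \emph{every} term produced by \eqref{spr3} either the coefficient of $\|g\|_{W^{s-1}_p}$ carries a genuine $\varepsilon$‑small factor or the term involves only the lower norm $\|g\|_{W^{s'-1}_p}$. The delicate terms are the higher‑order operators $B_{n+2,m+1}$ coming from translating the denominators $1+(\delta_{[x,y]}f/y)^2$ — to be handled, as the terms $T_3,T_4$ in the proof of Lemma~\ref{L:MP3b}, by splitting $\{|y|\le1\}$ from $\{|y|>1\}$ so that on $\{|y|\le1\}$ the extra factor $\delta_{[x,y]}a_i/y$ is estimated by $\|f'\|_{L_\infty(\{|x|\ge1/(2\varepsilon)-2\})}$ — and the terms with $f-\tau_\xi f$ in a $b$‑slot, to be reorganized via the commutator identity \eqref{REF2}, whose main term is $\tau_\xi g$ multiplied by a $B$‑operator and hence only involves $\|\tau_\xi g\|_\infty\le C\|g\|_{W^{s'-1}_p}$. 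Since all the required estimates are of the same, purely computational, nature as those already carried out in Lemma~\ref{L:C1} and Lemma~\ref{L:C1a}, the present lemma is then proved by repeating those arguments with the role of ``oscillation of $f'$ over a short interval'' played by ``smallness of $f,f'$ at infinity''.
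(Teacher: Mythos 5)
Your strategy is essentially the one the paper uses: push $\pi_N^\varepsilon$ inside via the commutator estimate of Lemma~\ref{L:B2}, invoke the algebraic identity $B_{0,m}-B_{0,0}=-\sum_{k}B_{2,k}[f,f,\cdot]$ (which \eqref{spr3} encodes), and then exploit the decay of $f,f'$ at infinity to produce the small coefficient in front of $\|\pi_N^\varepsilon h\|_{W^{s-1}_p}$; the $L_p$ part is indeed harmless since $\|h\|_p\leq\|h\|_{W^{s'-1}_p}$, and the whole difficulty is, as you correctly identify, the $W^{s-1}_p$-seminorm. Two points where the paper's execution is cleaner than your sketch and worth internalizing.

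First, after the $\pi_N^\varepsilon$-commutator the paper does \emph{not} drop the outer cut-off: it inserts $\chi_N^\varepsilon$ in front of the main term (a second application of Lemma~\ref{L:B2}, their $T_b$), so that in $T_c=\chi_N^\varepsilon\bigl(B_{0,m}[\pi_N^\varepsilon h]-B_{0,0}[\pi_N^\varepsilon h]\bigr)$ the outer variable is already pinned to $\{|x|\geq1/\varepsilon-\varepsilon\}$. You instead work with $B_{n,m}[\ldots,g]$, $g=\pi_N^\varepsilon h$, and invoke an ``$|x|$ large or $|y|$ large'' dichotomy. That is workable, but your justification of the ``$|x|$ large'' branch via \emph{intermediate} points $x-ty$ is unnecessarily restrictive and forces the artificial split at $|y|\leq1$: the crude $|y|^{-n-1}$ bound you use on $\{|y|>1\}$ is only $O(\varepsilon^n)$ over $\{|y|\geq1/(2\varepsilon)\}$, so you would have to argue the intermediate range $1<|y|<1/(2\varepsilon)$ separately. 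The right observation — and the one the paper uses — is that $\delta_{[x,y]}f$ depends only on the \emph{endpoints} $x$ and $x-y$; so once both lie far out, one may replace $f$ in every $b$-slot by the auxiliary Lipschitz function $F$ that equals $f$ on $\{|z|\geq1/\varepsilon-\varepsilon\}$ and is linear inside. Since $\|F'\|_\infty\to0$ as $\varepsilon\to0$, Lemma~\ref{L:MP1}(i) then yields the small constant for all $y$ at once, with no case distinction in $y$.

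Second, for the seminorm the paper decomposes $T_k-\tau_\xi T_k=T_{kA}+T_{kB}+\chi_N^\varepsilon T_{kC}$ (analogous to what you get from \eqref{spr3}), estimates $T_{kA}$ and $\chi_N^\varepsilon T_{kC}$ by Lemma~\ref{L:MP1}(ii) and Lemma~\ref{L:MP2} with $r=s'$ (producing the lower norm $\|h\|_{W^{s'-1}_p}$), and for the dangerous term $T_{kB}$ — the one carrying $\pi_N^\varepsilon h-\tau_\xi(\pi_N^\varepsilon h)$ — splits $|\xi|<\varepsilon$ from $|\xi|\geq\varepsilon$. When $|\xi|<\varepsilon$, $\supp(\pi_N^\varepsilon h-\tau_\xi\pi_N^\varepsilon h)\subset\supp\chi_N^\varepsilon$, so the $F$-substitution applies and Lemma~\ref{L:MP1}(i) gives the coefficient $C\|F'\|_\infty^2$, small; when $|\xi|\geq\varepsilon$, a rough $K\|h\|_p$ bound suffices, since $\int_{\varepsilon\leq|\xi|<1}|\xi|^{-1-(s-1)p}\,d\xi<\infty$. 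This is precisely the mechanism you anticipate when you remark that ``one must arrange that either the coefficient of $\|g\|_{W^{s-1}_p}$ carries a genuine $\varepsilon$-small factor or the term involves only the lower norm''; the devices that make it work are $F$ and the $|\xi|<\varepsilon$ split, which your sketch does not name. (The references to \eqref{REF2}, \eqref{REF2'} are not needed here; \eqref{REF1} together with Lemma~\ref{L:MP1}(i),(ii) suffices.)
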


\pagebreak

We are now in a position to prove Theorem \ref{T:AP}.

\begin{proof}[Proof of Theorem \ref{T:AP}]
Let $\{\pi_j^\varepsilon\,:\, -N+1\leq j\leq N\} $ be an  $\varepsilon$-localization family   with the associated family~${\{\chi_j^\varepsilon\,:\, -N+1\leq j\leq N\}}$, 
with $\varepsilon\in(0,1)$ to be fixed later on. 
In this proof we denote by $C$ constants that depend only on $f_0$. 
Constants  denoted by $K$ may depend only  on $\varepsilon$ and $f_0$.\medskip

\noindent{\em Step 1: The terms $\partial\mathbb{B}(f_0)[f][\overline{\omega}_0].$} 
In virtue of Lemma \ref{L:MP3b} (with $r=s$ and $r'=s'$) it holds 
\begin{equation}\label{nbn}
\begin{aligned}
&\hspace{-0.5cm}\big\|-2B_{2,2}(f_0,f_0)[f_0,f,\overline{\omega}_0]+f'_0B_{1,1}(f_0)[f,\overline{\omega}_0]-2f_0'B_{3,2}(f_0,f_0)[f,f_0,f_0,\overline{\omega}_0]\\[1ex]
&-\overline{\omega}_0\big(-2B_{1,2}(f_0,f_0)[f_0,f']+f'_0B_{0,1}(f_0)[f']-2f_0'B_{2,2}(f_0,f_0)[f_0,f_0,f']\big)\big\|_{W^{s-1}_p}\\[1ex]
&\hspace{4cm}\leq C\|f\|_{W^{s'}_p}.
\end{aligned}
\end{equation}
Moreover, invoking Lemma \ref{L:C1}, if $\varepsilon$ is sufficiently small, then
\begin{align*}
&\hspace{-0.5cm}\big\|\pi_j^\varepsilon\overline{\omega}_0\big(-2B_{1,2}(f_0,f_0)[f_0,f']+f'_0B_{0,1}(f_0)[f']-2f_0'B_{2,2}(f_0,f_0)[f_0,f_0,f']\big)\\[1ex]
&\hspace{2cm}+\frac{\overline{\omega}_0(x_j^\varepsilon) f'_0(x_j^\varepsilon)}{1+(f'_0(x_j^\varepsilon))^2}B_{0,0}[\pi_j^\varepsilon f']\big\|_{W^{s-1}_p}\leq \frac{\mu}{4}\|\pi_j^\varepsilon f\|_{W^{s }_p}+K\|  f\|_{W^{s'}_p} 
\end{align*}
for all $|j|\leq N-1$.  Besides, for  $j=N$,  Lemma \ref{L:C1a} yields 
\begin{align*}
&\hspace{-0.5cm}\big\|\pi_j^\varepsilon\overline{\omega}_0\big(-2B_{1,2}(f_0,f_0)[f_0,f']+f'_0B_{0,1}(f_0)[f']-2f_0'B_{2,2}(f_0,f_0)[f_0,f_0,f']\big)\big\|_{W^{s-1}_p}\\[1ex]
&\hspace{2cm}\leq \frac{\mu}{4}\|\pi_j^\varepsilon f\|_{W^{s }_p}+K\|  f\|_{W^{s'}_p}.
\end{align*}
Finally, in view of  (\ref{MES})
and of $B_{1,1}(f_0)[f_0,\overline{\omega}_0]\in{\rm C}^{s-1-1/p}(\mathbb{R})$, it holds 
\begin{equation}\label{DECI}
\begin{aligned}
&\hspace{-0.5cm}\|\pi_j^\varepsilon f'B_{1,1}(f_0)[f_0,\overline{\omega}_0]-(\pi_j^\varepsilon f)'B_{1,1}(f_0)[f_0,\overline{\omega}_0](x_j)\|_{W^{s-1}_p}\\[1ex]
&\leq  \|\chi_j^\varepsilon (B_{1,1}(f_0)[f_0,\overline{\omega}_0]-B_{1,1}(f_0)[f_0,\overline{\omega}_0](x_j))(\pi_j^\varepsilon f)'\|_{W^{s-1}_p}+K\|f\|_{W^{s-1}_p}\\[1ex]
&\leq \frac{\mu}{4}\|\pi_j^\varepsilon f\|_{W^{s}_p} +K\|f\|_{W^{s'}_p}
\end{aligned}
\end{equation}
 for $|j|\leq N-1$. 
 Moreover, since $B_{1,1}(f_0)[f_0,\overline{\omega}_0]\to0$ for $|x|\to\infty$, (\ref{MES})   yields
\begin{align*}
\|\pi_j^\varepsilon f'B_{1,1}(f_0)[f_0,\overline{\omega}_0]\|_{W^{s-1}_p}  \leq \frac{\mu}{4}\|\pi_j^\varepsilon f\|_{W^{s}_p} +K\|f\|_{W^{s'}_p}
\end{align*}
for $j=N$.
Hence, if $\varepsilon$ is sufficiently small, then
\begin{equation}\label{ST3a}
\begin{aligned}
  &\Big\|\pi_j^\varepsilon\tau\partial\mathbb{B}(f_0)[f][\overline{\omega}_0] + \frac{\tau\overline{\omega}_0(x_j^\varepsilon) f'_0(x_j^\varepsilon)}{\pi(1+(f'_0(x_j^\varepsilon))^2)}B_{0,0}[(\pi_j^\varepsilon f)']-\frac{\tau}{\pi}B_{1,1}(f_0)[f_0,\overline{\omega}_0](x_j^\varepsilon)(\pi_j^\varepsilon f)'\Big\|_{W^{s-1}_p} \\[1ex]
  &\hspace{2cm}\leq \frac{\mu}{2}\|\pi_j^\varepsilon f\|_{W^{s}_p} +K\|f\|_{W^{s'}_p}
\end{aligned}
\end{equation}
for all $ |j|\leq N-1$ and  $f\in W^{s }_p(\mathbb{R}) ,$ and 
\begin{align}\label{ST3b}
  \|\pi_j^\varepsilon\tau\partial\mathbb{B}(f_0)[f][\overline{\omega}_0] \|_{W^{s-1}_p} \leq \frac{\mu}{2}\|\pi_j^\varepsilon f\|_{W^{s}_p} +K\|f\|_{W^{s'}_p}
\end{align}
for $ j=N$ and all $f\in W^{s }_p(\mathbb{R})$.\medskip

\noindent{\em Step 2: The terms $\mathbb{B}(\tau f_0)[w(\tau)[f]].$} 
We first estimate   $\|\pi_j^\varepsilon w(\tau)[f]\|_{W^{s-1}_p}$, $-N+1\leq j\leq N$.
Recalling Lemma~\ref{L:B2} and (\ref{we1}), it holds 
 \[
 \|\pi_j^\varepsilon\mathbb{A}(\tau f_0)[w(\tau)[f]]- \mathbb{A}(\tau f_0)[\pi_j^\varepsilon(w(\tau)[f]]\|_{W^{s-1}_p}\leq   K\|f\|_{W^{s'}_p}.
 \]
Besides, multiplying   (\ref{WT}) by $\pi_j^\varepsilon$,  it follows from Lemma \ref{L:MP3}, Lemma \ref{L:MP3b}, and Lemma \ref{L:B2}  that
\begin{align*}
\|\pi_j^\varepsilon(1+a_\mu\mathbb{A}(\tau f_0))[w(\tau)[f]]\|_{W^{s-1}_p}&\leq C\|\pi_j^\varepsilon\partial\mathbb{A}(f_0)[f][\overline{\omega}_0]\|_{W^{s-1}_p}+C\|\pi_j^\varepsilon f'\|_{W^{s-1}_p}\\[1ex]
&\leq C\|\pi_j^\varepsilon f\|_{W^{s}_p}+ K\|f\|_{W^{s'}_p}.
\end{align*}
In order to estimate the last three terms of $\pi_j^\varepsilon\partial\mathbb{A}(f_0)[f][\overline{\omega}_0]$ we  used Lemma~\ref{L:MP3b} in a similar manner as in the derivation of (\ref{nbn}), and afterwards the commutator estimate in Lemma~\ref{L:B2} to write in the end $\pi_j^\varepsilon$ as a multiplying factor of $f'$.

Combining the last two estimates we arrive at
\begin{align*}
\|(1+a_\mu\mathbb{A}(\tau f_0))[\pi_j^\varepsilon w(\tau)[f]]\|_{W^{s-1}_p}&\leq C\|\pi_j^\varepsilon f\|_{W^{s}_p}+ K\|f\|_{W^{s'}_p}.
\end{align*}
Finally, Theorem \ref{T:INV} ensures  there exists a constant $C_0=C_0(f_0)>0$  with
\begin{align}\label{wtf4}
\|\pi_j^\varepsilon w(\tau)[f]\|_{W^{s-1}_p}\leq  C_0\|\pi_j^\varepsilon f\|_{W^{s}_p}+ K\|f\|_{W^{s'}_p}.
\end{align}

It virtue of Lemma~\ref{L:C1} (with $\nu=\mu/(8C_0)$, where $C_0$ is the constant in (\ref{wtf4})), (\ref{we2}), and~(\ref{wtf4}) for $\varepsilon$ sufficiently small and $|j|\leq N-1$ it holds  that 
\begin{equation}\label{ST2a}
\begin{aligned}
&\hspace{-0,5cm}\|\pi_j^\varepsilon \mathbb{B}(\tau f_0)[w(\tau)[f]]- \pi^{-1}B_{0,0}[\pi_j^\varepsilon w(\tau)[f]]\|_{W^{s-1}_p}\\[1ex] 
&\leq \frac{\mu}{4C_0}\|\pi_j^\varepsilon w(\tau)[f]\|_{W^{s-1}_p}+K\|w(\tau)[f]\|_{W^{s'-1}_p}\\[1ex]
&\leq \frac{\mu}{4}\|\pi_j^\varepsilon f \|_{W^s_p}+K\|f\|_{W^{s'}_p}. 
\end{aligned} 
\end{equation}
Lemma \ref{L:C1a}, Lemma \ref{L:C2}, (\ref{we2}), and~(\ref{wtf4}) show that  (\ref{ST2a}) stays true also for~$j=N$.

We now set
\[
\varphi_\tau:= C_{\Theta} +(1-\tau)a_\mu\Phi(f_0)   +\frac{\tau a_\mu}{\pi} B_{0,1}(f_0)[\overline{\omega}_0],\qquad\tau\in[0,1],
\]
and  prove that
\begin{equation} \label{ST2b}
\begin{aligned}
&\Big\|  B_{0,0}[\pi_j^\varepsilon w(\tau)[f]]+ \varphi_\tau(x_j^\varepsilon)B_{0,0}[(\pi_j^\varepsilon f)'] + \tau a_\mu \pi \frac{\overline{\omega}_0(x_j^\varepsilon)}{1+f_0'^2(x_j^\varepsilon)} (\pi_j^\varepsilon f)' \Big\|_{W^{s-1}_p} \\[1ex]
&\hspace{2cm}\leq \frac{\mu}{4}\|\pi_j^\varepsilon f \|_{W^s_p}+K\|f\|_{W^{s'}_p} 
\end{aligned}
\end{equation}
 for all   $|j|\leq N-1$, provided   $\varepsilon$ is small.
 Indeed, since $B_{0,0}^2=\pi^2 H^2=-\pi^2\id_{L_p(\mathbb{R})},$ it holds  
\begin{equation} \label{ST2c}
\begin{aligned}
&\hspace{-0,5cm}\Big\|  B_{0,0}[\pi_j^\varepsilon w(\tau)[f]]  +\varphi_\tau(x_j^\varepsilon)B_{0,0}[(\pi_j^\varepsilon f)'] + \tau a_\mu \pi \frac{\overline{\omega}_0(x_j^\varepsilon)}{1+f_0'^2(x_j^\varepsilon)} (\pi_j^\varepsilon f)'\Big\|_{W^{s-1}_p}\\[1ex]
& \leq C_1\Big\|\pi_j^\varepsilon w(\tau)[f]+\varphi_\tau(x_j^\varepsilon)\pi_j^\varepsilon f'-\frac{\tau a_\mu}{\pi}\frac{\overline{\omega}_0(x_j^\varepsilon)}{1+f_0'^2(x_j^\varepsilon)}B_{0,0}[\pi_j^\varepsilon f']\Big\|_{W^{s-1}_p}+K\|f\|_{W^{s-1}_p}. 
\end{aligned}
\end{equation}
Besides, multiplying (\ref{WT}) by $\pi_j^\varepsilon$ and using the definition of $\varphi_\tau$, we arrive at
\begin{equation}\label{FLFOR}
\pi_j^\varepsilon w(\tau)[f]+\varphi_\tau(x_j^\varepsilon)\pi_j^\varepsilon f'-\frac{\tau a_\mu}{\pi}\frac{\overline{\omega}_0(x_j^\varepsilon)}{1+f_0'^2(x_j^\varepsilon)}B_{0,0}[\pi_j^\varepsilon f']=T_1+T_{2}+T_{3},\\[1ex]
\end{equation}
where
\begin{align*}
T_1&:=(1-\tau)a_\mu(\Phi(f_0)(x_j^\varepsilon)-\Phi(f_0))\pi_j^\varepsilon f',\\[1ex]
T_2&:=-\frac{\tau a_\mu}{\pi}\Big(\pi\pi_j^\varepsilon\partial \mathbb{A}(f_0)[f][\overline{\omega}_0]-B_{0,1}(f_0)[\overline{\omega}_0](x_j^\varepsilon)\pi_j^\varepsilon f'+\frac{\overline{\omega}_0(x_j^\varepsilon)}{1+f_0'^2(x_j^\varepsilon)}B_{0,0}[\pi_j^\varepsilon f']\Big),\\[1ex]
T_3&:=-a_\mu\pi_j^\varepsilon \mathbb{A}(\tau f_0)[w(\tau)[f]].
\end{align*}
 The   term  $T_1$ can be estimated by using  the fact that ${\Phi(f_0)\in{\rm C}^{s-1-1/p}}(\mathbb{R})$, similarly  as in~(\ref{DECI}). 
Concerning $T_3$, we infer from (\ref{FormulaA}) that 
\begin{align*}
T_3&=-\frac{\tau a_\mu}{\pi}\Big[\Big(\pi_j^\varepsilon f_0'B_{0,1}(\tau f_0)[w(\tau)[f]]-\frac{f_0'(x_j^\varepsilon)}{1+\tau^2f_0'^2(x_j^\varepsilon)}B_{0,0}[\pi_j^\varepsilon w(\tau)[f]]\Big)\\[1ex]
&\hspace{1,75cm}-\Big(\pi_j^\varepsilon  B_{1,1}(\tau f_0)[f_0,w(\tau)[f]]-\frac{f_0'(x_j^\varepsilon)}{1+\tau^2f_0'^2(x_j^\varepsilon)}B_{0,0}[\pi_j^\varepsilon w(\tau)[f]]\Big)\Big]
\end{align*}
and both terms can be estimated by using (\ref{we2}), Lemma \ref{L:C1}, and (\ref{wtf4}).
Finally, using~(\ref{FD3}), it holds that 
\begin{equation*} 
\begin{aligned}
T_2&=-\frac{\tau a_\mu}{\pi}\Big[\pi_j^\varepsilon f'\big(B_{0,1}(f_0)[\overline{\omega}_0]-B_{0,1}(f_0)[\overline{\omega}_0](x_j^\varepsilon)\big)+T_{\rm LOT}[f]\\[1ex]
&\hspace{1,65cm}-2\Big(\pi_j^\varepsilon f_0'\overline{\omega}_0 B_{1,2}(f_0,f_0)[f_0,f']-\frac{f_0'^2(x_j^\varepsilon)\overline{\omega}_0(x_j^\varepsilon)}{[1+f_0'^2(x_j^\varepsilon)]^2}B_{0,0}[\pi_j^\varepsilon f']\Big)\\[1ex]
&\hspace{1,65cm}-\Big(\pi_j^\varepsilon\overline{\omega}_0B_{0,1}(f_0)[f']-\frac{\overline{\omega}_0(x_j^\varepsilon)}{1+f_0'^2(x_j^\varepsilon)}B_{0,0}[\pi_j^\varepsilon f']\Big)\\[1ex]
&\hspace{1,65cm}+2\Big(\pi_j^\varepsilon\overline{\omega}_0B_{2,2}(f_0,f_0)[f_0,f_0,f']-\frac{f_0'^2(x_j^\varepsilon)\overline{\omega}_0(x_j^\varepsilon)}{[1+f_0'^2(x_j^\varepsilon)]^2}B_{0,0}[\pi_j^\varepsilon f']\Big)\Big],
\end{aligned}
\end{equation*}
where
\begin{align*}
T_{\rm LOT}[f]&:=-2(f_0'B_{2,2}(f_0,f_0)[f_0,f,\overline{\omega}_0]-f_0'\overline{\omega}_0B_{1,2}(f_0,f_0)[f_0,f'])\\[1ex]
&\hspace{0,54cm}-(B_{1,1}(f_0)[f,\overline{\omega}_0]-\overline{\omega}_0B_{0,1}(f_0)[f'])\\[1ex]
&\hspace{0,54cm}+2(B_{3,2}(f_0,f_0)[f_0,f_0,f,\overline{\omega}_0]-\overline{\omega}_0B_{2,2}(f_0,f_0)[f_0,f_0,f']).
\end{align*}
Lemma \ref{L:MP3b} yields
\begin{align*}
\|\pi_j^\varepsilon T_{\rm LOT}[f]\|_{W^{s-1}_p}\leq K\|f\|_{W^{s'}_p}.
\end{align*}
 The first term in the decomposition of $T_2$ is estimated by using ${B_{0,1}(f_0)[\overline{\omega}_0]\in{\rm C}^{s-1-1/p}}(\mathbb{R})$, similarly  as in (\ref{DECI}).  
 For the last three   terms we rely on Lemma \ref{L:C1}, (\ref{we2}),    and (\ref{wtf4}).
 Altogether, we conclude that if $\varepsilon$ is sufficiently small, then
 \begin{align*}
 \Big\|\pi_j^\varepsilon w(\tau)[f]+\varphi_\tau(x_j^\varepsilon)\pi_j^\varepsilon f'-\frac{\tau a_\mu}{\pi}\frac{\overline{\omega}_0(x_j^\varepsilon)}{1+f_0'^2(x_j^\varepsilon)}
 B_{0,0}[\pi_j^\varepsilon f']\Big\|_{W^{s-1}_p}
  \leq  \frac{\mu}{4C_1}\|\pi_j^\varepsilon f \|_{W^s_p}+K\|f\|_{W^{s'}_p}, 
 \end{align*}
 and together with (\ref{ST2c}) we have proven (\ref{ST2b}).
From (\ref{ST2a}) and (\ref{ST2b}) we finally conclude that
\begin{equation}\label{ST3a'}
\begin{aligned}
&\Big\|\pi_j^\varepsilon \mathbb{B}(\tau f_0)[w(\tau)[f]]+\pi^{-1}\varphi_\tau(x_j^\varepsilon)B_{0,0}[(\pi_j^\varepsilon f)'] + \tau a_\mu  \frac{\overline{\omega}_0(x_j^\varepsilon)}{1+f_0'^2(x_j^\varepsilon)} (\pi_j^\varepsilon f)'\Big\|_{W^{s-1}_p} \\[1ex]
&\hspace{2cm}\leq \frac{\mu}{2}\|\pi_j^\varepsilon f \|_{W^s_p}+K\|f\|_{W^{s'}_p} 
\end{aligned}
\end{equation}
 for all   $|j|\leq N-1$, provided that $\varepsilon$ is small.
 Using also  Lemma \ref{L:C1a} and Lemma \ref{L:C2},   it is not difficult to infer from the latter relations that
 \begin{equation}\label{ST3a''}
\Big\|\pi_j^\varepsilon \mathbb{B}(\tau f_0)[w(\tau)[f]]+\pi^{-1}C_{\Theta} B_{0,0}[(\pi_j^\varepsilon f)'] \|_{W^{s-1}_p}\leq \frac{\mu}{2}\|\pi_j^\varepsilon f \|_{W^s_p}+K\|f\|_{W^{s'}_p} 
\end{equation}
 for    $j=N$, provided that $\varepsilon$ is small.
 
Combining the relation $\overline{\omega}_0=-C_{\Theta}f_0'-a_\mu\mathbb{A}(f_0)[\overline{\omega}_0]$  with the estimates(\ref{ST3a})  and (\ref{ST3a'})(and recalling also Remark~\ref{R:1}~(iii) and the identity $B_{0,0}=\pi H$), we conclude that  (\ref{D1}) holds true in the case when $|j|\leq N-1$.
For $j=N$ the desired claim (\ref{D1}) follows from~(\ref{ST3b})  and (\ref{ST3a''}).
 \end{proof}

We now consider the Fourier multipliers from Theorem  \ref{T:AP} more closely.

\begin{lemma}\label{L:GAP} 
 There exists a constant  $\eta=\eta(f_0)\in(0,1)$ with the property that
\begin{align}\label{BEta}
\eta\leq\alpha_\tau\leq \frac{1}{\eta}\qquad\mbox{and}\qquad \|\beta_\tau\|_\infty\leq \frac{1}{\eta}
\end{align}
for all $\tau\in[0,1]$.
Moreover, given  $\alpha\in[\eta,1/\eta]$ and $|\beta|\leq 1/\eta$, there exists a constant~${\kappa_0\geq 1}$   such that the Fourier multiplier 
\begin{align*} 
 \mathbb{A}_{\alpha,\beta}:=- \alpha\Big(-\frac{d^2}{dx^2}\Big)^{1/2}+\beta\frac{d}{dx},  
 \end{align*}
 satisfies
 \begin{align}
\bullet &\quad \lambda-\mathbb{A}_{\alpha,\beta}\in {\rm Isom}(W^s_p(\mathbb{R}),W^{s-1}_p(\mathbb{R})),\qquad  \forall\, \re\lambda\geq 1,\label{L:FM1}\\[1ex]
\bullet &\quad  \kappa_0\|(\lambda-\mathbb{A}_{\alpha,\beta})[f]\|_{W^{s-1}_p}\geq |\lambda|\cdot\|f\|_{W^{s-1}_p}+\|f\|_{W^s_p}, \qquad \forall\, f\in W^{s}_p(\mathbb{R}),\, \re\lambda\geq 1\label{L:FM2}.
\end{align}
\end{lemma}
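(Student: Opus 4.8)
The plan is to establish the two assertions separately. \emph{Bounds on $\alpha_\tau$ and $\beta_\tau$.} By (\ref{REG}) we have $\Phi(f_0)\in W^{s-1}_p(\mathbb{R})$, and since $s-1>1/p$ Sobolev's embedding gives $\Phi(f_0)\in{\rm C}(\mathbb{R})$ with $\Phi(f_0)(x)\to 0$ as $|x|\to\infty$. As $f_0\in\mathcal{O}$, the continuous function $C_{\Theta}+a_\mu\Phi(f_0)$ is everywhere positive and tends to $C_{\Theta}>0$ at $\pm\infty$, so there are constants $0<c_0\leq c_1$ depending only on $f_0$ with $c_0\leq C_{\Theta}+a_\mu\Phi(f_0)\leq c_1$ on $\mathbb{R}$. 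Since $t\mapsto t/(1+t)$ is increasing, $0\leq \tau f_0'^2/(1+f_0'^2)\leq \|f_0'\|_\infty^2/(1+\|f_0'\|_\infty^2)<1$ for $\tau\in[0,1]$ (here $\|f_0'\|_\infty<\infty$ because $W^s_p(\mathbb{R})\hookrightarrow {\rm C}^1(\mathbb{R})$), so the first factor of $\alpha_\tau$ lies in $[(1+\|f_0'\|_\infty^2)^{-1},1]$ and therefore $c_0(1+\|f_0'\|_\infty^2)^{-1}\leq \alpha_\tau\leq c_1$ for all $\tau\in[0,1]$. For $\beta_\tau$ I would invoke Lemma~\ref{L:MP1}~(ii) (with $n=m=1$, $r=s$, $\tau=s-1\in(1/p,1)$) together with $W^{s-1}_p(\mathbb{R})\hookrightarrow L_\infty(\mathbb{R})$ to get $\|B_{1,1}(f_0)[f_0,\overline{\omega}_0]\|_\infty+\|\overline{\omega}_0\|_\infty\leq C(f_0)$, whence $\|\beta_\tau\|_\infty\leq c_2(f_0)$ for all $\tau\in[0,1]$, using $|a_\mu|<1$ and $\tau\in[0,1]$. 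Setting $\eta:=\min\{c_0(1+\|f_0'\|_\infty^2)^{-1},\,c_1^{-1},\,c_2^{-1},\,1/2\}\in(0,1)$ yields (\ref{BEta}).

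\emph{The resolvent estimate.} Fix $\alpha\in[\eta,1/\eta]$ and $\beta\in\mathbb{R}$ with $|\beta|\leq 1/\eta$. Using $\widehat{f'}=i\xi\widehat f$ and the identity $(-d^2/dx^2)^{1/2}=H\circ(d/dx)$ from Remark~\ref{R:1}~(iii), the operator $\mathbb{A}_{\alpha,\beta}$ is the Fourier multiplier with symbol $m(\xi)=-\alpha|\xi|+i\beta\xi$, so $\lambda-\mathbb{A}_{\alpha,\beta}$ has symbol $p_\lambda(\xi):=\lambda+\alpha|\xi|-i\beta\xi$; since $d/dx\in\mathcal{L}(W^s_p(\mathbb{R}),W^{s-1}_p(\mathbb{R}))$ and $H$ is bounded on $L_p(\mathbb{R})$ and commutes with $d/dx$, also $\mathbb{A}_{\alpha,\beta}\in\mathcal{L}(W^s_p(\mathbb{R}),W^{s-1}_p(\mathbb{R}))$. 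For $\re\lambda\geq1$ we have $\re p_\lambda(\xi)=\re\lambda+\alpha|\xi|\geq1$, so $p_\lambda$ never vanishes, and an elementary case distinction (according to whether $|\beta\xi|$ dominates $|\im\lambda|$, using $\alpha\geq\eta$ and $|\beta|\leq 1/\eta$) produces a constant $c=c(\eta)>0$ with
\[
|p_\lambda(\xi)|\geq c\,(1+|\lambda|+|\xi|),\qquad \xi\in\mathbb{R},\ \re\lambda\geq1,
\]
while $|p_\lambda'(\xi)|=|\alpha\,{\rm sign}(\xi)-i\beta|\leq 2/\eta$ for $\xi\neq0$. From these two facts a direct computation shows that the Fourier multipliers with symbols $\lambda/p_\lambda$, $1/p_\lambda$ and $i\xi/p_\lambda$ all satisfy the Mikhlin condition $|n(\xi)|+|\xi n'(\xi)|\leq C(\eta)$, uniformly in $\lambda$ with $\re\lambda\geq1$ and in $\alpha\in[\eta,1/\eta]$, $|\beta|\leq1/\eta$; crucially it is $\lambda/p_\lambda$, and not merely $1/p_\lambda$, that is uniformly bounded. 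Since any Fourier multiplier obeying the Mikhlin condition is bounded on $L_p(\mathbb{R})$, $1<p<\infty$, and commutes with translations, it is bounded on $W^{s-1}_p(\mathbb{R})$ with the same operator norm (for $s-1\in(0,1)$ the seminorm $[\,\cdot\,]_{W^{s-1}_p}$ involves only translations of the function itself).

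\emph{Conclusion.} Given $g\in W^{s-1}_p(\mathbb{R})$, define $f$ by $\widehat f:=\widehat g/p_\lambda$; since $\widehat{\lambda f}=(\lambda/p_\lambda)\widehat g$, $\widehat f=(1/p_\lambda)\widehat g$ and $\widehat{f'}=(i\xi/p_\lambda)\widehat g$, the three multiplier bounds give
\[
\|\lambda f\|_{W^{s-1}_p}+\|f\|_{W^{s-1}_p}+\|f'\|_{W^{s-1}_p}\leq C(\eta,p)\,\|g\|_{W^{s-1}_p}.
\]
As $\|h\|_{W^s_p}\leq C(\|h\|_{W^{s-1}_p}+\|h'\|_{W^{s-1}_p})$ for $s\in(1,2)$ — immediate from the definition of the norms, since $[h]_{W^s_p}=[h']_{W^{s-1}_p}$ — the middle and last terms control $\|f\|_{W^s_p}$, so $f\in W^s_p(\mathbb{R})$, while a check on the Fourier side gives $(\lambda-\mathbb{A}_{\alpha,\beta})f=g$. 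This establishes $\lambda-\mathbb{A}_{\alpha,\beta}\in{\rm Isom}(W^s_p(\mathbb{R}),W^{s-1}_p(\mathbb{R}))$ for every $\re\lambda\geq1$, i.e. (\ref{L:FM1}); applying the displayed inequality to $f=(\lambda-\mathbb{A}_{\alpha,\beta})^{-1}g$ then yields $|\lambda|\,\|f\|_{W^{s-1}_p}+\|f\|_{W^s_p}\leq\kappa_0\|(\lambda-\mathbb{A}_{\alpha,\beta})f\|_{W^{s-1}_p}$ with $\kappa_0=\kappa_0(\eta,p)\geq1$, which is (\ref{L:FM2}). The one genuinely delicate point is that for $p\neq2$ the space $W^s_p(\mathbb{R})$ is not a Bessel potential space, so Plancherel is unavailable and the resolvent must be routed through a multiplier theorem; the real work lies in checking the Mikhlin bounds \emph{uniformly} in $\lambda$ — with the correct $|\lambda|$-homogeneity encoded in the uniform boundedness of $\lambda/p_\lambda$ — and uniformly over the admissible range of $\alpha$ and $\beta$, after which the transfer of Mikhlin multipliers from $L_p(\mathbb{R})$ to $W^{s-1}_p(\mathbb{R})$ is immediate because they commute with translations. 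Everything else is routine bookkeeping.
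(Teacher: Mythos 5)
Your proof is correct, and the first part (the bounds on $\alpha_\tau,\beta_\tau$) is a fleshed-out version of the paper's one-line appeal to $f_0\in\mathcal{O}$. For the resolvent estimates (\ref{L:FM1})--(\ref{L:FM2}), however, you take a genuinely different route from the paper. The paper first establishes the estimates for the realizations $\mathbb{A}_{\alpha,\beta}\in\mathcal{L}(W^1_p,L_p)$ and $\mathbb{A}_{\alpha,\beta}\in\mathcal{L}(W^2_p,W^1_p)$, using the identification $W^k_p=H^k_p$ for integer $k$ (where Mikhlin applies on the Bessel-potential scale), and then passes to $(W^s_p,W^{s-1}_p)$ by the real interpolation identity (\ref{IP}). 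You instead verify Mikhlin's condition for the symbols $\lambda/p_\lambda$, $1/p_\lambda$, $i\xi/p_\lambda$ uniformly in the relevant parameters, and then transfer the resulting $L_p$-boundedness directly to $W^{s-1}_p$ by observing that a translation-invariant operator bounded on $L_p$ has the same bound on the Slobodeckij seminorm $[\cdot]_{W^{s-1}_p}$, since the latter is built from $\|f-\tau_\xi f\|_p$. Both routes are legitimate; the paper's leans on machinery already present in the article (the interpolation identity (\ref{IP}) is used repeatedly elsewhere), whereas yours is more self-contained and makes the uniformity of the constant in $\alpha,\beta,\lambda$ completely explicit, which the paper delegates to the reader. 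One cosmetic remark: you invoke "$\|h\|_{W^s_p}\leq C(\|h\|_{W^{s-1}_p}+\|h'\|_{W^{s-1}_p})$"; this does follow from $[h]_{W^s_p}=[h']_{W^{s-1}_p}$ together with $\|h\|_{W^1_p}\leq\|h\|_{W^{s-1}_p}+\|h'\|_p$, but it is worth spelling out since it is the bridge from your three multiplier estimates to the $W^s_p$-norm.
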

\begin{proof}
The bounds  (\ref{BEta}) are a consequence of $f_0\in\mathcal{O}$.
Finally, in order to prove the properties (\ref{L:FM1})-(\ref{L:FM2}),  we first consider the realizations 
$$\mathbb{A}_{\alpha,\beta}\in \mathcal{L}(W^1_p(\mathbb{R}),L_p(\mathbb{R}))\quad\mbox{and}\quad\mathbb{A}_{\alpha,\beta}\in\mathcal{L}(W^2_p(\mathbb{R}),W^1_p(\mathbb{R}))$$ 
for which the properties (\ref{L:FM1})-(\ref{L:FM2}) (in the appropriate spaces) can be  established in view of the  identification $W^k_p(\mathbb{R})=H^k_p(\mathbb{R})$, $k\in\mathbb{N}$ (using Fourier analysis and, in particular, Mikhlin's multiplier theorem, cf. e.g. \cite[Theorem 4.23]{AB12}).
 Then,  using the interpolation property (\ref{IP}) we conclude that (\ref{L:FM1})-(\ref{L:FM2}) hold true.
\end{proof}
\pagebreak

We next exploit for a second time the Rayleigh-Taylor condition to show that $\lambda-\Psi(0)$  
is an isomorphism provided that $\lambda\in\mathbb{R}$ is sufficiently large.

\begin{prop}\label{P:IP0} Let $\delta>0$ and $\phi\in W^{s-1}_p(\mathbb{R})$ satisfy  $a:=\delta+\phi>0.$ 
Then, there exists a  constant $\omega_0>0$ such that
\begin{align}\label{eq:IP0}
\lambda+H\circ\Big(a\frac{d}{dx}\Big)\in{\rm Isom}(W^{s}_p(\mathbb{R}), W^{s-1}_p(\mathbb{R}))\qquad\mbox{for $\lambda\in[\omega_0,\infty)$.}
\end{align}
\end{prop}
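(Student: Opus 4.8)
The plan is to establish the a priori estimate $\|f\|_{W^s_p}\le C\|(\lambda+H\circ(a\,d/dx))f\|_{W^{s-1}_p}$ for all $f\in W^s_p(\mathbb{R})$ and all real $\lambda$ larger than some $\omega_0$, and then to deduce invertibility by the method of continuity along the affine homotopy connecting $a$ to the constant $\delta$. First I would record that, since $\phi\in W^{s-1}_p(\mathbb{R})$ and $s-1>1/p$, Sobolev's embedding gives $\phi\in{\rm C}^{s-1-1/p}(\mathbb{R})$ with $\phi(x)\to0$ as $|x|\to\infty$; consequently $a=\delta+\phi$ is H\"older continuous, tends to $\delta$ at infinity, and, being positive and continuous, satisfies $\eta\le a\le1/\eta$ on $\mathbb{R}$ for some $\eta=\eta(\delta,\phi)\in(0,1)$ which can be chosen so that also $\eta\le\delta\le1/\eta$. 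Moreover $af'=\delta f'+\phi f'\in W^{s-1}_p(\mathbb{R})$ by the algebra property of $W^{s-1}_p(\mathbb{R})$, and since $H\in\mathcal{L}(W^{s-1}_p(\mathbb{R}))$ by Mikhlin's theorem, we get $\lambda+H\circ(a\,d/dx)\in\mathcal{L}(W^s_p(\mathbb{R}),W^{s-1}_p(\mathbb{R}))$.

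The core step is a localization estimate in the spirit of Theorem~\ref{T:AP}: given $s'\in(1+1/p,s)$ and $\mu>0$, for $\varepsilon$ small, an associated $\varepsilon$-localization family $\{\pi_j^\varepsilon\}$, $\{\chi_j^\varepsilon\}$, and points $x_j^\varepsilon\in\supp\pi_j^\varepsilon$, one has
\begin{align*}
\|\pi_j^\varepsilon H(af')-\mathbb{A}_j[\pi_j^\varepsilon f]\|_{W^{s-1}_p}\le\mu\|\pi_j^\varepsilon f\|_{W^s_p}+K\|f\|_{W^{s'}_p},
\end{align*}
where $\mathbb{A}_j:=-a(x_j^\varepsilon)(-d^2/dx^2)^{1/2}$ for $|j|\le N-1$, $\mathbb{A}_N:=-\delta(-d^2/dx^2)^{1/2}$, and $K=K(\varepsilon,\delta,\phi)$. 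To prove it I would write $\pi_j^\varepsilon H(af')=H(\pi_j^\varepsilon af')+[\pi_j^\varepsilon,H](af')$ and bound the commutator by $K\|af'\|_p\le K\|f\|_{W^{s'}_p}$ using Lemma~\ref{L:B2} with $n=m=0$ (recall $B_{0,0}=\pi H$) together with $W^1_p(\mathbb{R})\hookrightarrow W^{s-1}_p(\mathbb{R})$. For the main term, using $\pi_j^\varepsilon f'=\chi_j^\varepsilon\pi_j^\varepsilon f'$ and $(\pi_j^\varepsilon f)'=\pi_j^\varepsilon f'+(\pi_j^\varepsilon)'f$,
\begin{align*}
H(\pi_j^\varepsilon af')-a(x_j^\varepsilon)H((\pi_j^\varepsilon f)')=H\big([\chi_j^\varepsilon(a-a(x_j^\varepsilon))]\,\pi_j^\varepsilon f'\big)-a(x_j^\varepsilon)H\big((\pi_j^\varepsilon)'f\big);
\end{align*}
the last term is $\le K\|f\|_{W^{s'}_p}$ (a factor with compactly supported, $\varepsilon$-dependent derivative times $f$, using $W^{s'}_p\hookrightarrow L_\infty$), while for the first term I would use $H\in\mathcal{L}(W^{s-1}_p)$, the product estimate (\ref{MES}) with $r=s-1$, the H\"older bound $\|\chi_j^\varepsilon(a-a(x_j^\varepsilon))\|_\infty\le C[\phi]_{s-1-1/p}\varepsilon^{s-1-1/p}$ (support of length $3\varepsilon$), the embedding $\|\pi_j^\varepsilon f'\|_\infty\le C\|f\|_{W^{s'}_p}$, and $\|\pi_j^\varepsilon f'\|_{W^{s-1}_p}\le\|\pi_j^\varepsilon f\|_{W^s_p}+K\|f\|_{W^{s'}_p}$ to reach $C\varepsilon^{s-1-1/p}\|\pi_j^\varepsilon f\|_{W^s_p}+K\|f\|_{W^{s'}_p}$, whence the claim for $\varepsilon$ small. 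For $j=N$ the same computation applies with $a(x_N^\varepsilon)$ replaced by $\delta$, the crucial point being that $\|\chi_N^\varepsilon(a-\delta)\|_\infty=\|\chi_N^\varepsilon\phi\|_\infty=\sup_{|x|\ge1/\varepsilon-\varepsilon}|\phi|\to0$.

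Next I would assemble the a priori estimate. Since $a(x_j^\varepsilon),\delta\in[\eta,1/\eta]$, Lemma~\ref{L:GAP} (with $\beta=0$) gives $\kappa_0\ge1$ with $\kappa_0\|(\lambda-\mathbb{A}_j)[g]\|_{W^{s-1}_p}\ge|\lambda|\,\|g\|_{W^{s-1}_p}+\|g\|_{W^s_p}$ for $\re\lambda\ge1$; taking $\mu:=1/(2\kappa_0)$ in the localization step (which fixes $\varepsilon$ and $N$), applying this with $g=\pi_j^\varepsilon f$, and summing over $j$ with the help of Lemma~\ref{L:EN} yields
\begin{align*}
\|f\|_{W^s_p}+|\lambda|\,\|f\|_{W^{s-1}_p}\le C_1\big(\|(\lambda+H\circ(a\,d/dx))f\|_{W^{s-1}_p}+\|f\|_{W^{s'}_p}\big),\qquad\re\lambda\ge1.
\end{align*}
By (\ref{IP}), $W^{s'}_p(\mathbb{R})=(W^{s-1}_p(\mathbb{R}),W^s_p(\mathbb{R}))_{s'-s+1,p}$, so the interpolation and Young inequalities give $\|f\|_{W^{s'}_p}\le(2C_1)^{-1}\|f\|_{W^s_p}+C_2\|f\|_{W^{s-1}_p}$; absorbing the first summand, and then for $\lambda\ge\omega_0:=\max\{1,2C_1C_2\}$ absorbing $C_1C_2\|f\|_{W^{s-1}_p}$ into $\tfrac{\lambda}{2}\|f\|_{W^{s-1}_p}$, we obtain $\|f\|_{W^s_p}\le2C_1\|(\lambda+H\circ(a\,d/dx))f\|_{W^{s-1}_p}$ for $\lambda\ge\omega_0$, so $\lambda+H\circ(a\,d/dx)$ is injective with closed range there. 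The constants $\eta,\kappa_0,C_1,C_2,\omega_0$ depend only on $\delta$, $[\phi]_{s-1-1/p}$, $\|\phi\|_{W^{s-1}_p}$, $\inf a$ and $\sup_{|x|\ge R}|\phi|$; hence the same estimate, with the same constants, holds for every $a_t:=(1-t)\delta+ta=\delta+t\phi$, $t\in[0,1]$, since each $a_t$ is positive (a convex combination of the positive functions $\delta$ and $a$) and inherits these bounds uniformly in $t$. Finally, $L_t:=\lambda+H\circ(a_t\,d/dx)$ is the affine family $(1-t)L_0+tL_1$ with $L_0=\lambda+\delta(-d^2/dx^2)^{1/2}\in{\rm Isom}(W^s_p(\mathbb{R}),W^{s-1}_p(\mathbb{R}))$ for $\re\lambda\ge1$ by (\ref{L:FM1}); the method of continuity \cite[Proposition I.1.1.1]{Am95}, applicable thanks to the uniform bound just obtained, forces $L_1=\lambda+H\circ(a\,d/dx)\in{\rm Isom}(W^s_p(\mathbb{R}),W^{s-1}_p(\mathbb{R}))$ for all $\lambda\ge\omega_0$, which is (\ref{eq:IP0}).

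The main obstacle is the localization estimate of the second paragraph: one must verify carefully, in the fractional scale, that freezing the variable coefficient $a$ at $x_j^\varepsilon$ on the length-$\varepsilon$ support of $\pi_j^\varepsilon$ produces an error that genuinely decomposes into an arbitrarily small multiple of $\|\pi_j^\varepsilon f\|_{W^s_p}$ plus a term of strictly lower order controlled by $\|f\|_{W^{s'}_p}$, and in particular to treat the unbounded localization piece $j=N$, where the needed smallness of $\|\chi_N^\varepsilon(a-\delta)\|_\infty$ comes not from the size of the support but from the decay of $\phi$ at infinity.
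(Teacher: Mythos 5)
Your proposal is correct and follows essentially the same route as the paper's proof: a localization argument that freezes the coefficient $a$ at $x_j^\varepsilon$ (with the unbounded piece $j=N$ handled via the decay of $\phi$ at infinity), combined with Lemma~\ref{L:GAP}, Lemma~\ref{L:EN}, interpolation via (\ref{IP}), and the method of continuity along the affine path $a_\tau=\delta+\tau\phi$. The only slip is a sign convention in your $\mathbb{A}_j$ (the freezing estimate you actually prove compares $\pi_j^\varepsilon H(af')$ with $+a(x_j^\varepsilon)(-d^2/dx^2)^{1/2}[\pi_j^\varepsilon f]$, i.e.\ with $-\mathbb{A}_j$), but since you consistently apply the resolvent bound of Lemma~\ref{L:GAP} to $\lambda-\mathbb{A}_j=\lambda+a(x_j^\varepsilon)(-d^2/dx^2)^{1/2}$, the argument is unaffected.
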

\begin{proof}
Let $[\tau\mapsto B(\tau)]:[0,1]\to \mathcal{L}(W^s_p(\mathbb{R}), W^{s-1}_p(\mathbb{R}))$ be given by
\[
B(\tau):=  H\circ\Big(a_\tau\frac{d}{dx}\Big),
\]
where $a_{\tau}:=(1-\tau)\delta+\tau a=\delta+\tau\phi$, $\tau\in[0,1]$.
It then holds $B\in {\rm C}([0,1], \mathcal{L}(W^s_p(\mathbb{R}), W^{s-1}_p(\mathbb{R})))$. 
We prove below   there exist  constants $\omega_0>0$ and  $C>0$  such that
 \begin{align}\label{EDY}
  \|(\lambda+B(\tau))[f]\|_{W^{s-1}_p}\geq C\|f\|_{W^s_p},\qquad \mbox{$\forall$ $\tau\in[0,1]$, $\lambda\in[\omega_0,\infty),$ $f\in W^s_p(\mathbb{R})$.}
 \end{align}
Since $\lambda+B(0)$ is the Fourier multiplier with symbol $m_\lambda(\xi):= \lambda+\delta |\xi|$, $\xi \in\mathbb{R}$, it holds that  $\lambda+B(0)\in{\rm Isom}(W^{s}_p(\mathbb{R}), W^{s-1}_p(\mathbb{R}))$  for all $\lambda>0$. 
The method of continuity and (\ref{EDY})   imply then that (\ref{eq:IP0}) holds true.\medskip

\noindent{\em Step 1.}
Let  $s'\in (1+1/p,s)$. 
Given  $\mu>0$, we find below $\varepsilon\in(0,1)$, an $\varepsilon$-locali\-za\-tion family~$\{\pi_j^\varepsilon\,:\, -N+1\leq j\leq N\} $, 
a constant $K=K(\varepsilon)$ and bounded operators 
$$\mathbb{B}_{j,\tau}\in\mathcal{L}(W^s_p(\mathbb{R}), W^{s-1}_p(\mathbb{R})),  \qquad\mbox{$j\in\{-N+1,\ldots,N\}$ and $\tau\in[0,1]$},$$
 such that 
 \begin{equation}\label{D1'}
  \|\pi_j^\varepsilon B(\tau) [f]-\mathbb{B}_{j,\tau}[\pi^\varepsilon_j f]\|_{W^{s-1}_p}\leq \mu \|\pi_j^\varepsilon f\|_{W^s_p}+K\|  f\|_{W^{s'}_p}
 \end{equation}
 for all $ j\in\{-N+1,\ldots,N\}$, $\tau\in[0,1],$  and  $f\in W^s_p(\mathbb{R})$. 
The operators $\mathbb{B}_{j,\tau}$ are defined  by 
  \begin{align*} 
 \mathbb{B}_{N,\tau }:=   \delta \Big(-\frac{d^2}{dx^2}\Big)^{1/2}\qquad\mbox{and}\qquad\mathbb{B}_{j,\tau }:=  a_\tau(x_j^\varepsilon) \Big(-\frac{d^2}{dx^2}\Big)^{1/2}, \quad |j|\leq N-1, 
 \end{align*}
 where  $x_j^\varepsilon\in \supp  \pi_j^\varepsilon$.  
  
 For $-N+1\leq j\leq N$ it holds that
 \begin{align*}
 \|\pi_j^\varepsilon B(\tau) [f]-\mathbb{B}_{j,\tau}[\pi^\varepsilon_j f]\|_{W^{s-1}_p}&\leq \|\pi_j^\varepsilon B(\tau) [f]-B(\tau)[\pi^\varepsilon_j f]\|_{W^{s-1}_p}\\[1ex]
 &\hspace{0,45cm}+\|B(\tau)[\pi^\varepsilon_j f]-\mathbb{B}_{j,\tau}[\pi^\varepsilon_j f]\|_{W^{s-1}_p}.
 \end{align*}
Lemma \ref{L:B2} yields
 \[
 \|\pi_j^\varepsilon B(\tau) [f]-B(\tau)[\pi^\varepsilon_j f]\|_{W^{s-1}_p}\leq K\|f\|_{W^{1}_p},\qquad -N+1\leq j\leq N.
 \]
 Moreover, for $|j|\leq N-1$ we use the identity $\chi_j^\varepsilon\pi_j^\varepsilon=\pi_j^\varepsilon$  together with (\ref{MES}), Lemma \ref{L:MP1}~(ii) (with $r=s$ and  $\tau=s'-1$), and Lemma~\ref{L:MP3} to derive that
 \begin{align*}
\|B(\tau)[\pi^\varepsilon_j f]-\mathbb{B}_{j,\tau}[\pi^\varepsilon_j f]\|_{W^{s-1}_p}&=\|H[(a_\tau-a_\tau(x_j^\varepsilon))\chi_j^\varepsilon(\pi^\varepsilon_j f)']\|_{W^{s-1}_p}\\[1ex]
 &\leq C\|(\phi-\phi(x_j^\varepsilon))\chi_j^\varepsilon(\pi^\varepsilon_j f)']\|_{W^{s-1}_p}\\[1ex]
 &\leq  C\|(\phi-\phi(x_j^\varepsilon))\chi_j^\varepsilon\|_\infty\|(\pi^\varepsilon_j f)' \|_{W^{s-1}_p}+K\|f\|_{W^{s'}_p}\\[1ex]
 &\leq \mu \|\pi_j^\varepsilon f\|_{W^s_p}+K\|  f\|_{W^{s'}_p} ,
 \end{align*}
 provided that $\varepsilon$ is sufficiently small.
This proves (\ref{D1'}) for $|j|\leq N-1$.
Finally, for $j=N$ we have 
  \begin{align*}
\|B(\tau)[\pi^\varepsilon_j f]-\mathbb{B}_{j,\tau}[\pi^\varepsilon_j f]\|_{W^{s-1}_p}&=\|H[(a_\tau-\delta)\chi_j^\varepsilon(\pi^\varepsilon_j f)']\|_{W^{s-1}_p}\\[1ex]
 &\leq  C\|\phi\chi_j^\varepsilon\|_\infty\|(\pi^\varepsilon_j f)'\|_{W^{s-1}_p}+K\|f\|_{W^{s'}_p}\\[1ex]
 &\leq \mu \|\pi_j^\varepsilon f\|_{W^s_p}+K\|  f\|_{W^{s'}_p},
 \end{align*}
 provided that $\varepsilon$ is sufficiently small, and (\ref{D1'}) holds also for $j=N$.\medskip
 
 \noindent{\em Step 2.} 
 Let $\eta\in (0,1)$ be chosen such that the function $a_\tau$ satisfies
 \[
\eta\leq a_\tau\leq 1/\eta,\qquad \tau\in[0,1].
 \]
   Lemma \ref{L:GAP} implies there exists a constant $\kappa=\kappa(\eta)\geq1$ such that the Fourier multipliers
 \begin{align*} 
 \mathbb{B}_{\alpha}:= \alpha\Big(-\frac{d^2}{dx^2}\Big)^{1/2},  \qquad \alpha\in[\eta,\eta^{-1}],
 \end{align*}
 satisfy
 \begin{align}
  \kappa\|(\lambda+\mathbb{B}_{\alpha})[f]\|_{W^{s-1}_p}\geq |\lambda|\cdot\|f\|_{W^{s-1}_p}+\|f\|_{W^s_p}, \qquad f\in W^{s}_p(\mathbb{R}),\,  \lambda\geq 1\label{L:FM'}.
\end{align}
Let $\varepsilon>0$ be   determined in the previous step for $\mu:=(2\kappa)^{-1}$.
It then holds     
 \begin{align*}
  2\kappa \|\pi_j^\varepsilon (\lambda+B(\tau)) [ f]\|_{W^{s-1}_p}&\geq 2\kappa\|(\lambda+\mathbb{B}_{j,\tau})[\pi^\varepsilon_j f]\|_{W^{s-1}_p}-  2\kappa \|\pi_j^\varepsilon B(\tau) [ f]-\mathbb{B}_{j,\tau}[\pi^\varepsilon_j f]\|_{W^{s-1}_p}\\[1ex]
  &\geq   \|\pi_j^\varepsilon f\|_{W^s_p}+2 \lambda\|\pi_j^\varepsilon f\|_{W^{s-1}_p}-2\kappa K\|  f\|_{W^{s'}_p}
 \end{align*}
 for $-N+1\leq j\leq N$, $\tau\in[0,1]$, and $\lambda\geq1.$
 Summing up over $j$, we conclude together with Lemma \ref{L:EN}, (\ref{IP}), and Young's inequality   there are constants $\kappa_0\geq 1$   
 and $\omega_0>0$ such that 
  \begin{align*} 
  \kappa_0\| (\lambda+B(\tau)) [ f]\|_{W^{s-1}_p}&\geq   \| f\|_{W^s_p}+\lambda\| f\|_{W^{s-1}_p}
 \end{align*}
 for all   $\tau\in[0,1]$, $\lambda\geq \omega_0,$ and $f\in W^s_p(\mathbb{R})$. This proves
  (\ref{EDY}) and the proof is complete. 
\end{proof}

We are now in a position to prove Theorem \ref{T:AG}.

\begin{proof}[Proof of Theorem \ref{T:AG}] Let $s'\in(1+1/p,s).$
 Let  further $\kappa_0\geq1$ be the constant found in Lemma~\ref{L:GAP} and set $\mu:=1/2\kappa_0$.
 Theorem \ref{T:AP} implies there exist a constant  $\varepsilon\in(0,1) $,  an~$\varepsilon$-localization family $\{\pi_j^\varepsilon\,:\, -N+1\leq j\leq N\}$, a constant $K=K(\varepsilon,f_0)>0$
 and  bounded operators $\mathbb{A}_{j,\tau}\in\mathcal{L}(W^s_p(\mathbb{R}), W^{s-1}_p(\mathbb{R}))$, $ -N+1\leq j\leq N$ and $\tau\in[0,1],$ satisfying 
 \begin{equation*} 
  2\kappa_0\|\pi_j^\varepsilon\Psi(\tau )[f]-\mathbb{A}_{j,\tau}[\pi^\varepsilon_j f]\|_{W^{s-1}_p}\leq \|\pi_j^\varepsilon f\|_{W^{s}_p}+2\kappa_0 K\|  f\|_{W^{s'}_p},\qquad f\in W^s_p(\mathbb{R}).
 \end{equation*}
Furthermore, Lemma \ref{L:GAP}  yields
  \begin{equation*} 
    2\kappa_0\|(\lambda-\mathbb{A}_{j,\tau})[\pi^\varepsilon_jf]\|_{W^{s-1}_p}\geq 2|\lambda|\cdot\|\pi^\varepsilon_jf\|_{W^{s-1}_p}+ 2\|\pi^\varepsilon_j f\|_{W^s_p}
 \end{equation*}
 for all $-N+1\leq j\leq N$, $\tau\in[0,1],$  $\re \lambda\geq 1$, and  $f\in W^s_p(\mathbb{R})$.
 The latter inequalities lead to
 \begin{align*}
   2\kappa_0\|\pi_j^\varepsilon(\lambda-\Psi(\tau ))[f]\|_{W^{s-1}_p}\geq& 2\kappa_0\|(\lambda-\mathbb{A}_{j,\tau})[\pi^\varepsilon_jf]\|_{W^{s-1}_p}-2\kappa_0\|\pi_j^\varepsilon\Psi(\tau)[f]-\mathbb{A}_{j,\tau}[\pi^\varepsilon_j f]\|_{W^{s-1}_p}\\[1ex]
   \geq& 2|\lambda|\cdot\|\pi^\varepsilon_j f\|_{W^{s-1}_p}+ \|\pi^\varepsilon_j f\|_{W^s_p}-2\kappa_0K\|  f\|_{W^{s'}_p}
 \end{align*}
for all $-N+1\leq j\leq N$, $\tau\in[0,1],$   $\re \lambda\geq 1$, and  $f\in W^s_p(\mathbb{R})$.
 Summing  up over $j$, Lemma~\ref{L:EN}, relation (\ref{IP}), and Young's inequality  imply there exist constants  $\kappa=\kappa(f_0)\geq1$  and $\omega_1=\omega_1( f_0)>0 $ such that 
  \begin{align}\label{KDED}
   \kappa\|(\lambda-\Psi(\tau ))[f]\|_{W^{s-1}_p}\geq |\lambda|\cdot\|f\|_{W^{s-1}_p}+ \| f\|_{W^s_p}
 \end{align}
for all   $\tau\in[0,1],$   $\re \lambda\geq \omega_1$, and  $f\in W^s_p(\mathbb{R})$.

Let  $\omega_0>0$ denote the constant from Proposition  \ref{P:IP0} found for $\delta:=C_{\Theta}$ and ${\phi:=a_\mu\Phi(f_0)}$.
Setting  $\omega:=\max\{\omega_0,\omega_1\},$ it holds
  $\omega-\Psi(0) \in {\rm Isom}(W^s_p(\mathbb{R}), W^{s-1}_p(\mathbb{R})),$ cf. Lemma~\ref{L:GAP}.
 The method of continuity together with (\ref{KDED}) yields that 
\begin{align}\label{DEDK2}
   \omega-\Psi(1)\in {\rm Isom}(W^s_p(\mathbb{R}), W^{s-1}_p(\mathbb{R})).
 \end{align}
Gathering  (\ref{KDED}) (with $\tau=1$) and (\ref{DEDK2}) it follows  that $-\partial\Phi(f_0)\in\mathcal{H}(W^s_p(\mathbb{R}),W^{s-1}_p(\mathbb{R})),$ cf. \cite[Chapter I]{Am95}
 and the proof is complete.
\end{proof}

We conclude this section with the proof of our main result.
The well-posedness result follows by applying abstracts result for fully nonlinear parabolic problems from \cite{L95}.
It is important to point out that in fact we can establish the  uniqueness of solutions in the setting of strict solutions (as stated in Theorem \ref{MT}), which is an improvement compared to the theory in \cite{L95}.
This feature is essential when proving the claim  (ii) of Theorem~\ref{MT}, as it enables us to 
use a parameter trick which was successfully applied also to other problems, cf., e.g., \cite{An90, ES96, PSS15, MBV19}.

\begin{proof}[Proof of Theorem \ref{MT}] {\em  Well-posedness:}
In view of (\ref{REG}) and Theorem \ref{T:AG}, we find that the assumptions of \cite[Theorem 8.1.1]{L95} 
are satisfied in the context of the evolution problem~(\ref{NNEP}) when restricting $\Phi$ to the open set $\mathcal{O}$.
Hence, given $f_0\in \mathcal{O}$, there exists $T>0$ and a solution~$f(\cdot;f_0)$ to (\ref{NNEP}) that satisfies
\[ f\in {\rm C}([0,T],\mathcal{O})\cap {\rm C}^1([0,T], W^{s-1}_p(\mathbb{R}))\cap {\rm C}^{\alpha}_{\alpha}((0,T], W^s_p(\mathbb{R}))\] 
for some  $\alpha\in(0,1)$ (actually, since the problem is autonomous, for all $\alpha\in(0,1)$).
Moreover, the solution is unique within the class 
\[
  \bigcup_{\alpha\in(0,1)}{\rm C}^{\alpha}_{\alpha}((0,T], W^s_p(\mathbb{R})) \cap {\rm C}([0,T],\mathcal{O})\cap {\rm C}^1([0,T], W^{s-1}_p(\mathbb{R})).
 \]
In fact the solution is unique in ${\rm C}([0,T],\mathcal{O})\cap {\rm C}^1([0,T], W^{s-1}_p(\mathbb{R})).$  
Indeed, assuming there are two solutions $f,\,   \widetilde f:[0,T]\to\mathcal{O}$ corresponding to the same initial data $f_0\in\mathcal{O}$, since the problem (\ref{NNEP}) is autonomous,
 we can assume   $f(t)\neq \widetilde f(t)$ for $t\in(0,T]$. 
 Let $  s'\in (1+1/p,s)$ and set $\alpha:= s-s'\in(0,1)$. 
 In virtue of (\ref{IP}) there exists a constant $C>0$ such that
 \begin{equation}\label{AdReg}
\|f(t_1)-f(t_2)\|_{W^{s'}_p} +\|\widetilde f(t_1)-\widetilde f(t_2)\|_{W^{s'}_p} \leq C|t_1-t_2|^\alpha,\qquad t_1,\, t_2\in[0, T],
 \end{equation}
 and therefore $f,\, \widetilde f\in {\rm C}^{\alpha}([0,T], W^{s'}_p(\mathbb{R}))\hookrightarrow {\rm C}^{\alpha}_{\alpha}((0,T], W^{s'}_p(\mathbb{R}))$.
We may now apply the abstract result \cite[Theorem 8.1.1]{L95} in the context of  (\ref{NNEP}) with $\Phi\in {\rm C}^\omega(\widetilde{\mathcal{O}}, W^{s'-1}_p(\mathbb{R})),$
where
\begin{align*}
 \widetilde{\mathcal{O}}:= \{f\in W^{s'}_p(\mathbb{R}) \,:\, C_{\Theta}+a_\mu\Phi(f)>0\}.
\end{align*}
 Since $f_0\in \widetilde{\mathcal{O}}$, we get in virtue of (\ref{AdReg}), that $f=\widetilde f$ on $[0,T],$ hence our assumption was~false.

 Finally,  the unique solution can be extended up to a maximal existence time $T_+(f_0)$, see \cite[Section 8.2]{L95}.
  In virtue of \cite[Proposition 8.2.3]{L95}  the solution map also defines  a semiflow on $\mathcal{O}$, and  it remains to establish (ii). \medskip

\noindent{\em  Parabolic smoothing:} Given $\lambda:=(\lambda_1,\lambda_2)\in(0,\infty)\times\mathbb{R}$ and a maximal  solution $f=f(\cdot;f_0)$ with maximal existence time $T_+=T_+(f_0)$ to (\ref{NNEP}), let  
\[
f_{\lambda}(t,x):=f(\lambda_1 t,x+\lambda_2 t), \qquad   x\in\mathbb{R}, \, 0\leq t\leq T_{+,\lambda}:=T_+/\lambda_1.
\]
Straightforward calculations show that $f_{\lambda}\in {\rm C}([0,T_{+,\lambda}),\mathcal{O})\cap {\rm C}^1([0,T_{+,\lambda}), W^{s-1}_p(\mathbb{R})) $
is a solution to  the evolution problem  
\begin{align}\label{QC}
\frac{df}{dt}= \Psi(f,\lambda) ,\quad t\geq0,\qquad f(0)=f_0,
\end{align}
 where $ \Psi:\mathcal{O}\times (0,\infty)\times\mathbb{R}\subset W^s_p(\mathbb{R})\times \mathbb{R}^2\to  W^{s-1}_p(\mathbb{R})$ is defined by  
\begin{align*}
  \Psi(f,\lambda):=\lambda_1\Phi(f)+\lambda_2\frac{ df}{dx}.
\end{align*}
Using (\ref{REG}), we get $\Psi\in {\rm C}^\omega(\mathcal{O}\times (0,\infty)\times\mathbb{R}, W^{s-1}_p(\mathbb{R}))$.
Also, given $(f_0,\lambda)\in \mathcal{O}\times (0,\infty)\times\mathbb{R}$,
the partial derivative of $\Psi$ with respect to $f$ is 
\[
\partial_f\Psi(f_0,\lambda)=\lambda_1\partial\Phi(f_0)+\lambda_2\frac{ d}{dx}.
\]
Since $d/dx$ is a  Fourier multiplier   with   symbol $m(\xi)= i\xi$, $\xi\in\mathbb{R}$,
the results leading to Theorem~\ref{T:AG} can be easily adapted to obtain that    $-\partial_f\Psi(f_0,\lambda)$ belongs to $\mathcal{H}(W^s_p(\mathbb{R}), W^{s-1}_p(\mathbb{R}))$
for all $(f_0,\lambda)\in \mathcal{O}\times (0,\infty)\times\mathbb{R}.$
According to \cite[Theorem~8.1.1 and Theorem~8.3.9]{L95} and arguing as in the proof of (i),
 it follows that  (\ref{QC}) has for each $(f_0,\lambda)\in \mathcal{O}\times (0,\infty)\times\mathbb{R}$ 
 a unique strict solution 
 $$f=f(\,\cdot\,; f_0,\lambda)\in {\rm C}([0,\widetilde T_+),\mathcal{O})\cap {\rm C}^1([0,\widetilde T_+),W^{s-1}_p(\mathbb{R})),$$ 
 where $\widetilde T_+=T_+(f_0,\lambda)\in(0,\infty]$ is the maximal existence time.
 Moreover, the set
\[
\Omega:=\{(t,f_0,\lambda)\,:\, (f_0,\lambda)\in \mathcal{O}\times (0,\infty)\times\mathbb{R},\, 0<t< T_+(f_0,\lambda)\}
\]
is open and 
\[
[(t,f_0,\lambda)\mapsto f(t; f_0,\lambda)]\in {\rm C}^\omega(\Omega, \mathcal{O}).
\]

Hence, given  $f_0\in\mathcal{O}$, we may conclude that  
\begin{align*}
T_+(f_0,\lambda)=\frac{T_+(f_0)}{\lambda_1}\qquad\mbox{and}\qquad   f(t; f_0,\lambda)=f_{\lambda}(t),\quad 0\leq t<\frac{T_+(f_0)}{\lambda_1}.
\end{align*}
In particular, given $t_0<T_+(f_0)$, we may choose $\delta>0$ such that  $t_0<T_+(f_0,\lambda)$ for all $\lambda $ belonging to the disc $ D_\delta((1,0))$, and therewith
\begin{align}\label{RAM}
[\lambda\mapsto f_{\lambda}(t_0)]:D_\delta((1,0))\to W^s_p(\mathbb{R})
\end{align}
is  also a  real-analytic map. Repeated differentiation with respect to $\lambda_2$ immediately yields~(iib).
Let now $x_0\in\mathbb{R}$. Since $[h\mapsto h(x_0)]:W^s_p(\mathbb{R})\to\mathbb{R}$ is real-analytic, then so is
   \[
[\lambda\mapsto f(\lambda_1 t_0, x_0+\lambda_2t_0)]:D_\delta((1,0))\to  \mathbb{R}.
   \]
Besides, if $\varepsilon>0$  small,   the mapping $\varphi:D_\varepsilon((t_0,x_0))\to D_\delta ((1,0))$ with
  \begin{align*}
  \varphi(t,x):= \Big(\frac{t}{t_0},\frac{x-x_0}{t_0}\Big)
  \end{align*}
is well-defined and real-analytic, and composing it with the previous function shows that 
$$ [(t,x)\mapsto  f(t,x)]:D_\varepsilon ((t_0,x_0))\to\mathbb{R},$$
is also real-analytic. This proves (iia).  
\end{proof}

\appendix
 \section{Preparatory results used in Section \ref{Sec:4}}\label{Sec:C}
In this section we present the proofs of Lemmas \ref{L:B2}-\ref{L:C2}.

\begin{proof}[Proof of Lemma \ref{L:B2}]
We may assume that $h\in C^\infty_0(\mathbb{R})$. 
Setting
\[
T:=\varphi B_{n,m}(f,\ldots,f)[f,\ldots,f, h]- B_{n,m}(f,\ldots,f)[f,\ldots,f, \varphi h],
\]
it follows from Lemma \ref{L:MP1}~(i) that
\begin{align}\label{EST:P0}
\|T\|_p\leq K\|h\|_p.
\end{align}
Moreover, given $0\neq\xi\in\mathbb{R}$, it holds that
\begin{align*}
\frac{\tau_\xi T-T}{\xi} = T_1+T_2+T_3+T_4+T_5,
\end{align*}
where, using (\ref{spr2}) and (\ref{spr3}), it holds
\begin{align*}
T_1 &:=  \frac{\tau_\xi\varphi-\varphi}{\xi}B_{n,m}(\tau_\xi f,\ldots,\tau_\xi f)[\tau_\xi f,\ldots,\tau_\xi f, \tau_\xi h],\\[1ex]
T_2&:=\varphi B_{n,m}(\tau_\xi f,\ldots,\tau_\xi f)\Big[\tau_\xi f,\ldots,\tau_\xi f, \frac{\tau_\xi h-h}{\xi}\Big],\\[1ex]
T_3&:=-B_{n,m}(\tau_\xi f,\ldots,\tau_\xi f)\Big[\tau_\xi f,\ldots,\tau_\xi f, \frac{\tau_\xi (\varphi h)-\varphi h}{\xi}\Big],\\[1ex]
T_4&:= \frac{\tau_\xi f-f}{\xi}\sum_{i=1}^n B_{n,m}(\tau_\xi f,\ldots,\tau_\xi f)[ \underset{\mbox{$i$-times}}{\underbrace{f,\ldots,f}},\varphi,\tau_\xi f,\ldots \tau_\xi f, h]\\[1ex]
&\hspace{0,5cm}-\sum_{i=1}^n B_{n,m}(\tau_\xi f,\ldots,\tau_\xi f)\Big[ \underset{\mbox{$i$-times}}{\underbrace{f,\ldots,f}},\varphi,\tau_\xi f,\ldots \tau_\xi f,\frac{\tau_\xi f-f}{\xi} h\Big],
\end{align*}
\begin{align*}
T_5&:=-   \frac{\tau_\xi f-f}{\xi}\sum_{i=1}^m B_{n+2,m+1}( \underset{\mbox{$i$-times}}{\underbrace{\tau_\xi f,\ldots,\tau_\xi f}},f,\ldots,f)\Big[\varphi, \tau_\xi f+f,f,\ldots,f, h\Big]\\[1ex]
&\hspace{0,5cm}+ \sum_{i=1}^m B_{n+2,m+1}( \underset{\mbox{$i$-times}}{\underbrace{\tau_\xi f,\ldots,\tau_\xi f}},f,\ldots,f)\Big[ \varphi, \tau_\xi f+f,f,\ldots,f,   \frac{\tau_\xi f-f}{\xi} h\Big].
\end{align*}
Lemma \ref{L:MP1}~(i) implies the limit  $\lim_{\xi\to0 }(\tau_\xi T-T)/\xi$
exists in $L_p(\mathbb{R})$. Hence $T\in W^1_p(\mathbb{R})$ and
\begin{align*}
T'&=\varphi' B_{n,m}(f,\ldots, f)[ f,\ldots,f,  h]+ \varphi B_{n,m}( f,\ldots, f)[f,\ldots, f, h']\\[1ex]
&\hspace{0,45cm}-B_{n,m}( f,\ldots, f)[f,\ldots,f, \varphi h']-B_{n,m}( f,\ldots, f)[f,\ldots,f, \varphi' h]\\[1ex]
&\hspace{0,45cm}+nf'B_{n,m}(  f,\ldots,  f)[ \varphi, f,\ldots,f , h]-n B_{n,m}(  f,\ldots,  f)[ \varphi, f,\ldots,f , f'h]\Big]\\[1ex]
&\hspace{0,45cm}-   2mf' B_{n+2,m+1}(  f,\ldots,  f)[ \varphi, f,\ldots,f , h]+2m B_{n+2,m+1}(  f,\ldots,  f)[ \varphi, f,\ldots,f , f'h].
\end{align*}
Using again Lemma \ref{L:MP1}~(i), we get
\begin{align}\label{EST:P1}
\|T'-\varphi B_{n,m}( f,\ldots, f)[f,\ldots, f, h']+B_{n,m}( f,\ldots, f)[f,\ldots,f, \varphi h']\|_p\leq K\|h\|_p.
\end{align}
It remains to estimate the term
\[
T_6:=\varphi B_{n,m}( f,\ldots, f)[f,\ldots, f, h']-B_{n,m}( f,\ldots, f)[f,\ldots,f, \varphi h'].
\]
Since 
\begin{align*}
T_6(x)&=\int_\mathbb{R}\frac{(\delta_{[x,y]}f/y)^n}{(1+(\delta_{[x,y]}f/y)^2)^m}\frac{\delta_{[x,y]}\varphi}{y}\frac{d}{dy}(-h(x-y))\, dy,\qquad x\in\mathbb{R},
\end{align*}
integration by parts leads to the following representation
\begin{align*}
T_{6} &= B_{n,m} (f,\ldots,f)[f,\ldots,f,  \varphi' h]-B_{n+1,m} (f,\ldots,f)[f,\ldots,f,\varphi,   h]\\[1ex]
 &\hspace{0,45cm}+   nB_{n,m} (f,\ldots,f)[f,\ldots,f,  \varphi,f'h]-nB_{n+1,m} (f,\ldots,f)[f,\ldots,f,\varphi,   h]\\[1ex]
  &\hspace{0,45cm}-2m  B_{n+2,m+1} (f,\ldots,f)[f,\ldots,f,  \varphi,f'h]+2mB_{n+3,m+1} (f,\ldots,f)[f,\ldots,f,\varphi,h]
\end{align*}
and Lemma \ref{L:MP1}~(i)  yields
\begin{align}\label{EST:P2}
\|T_6\|_p\leq K\|h\|_p.
\end{align}
Gathering (\ref{EST:P0})-(\ref{EST:P2}), we arrive at (\ref{LB2}) and the proof is complete.
\end{proof}

We now establish  Lemma \ref{L:C1}.
 
\begin{proof}[Proof of Lemma \ref{L:C1}]
We  first deal with the case $|j|\leq N-1$ and write 
\begin{align*}
\pi_j^\varepsilon\overline{\omega} B_{n,m}(f,\ldots,f)[f,\ldots,f, h]-\frac{\overline{\omega}(x_j^\varepsilon)(f'(x_j^\varepsilon))^n}{[1+(f'(x_j^\varepsilon))^2]^m}B_{0,0}[\pi_j^\varepsilon h]=T_1+\overline{\omega}(x_j^\varepsilon)T_2 ,
\end{align*}
with
\begin{align*}
T_1&:=\pi_j^\varepsilon\overline{\omega} B_{n,m}(f,\ldots,f)[f,\ldots,f, h]-\overline{\omega}(x_j^\varepsilon) B_{n,m}(f,\ldots,f)[f,\ldots,f, \pi_j^\varepsilon h],\\[1ex]
T_2&:=  B_{n,m}(f,\ldots,f)[f,\ldots,f, \pi_j^\varepsilon h]-\frac{(f'(x_j^\varepsilon))^n}{[1+(f'(x_j^\varepsilon))^2]^m}B_{0,0}[\pi_j^\varepsilon h].
\end{align*}

\noindent{\it The term $T_1$.} 
In view of $\chi_j^\varepsilon\pi_j^\varepsilon=\pi_j^\varepsilon$ we decompose $T_1=T_{1a}+\overline{\omega}(x_j^\varepsilon)T_{1b}$, where
\begin{align*}
T_{1a}&:=\chi_j^\varepsilon(\overline{\omega}-\overline{\omega}(x_j^\varepsilon)) \pi_j^\varepsilon B_{n,m}(f,\ldots,f)[f,\ldots,f, h],\\[1ex]
T_{1b}&:= \pi_j^\varepsilon  B_{n,m}(f,\ldots,f)[f,\ldots,f, h]- B_{n,m}(f,\ldots,f)[f,\ldots,f, \pi_j^\varepsilon h].
\end{align*}
Applying Lemma \ref{L:B2}, we get 
\begin{align}\label{T1b}
\|T_{1b}\|_{W^{s-1}_p}\leq K\|h\|_{p},\qquad -N+1\leq j\leq N.
\end{align}
Moreover, recalling (\ref{MES}), 
it follows from   (\ref{T1b}), Lemma \ref{L:MP3} (with $r=s$), and Lemma \ref{L:MP1}~(ii) (with $r=s$ and $\tau=s'-1$)    that
\begin{align*}
\|T_{1a}\|_{W^{s-1}_p}&\leq 2\|\chi_j^\varepsilon(\overline{\omega}-\overline{\omega}(x_j^\varepsilon))\|_\infty\| \pi_j^\varepsilon B_{n,m}(f,\ldots,f)[f,\ldots,f, h]\|_{W^{s-1}_p}\\[1ex]
&\hspace{0,45cm}+K\|B_{n,m}(f,\ldots,f)[f,\ldots,f, h]\|_{\infty}\\[1ex]
&\leq 2\|\chi_j^\varepsilon(\overline{\omega}-\overline{\omega}(x_j^\varepsilon))\|_\infty\| B_{n,m}(f,\ldots,f)[f,\ldots,f, \pi_j^\varepsilon h]\|_{W^{s-1}_p}+K\|h\|_{W^{s'-1}_p}\\[1ex]
&\leq\frac{\nu}{2}\|\pi_j^\varepsilon h\|_{W^{s-1}_p}+K\|h\|_{W^{s'-1}_p},
\end{align*}
provided   $\varepsilon$ is sufficiently small, and therewith
\begin{align}\label{C1a}
\|T_{1}\|_{W^{s-1}_p}\leq\frac{\nu}{2}\|\pi_j^\varepsilon h\|_{W^{s-1}_p}+K\|h\|_{W^{s'-1}_p}.
\end{align}

\noindent{\it The term $T_2$.} We use again the identity $\chi_j^\varepsilon\pi_j^\varepsilon=\pi_j^\varepsilon$ and write $T_2=T_{2a}+T_{2b}$, where
\begin{align*}
T_{2a}&:=\frac{(f'(x_j^\varepsilon))^n}{[1+(f'(x_j^\varepsilon))^2]^m}(\chi_j^\varepsilon B_{0,0}[\pi_j^\varepsilon h]- B_{0,0}[\chi_j^\varepsilon(\pi_j^\varepsilon h)])\\[1ex]
&\hspace{0,45cm}-( \chi_j^\varepsilon B_{n,m} (f,\ldots,f)[f,\ldots,f,  \pi_j^\varepsilon h]-  B_{n,m} (f,\ldots,f)[f,\ldots,f,  \chi_j^\varepsilon(\pi_j^\varepsilon h)]),\\[1ex]
T_{2b}&:=  \chi_j^\varepsilon B_{n,m} (f,\ldots,f)[f,\ldots,f,  \pi_j^\varepsilon h]-\frac{(f'(x_j^\varepsilon))^n}{[1+(f'(x_j^\varepsilon))^2]^m}\chi_j^\varepsilon B_{0,0}[\pi_j^\varepsilon h].
\end{align*}
  Lemma \ref{L:B2} yields
\begin{equation}\label{C2a}
\|T_{2a}\|_{W^{s-1}_p}\leq K\|h\|_{p}.
\end{equation}

It remains to estimate $T_{2b}.$ 
We first use Lemma \ref{L:MP1}~(i) to deduce that 
\begin{align}\label{C2a'}
\|T_{2b}\|_p\leq K\|h\|_{p}.
\end{align}
Moreover, noticing that $f'(x_j^\varepsilon)=\delta_{[x,y]} (f'(x_j^\varepsilon)\id_{\mathbb{R}})/y$ and recalling (\ref{spr3}), we write 
\begin{align*}
T_{2b}&=\sum_{k=0}^{n-1}(f'(x_j^\varepsilon))^{n-k-1}\chi_j^\varepsilon B_{k+1,m}(f,\ldots,f)[f,\ldots,f, f-f'(x_j^\varepsilon)\id_{\mathbb{R}},\pi_j^\varepsilon h]\\[1ex]
&\hspace{0,45cm}- \sum_{k=0}^{m-1}\frac{(f'(x_j^\varepsilon))^{n}}{[1+ ( f'(x_j^\varepsilon))^2]^{m-k}}\chi_j^\varepsilon B_{2,k+1}(f,\ldots,f)[f-f'(x_j^\varepsilon)\id_{\mathbb{R}}, f+f'(x_j^\varepsilon)\id_{\mathbb{R}},\pi_j^\varepsilon h].
\end{align*} 
Let $T_k:=\chi_j^\varepsilon B_{k+1,m}(f,\ldots,f)[f,\ldots,f, f-f'(x_j^\varepsilon)\id_{\mathbb{R}},\pi_j^\varepsilon h]$ for $0\leq k \leq n-1$. 
In order to estimate the ${W^{s-1}_p}$-seminorm of $T_k$ we write for $\xi\in\mathbb{R}$ 
\begin{align*}
T_k-\tau_\xi T_k=T_{kA}+T_{kB}+\chi_j^\varepsilon T_{kC},
\end{align*}
where, appealing again to   (\ref{spr3}), it holds
\begin{align*}
T_{kA}&:=(\chi_j^\varepsilon-\tau_\xi \chi_j^\varepsilon)\tau_\xi B_{k+1,m}(f,\ldots,f)[f,\ldots,f, f-f'(x_j^\varepsilon)\id_{\mathbb{R}},\pi_j^\varepsilon h],\\[1ex]
T_{kB}&:=  \chi_j^\varepsilon B_{k+1,m}(f,\ldots,f)[f,\ldots,f, f-f'(x_j^\varepsilon)\id_{\mathbb{R}},\pi_j^\varepsilon h -\tau_\xi (\pi_j^\varepsilon h)],\\[1ex]
T_{kC}&:= \sum_{j=1}^{k}B_{k+1,m}(f,\ldots,f)[\underset{(j-1)-{\rm times}}{\underbrace{\tau_\xi f,\ldots,\tau_\xi f}},f-\tau_\xi f,f,\ldots, f, f-f'(x_j^\varepsilon)\id_{\mathbb{R}},\tau_\xi (\pi_j^\varepsilon h)]\\[1ex]
&\hspace{0,45cm}+B_{k+1,m}(f,\ldots,f)[\tau_\xi f,\ldots,\tau_\xi f, f-\tau_\xi f,\tau_\xi (\pi_j^\varepsilon h)]\\[1ex]
&\hspace{0,45cm}+\sum_{j=1}^mB_{k+3,m+1}^j [\tau_\xi f,\ldots,\tau_\xi f, \tau_\xi f-f'(x_j^\varepsilon)\id_{\mathbb{R}},\tau_\xi f+f,\tau_\xi f-f,\tau_\xi (\pi_j^\varepsilon h)],
\end{align*}
with
$$B_{k+3,m+1}^j:=B_{k+3,m+1}(\underset{j-{\rm times}}{\underbrace{f,\ldots,f}},\tau_\xi f,\ldots,\tau_\xi f).$$
Observing that
\[
T_{kA}=(\chi_j^\varepsilon-\tau_\xi \chi_j^\varepsilon)\tau_\xi\big( B_{k+1,m}(f,\ldots,f)[f,\ldots,f,\pi_j^\varepsilon h]-f'(x_j^\varepsilon)  B_{k,m}(f,\ldots,f)[f,\ldots,f,\pi_j^\varepsilon h]\big),
\]
Lemma \ref{L:MP1}~(ii) (with $r=s$ and $\tau=s'-1$) yields
\begin{equation}\label{2E1}
\|T_{kA}\|_p\leq K\|\chi_j^\varepsilon-\tau_\xi\chi_j^\varepsilon\|_p\|\pi_j^\varepsilon h\|_{W^{s'-1}_p}\leq K \|\chi_j^\varepsilon-\tau_\xi\chi_j^\varepsilon\|_p\| h\|_{W^{s'-1}_p}.
\end{equation}
In order to estimate  $T_{kB},$ let $F$ denote the Lipschitz continuous function  defined by $F=f$ on $\supp\chi_j^\varepsilon$ and $F'=f'(x_j^\varepsilon)$ on $\mathbb{R}\setminus\supp\chi_j^\varepsilon$.
If $|\xi|\geq\varepsilon,$ we infer from Lemma \ref{L:MP1}~(i)  that 
\begin{align}\label{2E21}
\|T_{kB}\|_p\leq   K\|h \|_{p}.
\end{align}
If $|\xi|<\varepsilon$, then $\xi+ \supp\pi_j^\varepsilon\subset \supp\chi_j^\varepsilon$, and  Lemma \ref{L:MP1}~(i)  and the properties defining~$F$ lead to
\begin{equation}\label{2E22}
\begin{aligned}
\|T_{kB}\|_p&=  \| \chi_j^\varepsilon B_{k+1,m}(f,\ldots,f)[f,\ldots,f, F-f'(x_j^\varepsilon)\id_{\mathbb{R}},\pi_j^\varepsilon h -\tau_\xi (\pi_j^\varepsilon h)]\|_p\\[1ex]
&\leq C\|  f'-f'(x_j^\varepsilon)\|_{L_\infty(\supp\chi_j^\varepsilon )}\|\pi_j^\varepsilon h-\tau_\xi (\pi_j^\varepsilon h)\|_p\\[1ex]
&\leq \frac{\nu}{12(n+1)C_0^{n}}\|\pi_j^\varepsilon h-\tau_\xi (\pi_j^\varepsilon h)\|_p,
\end{aligned}
\end{equation}
provided that $\varepsilon$ is sufficiently small, where $C_0:=1+ \|\overline{\omega}\|_\infty+\|f'\|_\infty$. 

Finally, Lemma \ref{L:MP2} (with $r=s'$) yields
\begin{equation}\label{2E2}
\|\chi_j^\varepsilon T_{kC}\|_p\leq K\|f'-\tau_\xi f'\|_p\| \pi_j^\varepsilon h \|_{W^{s'-1}_p}\leq K\|f'-\tau_\xi f'\|_p\|h\|_{W^{s'-1}_p}.
\end{equation}

The estimates (\ref{2E1})-(\ref{2E22}) combined imply that
\begin{align*}
 [T_k ]_{W^{s-1}_p} \leq \frac{\nu}{4(n+1)C_0^{n}}\|\pi_j^\varepsilon h \|_{W^{s-1}_p}+\|h\|_{W^{s'-1}_p}.
\end{align*}
The arguments used to estimate $T_k$ show also that 
\begin{align*}
 &\big[\chi_j^\varepsilon B_{2,k+1}(f,\ldots,f)[f-f'(x_j^\varepsilon)\id_{\mathbb{R}}, f+f'(x_j^\varepsilon)\id_{\mathbb{R}},\pi_j^\varepsilon h]\big]_{W^{s-1}_p} \\[1ex]
 &\hspace{2cm}\leq \frac{\nu}{4(m+1)C_0^{n+1}}\|\pi_j^\varepsilon h \|_{W^{s-1}_p}+\|h\|_{W^{s'-1}_p},
\end{align*}
provided that $\varepsilon$ is chosen sufficiently small. 
Recalling also (\ref{C2a'}), we obtain for such $\varepsilon$ that
\begin{align*}
\|T_{2b}\|_{W^{s-1}_p}\leq \frac{\nu}{2C_0}\|\pi_j^\varepsilon h \|_{W^{s-1}_p}+\|h\|_{W^{s'-1}_p},
\end{align*} 
and together with (\ref{C1a}) and (\ref{C2a}) we have established (\ref{LC1}).\medskip
\end{proof}

We continue with the proof of Lemma \ref{L:C1a}.

\begin{proof}[Proof of Lemma~\ref{L:C1a}]
 Let now $j=N$. Since $\chi_j^\varepsilon\pi_j^\varepsilon=\pi_j^\varepsilon$, it holds that 
\begin{align*}
\pi_j^\varepsilon\overline{\omega} B_{n,m}(f,\ldots,f)[f,\ldots,f, h]=T_1+T_2 ,
\end{align*}
where
\begin{align*}
T_1&:=\chi_j^\varepsilon\overline{\omega} \big(\pi_j^\varepsilon B_{n,m}(f,\ldots,f)[f,\ldots,f, h]- B_{n,m}(f,\ldots,f)[f,\ldots,f, \pi_j^\varepsilon h]\big),\\[1ex]
T_2&:= \chi_j^\varepsilon\overline{\omega}  B_{n,m}(f,\ldots,f)[f,\ldots,f, \pi_j^\varepsilon h].
\end{align*}
Lemma \ref{L:MP3} (with $r=s$) together with Lemma \ref{L:MP1}~(i) (with $r=s$ and $\tau=s'-1$) and (\ref{MES}) yields 
\begin{align*}
\|T_2\|_{W^{s-1}_p}&\leq 2\|\chi_j^\varepsilon\overline{\omega} \|_\infty\|B_{n,m}(f,\ldots,f)[f,\ldots,f, \pi_j^\varepsilon h]\|_{W^{s-1}_p}\\[1ex]
&\hspace{0,45cm}+K\|B_{n,m}(f,\ldots,f)[f,\ldots,f, \pi_j^\varepsilon h]\|_\infty\\[1ex]
&\leq C\|\chi_j^\varepsilon\overline{\omega} \|_\infty\|\pi_j^\varepsilon h\|_{W^{s-1}_p}+K\|\pi_j^\varepsilon h\|_{W^{s'-1}_p}\\[1ex]
&\leq \nu[\pi_j^\varepsilon h ]_{W^{s-1}_p}+K\|  h\|_{W^{s'-1}_p},
\end{align*}
provided that $\varepsilon$ is sufficiently small. 
We have made use here of the fact that if $\varepsilon$ is sufficiently small, then $\|\overline{\omega}\|_{L_\infty(\supp\chi_j^\varepsilon)}<\nu$.
Furthermore,  Lemma \ref{L:B2} shows that 
\begin{align*}
\|T_1\|_{W^{s-1}_p}\leq K\| h\|_{p} 
\end{align*}
and (\ref{LC2}) follows.
\end{proof}

We conclude this appendix with the proof of Lemma \ref{L:C2}.

\begin{proof}[Proof of Lemma~\ref{L:C2}]
We first address the case $n=0$.
Then $$\pi_j^\varepsilon B_{0,m}(f,\ldots,f)[ h]-B_{0,0}[\pi_j^\varepsilon h]=T_a+T_b+T_c,$$ where
\begin{align*}
T_{a}&:=\pi_j^\varepsilon B_{0,m}(f,\ldots,f)[ h]- B_{0,m}(f,\ldots,f)[\pi_j^\varepsilon h],\\[1ex]
T_{b}&:=  \chi_j^\varepsilon B_{0,0}[\pi_j^\varepsilon h]- B_{0,0}[\chi_j^\varepsilon(\pi_j^\varepsilon h)]-( \chi_j^\varepsilon B_{0,m}(f,\ldots,f)[\pi_j^\varepsilon h]-B_{0,m}(f,\ldots,f)[\chi_j^\varepsilon(\pi_j^\varepsilon h)]),\\[1ex]
T_{c}&:=  \chi_j^\varepsilon (B_{0,m}(f,\ldots,f)[\pi_j^\varepsilon h]-  B_{0,0}[\pi_j^\varepsilon h]).
\end{align*}
Lemma \ref{L:B2} yields
\begin{equation}\label{C2a''}
\|T_a\|_{W^{s-1}_p}+\|T_b\|_{W^{s-1}_p}\leq K\|h\|_{p}.
\end{equation}
It remains to estimate   
\begin{align*}
T_{c}&= - \sum_{k=0}^{m-1} \chi_j^\varepsilon B_{2,m-k}(f,\ldots,f)[f, f,\pi_j^\varepsilon h].
\end{align*} 
Using Lemma \ref{L:MP1}~(i), we get
\begin{align}\label{2EEE}
\|T_c\|_p\leq K\|h\|_{p}.
\end{align}
Let $T_k:=\chi_j^\varepsilon B_{2,m-k}(f,\ldots,f)[f, f,\pi_j^\varepsilon h]$, $0\leq k \leq m-1$. 
To estimate the ${W^{s-1}_p}$-seminorm of~$T_k$ we write for $\xi\in\mathbb{R}$ 
\begin{align*}
T_k-\tau_\xi T_k=T_{kA}+T_{kB}+\chi_j^\varepsilon T_{kC},
\end{align*}
where,  using (\ref{spr3}), we get
\begin{align*}
T_{kA}&:=(\chi_j^\varepsilon-\tau_\xi \chi_j^\varepsilon)\tau_\xi B_{2,m-k}(f,\ldots,f)[f, f,\pi_j^\varepsilon h],\\[1ex]
T_{kB}&:=  \chi_j^\varepsilon B_{2,m-k}(f,\ldots,f)[f, f, \pi_j^\varepsilon h -\tau_\xi (\pi_j^\varepsilon h)],\\[1ex]
T_{kC}&:=  B_{2,m-k}(f,\ldots,f) [ f-\tau_\xi f,f ,\tau_\xi (\pi_j^\varepsilon h')]+B_{2,m-k}(f,\ldots,f)[  \tau_\xi f,f-\tau_\xi f,\tau_\xi (\pi_j^\varepsilon h)] \\[1ex]
&\hspace{0,45cm}+\sum_{\ell=1}^{m-k}B_{4,m-k+1}(\underset{\ell-{\rm times}}{\underbrace{f,\ldots,f}},\tau_\xi f,\ldots,\tau_\xi f)
[\tau_\xi f, \tau_\xi f,  \tau_\xi f+f,\tau_\xi f-f,\tau_\xi (\pi_j^\varepsilon h)].
\end{align*}
Lemma \ref{L:MP1}~(ii) (with $r=s$ and $\tau=s'-1$) yields
\begin{equation}\label{2E1'}
\|T_{kA}\|_p\leq K \|\chi_j^\varepsilon-\tau_\xi\chi_j^\varepsilon\|_p\|\pi_j^\varepsilon h\|_{W^{s'-1}_p}\leq K\|(1-\chi_j^\varepsilon)-\tau_\xi(1-\chi_j^\varepsilon)\|_p \| h\|_{W^{s'-1}_p}.
\end{equation}
Let $F$ denote the Lipschitz continuous function  defined by $F=f$ on $[|x|\geq 1/\varepsilon-\varepsilon]$ and which is linear in $[|x|\le 1/\varepsilon-\varepsilon]$.
If $|\xi|\geq\varepsilon,$ we infer from Lemma \ref{L:MP1}~(i)  that 
\begin{align}\label{2E21'}
\|T_{kB}\|_p\leq   K\|h \|_{p}.
\end{align}
If $|\xi|<\varepsilon$, then $\xi+ \supp\pi_j^\varepsilon\subset \supp\chi_j^\varepsilon$, and using Lemma \ref{L:MP1}~(i)   we get
\begin{equation}\label{2E22'}
\begin{aligned}
\|T_{kB}\|_p&=  \|\chi_j^\varepsilon B_{2,m-k}(f,\ldots,f)[F, F, \pi_j^\varepsilon h -\tau_\xi (\pi_j^\varepsilon h)]\|_p\\[1ex]
&\leq C\| F'\|_{\infty}^2\|\pi_j^\varepsilon h-\tau_\xi (\pi_j^\varepsilon h)\|_p\\[1ex]
&\leq \frac{\nu}{3( m+1)  }\|\pi_j^\varepsilon h-\tau_\xi (\pi_j^\varepsilon h)\|_p,
\end{aligned}
\end{equation}
provided that $\varepsilon$ is sufficiently small. 
The arguments in (\ref{2E22'}) rely on the fact that  ${ \|F'\|_{\infty}  \to 0}$ for $\varepsilon\to0$.
Finally, Lemma \ref{L:MP2} (with $r=s'$) yields
\begin{equation}\label{2E2'}
\|\chi_j^\varepsilon T_{kC}\|_p\leq K\|f'-\tau_\xi f'\|_p\| \pi_j^\varepsilon h \|_{W^{s'-1}_p}\leq K\|f'-\tau_\xi f'\|_p\|h\|_{W^{s'-1}_p}.
\end{equation}
The estimates (\ref{C2a''})-(\ref{2E2'})  lead us to (\ref{LC1Na}).

The estimate (\ref{LC1Nb}) can be derived by using the same arguments as above. 
\end{proof}\bigskip

\section*{Acknowledgement}
We are grateful to the anonymous referees for the helpful comments on an earlier version of this contribution.
Helmut Abels and Bogdan-Vasile Matioc were partially supported by the RTG 2339 ''Interfaces, Complex Structures, and Singular Limits''
of the German Science Foundation (DFG). The support is gratefully acknowledged.

\bibliographystyle{siam}
\bibliography{AbMa}

\begin{thebibliography}{10}

\bibitem{AB12}
{\sc H.~Abels}, {\em {Pseudodifferential and Singular Integral Operators}}, De
  Gruyter Graduate Lectures, De Gruyter, Berlin, 2012.
\newblock An introduction with applications.

\bibitem{AL20}
{\sc T.~Alazard and O.~Lazar}, {\em Paralinearization of the {M}uskat equation
  and application to the {C}auchy problem}, Arch. Ration. Mech. Anal., 237
  (2020), pp.~545--583.

\bibitem{Am95}
{\sc H.~Amann}, {\em {Linear and Quasilinear Parabolic Problems. {V}ol. {I}}},
  vol.~89 of {Monographs in Mathematics}, Birkh\"auser Boston, Inc., Boston,
  MA, 1995.
\newblock Abstract linear theory.

\bibitem{A04}
{\sc D.~M. Ambrose}, {\em {Well-posedness of two-phase {H}ele-{S}haw flow
  without surface tension}}, European J. Appl. Math., 15 (2004), pp.~597--607.

\bibitem{A14}
\leavevmode\vrule height 2pt depth -1.6pt width 23pt, {\em {The zero surface
  tension limit of two-dimensional interfacial {D}arcy flow}}, J. Math. Fluid
  Mech., 16 (2014), pp.~105--143.

\bibitem{An90}
{\sc S.~B. Angenent}, {\em {Nonlinear analytic semiflows}}, Proc. Roy. Soc.
  Edinburgh Sect. A, 115 (1990), pp.~91--107.

\bibitem{BV14}
{\sc B.~V. Bazaliy and N.~Vasylyeva}, {\em {The two-phase {H}ele-{S}haw problem
  with a nonregular initial interface and without surface tension}}, Zh. Mat.
  Fiz. Anal. Geom., 10 (2014), pp.~3--43, 152, 155.

\bibitem{BCG14}
{\sc L.~C. Berselli, D.~C\'ordoba, and R.~Granero-Belinch\'on}, {\em {Local
  solvability and turning for the inhomogeneous {M}uskat problem}}, Interfaces
  Free Bound., 16 (2014), pp.~175--213.

\bibitem{Cam19}
{\sc S.~Cameron}, {\em Global well-posedness for the two-dimensional {M}uskat
  problem with slope less than 1}, Anal. PDE, 12 (2019), pp.~997--1022.

\bibitem{CCFG13}
{\sc A.~Castro, D.~C\'ordoba, C.~Fefferman, and F.~Gancedo}, {\em Breakdown of
  smoothness for the {M}uskat problem}, Arch. Ration. Mech. Anal., 208 (2013),
  pp.~805--909.

\bibitem{CGFL11}
{\sc A.~Castro, D.~C\'ordoba, C.~L. Fefferman, F.~Gancedo, and
  M.~L\'opez-Fern\'andez}, {\em {Turning waves and breakdown for incompressible
  flows}}, Proc. Natl. Acad. Sci. USA, 108 (2011), pp.~4754--4759.

\bibitem{CCFGL12}
\leavevmode\vrule height 2pt depth -1.6pt width 23pt, {\em Rayleigh-{T}aylor
  breakdown for the {M}uskat problem with applications to water waves}, Ann. of
  Math. (2), 175 (2012), pp.~909--948.

\bibitem{BCS16}
{\sc C.~H.~A. Cheng, R.~Granero-Belinch\'on, and S.~Shkoller}, {\em
  {Well-posedness of the {M}uskat problem with {$H^2$} initial data}}, Adv.
  Math., 286 (2016), pp.~32--104.

\bibitem{CCGS16}
{\sc P.~Constantin, D.~C\'ordoba, F.~Gancedo, L.~Rodr\'guez-Piazza, and R.~M.
  Strain}, {\em {On the Muskat problem: Global in time results in 2D and 3D}},
  Amer. J. Math., 138 (2016), pp.~1455--1494.

\bibitem{CCGS13}
{\sc P.~Constantin, D.~C\'ordoba, F.~Gancedo, and R.~M. Strain}, {\em {On the
  global existence for the {M}uskat problem}}, J. Eur. Math. Soc. (JEMS), 15
  (2013), pp.~201--227.

\bibitem{CGSV17}
{\sc P.~Constantin, F.~Gancedo, R.~Shvydkoy, and V.~Vicol}, {\em Global
  regularity for 2{D} {M}uskat equations with finite slope}, Ann. Inst. H.
  Poincar\'e Anal. Non Lin\'eaire, 34 (2017), pp.~1041--1074.

\bibitem{CCG11}
{\sc A.~C\'ordoba, D.~C\'ordoba, and F.~Gancedo}, {\em {Interface evolution:
  the {H}ele-{S}haw and {M}uskat problems}}, Ann. of Math. (2), 173 (2011),
  pp.~477--542.

\bibitem{CG07}
{\sc D.~C\'ordoba and F.~Gancedo}, {\em {Contour dynamics of incompressible
  3-{D} fluids in a porous medium with different densities}}, Comm. Math.
  Phys., 273 (2007), pp.~445--471.

\bibitem{CG10}
\leavevmode\vrule height 2pt depth -1.6pt width 23pt, {\em {Absence of squirt
  singularities for the multi-phase {M}uskat problem}}, Comm. Math. Phys., 299
  (2010), pp.~561--575.

\bibitem{CGO14}
{\sc D.~{C\'ordoba Gazolaz}, R.~Granero-Belinch\'on, and R.~Orive-Illera}, {\em
  {The confined {M}uskat problem: differences with the deep water regime}},
  Commun. Math. Sci., 12 (2014), pp.~423--455.

\bibitem{DLL17}
{\sc F.~Deng, Z.~Lei, and F.~Lin}, {\em {On the two-dimensional Muskat problem
  with monotone large initial data}}, Comm. Pure Appl. Math., LXX (2017),
  pp.~1115--1145.

\bibitem{EMM12a}
{\sc J.~Escher, A.-V. Matioc, and B.-V. Matioc}, {\em {A generalized
  {R}ayleigh-{T}aylor condition for the {M}uskat problem}}, Nonlinearity, 25
  (2012), pp.~73--92.

\bibitem{EM11a}
{\sc J.~Escher and B.-V. Matioc}, {\em {On the parabolicity of the {M}uskat
  problem: well-posedness, fingering, and stability results}}, Z. Anal.
  Anwend., 30 (2011), pp.~193--218.

\bibitem{EMW18}
{\sc J.~Escher, B.-V. Matioc, and C.~Walker}, {\em {The domain of parabolicity
  for the Muskat problem}}, Indiana Univ. Math. J., 67 (2018), pp.~679--737.

\bibitem{ES96}
{\sc J.~Escher and G.~Simonett}, {\em {Analyticity of the interface in a free
  boundary problem}}, Math. Ann., 305 (1996), pp.~439--459.

\bibitem{NF20x}
{\sc P.~T. Flynn and H.~Q. Nguyen}, {\em {The vanishing surface tension limit
  of the Muskat problem}},  (2020).
\newblock arXiv:2001.10473.

\bibitem{G17}
{\sc F.~Gancedo}, {\em A survey for the {M}uskat problem and a new estimate},
  SeMA J., 74 (2017), pp.~21--35.

\bibitem{GGPS19}
{\sc F.~Gancedo, E.~Garc\'ia-Ju\'arez, N.~Patel, and R.~M. Strain}, {\em On the
  {M}uskat problem with viscosity jump: global in time results}, Adv. Math.,
  345 (2019), pp.~552--597.

\bibitem{GGS20}
{\sc F.~Gancedo, R.~Granero-Belinchón, and S.~Scrobogna}, {\em Surface tension
  stabilization of the rayleigh-taylor instability for a fluid layer in a
  porous medium}, Annales de l'Institut Henri Poincaré C, Analyse non
  linéaire, 37 (2020), pp.~1299 -- 1343.

\bibitem{GG14}
{\sc J.~G\'omez-Serrano and R.~Granero-Belinch\'on}, {\em {On turning waves for
  the inhomogeneous {M}uskat problem: a computer-assisted proof}},
  Nonlinearity, 27 (2014), pp.~1471--1498.

\bibitem{GB14}
{\sc R.~Granero-Belinch\'on}, {\em {Global existence for the confined {M}uskat
  problem}}, SIAM J. Math. Anal., 46 (2014), pp.~1651--1680.

\bibitem{GL20}
{\sc R.~Granero-Belinch\'{o}n and O.~Lazar}, {\em Growth in the {M}uskat
  problem}, Math. Model. Nat. Phenom., 15 (2020), pp.~Paper No. 7, 23.

\bibitem{GS19}
{\sc R.~Granero-Belinch\'on and S.~Shkoller}, {\em Well-posedness and decay to
  equilibrium for the {M}uskat problem with discontinuous permeability}, Trans.
  Amer. Math. Soc., 372 (2019), pp.~2255--2286.

\bibitem{L95}
{\sc A.~Lunardi}, {\em {Analytic Semigroups and Optimal Regularity in Parabolic
  Problems}}, {Progress in Nonlinear Differential Equations and their
  Applications, 16}, Birkh\"auser Verlag, Basel, 1995.

\bibitem{Matioc2019}
{\sc A.-V. Matioc and B.-V. Matioc}, {\em Well-posedness and stability results
  for a quasilinear periodic {Muskat} problem}, J. Differential Equations, 266
  (2019), pp.~5500-- 5531.

\bibitem{MBV18}
{\sc B.-V. Matioc}, {\em {Viscous displacement in porous media: the Muskat
  problem in 2D}}, Trans. Amer. Math. Soc., 370 (2018), pp.~7511--7556.

\bibitem{MBV19}
\leavevmode\vrule height 2pt depth -1.6pt width 23pt, {\em {The Muskat problem
  in two dimensions: equivalence of formulations, well-posedness, and
  regularity results}}, Anal. PDE, 12 (2019), pp.~281--332.

\bibitem{MBV20}
\leavevmode\vrule height 2pt depth -1.6pt width 23pt, {\em Well-posedness and
  stability results for some periodic {M}uskat problems}, J. Math. Fluid Mech.,
  22 (2020), pp.~Art. 31, 45.

\bibitem{MW20}
{\sc B.-V. Matioc and C.~Walker}, {\em On the principle of linearized stability
  in interpolation spaces for quasilinear evolution equations}, Monatshefte
  f\"ur Mathematik, 191 (2020), pp.~615--634.

\bibitem{CM97}
{\sc Y.~Meyer and R.~Coifman}, {\em {Wavelets}}, vol.~48 of {Cambridge Studies
  in Advanced Mathematics}, Cambridge University Press, Cambridge, 1997.
\newblock Calder\'on-Zygmund and multilinear operators, Translated from the
  1990 and 1991 French originals by David Salinger.

\bibitem{TM86b}
{\sc T.~Murai}, {\em Boundedness of singular integral operators of
  {C}alder\'{o}n type. {V}}, Adv. in Math., 59 (1986), pp.~71--81.

\bibitem{TM86}
\leavevmode\vrule height 2pt depth -1.6pt width 23pt, {\em {Boundedness of
  singular integral operators of {C}alder\'on type. {VI}}}, Nagoya Math. J.,
  102 (1986), pp.~127--133.

\bibitem{Mu34}
{\sc M.~Muskat}, {\em {Two fluid systems in porous media. The encroachment of
  water into an oil sand}}, Physics, 5 (1934), pp.~250--264.

\bibitem{Ngu20}
{\sc H.~Q. Nguyen}, {\em On well-posedness of the {M}uskat problem with surface
  tension}, Adv. Math., 374 (2020), p.~107344.

\bibitem{NP20}
{\sc H.~Q. Nguyen and B.~Pausader}, {\em A paradifferential approach for
  well-posedness of the {M}uskat problem}, Arch. Ration. Mech. Anal., 237
  (2020), pp.~35--100.

\bibitem{PS17}
{\sc N.~Patel and R.~M. Strain}, {\em Large time decay estimates for the
  {M}uskat equation}, Comm. Partial Differential Equations, 42 (2017),
  pp.~977--999.

\bibitem{PSS15}
{\sc J.~Pr\"uss, Y.~Shao, and G.~Simonett}, {\em {On the regularity of the
  interface of a thermodynamically consistent two-phase {S}tefan problem with
  surface tension}}, Interfaces Free Bound., 17 (2015), pp.~555--600.

\bibitem{PS16}
{\sc J.~Pr\"uss and G.~Simonett}, {\em Moving Interfaces and Quasilinear
  Parabolic Evolution Equations}, vol.~105 of Monographs in Mathematics,
  Birkh\"auser/Springer, [Cham], 2016.

\bibitem{PS16x}
\leavevmode\vrule height 2pt depth -1.6pt width 23pt, {\em {On the Muskat
  flow}}, Evol. Equ. Control Theory, 5 (2016), pp.~631--645.

\bibitem{SCH04}
{\sc M.~Siegel, R.~E. Caflisch, and S.~Howison}, {\em {Global existence,
  singular solutions, and ill-posedness for the {M}uskat problem}}, Comm. Pure
  Appl. Math., 57 (2004), pp.~1374--1411.

\bibitem{To17}
{\sc S.~Tofts}, {\em On the existence of solutions to the {M}uskat problem with
  surface tension}, J. Math. Fluid Mech., 19 (2017), pp.~581--611.

\bibitem{Y96}
{\sc F.~Yi}, {\em {Local classical solution of {M}uskat free boundary
  problem}}, J. Partial Differential Equations, 9 (1996), pp.~84--96.

\end{thebibliography}
\end{document}